\newcommand{\C}{\mathbb{C}}
\newcommand{\R}{\mathbb{R}}
\newcommand{\Z}{\mathbb{Z}}
\newcommand{\M}{\mathcal{M}}
\newcommand{\N}{\mathbb{N}}
\newcommand{\V}{\mathbb{V}}
\newcommand{\X}{\mathbf{X}}
\newcommand{\HH}{\mathbb{H}}
\newcommand{\Ss}{\mathbb{S}}
\newcommand{\eps}{\varepsilon}
\newcommand{\mc}{\mathcal}
\newcommand{\End}{\mathrm{End}}
\newcommand{\VV}{\mathbf{\mathrm{v}}}
\DeclareMathOperator{\Tr}{\mathrm{Tr}}
\DeclareMathOperator{\dd}{d}
\DeclareMathOperator{\supp}{supp}
\DeclareMathOperator{\vol}{vol}
\DeclareMathOperator{\id}{Id}
\DeclareMathOperator{\E}{\mathcal{E}}
\DeclareMathOperator{\F}{\mathcal{F}}
\DeclareMathOperator{\ran}{ran}
\DeclareMathOperator{\divv}{div}
\DeclareMathOperator{\Lie}{\mathcal{L}}
\DeclareMathOperator{\e}{\mathbf{e}}
\DeclareMathOperator{\RR}{\mathbf{R}}
\DeclareMathOperator{\WW}{\mathbf{\mathrm{w}}}
\DeclareMathOperator{\EE}{\mathbf{\mathrm{e}}}
\DeclareMathOperator{\Deg}{\mathrm{deg}}
\DeclareMathOperator{\WF}{\mathrm{WF}}
\theoremstyle{plain}
\newtheorem{theorem}{Theorem}[section]
\newtheorem{lemma}[theorem]{Lemma}
\newtheorem{remark}[theorem]{Remark}
\newtheorem{proposition}[theorem]{Proposition}
\newtheorem{conj}[theorem]{Conjecture}
\newtheorem{corollary}[theorem]{Corollary}
\numberwithin{equation}{section}
\begin{document}

\begin{abstract}
In dimensions $\geq 3$, we prove that the X-ray transform of symmetric tensors of arbitrary degree is generically injective with respect to the metric on closed Anosov manifolds, and on manifolds with spherical strictly convex boundary, no conjugate points and a hyperbolic trapped set.  This has two immediate corollaries: local spectral rigidity, and local marked length spectrum rigidity (building on earlier work by Guillarmou, Knieper and the second author \cite{Guillarmou-Lefeuvre-18, Guillarmou-Knieper-Lefeuvre-19}), in a neighbourhood of a generic Anosov metric. In both cases, this is the first work going beyond the negatively curved assumption or dimension 2.

Our method, initiated in \cite{Cekic-Lefeuvre-20} and fully developed in the present paper, is based on a perturbative argument of the $0$-eigenvalue of elliptic operators via microlocal analysis which turn the analytic problem of injectivity into an algebraic problem of representation theory. When the manifold is equipped with a Hermitian vector bundle together with a unitary connection, we also show that the twisted X-ray transform of symmetric tensors (with values in that bundle) is generically injective with respect to the connection. This property turns out to be crucial when solving the \emph{holonomy inverse problem}, as studied in a subsequent article \cite{Cekic-Lefeuvre-21-1}.
\end{abstract}

\title{Generic injectivity of the X-ray transform}

\author[M. Ceki\'{c}]{Mihajlo Ceki\'{c}}
\date{\today}
\address{University of Zurich, Winterthurerstrasse 190, CH-8057 Zurich, Switzerland}
\email{mihajlo.cekic@math.uzh.ch}

\author[T. Lefeuvre]{Thibault Lefeuvre}

\address{Université de Paris and Sorbonne Université, CNRS, IMJ-PRG, F-75006 Paris, France.}

\email{tlefeuvre@imj-prg.fr}

\maketitle

\setcounter{tocdepth}{1}

\tableofcontents

\newpage

\section{Introduction}

Let $M$ be a smooth closed $n$-dimensional manifold, with $n \geq 2$. Let $\M$ be the cone of smooth metrics on $M$. Recall that a metric $g \in \M$ is said to be \emph{Anosov} if the geodesic flow $(\varphi_t)_{t \in \R}$ on its unit tangent bundle
\[
	SM := \left\{ (x,v) \in TM ~|~|v|_g = 1 \right\}
\]
is an Anosov flow (also called \emph{uniformly hyperbolic} in the literature), in the sense that is there exists a continuous flow-invariant splitting of the tangent bundle of $SM$ as:
\[
	T(SM) = \R X \oplus E_s \oplus E_u,
\]
where $X$ is the geodesic vector field, and such that:
\begin{equation}
\label{equation:anosov}
\begin{array}{l}
\forall t \geq 0, \forall w \in E_s, ~~ |\dd\varphi_t(w)| \leq Ce^{-t\lambda}|w|, \\
\forall t \leq 0, \forall w \in E_u, ~~ |\dd\varphi_t(w)| \leq Ce^{-|t|\lambda}|w|,
\end{array}
\end{equation}
the constants $C,\lambda > 0$ being uniform and the metric $|\bullet|$ arbitrary. We will denote by $\M_{\mathrm{Anosov}}$ the space of smooth Anosov metrics on $M$ and we will always assume in the following that it is not empty\footnote{Note that $\M_{\mathrm{Anosov}}(\Ss^2) = \emptyset$ (see \cite[Corollary 9.5]{Paternain-lecture-notes} for instance), that is to say not all manifolds can carry Anosov metrics. It is also not known if manifolds carrying Anosov metrics also carry negatively-curved metrics (the converse being obviously true).}.

Historical examples of Anosov metrics were provided by metrics of negative sectional curvature \cite{Anosov-67} but there are other examples as long as the metric carries ``enough" zones of negative curvature, see \cite{Eberlein-73,Donnay-Pugh-03}. As shown in \cite{Contreras-10}, generic metrics have a non-trivial hyperbolic basic set, i.e. a compact invariant set, not reduced to a single periodic orbit, where \eqref{equation:anosov} is satisfied (but this set may not be equal to the whole manifold though). Certain chaotic physical systems can also be described by Anosov Riemannian manifolds which are not globally negatively-curved: for instance, the \emph{Sinaï billiards} which arise as a model in physics for the Lorentz gas (a gas of electrons in a metal) can be naturally approximated by Anosov surfaces but these surfaces have a lot of flat areas (they consist of two copies of a flat tori connected by negatively-curved cylinders which play the role of the obstacles), see \cite[Chapter 6]{Kourganoff-15} for instance.

\subsection{Generic injectivity of the X-ray transform with respect to the metric: closed case} We let $\mc{C}$ be the set of free homotopy classes of loops on $M$. If $g \in \M_{\mathrm{Anosov}}$, it is known \cite{Klingenberg-74} that for all $c \in \mc{C}$, there exists a unique $g$-geodesic $\gamma_g(c) \in c$. We will denote by $L_g$ the \emph{marked length spectrum} of $g$, defined as the map:
\begin{equation}
\label{equation:mls}
	L_g \in \ell^\infty(\mc{C}), \quad L_g(c) := \ell_g(\gamma_g(c)),
\end{equation}
where $\ell_g(\gamma)$ denotes the Riemannian length of a curve $\gamma \subset M$ computed with respect to the metric $g$.

The closed curve $\gamma_g(c)$ on $M$ can be lifted to $SM$ to a periodic orbit $(\gamma_g(c), \dot{\gamma}_g(c))$ of $(\varphi_t)_{t \in \R}$, the geodesic flow of $g$. We then define the \emph{X-ray transform} as the operator:
\begin{equation}
\label{equation:xray}
	I^g : C^{\infty}(SM) \rightarrow \ell^\infty(\mc{C}), ~~~ I^gf(c) := \dfrac{1}{L_g(c)} \int_0^{L_g(c)} f(\varphi_t(x,v)) \dd t,
\end{equation}
where $(x,v)$ is an arbitrary point of the lift of $\gamma_g(c)$. Its kernel is given by \emph{coboundaries}, namely
\[
	\ker I^g|_{C^\infty(SM)} = \left\{ Xu ~|~ u \in C^\infty(SM), Xu \in C^\infty(SM)\right\}\footnote{Of course, the geodesic vector field $X$ depends on $g$. Note that when the context is clear, we try to avoid as much as possible the notation $X_g$ in order not to burden the discussion.}.
\]
The restriction of this operator to symmetric tensors appears in some rigidity questions in Riemannian geometry, as we shall see. We introduce $\pi_m^* : C^\infty(M,\otimes^m_S T^*M) \rightarrow C^\infty(SM)$, the natural pullback of symmetric $m$-tensors, defined by $\pi_m^*f(x,v) := f_x(\otimes^m v)$. We then set
\begin{equation}
\label{equation:xray-m}
	I^g_m := I^g \circ \pi_m^*.
\end{equation}
Any symmetric tensor $f \in C^\infty(M,\otimes^m_S T^*M)$ admits a canonical decomposition $f = Dp + h$, where $D$ is the \emph{symmetrized covariant derivative}, $p \in C^\infty(M,\otimes^{m-1}_S T^*M)$, $h \in C^\infty(M,\otimes^m_S T^*M)$ and $D^* h = 0$, see \S\ref{sssection:tensors-riemannian} for further details. The $Dp$ part is called \emph{potential} whereas $h$ is called \emph{solenoidal}. Using the fundamental relation $X \pi_m^* = \pi_{m+1}^* D$, we directly see that:
\[
	\left\{ Dp ~|~ p \in C^\infty(M,\otimes^{m-1}_S T^*M)\right\} \subset \ker I^g_m|_{C^\infty(M,\otimes^m_S T^*M)}.
\]
If in the place of inclusion we have equality, we say that the X-ray transform of symmetric $m$-tensors is \emph{s-injective} or \emph{solenoidally injective}, i.e. injective when restricted to solenoidal tensors. This is known to be true:
\begin{itemize}
\item for $m=0,1$ on all Anosov manifolds \cite{Dairbekov-Sharafutdinov-03},
\item for all $m \in \Z_{\geq 0}$ on Anosov manifolds with non-positive curvature \cite{Guillemin-Kazhdan-80, Croke-Sharafutdinov-98},
\item and for all $m \in \Z_{\geq 0}$ on all Anosov surfaces without any assumptions on the curvature by \cite{Guillarmou-17-1} (see also \cite{Paternain-Salo-Uhlmann-14-1} for the cases $m=0,1,2$).
\end{itemize}
Although the s-injectivity of $I^g_m$ is conjectured on Anosov manifolds of arbitrary dimension, it is still a widely open question. The main theorem of this article is a first step in this direction:

\begin{theorem}
\label{theorem:genericity-metric}
There exists an integer $k_0 \gg 1$ such that the following holds. Let $M$ be a smooth closed manifold of dimension $\geq 3$ carrying Anosov metrics. For all $m \in \Z_{\geq 0}$,\footnote{For $m=0,1$, the s-injectivity is already established \cite{Dairbekov-Sharafutdinov-03}.} there exists an open dense set $\mc{R}_m \subset \M_{\mathrm{Anosov}}$ (for the $C^{k_0}$-topology) such that for all metrics $g \in \mc{R}_m$, the X-ray transform $I^g_m$ is s-injective. In particular, the space of metrics $\mc{R} := \cap_{m \geq 0} \mc{R}_m$ whose $X$-ray transforms are $s$-injective for all $m \in \mathbb{Z}_{\geq 0}$ is \emph{residual} in $\M_{\mathrm{Anosov}}$.
\end{theorem}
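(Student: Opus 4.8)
The strategy is to realize the conclusion through a perturbative spectral argument for an elliptic operator whose $0$-eigenspace controls $\ker I^g_m$ modulo potential tensors. Concretely, one works with the normal operator $\Pi_m^g := (\pi_m^*)^* \Pi^g \pi_m^*$, where $\Pi^g$ is the generalized X-ray/resolvent projector associated to the geodesic flow (living in $\Psi^{-1}$ by the microlocal theory of Guillarmou et al.). On solenoidal tensors, s-injectivity of $I^g_m$ is equivalent to invertibility of $\Pi_m^g$ restricted to $\ker D^*$, equivalently to the statement that $\dim\ker \Pi_m^g = \dim\{\text{potential tensors}\}$ with no extra solenoidal kernel. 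Since $\Pi_m^g$ is a self-adjoint non-negative elliptic pseudodifferential operator of order $-1$ depending (in a suitable topology) continuously — indeed analytically after finite-dimensional reduction — on $g \in \M^{k_0}_{\mathrm{Anosov}}$, the dimension of $\ker \Pi_m^g$ is upper semicontinuous in $g$. Hence the set $\mc{R}_m$ where this dimension is \emph{minimal} (i.e.\ equals the rank of the potential part, which is forced by the inclusion already noted) is automatically open; the real work is density.

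For density one argues by contradiction / by a transversality-type scheme: suppose $g_0$ is a metric for which $\Pi_m^{g_0}$ has a nontrivial solenoidal kernel element $h_0$. One deforms $g_0$ in a finite-dimensional family $g_s$ of Anosov metrics (this is the point where one needs $\M^{k_0}_{\mathrm{Anosov}}$ to be open so that perturbations stay Anosov) and studies the first variation of the $0$-eigenvalue branch. By the Kato perturbation theory of eigenvalues, to first order this variation is governed by a Hessian-type quadratic form $Q(h_0)$ obtained by pairing $h_0$ with the derivative of $\Pi_m^g$ in the direction of the metric perturbation; crucially, following the method initiated in \cite{Cekic-Lefeuvre-20}, the \emph{principal symbol} of $\partial_g \Pi_m^g$ can be computed explicitly, and the obstruction to pushing the eigenvalue off $0$ becomes an algebraic/representation-theoretic condition: it reduces to whether a certain equivariant map between bundles of symmetric tensors over $SM$ (built from the $SO(n)$-module structure of $\otimes^m_S T^*M$ and the Fourier decomposition into spherical harmonics on the fibers of $SM$) is surjective. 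This is exactly the mechanism by which "the analytic problem of injectivity turns into an algebraic problem of representation theory" advertised in the abstract; in dimension $\geq 3$ the relevant branching rules for $SO(n)$ provide enough freedom (unlike $n=2$, where $SO(2)$ is abelian), and one checks the required non-degeneracy of $Q$ on any nonzero solenoidal $h_0$.

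Once density of each $\mc{R}_m$ is established, the final assertion about $\mc{R} = \cap_{m\geq 0}\mc{R}_m$ is immediate from Baire's theorem: each $\mc{R}_m$ is open and dense in the complete (in the $C^{k_0}$ topology, hence a Baire space) open subset $\M^{k_0}_{\mathrm{Anosov}}$, so the countable intersection is residual (dense $G_\delta$), which is the claim. I expect the genuine obstacle to be the density step — specifically, verifying that the first-order variation of the $0$-eigenvalue is non-degenerate, i.e.\ proving the surjectivity of the equivariant symbol map for every degree $m$ and every dimension $n \geq 3$. Handling all $m$ uniformly is delicate because the representation $\otimes^m_S T^*M$ decomposes into many irreducible $SO(n)$-pieces of trace-free tensors, and one must ensure the perturbation captures \emph{all} of them; a secondary technical point is making the spectral perturbation theory rigorous at the limited regularity $C^{k_0}$ (controlling mapping properties of $\Psi$DOs and their metric-derivatives with only finitely many derivatives of $g$), which is why the explicit loss $k_0 \gg 1$ appears.
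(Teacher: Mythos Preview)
Your overall architecture is right: reduce to the elliptic operator $\Pi_m^g$ (or rather the positive-order operator $P_g=\pi_{\ker D^*}\Delta\,\Pi_m^g\,\Delta\,\pi_{\ker D^*}$), get openness from upper semicontinuity, and for density perturb the $0$-eigenvalue. But there is a genuine gap at the heart of the density argument: the \emph{first} variation you propose to compute is identically zero. Indeed $\Pi_m^{g_\tau}\ge 0$ for every $\tau$, hence $P_\tau\ge 0$, and the sum of eigenvalues $\lambda_\tau$ inside a small contour satisfies $\lambda_\tau\ge 0=\lambda_0$; since $\tau=0$ is a local minimum of a smooth function, $\dot\lambda_0=0$ for every direction of perturbation. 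So pairing $h_0$ with $\partial_g\Pi_m^g$ gives you nothing, and no amount of symbol computation at first order can push the eigenvalue off zero.

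The paper's actual mechanism is a \emph{second-order} perturbation: one computes $\ddot\lambda_0=\sum_i\big(\langle\ddot P_0 u_i,u_i\rangle-2\langle P_0^{-1}\dot P_0 u_i,\dot P_0 u_i\rangle\big)$ and shows that this can be written as $\langle Bf,f\rangle_{L^2}$ for a pseudodifferential operator $B$ of order $1$ acting on the perturbation $f\in C^\infty(M,\otimes^2_ST^*M)$. Each of the two terms in $\ddot\lambda_0$ individually contributes an operator of order $1$; their principal symbols almost cancel, and the algebraic content is to show that they do \emph{not} cancel completely. This is done by inserting Gaussian states and using lemmas about restriction/extension of spherical harmonics to hyperspheres (this is where $n\ge 3$ enters) and multiplication of spherical harmonics, not an abstract $SO(n)$-branching argument. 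Note also that the paper does not prove $B$ is elliptic (and remarks that it likely is not); it only shows $\sigma_B(x_0,\xi_0)\ne 0$ at a single point where the primitive $\VV_1$ of $\pi_m^*\Delta u_1$ has high enough Fourier degree. So your expectation that one verifies ``non-degeneracy of $Q$ on any nonzero solenoidal $h_0$'' is too optimistic, and the iteration (eject one eigenvalue at a time) is essential.
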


The set $\mc{R}_m \subset \M_{\mathrm{Anosov}}$ is open and dense for the $C^{k_0}$-topology in the sense that:
\begin{itemize}
\item Openness: for all $g \in \mc{R}_m$, there exists $\eps > 0$ such that for all smooth metrics $g'$ with $\|g'-g\|_{C^{k_0}} < \eps$, $g' \in \mc{R}_m$,
\item Density: if $g \in \M_{\mathrm{Anosov}}$, then for all $\eps > 0$, there exists a smooth metric $g' \in \mc{R}_m$ such that $\|g-g'\|_{C^{k_0}} < \eps$.
\end{itemize}
Note that $\mc{R} \subset \M_{\mathrm{Anosov}}$ is a countable intersection of open and dense sets, and so in particular it is dense in the $C^\infty$ topology. Observe that the sets $\mc{R}_m$ and $\mc{R}$ are invariant by the action (by pullback of metrics) of the group of diffeomorphisms that are isotopic to the identity, which we denote by $\mathrm{Diff}_0(M)$.

As we shall see below, the generic s-injectivity of $I_m^g$ is equivalent to the s-injectivity of an elliptic pseudodifferential operator $\Pi_m^g$ introduced in \cite{Guillarmou-17-1}, called the \emph{generalized X-ray transform}, which enjoys very good analytic properties. This operator will also naturally appear below when discussing the \emph{twisted case}, i.e. when including a bundle $\E \rightarrow M$ in the discussion, see \S\ref{ssection:twisted-case}. In particular, this reduction to an elliptic $\Psi$DO will allow us to apply our technique of perturbation of the $0$-eigenvalue of elliptic operators, see \S\ref{ssection:strategy} for further details on the strategy of proof.

\subsection{Application to rigidity problems} We now detail the consequences of Theorem \ref{theorem:genericity-metric} on three problems of rigidity.

\subsubsection{The marked length spectrum rigidity conjecture}

In the following, an \emph{isometry class}, denoted by $\mathfrak{g}$, is defined as an orbit of metrics under the action of $\mathrm{Diff}_0(M)$, namely
\[
\mathfrak{g} := \left\{ \phi^*g ~|~ \phi \in \mathrm{Diff}_0(M)\right\}.
\]
If $M$ is closed, we let $\mathbb{M}_{\mathrm{Anosov}} := \M_{\mathrm{Anosov}}/\mathrm{Diff}_0(M)$ be the moduli space of smooth Anosov metrics modulo the action of $\mathrm{Diff}_0(M)$. The marked length spectrum introduced in \eqref{equation:mls} is invariant by the action of $\mathrm{Diff}_0(M)$ and thus descends as a map
\begin{equation}
\label{equation:mls-moduli}
L : \mathbb{M}_{\mathrm{Anosov}} \rightarrow \ell^\infty(\mc{C}), ~~ \mathfrak{g} \mapsto L_{\mathfrak{g}}.
\end{equation}
It is believed to parametrize entirely the moduli space of isometry classes.

\begin{conj}
\label{conjecture:bk}
Let $M$ be a smooth $n$-dimensional closed manifold such that $\M_{\mathrm{Anosov}}(M) \neq \emptyset$. Then the marked length spectrum map $L$ in \eqref{equation:mls-moduli} is injective.
\end{conj}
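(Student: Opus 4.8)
\emph{Proposed approach.} The idea is to bootstrap the generic statement of Theorem~\ref{theorem:genericity-metric} to the full conjecture through a local-to-global argument in the moduli space $\mathbb{M}_{\mathrm{Anosov}}$. Fix two smooth Anosov metrics $g_0,g_1$ on $M$ with $L_{g_0}=L_{g_1}$; we must produce $\phi\in\mathrm{Diff}_0(M)$ with $\phi^*g_1=g_0$. As a first step I would record that equality of marked length spectra already forces the geodesic flows $(\varphi_t^{g_0})$ and $(\varphi_t^{g_1})$ on $SM$ to be conjugated by a H\"older homeomorphism $\Psi$ intertwining the flows and preserving time: Anosov geodesic flows on a fixed manifold are topologically orbit equivalent (structural stability along a path of Anosov metrics, or matching periodic orbits through their free homotopy classes in $\mc C$), the equality $L_{g_0}=L_{g_1}$ says this matching preserves all periods, and a Liv\v{s}ic periodic cohomology argument promotes it to an honest time-preserving conjugacy. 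This is classical in dimension $2$ (Otal, Croke) and in negative curvature (Hamenst\"adt); we would need its analogue in the general Anosov case. The conjugacy $\Psi$ then serves as a device to transport the spectral data of $g_0$ onto that of $g_1$.

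Next I would organize the globalization around the \emph{isospectral locus} $\mathcal{I}:=\{\mathfrak{g}\in\mathbb{M}_{\mathrm{Anosov}}~|~L_{\mathfrak g}=L_{g_0}\}$. This set is closed in $\mathbb{M}_{\mathrm{Anosov}}$: the map $g\mapsto L_g$ is continuous, and along a sequence of Anosov metrics with bounded $C^{k_0}$ norm the closed geodesics and their lengths vary continuously by structural stability. It is also \emph{locally discrete at generic points}: the deformation-rigidity mechanism of Guillarmou--Lefeuvre \cite{Guillarmou-Lefeuvre-18} shows that $L$ is injective near any $\mathfrak g$ at which $I^g_2$ is s-injective (the linearization of $L$ at $g$ is governed, in a harmonic gauge, by the X-ray transform $I^g_2$), and Theorem~\ref{theorem:genericity-metric} with $m=2$ furnishes an open dense set $\mc{R}_2\subset\M^{k_0}_{\mathrm{Anosov}}$ of such metrics. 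Consequently the conjecture would follow from two statements: \textbf{(A)} $I^g_2$ — equivalently, the generalized X-ray transform $\Pi_2^g$ — is s-injective at \emph{every} $\mathfrak g\in\mathcal I$, so that $\mathcal I$ is discrete; and \textbf{(B)} $\mathcal I$ is connected. Indeed a non-empty closed, discrete, connected subset of a connected component of $\mathbb M_{\mathrm{Anosov}}$ is a single point, whence $\mathfrak g_1=\mathfrak g_0$.

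For \textbf{(A)} the cleanest input would be the s-injectivity conjecture for $I^g_2$ on all Anosov manifolds; absent that, one would run the perturbative scheme of \S\ref{ssection:strategy} \emph{along} $\mathcal{I}$: s-injectivity of the elliptic index-zero operator $\Pi_2^g$ fails only where a finite-dimensional determinant vanishes, and, transporting spectral data by the conjugacy $\Psi$, one would argue that this determinant cannot vanish identically along any nontrivial deformation inside $\mathcal I$, forcing discreteness at every point. For \textbf{(B)} I would attempt to interpolate $g_0$ and $g_1$ by an isospectral family in a fixed harmonic gauge, exploiting the real-analytic structure of the periodic data in the spirit of the analytic-continuation arguments of Stefanov--Uhlmann \cite{Stefanov-Uhlmann-05, Stefanov-Uhlmann-08}, or reduce the connectedness question — using the embedding of spherical SNH pieces into closed Anosov manifolds of \cite{Chen-Erchenko-Gogolev-20} — to a boundary-rigidity deformation problem.

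The main obstacles are precisely \textbf{(A)} and \textbf{(B)}. Our genericity theorem makes the exceptional set meagre but gives no control on $L$ at the non-generic Anosov metrics that could a priori belong to $\mathcal I$, and proving s-injectivity of $I^g_2$ on \emph{every} Anosov manifold is the central open problem motivating this paper; moreover there is at present no mechanism forcing two metrics with the same marked length spectrum to be joined by an isospectral path, so establishing the connectedness of $\mathcal I$ would require a genuinely new analytic or rigidity input. For these reasons Theorem~\ref{theorem:genericity-metric} presently yields Conjecture~\ref{conjecture:bk} only \emph{locally} — in a $C^{k_0}$-neighborhood of a generic Anosov isometry class — and the full conjecture remains open.
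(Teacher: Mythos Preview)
Your assessment is correct and aligns with the paper: Conjecture~\ref{conjecture:bk} is stated as an \emph{open problem}, not proved. The paper explicitly says the question ``is still widely open'' and only establishes the local statement (Corollary~\ref{corollary:mls}) near a generic Anosov isometry class, exactly as you conclude in your final paragraph. There is no proof in the paper to compare against; your honest identification of the obstacles \textbf{(A)} and \textbf{(B)} --- s-injectivity of $I_2^g$ at \emph{every} Anosov metric, and connectedness of the isospectral locus --- accurately pinpoints why the generic result does not upgrade to the full conjecture.

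One small comment on your sketch: the reduction to ``$\mathcal{I}$ discrete and connected $\Rightarrow$ $\mathcal{I}$ a point'' tacitly assumes the relevant component of $\mathbb{M}_{\mathrm{Anosov}}$ is connected (or at least that $\mathfrak{g}_0,\mathfrak{g}_1$ lie in the same component), which is itself not obvious; and your step \textbf{(A)} is essentially equivalent to the very s-injectivity conjecture the paper is unable to prove in full. So the proposal is not so much a gap-containing argument as a correct diagnosis that the available tools yield only the local result.
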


Originally, the conjecture was only phrased in the context of negatively-curved manifolds by Burns-Katok \cite{Burns-Katok-85} but it is believed to hold in the general Anosov case. Despite some partial results \cite{Guillemin-Kazhdan-80, Katok-88, Croke-Fathi-Feldman-92, Besson-Courtois-Gallot-95, Hamenstadt-99, Croke-Sharafutdinov-98, Paternain-Salo-Uhlmann-14-1} and the proof of the conjecture in the two-dimensional case for negatively-curved metrics \cite{Croke-90, Otal-90}, this question is still widely open. Recently, Guillarmou, Knieper and the second author proved that the s-injectivity of $I_2^{g_0}$ implies that the conjecture holds true locally around $g_0$ (see \cite{Guillarmou-Lefeuvre-18} and \cite[Theorem 1.2]{Guillarmou-Knieper-Lefeuvre-19}). In particular, by \cite{Croke-Sharafutdinov-98}, this solves locally the conjecture around an Anosov metric with nonpositive curvature in any dimension (and without any assumptions on the curvature in dimension two by \cite{Paternain-Salo-Uhlmann-14-1,Guillarmou-17-1}). A similar conjecture exists for the billiard flow of convex domains, see \cite{DeSimoi-Kaloshin-Wei-17} for the most recent developments. A straightforward consequence of Theorem \ref{theorem:genericity-metric}, combined with \cite[Theorem 1.3]{Guillarmou-Knieper-Lefeuvre-19} (and the remark following \cite[Theorem 1.2]{Guillarmou-Knieper-Lefeuvre-19}), is therefore the following:

\begin{corollary}[of Theorem \ref{theorem:genericity-metric} and \cite{Guillarmou-Lefeuvre-18,Guillarmou-Knieper-Lefeuvre-19}]
\label{corollary:mls}
There exists $k_0 \gg 1$ such that the following holds. Let $M$ be a smooth $n$-dimensional closed manifold carrying Anosov metrics. There is an open and dense set $\mathbbm{R}_2 \subset  \mathbb{M}_{\mathrm{Anosov}}$ (for the $C^{k_0}$-topology) such that: for all $\mathfrak{g}_0 \in \mathbbm{R}_2$, the marked length spectrum map $L$ in \eqref{equation:mls-moduli} is locally injective near $\mathfrak{g}_0$.
\end{corollary}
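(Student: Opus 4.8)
The plan is to combine Theorem~\ref{theorem:genericity-metric} in the case $m=2$ with the local marked length spectrum rigidity result of \cite{Guillarmou-Lefeuvre-18, Guillarmou-Knieper-Lefeuvre-19}, after transporting the genericity statement through the quotient by the gauge group. First I would enlarge $k_0$, if necessary, so that it dominates both the constant produced by Theorem~\ref{theorem:genericity-metric} and the regularity threshold $N=N(n)$ below which the arguments of \cite{Guillarmou-Knieper-Lefeuvre-19} apply; this keeps $k_0$ proportional to $n$. Let $\mc{R}_2 \subset \M^{k_0}_{\mathrm{Anosov}}$ be the open dense set of Theorem~\ref{theorem:genericity-metric}: for every $g\in\mc{R}_2$ the operator $I_2^{g}$ is s-injective. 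As recorded after Theorem~\ref{theorem:genericity-metric}, $\mc{R}_2$ is invariant under pullback by $\mathrm{Diff}^{k_0+1}_0(M)$, so it is a saturated subset of $\M^{k_0}_{\mathrm{Anosov}}$ and descends to a subset $\mathbbm{R}_2 \subset \mathbb{M}^{k_0}_{\mathrm{Anosov}}$.

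Next I would check that $\mathbbm{R}_2$ is open and dense in the moduli space. Writing $q : \M^{k_0}_{\mathrm{Anosov}} \to \mathbb{M}^{k_0}_{\mathrm{Anosov}}$ for the quotient projection, $q$ is continuous by definition of the quotient topology, and it is open because for any open $V$ one has $q^{-1}(q(V)) = \bigcup_{\phi\in\mathrm{Diff}^{k_0+1}_0(M)} \phi^*V$, a union of open sets (each $\phi^*$ being a homeomorphism of $\M^{k_0}$). Hence $\mathbbm{R}_2 = q(\mc{R}_2)$ is open; and for any nonempty open $\OO \subset \mathbb{M}^{k_0}_{\mathrm{Anosov}}$, the set $q^{-1}(\OO)$ is nonempty open, so it meets the dense set $\mc{R}_2$, whence $\OO = q(q^{-1}(\OO))$ meets $\mathbbm{R}_2$; this gives density.

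Finally, I would fix $\mathfrak{g}_0 \in \mathbbm{R}_2$ and a representative $g_0 \in \mc{R}_2$, so that $I_2^{g_0}$ is s-injective. By \cite{Guillarmou-Lefeuvre-18} and \cite[Theorem~1.2]{Guillarmou-Knieper-Lefeuvre-19}, s-injectivity of $I_2^{g_0}$ forces Conjecture~\ref{conjecture:bk} to hold in a $C^{k_0}$-neighborhood of $g_0$: there exists $\eps>0$ such that any two metrics $\eps$-close to $g_0$ in $C^{k_0}$ with the same marked length spectrum differ by an element of $\mathrm{Diff}_0(M)$. Since $q$ is open, $U := q\big(\{\,g : \|g-g_0\|_{C^{k_0}} < \eps\,\}\cap \M^{k_0}_{\mathrm{Anosov}}\big)$ is an open neighborhood of $\mathfrak{g}_0$, and the previous sentence says precisely that the map $L$ of \eqref{equation:mls-moduli} is injective on $U$. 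As $\mathfrak{g}_0 \in \mathbbm{R}_2$ was arbitrary, $L$ is locally injective at every point of the open dense set $\mathbbm{R}_2$, which is the claim.

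The only point demanding care — rather than a genuine obstacle, this being a corollary — is the regularity bookkeeping: one must make sure the perturbative arguments of \cite{Guillarmou-Knieper-Lefeuvre-19} run in the $C^{k_0}$ category and that the size $\eps$ of the rigidity neighborhood (and the associated stability estimate) depends only on $g_0$ and $n$, so that the injectivity statement descends through $q$ and $k_0$ can be chosen once and for all. Everything else is soft point-set topology.
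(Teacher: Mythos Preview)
Your proposal is correct and follows exactly the approach the paper takes: the paper does not give a separate proof of this corollary, but simply records (in the paragraph following the statement) that $\mathbbm{R}_2 = \mc{R}_2/\mathrm{Diff}^{k_0+1}_0(M)$ and spells out the meaning of ``locally injective'' via representatives, leaving the rest to Theorem~\ref{theorem:genericity-metric} combined with \cite{Guillarmou-Lefeuvre-18,Guillarmou-Knieper-Lefeuvre-19}. Your additional point-set topology verification that $\mathbbm{R}_2$ is open and dense in the quotient (using that $q$ is open because the $\mathrm{Diff}^{k_0+1}_0(M)$-action is by homeomorphisms) is a correct elaboration of what the paper leaves implicit.
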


The set $\mathbbm{R}_2$ is equal to $\mc{R}_2/\mathrm{Diff}_0(M)$, where $\mc{R}_2$ is given by Theorem \ref{theorem:genericity-metric} (and this is well-defined since $\mc{R}_2$ is invariant by $\mathrm{Diff}_0(M)$). By locally injective, we mean the following: for any $g_0 \in \mathfrak{g}_0$, there exists $\eps_0 > 0$ such that the following holds: if $g_1, g_2 \in \mc{M}_{\mathrm{Anosov}}$ are such that there exist $\phi_1,\phi_2 \in \mathrm{Diff}_0(M)$ such that $\|\phi_1^*g_1-g_0\|_{C^{k_0}} + \|\phi^*_2g_2-g_0\|_{C^{k_0}}< \eps_0$ and $L_{g_1} = L_{g_2}$, then $g_1$ and $g_2$ are isometric. Except in dimension two, this is the first result allowing to relax the negative curvature assumption.

\subsubsection{Spectral rigidity} Since the celebrated paper of Kac \cite{Kac-66} ``\emph{Can one hear the shape of a drum?}", investigating the space of \emph{isospectral} manifolds (i.e. manifolds with same spectrum for the Laplacian $\Delta_g$ on functions, counted with multiplicities) has been an important question in spectral geometry, see \cite{Milnor-64, Guillemin-Kazhdan-80, Guillemin-Kazhdan-80-2, Vigneras-80, Sarnak-90, Gordon-Webb-Wolpert-92} for instance. It is known that there exist pairs of isospectral hyperbolic surfaces that are not isometric \cite{Vigneras-80}. On the other hand, by \cite{Guillemin-Kazhdan-80}, the s-injectivity of $I_2^{g_0}$ implies that $(M,g_0)$ is \emph{spectrally rigid} in the following sense: if $(g_s)_{s \in (-1,1)}$ is a smooth family of isospectral metrics, then they are isometric, i.e. there exists $(\phi_s)_{s \in (-1,1)}$ such that $g_0 = \phi_s^* g_s$. As a consequence, we obtain the following:

\begin{corollary}[of Theorem \ref{theorem:genericity-metric}]
Let $M$ be a $n$-dimensional closed manifold carrying Anosov metrics. Then, the open and dense set of isometry classes $\mathbbm{R}_2 \subset \mathbb{M}_{\mathrm{Anosov}}$ are spectrally rigid.
\end{corollary}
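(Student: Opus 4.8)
The plan is to read the statement as an instance of the classical isospectral-deformation argument of Guillemin and Kazhdan \cite{Guillemin-Kazhdan-80}, the only new ingredient being the generic s-injectivity of $I_2^g$ provided by Theorem~\ref{theorem:genericity-metric}; everything else is a by-now standard deformation argument. First I would unwind the quotient. A class $\mathfrak{g}_0\in\mathbbm{R}_2=\mc{R}_2/\mathrm{Diff}^{k_0+1}_0(M)$ is represented by some $g_0\in\mc{R}_2\subset\M^{k_0}_{\mathrm{Anosov}}$; since $\mc{R}_2$ is open and $\mathrm{Diff}^{k_0+1}_0(M)$-invariant and the projection $\M^{k_0}_{\mathrm{Anosov}}\to\mathbb{M}^{k_0}_{\mathrm{Anosov}}$ is continuous, surjective and open, the image $\mathbbm{R}_2$ is open and dense in the moduli space (as already recorded in Corollary~\ref{corollary:mls}). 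It then suffices to prove: if $g_0\in\mc{R}_2$ and $(g_s)_{s\in(-1,1)}$ is a smooth one-parameter family of Anosov metrics with $g_s|_{s=0}=g_0$ and $\Spec(\Delta_{g_s})=\Spec(\Delta_{g_0})$ for all $s$, then there is a smooth family $(\phi_s)$ in $\mathrm{Diff}_0(M)$ with $\phi_0=\id$ and $\phi_s^*g_s=g_0$. Shrinking the interval, openness of $\mc{R}_2$ guarantees $g_s\in\mc{R}_2$, hence $I_2^{g_s}$ is s-injective, for every $s$.

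Second, I would recall the spectral input. Since each $g_s$ is Anosov, all of its closed geodesics are nondegenerate (no closed orbit of the linearized flow has $1$ as an eigenvalue), so the Duistermaat--Guillemin trace formula applies cleanly and the wave trace $t\mapsto\sum_j e^{it\sqrt{\lambda_j(s)}}$ is singular exactly at $\{0\}$ and at $\pm$ the lengths of closed geodesics of $g_s$. As in \cite{Guillemin-Kazhdan-80}, isospectrality makes this distribution independent of $s$, and comparing the leading singularities at a fixed length (whose amplitudes involve the positive factors $|\det(\mathrm{Id}-P_\gamma)|^{-1/2}$) forces $\frac{d}{ds}\ell_{g_s}(\gamma)=0$ for every closed geodesic $\gamma$, i.e. $\frac{d}{ds}L_{g_s}(c)=0$ for all $c\in\mc{C}$. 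By the first variation of arclength --- the curve being critical for $\mathrm{Length}_{g_s}$, only the variation of the metric contributes --- this derivative equals $\tfrac12\int_0^{L_{g_s}(c)}\dot g_s(\dot\gamma,\dot\gamma)\,\dd t=\tfrac12 L_{g_s}(c)\,I_2^{g_s}(\dot g_s)(c)$, where $\dot g_s:=\partial_s g_s$. Hence $I_2^{g_s}\dot g_s=0$ for every $s$.

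Third, I would use s-injectivity to trivialize the family. Since $I_2^{g_s}$ is s-injective and $\dot g_s\in\ker I_2^{g_s}$, the solenoidal part of $\dot g_s$ vanishes and $\dot g_s=D_{g_s}p_s$ for a $1$-form $p_s$; by the standard solenoidal decomposition and elliptic regularity (here $k_0\gg1$), $p_s$ can be chosen to depend smoothly on $(s,x)$. Because $D_{g_s}p_s$ is proportional to $\mathcal{L}_{p_s^\sharp}g_s$, we may write $\dot g_s=\mathcal{L}_{W_s}g_s$ with $W_s$ smooth in $(s,x)$; integrating the time-dependent vector field $-W_s$ produces a smooth isotopy $(\phi_s)$ with $\phi_0=\id$ and $\partial_s\phi_s=-W_s\circ\phi_s$, so that $\frac{d}{ds}(\phi_s^*g_s)=\phi_s^*\big(\dot g_s+\mathcal{L}_{-W_s}g_s\big)=\phi_s^*\big(\mathcal{L}_{W_s}g_s-\mathcal{L}_{W_s}g_s\big)=0$. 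Therefore $\phi_s^*g_s=g_0$ for all $s$, i.e. $(M,g_0)$ is spectrally rigid; since $\mathfrak{g}_0\in\mathbbm{R}_2$ was arbitrary, so is every class in $\mathbbm{R}_2$.

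The conceptually substantial step --- that isospectrality forces the marked length spectrum to be first-order rigid, including the handling of possible cancellations among equal-length orbits --- is exactly the Guillemin--Kazhdan mechanism and is taken as known; within the present paper the genuine content, generic s-injectivity of $I_2^g$, has already been established in Theorem~\ref{theorem:genericity-metric}. The only point that requires a little care here is regularity bookkeeping: one must make sure the constant $k_0$ from Theorem~\ref{theorem:genericity-metric} is large enough that $p_s$, and hence $W_s$, is at least $C^1$ in $x$ with smooth dependence on $s$, so that its flow $\phi_s$ is a well-defined diffeomorphism lying in $\mathrm{Diff}_0(M)$; this is automatic for $k_0\gg1$.
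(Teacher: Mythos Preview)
Your proposal is correct and follows exactly the approach the paper indicates: the paper does not give a separate proof of this corollary but simply records, just before the statement, that by \cite{Guillemin-Kazhdan-80} the s-injectivity of $I_2^{g_0}$ implies spectral rigidity of $(M,g_0)$, so the corollary is an immediate combination of that fact with Theorem~\ref{theorem:genericity-metric}. You have correctly unpacked the Guillemin--Kazhdan mechanism (wave-trace $\Rightarrow$ constancy of the length spectrum $\Rightarrow I_2^{g_s}\dot g_s=0$ $\Rightarrow \dot g_s=D_{g_s}p_s$ $\Rightarrow$ integrate the Lie-derivative identity), which is precisely what the citation encodes.
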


Once again, we conjecture that the previous corollary should actually hold for \emph{all} Anosov metrics in any dimension.

\subsection{Generic injectivity of the X-ray transform with respect to the connection}

\label{ssection:twisted-case}

We now consider a smooth closed Anosov Riemannian manifold $(M,g)$ and a smooth Hermitian vector bundle $\E \rightarrow M$. We let $\mc{A}_{\E}$ be the space of smooth unitary connections on the bundle $\E$. Contrary to the untwisted case (i.e. $\E = \C \times M$), \eqref{equation:xray} might not define a canonical notion of integration of sections along closed geodesics\footnote{Actually, \eqref{equation:xray} defines an interesting notion if the bundle $\pi^*\E \rightarrow SM$ is transparent, i.e. the holonomy with respect to the connection $\pi^*\nabla^{\E}$ along closed geodesics is trivial, see \cite[Section 7.2]{Cekic-Lefeuvre-21-1} for a discussion.}. It is therefore more convenient to define a similar notion via microlocal analysis.

If $\nabla^{\E} \in \mc{A}_{\E}$ and $\pi : SM \rightarrow M$ denotes the projection, we can consider the pullback bundle $\pi^*\E$ equipped with the pullback connection $\pi^*\nabla^{\E}$ and define the operator $\X := (\pi^*\nabla^{\E})_X$ acting on $C^\infty(SM,\pi^*\E)$. We then consider the meromorphic extension of the resolvent operators $\RR_{\pm}(z) := (\mp\X-z)^{-1} : C^\infty(SM,\pi^*\E) \rightarrow \mc{D}'(SM,\pi^*\E)$ to the whole complex plane $\C$ (here $\mc{D}'$ denotes the space of distributions), see \S\ref{ssection:ruelle} for further details on the Pollicott-Ruelle theory. It is known that there is an open and dense set of connections without resonances at $0$ (see \cite{Cekic-Lefeuvre-20}). When this is the case, we can define the \emph{twisted generalized X-ray transform} as:
\begin{equation}
\label{equation:generalized-xray}
\Pi_m^{\nabla^{\E}} := {\pi_m}_*(\RR_+(0) + \RR_-(0)) \pi_m^*,
\end{equation}
acting on sections of $\otimes^m_S T^*M \otimes \E \rightarrow M$, see \S\ref{sssection:t-g-x-ray} for further details. This operator turns out to be pseudodifferential of order $-1$ (see \cite[Section 7]{Cekic-Lefeuvre-21-1}) and has some very good analytic properties (such as ellipticity), as we shall see.

Symmetric tensors with values in the bundle $\E$ (also called twisted symmetric tensors in the following) also admit a canonical decomposition into a potential part and a solenoidal part, see \S\ref{sssection:tensors-twisted}. The twisted potential tensors are always contained in the kernel of $\Pi_m^{\nabla^{\E}}$ and we say that the operator is s-injective if this is an equality. We will prove the following:

\begin{theorem}
\label{theorem:genericity-connection}
Let $(M,g)$ be a smooth Anosov manifold of dimension $\geq 3$ and let $\pi_{\E} : \E \rightarrow M$ be a smooth Hermitian vector bundle. There exists $k_0 \gg 1$ such that the following holds. For all $m \in \Z_{\geq 0}$, there exists an open dense set $\mc{S}_m \subset \mc{A}_{\E}$ (for the $C^{k_0}$-topology) of unitary connections with s-injective twisted generalized X-ray transform $\Pi_m^{\nabla^{\E}}$. In particular, the space of connections $\mc{S} := \cap_{m \geq 0} \mc{S}_m$ whose twisted generalized X-ray transforms are all s-injective is \emph{residual} in $\mc{A}_{\E}^{k_0}$.
\end{theorem}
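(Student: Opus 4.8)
The plan is to deduce the generic s-injectivity of $\Pi_m^{\nabla^{\E}}$ from the fact that a single globally elliptic, self-adjoint, non-negative pseudodifferential operator built from $\Pi_m^{\nabla^{\E}}$ has trivial kernel, and then to establish genericity of the latter by a perturbation argument on its bottom eigenvalue, following the strategy of \S\ref{ssection:strategy}. The connection case is in fact the cleanest instance of this scheme, since $\mc{A}_{\E}^{k_0}$ is an affine space and perturbing $\nabla^{\E}$ to $\nabla^{\E}+tA$ (with $A$ a $C^{k_0}$ skew-Hermitian $\End(\E)$-valued $1$-form, so that unitarity is preserved) changes $\X=(\pi^*\nabla^{\E})_X$ only by the bounded, zeroth-order, skew-Hermitian term $t\mathbf{A}$, where $\mathbf{A}(x,v):=A_x(v)$; this should be contrasted with the proof of Theorem~\ref{theorem:genericity-metric}, where perturbing the metric deforms $SM$, the flow, and all the analytic objects at once.

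First I would restrict to the open dense subset $\mc{U}\subset\mc{A}_{\E}^{k_0}$ of connections such that $\X$ has no Pollicott--Ruelle resonance at $0$, which is dense by \cite{Cekic-Lefeuvre-20} and the framework of \S\ref{ssection:ruelle}; on $\mc{U}$ the resolvents $\RR_{\pm}(0)$, hence $\Pi_m^{\nabla^{\E}}$, are well defined and depend real-analytically on $\nabla^{\E}$ in the $C^{k_0}$-topology. Recall (see \S\ref{sssection:t-g-x-ray}) that $\Pi_m^{\nabla^{\E}}$ is formally self-adjoint, non-negative, and elliptic on solenoidal tensors; hence the space $\ker\Pi_m^{\nabla^{\E}}\cap\ker(D^{\nabla^{\E}})^*$ of solenoidal tensors lying in the kernel is finite-dimensional, and one realizes it as $\ker P^{\nabla^{\E}}$ where $P^{\nabla^{\E}}:=\Pi_m^{\nabla^{\E}}+D^{\nabla^{\E}}L(D^{\nabla^{\E}})^*$ for a fixed positive elliptic $\Psi$DO $L$ of order $-3$ on twisted $(m-1)$-tensors; then $P^{\nabla^{\E}}$ is globally elliptic of order $-1$, self-adjoint, non-negative, and smooth in $\nabla^{\E}$. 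Consequently $\mc{S}_m=\{\nabla^{\E}\in\mc{U}:\dim\ker P^{\nabla^{\E}}=0\}$ is open, by upper semicontinuity of $\dim\ker$ along this family; since $\mc{A}_{\E}^{k_0}$ is a Baire space and $\mc{U}$ is dense, to conclude it remains to prove $\mc{S}_m$ is dense, and then $\mc{S}=\cap_{m\geq 0}\mc{S}_m$ is residual.

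For density, an elementary argument using upper semicontinuity reduces everything to the following: if $\nabla_0\in\mc{U}$ has $\dim\ker P^{\nabla_0}=\ell\geq 1$, then for every $\eps>0$ there is a skew-Hermitian $A$ with $\|A\|_{C^{k_0}}<\eps$ such that $\dim\ker P^{\nabla_0+A}<\ell$. Fix $0\neq u\in\ker P^{\nabla_0}$; then $u$ is solenoidal, $\Pi_m^{\nabla_0}u=0$, and by the characterization of the kernel of $\RR_+(0)+\RR_-(0)$ there is $w\in C^\infty(SM,\pi^*\E)$ with $\X_0 w=\pi_m^*u$, where $\X_0$ denotes $\X$ for the connection $\nabla_0$. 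Along the ray $t\mapsto\nabla_0+tA$ the bottom eigenvalues $\lambda_1(t),\dots,\lambda_\ell(t)\geq 0$ of $P^{\nabla_0+tA}$ emanating from $0$ are real-analytic in $t$ and vanish at $t=0$ — so that $t=0$ is a minimum and their first derivatives vanish there — and I would then compute their second derivatives by standard second-order perturbation theory, the result being an explicit homogeneous quadratic form $\mathfrak{Q}(A)=\sum_j\ddot\lambda_j(0)\geq 0$ given by a pairing over $SM$ of $\mathbf{A}$, $w$ and the solution $\dot w$ of the linearized transport equation $\X_0\dot w+\mathbf{A}w=\pi_m^*\dot u$. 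The crux is to show that $\mathfrak{Q}$ is not identically zero on skew-Hermitian $\End(\E)$-valued $1$-forms, so that some $\lambda_j$ becomes strictly positive and the kernel drops; this is done by localizing near a point of $SM$, Fourier-decomposing $u$, $w$ and $\mathbf{A}$ in the fibre variable, and reducing the non-vanishing to the non-vanishing of explicit intertwiners coupling spherical harmonics of degrees $k$ and $k\pm 1$ through multiplication by a degree-one symbol, tensored with the fibre of $\E$ — a computation that is carried out uniformly in $m$ and that genuinely uses $\dim M\geq 3$ (the two-dimensional case falling outside the range of this method).

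The main obstacle is precisely this last, algebraic, step: proving that the second-variation obstruction $\mathfrak{Q}$ can be made nonzero amounts to controlling Clebsch--Gordan-type coefficients in the representation theory of $SO(n)$, for all tensor degrees $m$ simultaneously and subject to the constraint that $A$ be skew-Hermitian; this is where the reduction ``analytic injectivity $\Rightarrow$ algebra/representation theory'' advertised in the abstract really bites. A second, more technical point requiring care is to make the whole eigenvalue-perturbation scheme rigorous at finite regularity $C^{k_0}$: one needs the anisotropic Sobolev calculus of \S\ref{ssection:ruelle} to ensure that $\nabla^{\E}\mapsto\RR_{\pm}(0)$, hence $\nabla^{\E}\mapsto P^{\nabla^{\E}}$, is sufficiently smooth with uniform bounds for the perturbation theory and the semicontinuity arguments to go through; this is exactly the microlocal framework that the present paper develops in detail.
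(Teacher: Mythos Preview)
Your overall strategy matches the paper's, but there are two genuine gaps.

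\medskip

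\textbf{1. The auxiliary operator has the wrong order.} You take $P^{\nabla^{\E}}=\Pi_m^{\nabla^{\E}}+D^{\nabla^{\E}}L(D^{\nabla^{\E}})^*$, which is elliptic of order $-1$. Such an operator is compact on $L^2$, so its spectrum accumulates at $0$: the $0$-eigenvalue (if present) is never isolated from the rest of the spectrum. Your perturbation argument then breaks down, because you cannot draw a small contour $\gamma'$ around $0$ enclosing only the $\ell$ eigenvalues ``emanating from $0$'' and no others; equivalently, there is no well-defined finite-rank spectral projector onto a neighborhood of $0$, and the claim that $\lambda_1(t),\dots,\lambda_\ell(t)$ are real-analytic in $t$ is unfounded. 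The paper addresses exactly this point (see \S\ref{ssection:strategy}): it sandwiches $\Pi_m^{\nabla^{\E}}$ with an invertible positive elliptic operator $\Delta_0$ of order $k>\tfrac{1}{2}$ and projects to $\ker D_0^*$, producing
\[
P_\tau=\pi_{\ker D_0^*}\Delta_0\,\Pi_m^\tau\,\Delta_0\,\pi_{\ker D_0^*},
\]
which is elliptic of \emph{positive} order $2k-1>0$ on $\ker D_0^*$. Its spectrum is discrete and accumulates at $+\infty$, so $0$ is an isolated eigenvalue of finite multiplicity and standard Kato perturbation theory applies.

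\medskip

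\textbf{2. The second variation has a competing negative term, and the algebraic step is not carried out.} The abstract perturbation formula (Lemma~\ref{lemma:abstract}) reads
\[
\ddot\lambda=\Tr(\ddot P_0\Pi_0)-2\Tr(\Pi_0\dot P_0 P_0^{-1}\dot P_0\Pi_0),
\]
and the second term is \emph{non-positive}. Showing $\ddot\lambda>0$ for some $\Gamma$ is therefore not a matter of making a single quadratic form nonzero; one must control the difference of two quadratic forms. The paper does this by writing both terms as $\langle A\Gamma,\Gamma\rangle_{L^2}$ for explicit $\Psi$DOs (Lemmas~\ref{lemma:symbol-q} and~\ref{lemma:symbol-l}), then testing against real Gaussian states $\Re(\mathrm{e}_h(x_0,\xi_0))\cdot\Gamma$ (Lemma~\ref{lemma:technical}) to reduce to a pointwise identity of principal symbols. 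That identity, rewritten via the extension operator $E_\xi^m$ of \S\ref{section:spherical}, becomes
\[
\sum_i\|\WW_i\|^2_{L^2(\Ss^{n-1})}=\sum_i\|P_m\WW_i\|^2_{L^2(\Ss^{n-1})},\qquad \WW_i=E^m_{\xi_0}(\pi_1^*\Gamma\cdot\VV_i|_{\Ss^{n-2}_{\xi_0}}),
\]
which is contradicted by exhibiting $\Gamma$ with $\deg(\WW_1)\geq m+1$ (Lemma~\ref{lemma:step2}), using the restriction, extension, and multiplication lemmas for spherical harmonics (Lemmas~\ref{lemma:restriction-1}, \ref{lemma:extension}, \ref{lemma:multiplication-surjective-bundle}). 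Your sketch (``localize, Fourier-decompose, Clebsch--Gordan'') does not engage with the negative term, does not identify the Gaussian-state/principal-symbol reduction, and does not supply the spherical-harmonic input; as written it is a restatement of the problem rather than a proof.
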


We also point out here that a similar result holds for the induced connection on the endomorphism bundle, see Theorem \ref{theorem:genericity-connection-endomorphism}. This plays a crucial role in the study of the \emph{holonomy inverse problem} which consists in reconstructing a connection (up to gauge) from the knowledge of the trace of its holonomy along closed geodesics, see \cite{Cekic-Lefeuvre-21-1} for further details. We believe that a similar result should hold in the boundary case and this is left for future investigation.

\subsection{Strategy of proof, organization of the paper}

\label{ssection:strategy}

The strategy of both Theorems \ref{theorem:genericity-metric} and \ref{theorem:genericity-connection} is the same, although the metric case (Theorem \ref{theorem:genericity-metric}) is more involved due to complicated computations. The idea is also reminiscent of our previous work \cite{Cekic-Lefeuvre-20}, where a notion of \emph{operators of uniform divergence type} was introduced. Let us discuss the metric case. If the X-ray transform $I_m^{g_0}$ is not s-injective for some $m \in \Z_{\geq 0}$ and $g_0 \in \mc{M}_{\mathrm{Anosov}}$ (or $\mc{M}^{k_0}_{\mathrm{Anosov}}$) then, equivalently, the generalized X-ray transform operator $\Pi_m^{g_0}$ is not s-injective. This operator is non-negative, pseudodifferential of order $-1$ and elliptic on $\ker D^*_{g_0}$ (see \S\ref{sssection:g-x-ray}): as a consequence, it has a well-defined spectrum when acting on the Hilbert space
\[
\mc{H} := L^2(M,\otimes^2_S T^*M) \cap \ker D^*_{g_0},
\]
which lies in $\R_{\geq 0}$ and accumulates to $z=0$. The fact that this operator is not s-injective is equivalent to the existence of an eigenvalue at $z=0$. The accumulation of the spectrum at $0$ (due to the compactness of the operator) is a slight difficulty and we first need to multiply $\Pi_m^{g_0}$ by a certain invertible Laplace-type operator $\Delta$ of order $k > 1/2$ to obtain $P_{g_0} = {\pi_{\ker D^*_{g_0}}} \Delta \Pi_m^{g_0} \Delta {\pi_{\ker D^*_{g_0}}}$ which is a pseudodifferential operator of positive order (hence the spectrum accumulates to $+\infty$) with the same kernel as $\Pi_m^{g_0}$. The idea is to show that we can produce arbitrarily small perturbations $g$ of the metric $g_0$ so that $P_{g}$ has no eigenvalue at $0$. 

If $\gamma$ denotes a small circle near $0$ in $\C$ (such that the interior of $\gamma$ only contains the $0$ eigenvalue of $P_{g_0}$) and $\lambda_g$ is the sum of the eigenvalues of $P_g$ inside $\gamma$, then by elementary spectral theory, we know that $C^{k_0} \ni g \mapsto \lambda_g$ is at least $C^3$ near $g_0$ when $k_0 \gg 1$ is large enough. Moreover, due to the non-negativity of the operators $P_g$, we have $\dd \lambda|_{g=g_0} = 0$. We then compute the second variation and show, using an abstract perturbative Lemma \ref{lemma:abstract}, that for all $f \in C^\infty(M,\otimes^2_S T^*M)$:
\begin{equation}
\begin{split}
\label{equation:reduction-intro}
\dd^2 \lambda|_{g=g_0}(f,f) & = \sum_{i=1}^d \Big(\big\langle \dd^2 P|_{g=g_0}(f,f)u_i, u_i\big\rangle_{L^2} \\
& \hspace{1cm}- 2\big\langle P_{g_0}^{-1}\dd P|_{g=g_0}(f) u_i, \dd P|_{g=g_0}(f) u_i \big\rangle_{L^2}\Big),
\end{split}
\end{equation}
where $d$ is the dimension of $\ker P_{g_0}$ and $(u_1,...,u_d)$ is an $L^2$-orthonormal basis of $\ker P_{g_0}$. Writing the perturbation of the metric as $g_t = g_0 + tf$, we have $\lambda_{g_t} = t^2 \dd^2 \lambda_{g=g_0}(f,f) + t^3 \mc{O}(\|f\|^3_{C^{k_0}})$ and it thus suffices to find $f \in C^\infty(M,\otimes^2_S T^*M)$ such that $\dd^2 \lambda_{g=g_0}(f,f) > 0$. This means that one of the $0$-eigenvalues was ejected for a small perturbation of $g_0$, and iterating this process, one obtains a metric $g$ close to $g_0$ such that $P_g$ is injective.

For that, we assume that the contrary holds, namely that the second variation is always zero. We then consider the maps in \eqref{equation:reduction-intro}, namely $f \mapsto \langle \dd^2 P|_{g=g_0}(f,f)u_i, u_i\rangle_{L^2} $ and $f \mapsto \langle  P_{g_0}^{-1}\dd P|_{g=g_0}(f) u_i, \dd P|_{g=g_0}(f) u_i \rangle_{L^2}$. We show that these quantities can all be put in the form $\langle A f, f \rangle_{L^2}$, for some pseudodifferential operator $A$. The important point here is to evaluate the order of $A$ and to compute exactly its principal symbol.

As a consequence, \eqref{equation:reduction-intro} can be put in the form $\langle B f, f \rangle_{L^2} = 0$ for some $\Psi$DO denoted by $B \in \Psi^*(M,\otimes^2_S T^*M \rightarrow \otimes^2_S T^*M)$\footnote{If $\E , \F \rightarrow M$ are two vector bundles over $M$, we denote by $\Psi^*(M,\E\rightarrow \F)$ the standard space of pseudodifferential operators (of all orders) obtained by quantizing symbols in the Kohn-Nirenberg class $\rho=1,\delta=0$, see \cite{Shubin-01} for further details.}. Taking (real-valued) Gaussian states for the perturbations $f$, we then obtain by an elementary lemma that for all $(x_0,\xi_0) \in T^*M \setminus \left\{0\right\}$ and for all $f \in \otimes^2_S T^*_{x_0}M$:
\[
\langle \sigma_{B}(x_0,\xi_0) f, f \rangle_{\otimes^2_S T^*_{x_0}M } = 0,
\]
where $\sigma_{B}(x_0,\xi_0) \in \End(\otimes^2_S T^*_{x_0}M)$ denotes the principal symbol of $B$. In order to conclude, it is therefore sufficient to contradict the previous equality. This problem turns out to be of \emph{purely algebraic nature} and relies on the representation theory of $\mathrm{SO}(n)$ via spherical harmonics, which is treated in the preliminary section \S\ref{section:spherical}. We also point out that the operator $B$ is \emph{a priori} not elliptic (see Remark \ref{remark:elliptic}), which prevents us from proving that, at least locally, there is only a finite-dimensional submanifold of isometry classes with non-injective X-ray transform. 

The main technical ingredients are recalled in \S\ref{section:technical} but we assume that the reader is familiar with the basics of microlocal analysis. The proof of the genericity in the connection case is developed in \S\ref{section:genericity-connection} (with applications to the tensor tomography question for connections in Corollary \ref{corollary:above}) and the metric case is handled in \S\ref{section:genericity-metric}. Applications of our theorems to generic injectivity of the $X$-ray transform on manifolds with boundary can be found in \S \ref{sec:boundary}.

To conclude, let us mention that the approach initiated in \cite{Cekic-Lefeuvre-20} and developed in the present paper to study generic properties of elliptic pseudodifferential operators seems new (here by generic properties we mean for instance the simplicity of the spectrum, non-degeneracy of nodal sets of eigenfunctions, and so on). It is at least very different from the historical approach of Uhlenbeck \cite{Uhlenbeck-76} and others.  \\

\noindent \textbf{Acknowledgement:} M.C. acknowledges the support of an Ambizione grant (project number 201806) from the Swiss National Science Foundation. During the course of writing the paper, he was also supported by the European Research Council (ERC) under the European Union’s Horizon 2020 research and innovation programme (grant agreement No. 725967). The authors are grateful to Colin Guillarmou and Gabriel Paternain for their encouragement. They also warmly thank the anonymous referees for their valuable feedback that improved this manuscript.

\section{Technical preliminaries}

\label{section:technical}

\subsection{Elementary Riemannian geometry}

\label{ssection:elementary}

We refer to \cite{Paternain-99} for further details on the content of this paragraph. Let $(M,g)$ be a smooth Riemannian manifold. We denote by
\[
SM := \left\{ (x,v) \in TM ~|~ g_x(v,v) = 1 \right\} \subset TM,
\]
its unit tangent bundle. Let $(\varphi_t)_{t \in \R}$ be the geodesic flow generated by the vector field $X$. If $\pi : SM \rightarrow M$ denotes the projection, we define $\V := \ker \dd \pi$ to be the \emph{vertical subspace}. Recall the definition of the \emph{connection map} $\mc{K} : T(SM) \rightarrow TM$: consider $(x,v) \in SM, w \in T_{(x,v)}(SM)$ and a curve $(-\eps,\eps) \ni t \mapsto z(t) \in SM$ such that $z(0)=(x,v), \dot{z}(0)=w$; write $z(t)=(x(t),v(t))$; then $\mc{K}_{(x,v)}(w) := \nabla_{\dot{x}(t)} v(t)|_{t=0}$, where $\nabla$ denotes the Levi-Civita connection of $(M, g)$. The Sasaki metric $g_{\mathrm{Sas}}$ on $SM$ is defined as follows:
\[
	g_{\mathrm{Sas}}(w,w') := g(d \pi(w), d\pi(w')) + g(\mc{K}(w),\mc{K}(w')).
\]
Write $\HH := \ker \mc{K} \cap (\R \cdot X)^\bot$ for the \emph{horizontal subspace} and $\HH_{\mathrm{tot}} := \R X \oplus \HH$ for the \emph{total horizontal space}. Then we have the following splitting:
\begin{equation}
\label{equation:splitting}
T(SM) = \R \cdot X \oplus^\bot \V \oplus^\bot \HH,
\end{equation}
where $\oplus^\perp$ denotes orthogonal sum with respect to $g_{\mathrm{Sas}}$. We will denote by $\pi_{\HH}, \pi_{\HH_{\mathrm{tot}}},\pi_{\V}$ the orthogonal projections onto the respective spaces $\HH, \HH_{\mathrm{tot}},\V$.
We denote by $\nabla^{\mathrm{Sas}}$ the gradient of the Sasaki metric. The splitting \eqref{equation:splitting} gives rise to a decomposition of the gradient
\[
\nabla^{\mathrm{Sas}}(f) = Xf \cdot X + \nabla_{\HH} f + \nabla_{\V}f,
\]
where $f \in C^\infty(SM)$, and $\nabla_{\HH} f \in C^\infty(SM,\HH), \nabla_{\V} f \in C^\infty(SM,\V)$.

The geodesic vector field $X$ is a contact vector field with contact $1$-form $\alpha$ such that $\alpha(X) = 1, \imath_X d \alpha = 0$ and $\alpha$ has the expression:
\begin{equation}\label{eq:alphaDef}
\alpha_{(x,v)}(\xi) = g_x(\dd_{(x,v)} \pi(\xi), v), \quad \xi \in T_{(x, v)}SM.
\end{equation}
We have $\ker \alpha = \HH \oplus \V$ and $d \alpha$ is non-degenerate on $\ker \alpha$ (it is a symplectic form). Moreover $d \alpha|_{\HH \times \HH} = d\alpha|_{\V \times \V} = 0$. The space $\ker \alpha$ is equipped with a canonical almost complex structure $J$ defined in the following way: if $Z \in C^\infty(SM,\ker \alpha)$, we write $Z = (Z_\HH, Z_\V)$ to denote its horizontal $Z_{\mathbb{H}} = d\pi(Z)$ and vertical $Z_{\mathbb{V}} = \mc{K}(Z)$ parts; then $JZ = (-Z_\V,Z_\HH)$, see \cite[Section 1.3.2]{Paternain-99}. For such $Z$, the following relation between the contact form and the Sasaki metric holds (see \cite[Proposition 1.24]{Paternain-99}):
\begin{equation}\label{eq:contact-complex-sasaki}
%	\iota_Zd\alpha (\bullet) = -\langle{JZ, \bullet}\rangle_{g_{\mathrm{Sas}}}.
	\iota_Zd\alpha (\bullet) = -g_{\mathrm{Sas}}(JZ, \bullet).
\end{equation}
We will denote by $\divv^{\mathrm{Sas}} := (\nabla^{\mathrm{Sas}})^*$ the divergence operator with respect to the Sasaki metric. When clear from context, we will drop the Sasaki superscript. Then we have:
\begin{equation}\label{eq:divergence}
	\forall Z \in C^\infty(SM, T(SM)), \forall f \in C^\infty(SM), \quad \divv(fZ) = f\divv(Z) - Zf.
\end{equation}
Equivalently, the divergence operator is defined by
\[
\divv(Z) d\vol_{g_{\mathrm{Sas}}} = - \Lie_Z (d\vol_{g_{\mathrm{Sas}}}),
\]
where $\Lie_Z$ is the Lie derivative along $Z$ and $d\vol_{g_{\mathrm{Sas}}}$ is the volume form of the Sasaki metric. With our conventions, the following formal adjoint formula holds:
\begin{equation}\label{eq:vector_field_formal_adjoint}
	\forall Z \in C^\infty(SM, T(SM)),  \quad Z^* = -Z + \divv(Z). 
\end{equation}
The Sasaki volume form satisfies the property that (see \cite[Exercise 1.33]{Paternain-99})
\begin{equation}\label{eq:dvolSasaki}
	d\vol_{g_{\mathrm{Sas}}} = \frac{1}{(n - 1)!} \alpha \wedge (d\alpha)^{n - 1}.
\end{equation}
When the metric $g \in \mc{M}_{\mathrm{Anosov}}$ is Anosov, the following crucial property is known \cite{Klingenberg-74}:
\begin{equation}
\label{equation:transverse}
E_s \cap \V = \left\{0 \right\}, ~~ E_u \cap \V = \left\{ 0 \right\}.
\end{equation}
As we shall see, this property is essential in proving the pseudodifferential nature of certain operators, see \S\ref{section:pdo}. This also implies that the manifold has no conjugate points, namely:
\begin{equation}\label{equation:conjugate}
\forall t \neq 0, \quad \V \cap \dd \varphi_t(\mathbb{R}X \oplus\V) = \left\{ 0 \right\}.	
\end{equation}

\subsection{Symmetric tensors}

This material is standard but it might be hard to locate a complete reference in the literature. Further details can be found in \cite{Dairbekov-Sharafutdinov-10, Gouezel-Lefeuvre-19,Lefeuvre-thesis, Cekic-Lefeuvre-20}.

\subsubsection{Symmetric tensors in Euclidean space}

\label{sssection:tensors-euclidean}

Let $(E,g_E)$ be a $n$-dimensional Euclidean space and $(\e_1, ..., \e_n)$ be an orthonormal basis. Let $\e_i^* := g_E(\e_i, \bullet)$ be the covector given by the musical isomorphism. We denote by $\otimes^m E^*$, the space of $m$-tensors and $\otimes^m_S E^*$ the space of symmetric $m$-tensors, namely $f \in \otimes^m_S E^*$ if and only if
\[
 \forall v_1,...,v_m \in E,\,\, \forall \sigma \in \mathfrak{S}_m, \quad f(v_1, ..., v_m) = f(v_{\sigma(1)}, ..., v_{\sigma(m)}).
\]
Here $\mathfrak{S}_m$ denotes the permutation group of $\{1, \dotso, m\}$. Given $K = (k_1, ..., k_m)$, we write $\e_K^* := \e_{k_1}^* \otimes ... \otimes \e^*_{k_m}$. The metric $g_E$ induces a natural inner product on $\otimes^m E^*$ given by:
\[
\langle \e_K^*, \e_{K'}^* \rangle_{\otimes^m E^*} := \delta_{k_1 k'_1} ... \delta_{k_m k'_m}.
\]
The symmetrization operator $\mc{S} : \otimes^m E^* \rightarrow \otimes^m_S E^*$ defined by:
\[
\mc{S}(\eta_1 \otimes ... \otimes \eta_m) := \dfrac{1}{m!} \sum_{\sigma \in \mathfrak{S}_m} \eta_{\sigma(1)} \otimes ... \otimes \eta_{\sigma(m)},
\]
is the orthogonal projection onto $\otimes^m_S E^*$. We introduce the trace operator $\mc{T} : \otimes^m_S E^* \rightarrow \otimes^{m-2}_S E^*$:
\[
\mc{T} f := \sum_{i=1}^n f(\e_i,\e_i, \bullet, ..., \bullet),
\]
and this is formally taken to be equal to $0$ for $m=0,1$. We say that a symmetric tensor is \emph{trace-free} if its trace vanishes and denote by $\otimes^m_S E^*|_{0-\Tr}$ this subspace. We let $\mc{J} : \otimes^m_S E^* \rightarrow \otimes^{m+2}_S E^*$ be defined by $\mc{J}(f) := \mc{S}(g_E \otimes f)$ which is the adjoint of the trace map $\mc{T}$ (with respect to the standard inner product previously defined on symmetric tensors). The operator $\mc{T} \circ \mc{J}$ is a scalar multiple of the identity on $\otimes^m_S E^*|_{0-\Tr}$. Moreover, the total space of symmetric tensors of degree $m$ breaks up as the orthogonal sum:
\[
\otimes^m_S E^* = \oplus_{k \geq 0} \mc{J}^{2k}\left(\otimes^{m-2k}_S E^*|_{0-\Tr}\right).
\]
We define $\mathbf{P}_m(E)$ to be the set of homogeneous polynomials of degree $m$; $\mathbf{H}_m(E)$, the subset of harmonic homogeneous polynomials of degree $m$. There is a natural identification $\lambda_m : \otimes^m_S E^* \rightarrow \mathbf{P}_m(E)$ given by the evaluation map $\lambda_m f (v) := f(v,...,v)$ (where $v \in E$). Moreover, $\lambda_m : \otimes^m_S E^*|_{0-\Tr} \rightarrow \mathbf{H}_m(E)$ is an isomorphism and
\begin{equation}\label{eq:harmonicpolys}
\lambda_m : \otimes^m_S E^* = \oplus_{k \geq 0} \mc{J}^{2k}\left(\otimes^{m-2k}_S E^*|_{0-\Tr}\right) \rightarrow \oplus_{k \geq 0} |v|^{2k} \mathbf{H}_{m-2k}(E),
\end{equation}
is a graded isomorphism (it maps each summand to each summand isomorphically). We let $\Ss^{n-1} := \left\{ v \in E ~|~ g_E(v,v) = 1 \right\}$ be the unit sphere in $E$ and $r : C^\infty(E) \rightarrow C^\infty(\Ss^{n-1})$ be the operator of restriction. Define $\pi_m^* := r \circ \lambda_m$, ${\pi_m}_*$ its adjoint, and denote by $\Omega_m(E)$ the spherical harmonics of degree $m$, namely
\[
	\Omega_m(E) :=\ker(\Delta_{\Ss^{n-1}} + m(m+n-2))|_{{ C^\infty(\mathbb{S}^{n - 1})}},
\]
where $\Delta_{\Ss^{n-1}}$ denotes the induced Laplacian on the sphere, and $\mathbf{S}_m(E) := \oplus_{k \geq 0} \Omega_{m-2k}(E)$, where $\Omega_{k}(E) = \left\{ 0 \right\}$ for $k < 0$. It is well-known that
\[
\pi_m^*  : \otimes^m_S E^* = \oplus_{k \geq 0} \mc{J}^{2k}\left(\otimes^{m-2k}_S E^*|_{0-\Tr}\right) \rightarrow \oplus_{k \geq 0} \Omega_{m-2k}(E) = \mathbf{S}_m(E),
\]
is a graded isomorphism.

\subsubsection{Symmetric tensors on Riemannian manifolds}

\label{sssection:tensors-riemannian}

We now consider a Riemannian manifold $(M,g)$. Given $f \in C^\infty(M,\otimes^m_S T^*M)$, we define its symmetric derivative
\[
Df := \mc{S} \circ \nabla f \in C^\infty(M,\otimes^{m+1}_S T^*M),
\]
where $\nabla$ is the Levi-Civita connection. The operator $D$ is an elliptic differential operator of degree $1$ and is of gradient type, i.e. its principal symbol is injective (see \cite[Section 3]{Cekic-Lefeuvre-20} for instance). When the geodesic flow is ergodic, its kernel is given by $\ker(D) = \left\{ 0 \right\}$ for $m$ odd, and $\ker(D) = \R \cdot \mc{S}^{\frac{m}{2}} (g)$ for $m$ even. Its adjoint is denoted by $D^*f = - \Tr(\nabla f) \in C^\infty(M,\otimes^{m-1}_S T^*M)$ and is of divergence type.

Any symmetric tensor $f \in C^\infty(M,\otimes^m_S T^*M)$ can be uniquely decomposed as
\[
f = Dp + h,
\]
where $h \in C^\infty(M,\otimes^m_S T^*M) \cap \ker D^*$ is the solenoidal part and we have $p \in C^\infty(M,\otimes^{m-1}_S T^*M)$ (and $Dp$ is the potential part). We denote by $\pi_{\ran D}$ the $L^2$-orthogonal projection onto the first factor and by $\pi_{\ker D^*} = \mathbbm{1} - \pi_{\ran D}$ the $L^2$-orthogonal projection onto the second factor. Both are pseudodifferential operators of order $0$, namely in $\Psi^0(M,\otimes^m_S T^*M \rightarrow \otimes^m_S T^*M)$. The latter is given by the expression:
\begin{equation}
\label{equation:projection-pi}
\pi_{\ker D^*} = \mathbbm{1} - D(D^*D)^{-1}D^*.
\end{equation}
We use here the convention introduced in \S\ref{ssection:convention} for $(D^*D)^{-1}$ as $D^*D$ has some non-trivial kernel for $m$ even.
The principal symbol of $D$ is given by $\sigma_D(x,\xi) = i j_\xi$ where $j_\xi = i \mc{S}(\xi \otimes \bullet)$ and $i$ is the imaginary unit satisfying $i^2 = -1$, whereas that of $D^*$ is given by $\sigma_{D^*}(x,\xi) = - i \imath_{\xi^\sharp}$, where $\imath_w$ is the contraction by $w$. The space $\otimes^m_S T^*_xM$ breaks up as the orthogonal sum:
\[
\otimes^m_S T^*_xM = \ran(j_\xi) \oplus^\bot \ker \imath_{\xi^\sharp}.
\]
The principal symbol of $\pi_{\ker D^*}$ is then given by the orthogonal projection onto the second summand, namely 
\begin{equation}\label{eq:sigma-proj-ker}
	\sigma_{\pi_{\ker D^*}}(x,\xi) = \pi_{\ker \imath_{\xi^\sharp}}.
\end{equation}
We have the important relation, proved originally in \cite[Proposition 3.1]{Guillemin-Kazhdan-80-2}:
\begin{equation}
\label{equation:x}
	X \pi_m^* = \pi_{m+1}^* D.
\end{equation}
The spherical harmonics introduced previously in \S\ref{sssection:tensors-euclidean} allow to decompose smooth functions $f \in C^\infty(SM)$ as $f= \sum_{m \geq 0} f_m$, where $f_m \in C^\infty(M,\Omega_m)$ is the projection onto spherical harmonics of degree $m$ and
\[
	\Omega_m(x) := \ker\left(\Delta_{\V}(x) + m(m+n-2)\right)|_{{ C^\infty(S_xM)}},
\]
where $\Delta_{\V}(x) : C^\infty(S_xM) \rightarrow C^\infty(S_xM)$ denotes the vertical Laplacian acting on functions on $S_xM$ (i.e. the round Laplacian on the sphere). We call \emph{degree} of $f$ (denoted by $\mathrm{deg}(f)$) the highest non-zero spherical harmonic in this expansion (which can take value $+ \infty$) and say that $f$ has finite Fourier content if its degree is finite. We will say that a function is \emph{even} (resp. \emph{odd}) if it contains only even (resp. odd) spherical harmonics in its expansion, i.e. $f_{2k+1} = 0$ for all $k \in \Z_{\geq 0}$ (resp. $f_{2k}=0$ for all $k \in \Z_{\geq 0}$).
The operator $X$ acts on spherical harmonics as:
\[
X : C^\infty(M,\Omega_m) \rightarrow C^\infty(M,\Omega_{m-1}) \oplus C^\infty(M,\Omega_{m+1}),
\]
and therefore splits into $X = X_- + X_+$, where $X_\pm$ denotes the projection onto the $\Omega_{m\pm 1}$ factor. The operator $X_+$ is of gradient type and thus, for each $m \in \N$, $\ker X_+|_{C^\infty(M,\Omega_m)}$ is finite-dimensional, and we call elements in this kernel \emph{Conformal Killing Tensors} (CKTs).

Eventually, we will also use another lift of symmetric tensors to the unit tangent bundle via the following map, which we call the \emph{Sasaki lift}:
\begin{equation}
\label{equation:lift-sasaki}
\begin{split}
&\pi_{m,\mathrm{Sas}}^* : C^\infty(M,\otimes^m_S T^*M) \to C^\infty(SM, \otimes^m_S T^*(SM)),\\
&  \pi_{m,\mathrm{Sas}}^*f(\xi_1,...,\xi_m) := f(\dd \pi(\xi_1), ..., \dd \pi(\xi_m)).
\end{split}
\end{equation}
We note that the pullback $\pi_{m, \mathrm{Sas}}^*$ is different from $\pi_m^*$ introduced in \S \ref{sssection:tensors-euclidean}.

\subsubsection{Twisted symmetric tensors}

\label{sssection:tensors-twisted}

The previous discussion can be generalized in order to include a twist by a vector bundle $\E \rightarrow M$, see \cite[Section 2.3]{Cekic-Lefeuvre-20} for further details. We let $(e_1,...,e_r)$ be a local orthonormal frame of $\E$ (defined around a fixed point $x_0 \in M$). The smooth sections of the pullback bundle $\pi^*\E \rightarrow SM$ can also be decomposed into spherical harmonics, namely $f \in C^\infty(SM,\pi^*\E)$ can be written as $f = \sum_{m \geq 0} f_m$, where $f_m \in C^\infty(M,\Omega_m \otimes \E)$ and a similar notion of degree is defined, as well as the evenness/oddness of a section.

If $\nabla^{\E}$ is a unitary connection on $\E$ given in a local patch of coordinates $U \ni x_0$ by $\nabla^{\E} = d + \Gamma$, where $\Gamma$ is a connection $1$-form with values in skew-Hermitian endomorphisms, $(\Gamma(\partial_{x_i}))_{k \ell} =\Gamma_{i{\ell}}^k$, and we consider a twisted symmetric tensor $f \in C^\infty(M,\otimes^m_S T^*M \otimes \E)$, which we write locally as $f = \sum_{k=1}^r u_k \otimes e_k$ with $u_k \in C^\infty(U,\otimes^m_S T^*U)$, we can define its symmetric derivative
\begin{equation}
\label{equation:de}
D_{\E}\left(\sum_{k=1}^r u_k \otimes e_k\right) = \sum_{k=1}^r \left( D u_k + \sum_{\ell=1}^r \sum_{i=1}^n \Gamma_{i{\ell}}^k \mc{S}(u_{\ell} \otimes dx_i)\right) \otimes e_k.
\end{equation}
As in \S\ref{sssection:tensors-riemannian}, twisted symmetric tensors can be uniquely decomposed as $f = D_{\E}p + h$, where $p \in C^\infty(M,\otimes^{m-1}_S T^*M \otimes \E)$ and $h \in C^\infty(M,\otimes^m_S T^*M \otimes \E)$ is solenoidal, i.e. in $\ker (D_{\E})^*$.

The pullback operators extend as maps
\[
\pi_m^* : C^\infty(M,\otimes^m_S T^*M \otimes \E) \rightarrow \oplus_{k \geq 0} C^\infty(M,\Omega_{m-2k} \otimes \E).
\]
More precisely, in the local orthonormal frame $(e_i)_{i = 1}^r$, for a section $f$ written locally as above as $f = \sum_{k = 1}^r u_k \otimes e_k$, we have $\pi_m^*f(x, v) := \sum_{k = 1}^r \pi_m^*u_k(x, v) e_k(x) \in \E(x)$, where on the right hand side $\pi_m^*$ acts on symmetric tensors as introduced in \S \ref{sssection:tensors-euclidean}. For simplicity, we keep the same notation for both twisted and non-twisted pullbacks and note that the two agree in the case $\E = \M \times \mathbb{C}$. The bundle $\pi^*\E \rightarrow SM$ is naturally equipped with the pullback connection $\pi^*\nabla^{\E}$ and we set $\X := (\pi^*\nabla^{\E})_X$ which is a differential operator of order $1$ acting on $C^\infty(SM,\pi^*\E)$. We still have the relation:
\begin{equation}
\label{equation:xe}
\X \pi_m^* = \pi_{m+1}^* D_{\E},
\end{equation}
and $\X$ decomposes as:
\begin{equation}\label{eq:mapping-property-X}
\X : C^\infty(M,\Omega_m \otimes \E) \rightarrow C^\infty(M,\Omega_{m-1} \otimes \E) \oplus C^\infty(M,\Omega_{m+1} \otimes \E),
\end{equation}
that is $\X$ splits as $\X = \X_- + \X_+$, where $\X_+$ is of gradient type. Elements in $\ker \X_+$ are called \emph{twisted Conformal Killing Tensors}. Non-existence of twisted CKTs is a generic property of connections as proved in \cite{Cekic-Lefeuvre-20}.

\subsection{Pollicott-Ruelle theory}

\label{ssection:ruelle}

The theory of Pollicott-Ruelle resonances which is briefly recalled below has been widely studied in the literature, see \cite{Liverani-04, Gouezel-Liverani-06,Butterley-Liverani-07,Faure-Roy-Sjostrand-08,Faure-Sjostrand-11,Faure-Tsuji-13,Dyatlov-Zworski-16}. We also refer to \cite{Guillarmou-17-1,Cekic-Lefeuvre-20,Lefeuvre-thesis} for further details on these paragraphs. In what follows, we will make the running assumption that $(M, g)$ is a closed Anosov manifold.

\subsubsection{Meromorphic extension of the resolvents}

Let $\E \rightarrow M$ be a Hermitian bundle over $M$ equipped with a unitary connection $\nabla^{\E}$. We consider the pullback bundle $\pi^*\E \rightarrow SM$ equipped with the pullback connection $\pi^*\nabla^{\E}$ and set $\X := (\pi^*\nabla^{\E})_X$. Defining the domain
\[
\mc{D}(\X) := \left\{ u \in L^2(SM,\pi^*\E) ~|~ \X u \in L^2(SM,\pi^*\E) \right\}\footnote{The scalar product on $L^2$ is given by $\langle f, f' \rangle_{L^2} := \int_{SM} h\left( f(x,v), f'(x,v)\right)_{\E_x} \dd \mu(x,v)$, where $h$ denotes the Hermitian metric on $\E$ and $\mu$ is the Liouville measure on $SM$.},
\]
the differential operator $\X$ (of order $1$) is skew-adjoint as an unbounded operator with dense domain $\mc{D}(X)$ and has absolutely continuous spectrum on $i\R$ (with possibly embedded eigenvalues).

We introduce the positive (resp. negative) $\RR_+(z)$ (resp. $\RR_-(z)$) resolvents, defined for for $\Re(z) > 0$ by:
\[
\RR_{\pm}(z) := (\mp \X-z)^{-1} = - \int_0^{\pm \infty} e^{\mp tz} e^{-t \X} \dd t.
\]
Note that given $(x, v) \in SM$ and $f \in C^\infty(SM, \pi^*\E)$, we have that $e^{-t \X}f (x, v) \in \E_{x}$ is the parallel transport of $f(\varphi_{-t}(x, v))$ along the flowline $(\varphi_s(z))_{s \in [-t,0]}$ with respect to the connection $\pi^* \nabla^{\E}$.

These resolvents initially defined on $\left\{\Re(z) > 0 \right\}$ can be meromorphically extended to $\C$ by making $\X$ act on \emph{anisotropic Sobolev spaces}. More precisely, there exists a scale of Hilbert spaces $\mc{H}_\pm^s$ (where $s > 0$) and a constant $c > 0$ such that
\[
\left\{ \Re(z) > -cs \right\} \ni z \mapsto \RR_\pm(z) \in \mc{H}^s_\pm,
\]
are meromorphic families of operators with poles of finite rank. These spaces are defined so that $f \in \mc{H}^s_+$ (resp. $\mc{H}^s_-$) implies that $f$ is microlocally in $H^s$ near $E_s^*$ (resp. $H^{s}$ near $E_u^*$) and microlocally $H^{-s}$ near $E_u^*$ (resp. $H^{-s}$ near $E_s^*$). The poles are called the \emph{Pollicott-Ruelle resonances}: they are intrinsic to the operators $\X$ and do not depend on any choices made in the construction of the spaces. Moreover, these operators are holomorphic in $\left\{\Re(z) > 0\right\}$ and thus all the resonances are contained in $\left\{\Re(z) \leq 0\right\}$.

\subsubsection{Generalized X-ray transform}

\label{sssection:g-x-ray}

When $\E = \C \times M$, $\X = X$ is nothing but the vector field and we use the notations $R_\pm(z)$ for the resolvents. In this case, there is a single resonance on $i\R$ located at $z=0$. It is a pole of order $1$ and the resolvents have the expansion near $z=0$:
\[
R_{\pm}(z) = - \dfrac{\Pi_\pm}{z} - R_{\pm,0} - R_{\pm, 1}z + \mc{O}(z^2),
\]
for some operators $R_{\pm, 1}, R_{\pm,0} : \mc{H}^s_\pm \rightarrow \mc{H}^s_\pm$, bounded for any $s > 0$. Moreover, the spectral projection at $0$ (i.e. the residue at $z=0$) is 
\begin{equation}\label{eq:Pi_pm}
	\Pi_+ = \Pi_- = \langle \bullet, \mu \rangle \mathbf{1},
\end{equation} 
where $\mu$ is the normalized Liouville measure i.e. so that $\langle \mathbf{1}, \mu \rangle = 1$, see \cite{Guillarmou-17-1} for instance. We record a few useful relations involving $\Pi_\pm$ and $R_{\pm, 0}$:
\begin{align}\label{eq:resolvent-identities}
\begin{split}
	XR_{+, 0} &= R_{+, 0} X = \mathbbm{1} - \Pi_+,\quad XR_{-, 0} = R_{-, 0} X = -\mathbbm{1} + \Pi_-,\quad R_{+, 0}^* = R_{-, 0},\\
	XR_{+, 1} &= R_{+, 1}X = -R_{+, 0}, \quad\,\,\, XR_{-, 1} = R_{-, 1}X = R_{-, 0}.
\end{split}
\end{align}
We introduce the operator
\begin{equation}
\label{equation:capital-i}
\mc{I} := R_{+,0} + R_{-,0} +  \Pi_+
\end{equation}
and define the \emph{generalized X-ray transform} by:
\begin{equation}
\label{equation:g-x-ray}
	\Pi_m^g := {\pi_m}_* \mc{I} \pi_m^*,
\end{equation}
which is an operator acting on sections of the bundle $\otimes^m_S T^*M \rightarrow M$ of symmetric tensors. When clear from context, for the simplicity of notation we will drop the superscript $g$ in $\Pi_m^g$. We say that $\Pi_m$ is $\mathrm{s}$-injective if $\Pi_m$ is injective when restricted to $\ker D^*$. The following provides a relation with the X-ray transform $I_m$:
\begin{lemma}
\label{lemma:relation}
Assume that $(M, g)$ is a closed Anosov manifold. Let $u \in C^\infty(M,\otimes^m_S T^*M)$. Then $\Pi_m u = 0$ if and only if there exists $\VV \in C^\infty(SM)$ such that $\pi_m^* u = X \VV$. Moreover, $I_m$ is $\mathrm{s}$-injective if and only if $\Pi_m$ is $\mathrm{s}$-injective.
\end{lemma}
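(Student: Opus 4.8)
The plan is to establish the two assertions in turn, the first being the technical heart and the second a short dynamical systems argument. For the first equivalence, I would unwind the definitions. Suppose $\Pi_m u = 0$, i.e. ${\pi_m}_* \mc{I} \pi_m^* u = 0$. Set $f := \pi_m^* u \in C^\infty(SM)$ and $w := \mc{I} f = (R_{+,0} + R_{-,0} + \Pi_+) f$. Applying $X$ to $w$ and using the resolvent identities \eqref{eq:resolvent-identities} together with $X\Pi_+ = 0$ (since $\Pi_+ = \langle \bullet, \mu\rangle \mathbf{1}$ and $X\mathbf{1} = 0$), I get $Xw = (\mathbbm{1} - \Pi_+) f + (-\mathbbm{1} + \Pi_-) f = (\Pi_- - \Pi_+) f = 0$ by \eqref{eq:Pi_pm}. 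So $w$ is always flow-invariant, but a priori only an element of the anisotropic space $\mc{H}^s_+ \cap \mc{H}^s_-$; one must upgrade its regularity. The key point: $w \in \mc{H}^s_+ \cap \mc{H}^s_-$ is microlocally $H^s$ near both $E_s^*$ and $E_u^*$, hence (since the wavefront set of a flow-invariant distribution is contained in $E_s^* \cup E_u^*$, or rather in $E_s^* \cap E_u^* = \mathbb{R}\alpha$ after using $Xw=0$) one concludes $w$ is actually smooth — this is the standard elliptic/propagation-of-singularities argument for Anosov flows, and I would cite \cite{Guillarmou-17-1}. Now decompose $f = \pi_m^* u$ into its solenoidal-lifted components; the hypothesis ${\pi_m}_* w = 0$ says $w$ has no component of degree $\leq m$ in an appropriate sense, but the cleanest route is: $\langle f, w\rangle_{L^2} = \langle \pi_m^* u, w \rangle = \langle u, {\pi_m}_* w\rangle = 0$ is not quite what we want; instead I would argue that $w$ flow-invariant and smooth with $Xw = f$... wait — rather, set $\VV := R_{+,0} f$. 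Then $X\VV = (\mathbbm{1}-\Pi_+)f = f - \langle f,\mu\rangle\mathbf{1}$. The term $\langle f, \mu\rangle = \langle \pi_m^* u, \mu\rangle = c\, {\pi_m}_* \mathbf{1}$ paired appropriately, and one checks $\langle f,\mu\rangle = 0$ precisely when $m \geq 1$ by parity/degree, while for $m=0$ the statement $\Pi_0 u = 0$ forces $\int u = 0$ directly. So after absorbing this constant, $\pi_m^* u = X\VV$ with $\VV$ smooth (again by the regularity argument applied to $\VV = R_{+,0}f$, using $\WF(R_{+,0}f)$ control). Conversely, if $\pi_m^* u = X\VV$ for some smooth $\VV$, then applying $\mc{I}$ and using \eqref{eq:resolvent-identities} gives $\mc{I} X \VV = (R_{+,0} + R_{-,0} + \Pi_+)X\VV = (\mathbbm{1}-\Pi_+)\VV + (-\mathbbm{1}+\Pi_-)\VV + 0 = 0$; pushing forward by ${\pi_m}_*$ yields $\Pi_m u = 0$.

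For the moreover statement, I would argue both directions using the relation between $I_m$ and its kernel. Recall $\ker I_m|_{C^\alpha} = \{ Xu : u \in C^\alpha(SM),\ Xu \in C^\alpha(SM)\}$, the Livšic-type characterization of coboundaries for Anosov flows. Suppose $I_m$ is s-injective and let $u \in \ker D^* \cap \ker \Pi_m$. By the first part there is $\VV \in C^\infty(SM)$ with $\pi_m^* u = X\VV$, so $\pi_m^* u \in \ker I_m$. Decompose $u = Dp + h$ — but $u$ is already solenoidal, so $u = h$. Then $\pi_m^* u \in \ker I_m$ means, by the identification with coboundaries and the fundamental relation $X\pi_m^* = \pi_{m+1}^* D$, that $u$ is a potential tensor; combined with $D^* u = 0$ and s-injectivity this forces $u = 0$ (for $m$ even one must also quotient out the finite-dimensional space $\mathbb{R}\,\mc{S}^{m/2}(g)$, which lies in the kernel anyway). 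Conversely, if $\Pi_m$ is s-injective and $f \in C^\infty(M,\otimes^m_S T^*M)$ has $I_m^g f = 0$, write $f = Dp + h$; then $\pi_m^* h = \pi_m^* f - X\pi_{m-1}^* p$, so $\pi_m^* h$ is a smooth coboundary minus a coboundary, hence $\pi_m^* h = X\VV$ for a smooth $\VV$ (smoothness of the primitive of a smooth coboundary is again the Anosov regularity result), whence $\Pi_m h = 0$ by the first part, and s-injectivity of $\Pi_m$ gives $h = 0$, i.e. $f = Dp$ is potential.

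The main obstacle is the \emph{elliptic regularity step}: showing that a flow-invariant element of the anisotropic Sobolev space $\mc{H}^s_\pm$ (or more precisely $R_{\pm,0}f$ applied to a smooth $f$, together with the flow-invariant $\mc{I} f$) is in fact $C^\infty$. This requires the microlocal characterization of the anisotropic spaces — that they are $H^s$ near $E_s^*$ and $E_u^*$ respectively with the opposite regularity at the conjugate bundle — combined with propagation of singularities along the flow to push regularity from the good directions everywhere, using that $\WF$ of a flow-invariant distribution is flow-invariant and that the only non-wandering part of $T^*(SM)\setminus 0$ under the lifted flow sits over $E_s^* \cup E_u^*$. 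I would not reprove this but carefully cite \cite{Guillarmou-17-1} (and \cite{Dyatlov-Zworski-16}) for the precise statement, and devote the bulk of the written proof to bookkeeping the constant term $\langle f, \mu\rangle$ and the even-degree Killing tensor ambiguity so that the equivalences are stated cleanly.
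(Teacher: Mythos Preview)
Your converse direction ($\pi_m^* u = X\VV \Rightarrow \Pi_m u = 0$) and the ``moreover'' part are correct and match the paper's proof, which simply cites \cite[Theorem 1]{Guillarmou-17-1} and the smooth Liv\v{s}ic theorem.

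The forward direction has a genuine gap. You claim $w = \mc{I} f \in \mc{H}^s_+ \cap \mc{H}^s_-$, but this is false: for smooth $f$ one has $R_{+,0}f \in \mc{H}^s_+$ and $R_{-,0}f \in \mc{H}^s_-$ only, so $w$ lies in $\mc{H}^s_+ + \mc{H}^s_-$, not the intersection. Consequently $w$ need not be smooth or constant: for $f$ not a coboundary, $\mc{I} f$ is a genuinely singular flow-invariant distribution with wavefront set in $E_u^* \cup E_s^*$, and since both $E_u^*$ and $E_s^*$ are themselves flow-invariant, propagation of singularities buys nothing further. Your alternative route via $\VV = R_{+,0} f$ has the same defect: $\WF(R_{+,0}f) \subset E_u^*$ is the sharpest wavefront bound available and is nonempty in general. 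Notice that, as written, your argument never uses the hypothesis $\Pi_m u = 0$ in any essential way --- it would ``prove'' that every mean-zero smooth function on $SM$ is a smooth coboundary, which is false.

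The missing idea is \emph{positivity}. For $f \in C^\infty(SM)$ one has $\langle \mc{I} f, f\rangle_{L^2} \geq 0$: up to the contribution of $\Pi_+$, this quantity is the dynamical variance $\lim_{T\to\infty} T^{-1}\bigl\|\int_0^T f\circ\varphi_t\,dt\bigr\|_{L^2}^2$, and it vanishes iff $f$ is a smooth coboundary. Then $\Pi_m u = 0$ gives $0 = \langle \Pi_m u, u\rangle = \langle \mc{I}(\pi_m^* u), \pi_m^* u\rangle$, and the equality case yields $\pi_m^* u = X\VV$. This positivity/variance characterization is the actual content of \cite[Theorem 1]{Guillarmou-17-1}; the microlocal machinery there makes the variance formula rigorous and upgrades the coboundary to $C^\infty$, but it is not a standalone propagation-of-singularities statement of the kind you describe.
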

\begin{proof}
The first part follows from \cite[Theorem 1.1]{Guillarmou-17-1}, and the following observation. Note that our convention is different from \cite{Guillarmou-17-1} who writes $\Pi := R_{+, 0} + R_{-, 0}$ and uses this operator instead of $\mc{I}$ in definition \eqref{equation:g-x-ray}; however, both $\Pi$ and $\Pi_+$ are non-negative operators (see e.g. \cite[Lemma 5.1]{Cekic-Lefeuvre-20}) and so $\Pi_mu = 0$ is equivalent to $\Pi \pi_{m}^*u = 0$ and $\Pi_+ \pi_m^*u = 0$. By \eqref{eq:Pi_pm}, the latter condition gives that the average of $\pi_m^*u$ is zero, so \cite[Theorem 1.1]{Guillarmou-17-1} indeed applies.

The second part is then the consequence of the smooth Liv\v{s}ic theorem (see e.g. \cite[Lemma 2.5.4]{Lefeuvre-thesis}).
\end{proof}

By the preceding Lemma, the study of $I_m$ is reduced to the study of $\Pi_m$ and as we shall see in \S \ref{section:pdo}, $\Pi_m$ enjoys very good analytic properties.  In other words, in order to prove our main Theorem \ref{theorem:genericity-metric}, it suffices to show that $\Pi_m^g$ is generically $\mathrm{s}$-injective with respect to the metric.

\subsubsection{Twisted generalized X-ray transform}

\label{sssection:t-g-x-ray}

We now go back to the case of a Hermitian vector bundle $\E \rightarrow M$. For the sake of simplicity, we assume that $\ker \X = \left\{ 0 \right\}$ but the discussion could be generalized, see the footnote at the beginning of \S\ref{section:genericity-connection}. We introduce:
\[
\mc{I}_{\nabla^{\E}} := \RR_+ + \RR_-,
\]
where $\RR_+ := -\RR_+(z=0)$ and $\RR_- := -\RR_-(z=0)$, and define the \emph{twisted generalized X-ray transform} by:
\begin{equation}
\label{equation:t-g-x-ray}
\Pi^{\nabla^{\E}}_m := {\pi_m}_* \mc{I}_{\nabla^{\E}} \pi_m^*
\end{equation}
Similarly to the first part of Lemma \ref{lemma:relation}, the following was shown in \cite[Lemma 5.1]{Cekic-Lefeuvre-20}:
\begin{lemma}
\label{lemma:relation-twisted}
Let $u \in C^\infty(M,\otimes^m_S T^*M \otimes \E)$. Then $\Pi_m^{\nabla^{\E}} u = 0$ if and only if there exists $\VV \in C^\infty(SM, \pi^*\E)$ such that $\pi_m^* u = \X \VV$.
\end{lemma}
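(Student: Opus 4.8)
The plan is to mimic the untwisted argument behind the first part of Lemma \ref{lemma:relation}, using the resolvent identities \eqref{eq:resolvent-identities} adapted to the twisted operator $\X$ together with the standing assumption $\ker \X = \{0\}$. First I would record the twisted analogues of the resolvent relations: since $\ker\X = \{0\}$, the operators $\RR_\pm = -\RR_\pm(0)$ are holomorphic at $z=0$ and satisfy $\X\RR_+ = \RR_+\X = \mathbbm{1}$ and $-\X\RR_- = -\RR_-\X = \mathbbm{1}$ on the appropriate anisotropic spaces, together with $\RR_+^* = \RR_-$ coming from skew-adjointness of $\X$ on $L^2(SM,\pi^*\E)$. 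These follow from meromorphic continuation exactly as in \S\ref{ssection:ruelle}.

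For the ``if'' direction, suppose $\pi_m^* u = \X \VV$ for some $\VV \in C^\infty(SM,\pi^*\E)$. Then $\mc{I}_{\nabla^{\E}}\pi_m^* u = (\RR_+ + \RR_-)\X\VV = \VV - \VV = 0$, hence $\Pi_m^{\nabla^{\E}} u = {\pi_m}_* \mc{I}_{\nabla^{\E}} \pi_m^* u = 0$. For the ``only if'' direction, suppose $\Pi_m^{\nabla^{\E}} u = 0$, i.e. ${\pi_m}_* (\RR_+ + \RR_-) \pi_m^* u = 0$. Set $w := \RR_+ \pi_m^* u$ and $w' := -\RR_- \pi_m^* u$, so that $\X w = \pi_m^* u = \X w'$ by the resolvent identities, and therefore $\X(w - w') = 0$. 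The vanishing of $\Pi_m^{\nabla^{\E}}u$ says precisely that $w - w' = \RR_+\pi_m^* u + \RR_- \pi_m^* u$ is orthogonal — after applying ${\pi_m}_*$ — but one wants more: one needs $w - w'$ to actually be a smooth section so that $\ker\X = \{0\}$ forces $w = w'$ and then $\VV := w = w'$ does the job with $\X\VV = \pi_m^* u$. The regularity of $w-w'$ is obtained from the standard wavefront set argument: $w$ is microlocally smooth near $E_s^*$ and $w'$ near $E_u^*$ (by the mapping properties of $\RR_\pm$ on anisotropic spaces), while the pairing condition ${\pi_m}_*(w-w') $ — in fact the full identity $\langle (\RR_+ + \RR_-)\pi_m^* u, \pi_m^* \bar u\rangle = 0$ combined with self-adjointness $\RR_+^* = \RR_-$ — yields that $w - w'$ is a distribution annihilated by $\X$ with wavefront set contained in $E_u^* \cup E_s^*$; by propagation of singularities along the flow (the flow is Anosov) and $\ker\X=\{0\}$ acting on such anisotropic spaces, $w-w' = 0$, hence it is smooth. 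I would then invoke elliptic regularity of $\X + $ (transversal Laplacian), or directly the smoothness of $\pi_m^* u$ and the relation $\X w = \pi_m^* u$, to conclude $\VV = w \in C^\infty(SM,\pi^*\E)$.

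The main obstacle is the regularity step: showing that the a priori only distributional section $w - w'$ (living in the intersection of two different anisotropic Sobolev spaces) is genuinely smooth, so that the injectivity hypothesis $\ker\X = \{0\}$ can be applied. This is where one uses that the only resonance-type obstruction has been removed by assumption, together with the microlocal characterization of the anisotropic spaces $\mc{H}^s_\pm$ near $E_s^*$ and $E_u^*$ and a propagation-of-singularities argument; this is exactly the content of \cite[Lemma 5.1]{Cekic-Lefeuvre-20}, so in the write-up I would simply cite that reference for the twisted case and present the short algebraic manipulation above as the conceptual skeleton.
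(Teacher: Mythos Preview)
Your ``if'' direction is fine, and you do ultimately cite \cite[Lemma 5.1]{Cekic-Lefeuvre-20}, which is exactly what the paper does (the paper gives no proof beyond that citation). However, the ``conceptual skeleton'' you present for the ``only if'' direction contains a genuine gap and is misleading as written.

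You set $w := \RR_+ \pi_m^* u$, $w' := -\RR_- \pi_m^* u$, observe $\X(w-w') = 0$ and $\WF(w-w') \subset E_u^* \cup E_s^*$, and then claim that the assumption $\ker\X = \{0\}$ forces $w-w'=0$. But the absence of a resonance at $0$ only says that $\ker\X$ is trivial on $\mc{H}^s_+$ (equivalently, on distributions with wavefront set in $E_u^*$ alone), not on distributions with wavefront set in $E_u^* \cup E_s^*$. Indeed, your argument never uses the hypothesis $\Pi_m^{\nabla^{\E}} u = 0$: the properties $\X(w-w')=0$ and $\WF(w-w')\subset E_u^*\cup E_s^*$ hold for \emph{every} smooth $f = \pi_m^* u$, so if your deduction were valid it would give $(\RR_+ + \RR_-)f = 0$ for all smooth $f$, i.e.\ $\mc{I}_{\nabla^\E} \equiv 0$, which is absurd.

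The actual mechanism in \cite{Cekic-Lefeuvre-20} (and in \cite{Guillarmou-17-1} for the untwisted case) goes through positivity. From $\Pi_m^{\nabla^\E} u = 0$ one first extracts $\langle \mc{I}_{\nabla^\E}\pi_m^* u, \pi_m^* u\rangle_{L^2} = 0$. For $s>0$ the resolvent identity and skew-adjointness of $\X$ give $-\langle (\RR_+(s)+\RR_-(s))f,f\rangle = 2s\|\RR_+(s)f\|_{L^2}^2 \ge 0$, so $\mc{I}_{\nabla^\E}\ge 0$ as a quadratic form; equality at $f=\pi_m^* u$ is then used (together with a limiting argument as $s\to 0^+$) to show that $w = \RR_+\pi_m^* u$ actually lies in $L^2$, and hence---since $\WF(w)\subset E_u^*$ and $\X w$ is smooth---that $w$ is smooth. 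That is where the hypothesis enters, and that is the step your sketch is missing.
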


\subsection{Properties of the resolvent under perturbations}
\label{ssection:resolvent-perturbations-theory}

The generalized X-ray transform operators $\Pi^g_m$ (we now add the index $g$ to insist on the metric-dependence) and $\Pi_m^{\nabla^{\E}}$ introduced in the previous paragraphs depend on a choice of metric $g$ and/or connection $\nabla^{\E}$. In the following, we will consider perturbations of these operators with respect to these geometric data. For $\E, \F \to M$, two smooth Hermitian vector bundles over $M$, and $s \in \R$, the spaces of pseudodifferential operators $\Psi^s(M, \E \to \F)$ are Fréchet spaces (see \cite[Section 2.1]{Guillarmou-Knieper-Lefeuvre-19} for instance) where the seminorms are defined thanks to local coordinates by taking the seminorms of the full local symbol in the charts. Let us also mention that it is also possible to consider pseudodifferential operators obtained by quantizing symbols with limited regularity (see \cite{Taylor-91}): actually, all the standard arguments of microlocal analysis (such as boundedness on Sobolev spaces for instance) involve only a finite number of derivatives of the full symbol, and this number depends linearly on the dimension. As a consequence, for $k \gg n$ (where $n$ is the dimension of $M$), we can consider the space $\Psi^s_{(k)}(M, \E \to \F)$ of pseudodifferential operators obtained by quantizing $C^k$-symbols (satisfying the usual symbolic rules of derivation). 

\begin{lemma}
\label{lemma:differentiabilty}
The following maps are smooth:
\begin{align*}
\mc{M}_{\mathrm{Anosov}} \ni &g \mapsto \Pi_m^g \in \Psi^{-1}(M,\otimes^m_S T^*M \to \otimes^m_S T^*M), \\
C^\infty(M,T^*M \otimes \End_{\mathrm{sk}}(\E)) \ni &\nabla^{\E} \mapsto \Pi_m^{\nabla^{\E}} \in \Psi^{-1}(M,\otimes^m_S T^*M \otimes \E \to \otimes^m_S T^*M \otimes \E).
\end{align*}
More precisely, for every $k \in \mathbb{Z}_{\geq 0}$, $k'' \in \Z_{\geq 1}$, there is $k' \gg \max(n,k)$ such that the following maps are $C^k$:
\begin{align*}
\mc{M}^{k'}_{\mathrm{Anosov}} \ni &g \mapsto \Pi_m^g \in \Psi^{-1}_{(k'')}(M,\otimes^m_S T^*M \to \otimes^m_S T^*M), \\
C^{k'}(M,T^*M \otimes \End_{\mathrm{sk}}(\E)) \ni &\nabla^{\E} \mapsto \Pi_m^{\nabla^{\E}} \in \Psi^{-1}_{(k'')}(M,\otimes^m_S T^*M \otimes \E \to \otimes^m_S T^*M \otimes \E).
\end{align*}
\end{lemma}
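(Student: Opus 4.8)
The plan is to establish smoothness of the map $g \mapsto \Pi_m^g$ (and $\nabla^{\E} \mapsto \Pi_m^{\nabla^{\E}}$) by tracking the construction of the meromorphically continued resolvents and showing that each ingredient depends smoothly on the geometric data, in the appropriate Fr\'echet-space sense. First I would recall the microlocal construction of the anisotropic Sobolev spaces $\mc{H}^s_\pm$ and the meromorphic extension of $\RR_\pm(z)$ following Faure--Sj\"ostrand and Dyatlov--Zworski: one writes, for $\Re(z)$ large, $\RR_\pm(z) = (\mp\X - z)^{-1}$ and extends by constructing a parametrix $Q(z)$ together with an escape (weight) function $G_s$ adapted to the hyperbolic dynamics; the extended resolvent is then $\RR_\pm(z) = Q(z)(\mathbbm{1} + K(z))^{-1}$ for a suitable compact (smoothing-enough) operator $K(z)$, and it is meromorphic by analytic Fredholm theory. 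The key observation is that all of these objects --- the stable/unstable bundles $E_{s/u}$, their duals $E_{s/u}^*$, the escape function $G_s$, the parametrix $Q(z)$, the conjugated operator $e^{G_s}(\mp\X - z)e^{-G_s}$ --- depend smoothly (in the $C^k$, $k \gg 1$, sense, with loss of finitely many derivatives) on the metric $g$, respectively on the connection $1$-form $\Gamma$.

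The first substantive step is the smooth dependence of the \emph{hyperbolic structure}: the geodesic vector field $X = X_g$ depends smoothly (polynomially, via Christoffel symbols) on $g$, and by the structural stability of Anosov flows together with the standard graph-transform / cone-field construction, the splitting $T(SM) = \R X \oplus E_s \oplus E_u$ depends continuously on $g$ in the $C^0$ topology, and in fact one only needs H\"older-continuous dependence of $E_{s/u}$ (they are only H\"older in the base point anyway) --- crucially, the \emph{wavefront-set locations} $E_{s/u}^*$ move continuously, which is enough to choose, uniformly on a $C^{k'}$-neighborhood of a fixed $g_0$, a single escape function $G_s$ that works for all nearby $g$. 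This is where one invokes that structural stability is uniform on $C^{k'}$-neighborhoods (e.g. via the contraction mapping theorem in the space of H\"older sections of the cone bundle, with parameters $g$), so the constants $C, \lambda$ in \eqref{equation:anosov} can be taken uniform. The second step is that, with $G_s$ fixed, the family of operators $\mathbf{P}_\pm(g,z) := e^{G_s}(\mp\X_g - z)e^{-G_s}$ is a polynomial (degree one) in $z$ with coefficients depending $C^k$-smoothly on $g \in \mc{M}^{k'}_{\mathrm{Anosov}}$ as elements of a fixed space of ($C^{k''}$-coefficient) pseudodifferential operators; this is essentially the smoothness of $g \mapsto X_g$ composed with the smooth (even analytic) dependence of the quantization $\Op(e^{G_s})$ on nothing (it is fixed). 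Then analytic Fredholm theory with smooth parameters gives that $z \mapsto \RR_\pm(z)$ is meromorphic with the inverse $(\mathbbm{1}+K(g,z))^{-1}$ depending $C^k$-smoothly on $g$ away from the (discrete, finitely-degenerate) poles; since we work near $z = 0$ and, for the untwisted case, $0$ is a simple pole with residue \eqref{eq:Pi_pm} \emph{independent of $g$} (it is the normalized Liouville measure, hence smooth in $g$), the Laurent coefficients $R_{\pm,0}^g$, $\Pi_\pm^g$ depend $C^k$-smoothly on $g$ as bounded operators $\mc{H}^s_\pm \to \mc{H}^s_\pm$. For the twisted case one uses the genericity of $\ker\X = \{0\}$ from \cite{Cekic-Lefeuvre-20} to see $0$ is not a resonance, so $\RR_\pm^{\nabla^{\E}} = -\RR_\pm^{\nabla^{\E}}(0)$ is holomorphic in the parameter.

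The third step passes from smoothness as operators on anisotropic spaces to smoothness \emph{as pseudodifferential operators of order $-1$}. Here one uses that $\Pi_m^g = {\pi_m}_*\, \mc{I}^g\, \pi_m^*$ with $\mc{I}^g = R_{+,0}^g + R_{-,0}^g + \Pi_+^g$, and that the $\Psi$DO property of $\mc{I}^g$ (proved in \S\ref{section:pdo}, relying on the transversality \eqref{equation:transverse} and the two-sided parametrix construction of Guillarmou) comes with symbol estimates that are uniform --- and depend smoothly --- on a $C^{k'}$-neighborhood of $g_0$, because the parametrix for $X$ is built from the resolvent and from the (smoothly varying) symplectic geometry of the lifted flow. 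Concretely: the operator $\mc{I}^g$ equals an explicit $\Psi$DO parametrix $E^g$ plus a smoothing remainder $\mc{S}^g$, where $E^g$ has symbol given by integrating the principal transport along the (smoothly $g$-dependent) geodesic flow and $\mc{S}^g = R_{\pm,0}^g(\text{remainder})$ involves the resolvent composed with operators whose wavefront set avoids $E_{s/u}^*$ --- hence is $C^{k''}$-regularizing with $C^k$-dependence on $g$. Assembling, $g \mapsto \Pi_m^g$ is $C^k$ into $\Psi^{-1}_{(k'')}(M,\otimes^m_S T^*M)$, and letting $k'' \to \infty$ recovers the smooth statement.

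The main obstacle --- or rather the point requiring the most care --- is the second step: making the escape function and the Fredholm inverse \emph{uniform and smooth} across a neighborhood of metrics, rather than for a single fixed metric. This hinges on two facts that must be stated carefully: (i) structural stability of the Anosov splitting is effective and uniform on $C^{k'}$-balls (so one escape function fits all nearby $g$), and (ii) the only resonance on $i\R$ near $0$ is simple with $g$-\emph{independent} spectral data, so no spectral splitting or eigenvalue-crossing issues arise when differentiating the Laurent coefficients in $g$. Both are available in the literature (the former from Anosov structural stability, the latter from \cite{Guillarmou-17-1}); once they are in hand, the smoothness assertion follows from the holomorphic-Fredholm calculus with parameters together with the $\Psi$DO-parametrix bookkeeping of \S\ref{section:pdo}, and I would present the proof as a sequence of ``smooth dependence'' lemmas for each building block rather than re-deriving the meromorphic continuation from scratch.
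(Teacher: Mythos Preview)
Your proposal is correct and follows essentially the same approach as the paper, which does not give a self-contained proof but rather cites \cite[Proposition 4.1]{Guillarmou-Knieper-Lefeuvre-19} for the $k=0$ case and \cite{Bonthonneau-19, Dang-Guillarmou-Riviere-Shen-20, Cekic-Dyatlov-Kuster-Paternain-20} for the perturbation theory; the key technical input you identify (a single escape function/anisotropic space working uniformly over a $C^{k'}$-neighborhood of $g_0$) is precisely the content of the Bonthonneau reference. One small imprecision: the residue $\Pi_+^g = \langle \bullet, \mu_g\rangle \mathbf{1}$ is not $g$-\emph{independent} as you write, but rather depends smoothly on $g$ through the normalized Liouville measure $\mu_g$ --- this is what is needed and what your parenthetical correctly asserts.
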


Lemma \ref{lemma:differentiabilty} will be used in \S\ref{section:genericity-connection} and \S\ref{section:genericity-metric} in order to perturb the generalized X-ray transforms with respect to the connection/metric. For $k=0$, Lemma \ref{lemma:differentiabilty} is precisely the content of \cite[Proposition 4.1]{Guillarmou-Knieper-Lefeuvre-19}. Inspecting the proof, one can see that it also works for higher order derivatives. The heart of the proof is based on understanding the differentiability of the resolvent map
\[
	C^\infty(SM,T(SM)) \times \C \ni (Y,z) \mapsto (\mp Y-z)^{-1} \in \mc{L}(\mc{H}^s_\pm),
\]
where $\mc{H}^s_\pm$ is the scale of anisotropic Sobolev spaces (which can be made independent of the vector field by \cite{Bonthonneau-19}), and $Y$ is a vector field close to $X$. This perturbation theory is now standard and we refer to \cite{Bonthonneau-19, Dang-Guillarmou-Riviere-Shen-20,Cekic-Delarue-Dyatlov-Paternain-22} for further details.

\subsection{Notational convention} \label{ssection:convention} Throughout the paper (see for instance \eqref{eq:expansion-appendix} below), if $A$ is an operator on a Hilbert space $\mc{H}$ with meromorphic resolvent $(A-z)^{-1}$ on a half-space $\Re(z) \geq - \eps$ (for some $\eps > 0$), we use the convention that $A^{-1}$ denotes the holomorphic part of the resolvent at $0$. More precisely, close to $z = 0$ we can write
\[
(A-z)^{-1} = R_A + \sum_{j=1}^N \dfrac{A_j}{z^j} + \mc{O}(z),
\]
for some finite rank operators $(A_j)_{j = 1}^N$ and we set $A^{-1} := R_A$. In particular, if $z=0$ is not in the spectrum of $A$, $A^{-1}$ is the inverse for $A$.

\section{On spherical harmonics}

\label{section:spherical}

We record here some facts about spherical harmonics. We keep the notation $(E,g_E)$ for a Euclidean vector space of dimension $n$.

\subsection{The restriction operator}\label{ssection:restriction}

In the following, we will need to understand how the degree of a function is changed when restricting to a hypersphere. For $\xi \in E^*\setminus \{0\}$, define $\vec{n}(\xi) := \frac{\xi^\sharp}{|\xi|}$. If $\Ss^{n-1}$ denotes the unit sphere in $E$, we introduce $\Ss^{n-2}_\xi := \left\{ v \in \Ss^{n-1} ~|~ \langle \xi, v \rangle = 0 \right\}$. Any vector $v \in \Ss^{n-1} \setminus \{\pm \vec{n}(\xi)\}$ can be uniquely decomposed as $v = \cos(\varphi)\vec{n}(\xi) + \sin(\varphi) u$, where $\varphi \in (0,\pi), u \in \Ss^{n-2}_{\xi}$ (the diffeomorphism is singular at the extremal points $\varphi= 0$ and $\varphi=\pi$ but since they form a set of measure $0$, this is harmless in what follows). The round measure $\dd S(v)$ on $\Ss^{n-1}$ is then given in these new coordinates by 
\begin{equation}\label{eq:Jacobian}
	\dd S(v) = \sin^{n-2}(\varphi) \dd \varphi \dd S_{\xi}(u),
\end{equation}
	 where $\dd S_{\xi}(u)$ denotes the canonical round measure on the $(n-2)$-dimensional sphere $\Ss^{n-2}_{\xi}$.

\subsubsection{Standard restriction} We start with the following:

\begin{lemma}
\label{lemma:restriction-1}
Assume $n > 2$. Let $\WW \in C^\infty(\Ss^{n-1})$. Then $\WW$ has degree $\leq m$ if and only if the restriction $\WW|_{\Ss^{n-2}_{\xi}}$ to any hypersphere $\Ss^{n-2}_{\xi}$ has degree $\leq m$.
\end{lemma}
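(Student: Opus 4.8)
Lemma \ref{lemma:restriction-1} asserts that the degree of a smooth function on the sphere is controlled by (and controls) the degrees of its restrictions to all equatorial subspheres. The plan is to prove each implication separately, with the forward direction being elementary and the converse being the substantive one.

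For the forward implication, suppose $\WW$ has degree $\leq m$, so $\WW = \sum_{k=0}^m \WW_k$ with $\WW_k \in \Omega_k(E)$. Via the identification with (restrictions of) harmonic polynomials, each $\WW_k$ is the restriction to $\Ss^{n-1}$ of a homogeneous polynomial $P_k$ of degree $k$ on $E$. Fix $\xi \in E^* \setminus \{0\}$ and parametrize the subspace $\xi^\perp$ by an orthonormal basis; then the restriction $P_k|_{\xi^\perp}$ is again a homogeneous polynomial of degree $\leq k$ in the $(n-1)$ coordinates on $\xi^\perp$, hence when further restricted to $\Ss^{n-2}_\xi$ it decomposes into spherical harmonics of degree $\leq k$ on that sphere. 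Summing over $k \leq m$ shows $\WW|_{\Ss^{n-2}_\xi}$ has degree $\leq m$. (Here one uses the standard fact that a polynomial of degree $d$ restricted to a sphere has spherical-harmonic degree $\leq d$, via the decomposition $\mathbf{P}_d = \bigoplus_{j} |v|^{2j}\mathbf{H}_{d-2j}$ recalled in \S\ref{sssection:tensors-euclidean}.)

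For the converse, assume $\WW|_{\Ss^{n-2}_\xi}$ has degree $\leq m$ for every $\xi$, and suppose for contradiction that $\WW$ has a nonzero component $\WW_\ell \in \Omega_\ell(E)$ with $\ell > m$; take $\ell$ maximal (or work with the top component). The key is to relate the spherical-harmonic expansion on $\Ss^{n-1}$ to expansions on the equators $\Ss^{n-2}_\xi$ using the coordinates $v = \cos\varphi\, \vec n(\xi) + \sin\varphi\, u$ from \eqref{eq:Jacobian}. A degree-$\ell$ spherical harmonic on $\Ss^{n-1}$, when written in these coordinates, expands in terms of Gegenbauer polynomials in $\cos\varphi$ with coefficients that are spherical harmonics on $\Ss^{n-2}_\xi$ of degree $\leq \ell$; crucially, the component of $\WW_\ell$ restricted to $\varphi = \pi/2$, i.e. to $\Ss^{n-2}_\xi$ itself, picks out exactly the top-degree (degree $\ell$) spherical harmonic on $\Ss^{n-2}_\xi$ with a nonzero universal constant (the value of the Gegenbauer polynomial $C_\ell^{(n-2)/2}(0)$ times the ``zonal'' projection), plus lower-degree terms coming only from $\WW_k$ with $k < \ell$ of the same parity. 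Therefore, if $\WW$ had degree $> m$, one could choose $\xi$ so that the degree-$\ell$ part of $\WW_\ell|_{\Ss^{n-2}_\xi}$ is nonzero — this is possible because $\WW_\ell \not\equiv 0$ means its restriction to some great equator has a nonvanishing top harmonic — contradicting that $\WW|_{\Ss^{n-2}_\xi}$ has degree $\leq m < \ell$. Concretely, one shows that the operator sending a harmonic of degree $\ell$ on $\Ss^{n-1}$ to the degree-$\ell$ part of its restriction to $\Ss^{n-2}_\xi$ is, as $\xi$ varies, injective on $\Omega_\ell(E)$ — e.g. because if it annihilated some $0 \neq \WW_\ell$ for all $\xi$, then $\WW_\ell$ would have to be a sum of lower-degree harmonics when restricted to every equator, forcing $\WW_\ell = 0$ by a dimension or analyticity argument (a nonzero harmonic polynomial of degree $\ell$ cannot vanish to high order on all hyperplane sections).

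The main obstacle is making this last injectivity/non-degeneracy statement clean. The cleanest route is probably the polynomial viewpoint: identify $\WW$ with the restriction of a polynomial, and observe that ``$\WW|_{\Ss^{n-2}_\xi}$ has degree $\leq m$ for all $\xi$'' is equivalent to ``$\WW$ agrees on $\Ss^{n-1}$ with a polynomial of degree $\leq m$ in a neighborhood-independent way'', which then forces $\deg \WW \leq m$. Alternatively one invokes the explicit Funk–Hecke / Gegenbauer addition formula, where the relevant constant $C_\ell^{(n-2)/2}(0)$ is nonzero precisely when $\ell$ has the right parity and $n > 2$ (this is where the hypothesis $n>2$ enters, since for $n=2$ the equators $\Ss^0_\xi$ are two points and carry no degree structure). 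I would present the argument via the polynomial reformulation to avoid computing Gegenbauer constants, and only invoke the explicit recursion to confirm that the top coefficient genuinely survives restriction to a generic equator.
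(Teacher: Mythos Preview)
Your forward direction is fine and matches the paper. For the converse there are two real gaps. First, ``take $\ell$ maximal'' is illegitimate: $\WW$ is an arbitrary smooth function and may have infinitely many nonzero modes, so no top component need exist. This matters because the degree-$\ell$ piece of $\WW|_{\Ss^{n-2}_\xi}$ receives contributions from \emph{every} $\WW_{\ell+2j}$ for $j\geq 0$ (the restriction of a degree-$k$ harmonic to an equator has components in all degrees $k,k-2,\dots$), and these can cancel the contribution from $\WW_\ell$; your argument as written only works when $\WW$ has finite degree. Second, the injectivity you invoke---that a nonzero $\WW_\ell\in\Omega_\ell$ has nonzero degree-$\ell$ restriction to \emph{some} equator---is asserted without proof. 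It is true (the common kernel over all $\xi$ of the ``top part of restriction'' maps $\Omega_\ell(\Ss^{n-1})\to\Omega_\ell(\Ss^{n-2}_\xi)$ is an $\mathrm{O}(n)$-invariant proper subspace of the irreducible $\Omega_\ell$, hence zero for $n\ge3$), but it needs an argument; and your remark about $C_\ell^{(n-2)/2}(0)$ is off---that Gegenbauer value governs the zonal piece of the restriction, not the top one.

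The paper's route is quite different and sidesteps the infinite-degree problem entirely. After reducing to $\WW$ of the same parity as $m$, extend $\WW$ by $m$-homogeneity to $E\setminus\{0\}$; the hypothesis then says that the restriction to every hyperplane through $0$ is a homogeneous polynomial of degree $m$. The key observation is that $\partial_{x_i}^{m+1}\WW\equiv 0$ everywhere: at any point $x$ one chooses a hyperplane containing both $x$ and $\mathbf{e}_i$ (this is exactly where $n>2$ is used) and differentiates the polynomial along it. Hence $\sum_i\partial_{x_i}^{2m}\WW=0$, elliptic regularity makes $\WW$ smooth at $0$, and the order-$m$ Taylor remainder at $0$ vanishes on every hyperplane, hence identically. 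Thus $\WW$ is a polynomial of degree $\le m$, with no need to isolate a top harmonic.
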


\begin{proof}
We start with the easy direction. If $\WW \in \Omega_m(\Ss^{n-1})$ is a spherical harmonic of degree $m$, then:
\[
\WW|_{\Ss^{n-2}_{\xi}} \in \oplus_{k \geq 0} \Omega_{m-2k}(\Ss^{n-2}_{\xi}).
\]
This fact follows from the following observation: if $\WW \in \Omega_m(\Ss^{n-1})$, then $\WW$ is the restriction of a harmonic homogeneous polynomial $P \in \mathbf{H}_m(E)$ defined on $E$. For any $\xi \in E^*\setminus \{0\}$, let $H = \ker(\xi) \subset E$ denote the codimension $1$ hyperplane determined by $\xi$. Then $P|_{H}$ is still a homogeneous polynomial of degree $m$ (it may not be harmonic, though) and thus its restriction to $\Ss^{n-2}_{\xi}$ is a sum of spherical harmonics of degree $\leq m$ (and with same parity as $m$).

We now show the converse. The case $m=0$ is obvious so we can always assume that $m \geq 1$. Note first that we may split $\WW$ into odd and even terms, $\WW = \WW_{\mathrm{odd}} + \WW_{\mathrm{even}}$. We have $(\WW|_{\mathbb{S}^{n-2}_{\xi}})_{\mathrm{odd}} = \WW_{\mathrm{odd}}|_{\mathbb{S}^{n-2}_{\xi}}$ and $(\WW|_{\mathbb{S}^{n-2}_{\xi}})_{\mathrm{even}} = \WW_{\mathrm{even}}|_{\mathbb{S}^{n-2}_{\xi}}$, and so for every $\xi$, both $\WW_{\mathrm{even}}|_{\mathbb{S}^{n-2}_{\xi}}$ and $\WW_{\mathrm{odd}}|_{\mathbb{S}^{n-2}_{\xi}}$ are of degree $\leq m$. Thus, we may assume $\WW$ is either pure odd or pure even, and that moreover this is the parity of $m$ (if $m$ and $\WW$ have distinct parities, then the hypothesis of the Lemma is true for $m-1$).

The conclusion is now implied by the following claim
: let $\WW \in C^\infty(E \setminus \left\{0\right\})$ be an $m$-homogeneous function (since $m > 0$ it is at least continuous at $x=0$) such that the restriction $\WW|_{H}$ to any hyperplane $H \subset E$ is a homogeneous polynomial of degree $m$. Then $\WW$ is a homogeneous polynomial of degree $m$. 

First of all, we start by proving that $\WW$ is smooth at $x=0$. Let $(\mathbf{e}_1, ..., \mathbf{e}_n)$ be an orthonormal basis of $E$ and write $x = \sum_{i=1}^n x_i \mathbf{e}_i$. We claim that $\partial^{m+1}_{x_i} \WW \equiv 0$ on $E$. Indeed, fix $i\in\left\{1,...,n\right\}$, fix $x \in E$ and consider a hyperplane $H$ containing both $x$ and $\mathbf{e}_i$ (which forces the condition $n > 2$). Then $f|_{H}$ is a polynomial of degree $m$. In particular, it is smooth and satisfies:
\[
\partial^{m+1}_{x_i} \WW (x) = \partial_t^{m+1} \WW(x+t\mathbf{e}_i)|_{t=0} = 0,
\]
since it is polynomial. Actually, $\partial^k_{x_i} \WW \equiv 0$ as long as $k \geq m+1$. In particular, $P \WW = 0$ on $E$, where $P = \sum_{i=1}^n \partial^{2m}_{x_i}$. As $P$ is elliptic and $\WW$ continuous, this gives that $\WW$ is smooth on $E$.

We now write by Taylor's theorem:
\[
\WW(x) = \sum_{|\alpha| \leq m} \dfrac{1}{\alpha !} x^\alpha \cdot \partial_x^\alpha \WW (0) + R(x),
\]
where $R(x) = \mc{O}(|x|^{m+1})$ as $|x| \to 0$ and define $S(x) := \sum_{|\alpha| \leq m} \dfrac{1}{\alpha !} x^\alpha \cdot \partial_x^\alpha \WW (0)$. Taking any hyperplane $H$, we obtain:
\[
\WW|_{H}(x) - S|_{H}(x) = \mc{O}(|x|^{m+1}).
\]
The left-hand side is a polynomial of degree $\leq m$ so it implies that it is equal to $0$. This gives that $R|_{H} \equiv 0$. Since this holds for every hyperplane $H$, this implies $R \equiv 0$ and $\WW$ is a polynomial of degree $\leq m$. Using $m$-homogeneity of $\WW$, it is homogeneous of degree $m$.
\end{proof}

\begin{remark}
We observe that the proof actually gives a stronger statement which is: assume $\WW \in C^\infty(E \setminus \left\{ 0 \right\})$ is $m$-homogeneous and a polynomial of degree $m$ when restricted to \emph{any plane} (and not hyperplane), then it is a polynomial of degree $m$ on $E$. 
\end{remark}

\subsubsection{Differentiated restriction}

The following lemma will be used for the generic s-injectivity with respect to the metric. We will denote by $\nabla$ the gradient with respect to the spherical metric on $\mathbb{S}^{n-1}$.

\begin{lemma}
\label{lemma:differentiated-restriction}
Assume $n \geq 3$ and let $m \in \Z_{\geq 0}$ and $\WW \in C^\infty(\Ss^{n-1})$ such that $\mathrm{deg}(\WW) \geq m+1$. Then, there exists $\xi \in E^* \setminus \left\{0\right\}$ such that $\langle \xi, \nabla \WW (\bullet)\rangle|_{\Ss^{n-2}_\xi}$ has degree $\geq m$ (seen as a function on $\Ss^{n-2}_\xi$).
\end{lemma}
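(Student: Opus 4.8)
The plan is to argue by contradiction: suppose that for \emph{every} $\xi \in E^* \setminus \{0\}$ the differentiated restriction $\langle \xi, \nabla \WW(\bullet)\rangle|_{\Ss^{n-2}_\xi}$ has degree $\leq m-1$, and deduce that $\mathrm{deg}(\WW) \leq m$, contradicting the hypothesis. The key observation is that the quantity $\langle \xi, \nabla\WW(v)\rangle$ at a point $v \in \Ss^{n-1}$ is, up to normalization, the derivative of $\WW$ in the direction $\vec{n}(\xi) = \xi^\sharp/|\xi|$ \emph{projected onto $T_v\Ss^{n-1}$}, i.e. $\langle \xi, \nabla\WW(v)\rangle = |\xi|\, d_v\WW(\pi_{T_v\Ss^{n-1}}\vec{n}(\xi))$. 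Using the coordinates $v = \cos(\varphi)\vec{n}(\xi) + \sin(\varphi) u$ from \eqref{eq:Jacobian} with $u \in \Ss^{n-2}_\xi$, one finds that on the hypersphere $\Ss^{n-2}_\xi$ itself (i.e. at $\varphi = \pi/2$), the projection of $\vec{n}(\xi)$ onto $T_v\Ss^{n-1}$ is precisely $-\partial_\varphi v$, the unit tangent to the meridian through $v$ orthogonal to $\Ss^{n-2}_\xi$. Hence $\langle \xi, \nabla\WW\rangle|_{\Ss^{n-2}_\xi}(u) = -|\xi|\,\partial_\varphi \WW(\cos\varphi\,\vec{n}(\xi) + \sin\varphi\, u)\big|_{\varphi = \pi/2}$, a \emph{transverse} derivative of $\WW$ along the meridian direction, restricted to the equator.

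Next I would decompose $\WW$ into spherical harmonics, $\WW = \sum_{\ell \geq 0} \WW_\ell$. Since $\partial_\varphi$ and restriction to $\Ss^{n-2}_\xi$ interact well with parity, I can assume (as in the proof of Lemma \ref{lemma:restriction-1}) that $\WW$ is pure even or pure odd, and that the degree of $\WW$ has the opposite parity to $m$ would be handled by passing to $m-1$; so assume $\mathrm{deg}(\WW)$ and $m+1$ have the same parity, say $\mathrm{deg}(\WW) = m+1+2j$ for some $j \geq 0$. The strategy is to relate the transverse derivative $\partial_\varphi \WW|_{\varphi=\pi/2}$ to the tangential data. Concretely, expand each $\WW_\ell$ in the meridian coordinate: writing $t = \cos\varphi$, a harmonic polynomial of degree $\ell$ restricted to the sphere has the Gegenbauer-type expansion $\WW_\ell(v) = \sum_{p} C_{\ell,p}(t)\, Y_{\ell-p}(u)$ where $Y_{\ell-p} \in \Omega_{\ell-p}(\Ss^{n-2}_\xi)$ and $C_{\ell,p}$ are explicit polynomials in $t$ of degree $p$ (with fixed parity). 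Then $\partial_\varphi\WW_\ell|_{\varphi=\pi/2} = -\sin(\varphi)\partial_t[\dots]|_{t=0} = -\partial_t C_{\ell,p}(0)\, Y_{\ell-p}(u)$, summed over $p$. The top tangential degree $\ell$ on $\Ss^{n-2}_\xi$ arises from $p=0$, i.e. from the leading coefficient $C_{\ell,0}(t) \sim c_\ell t^\ell$, whose $t$-derivative at $0$ is nonzero only when $\ell$ is odd — which is exactly the parity-opposite case. To get the degree-$m$ contribution robustly for \emph{some} $\xi$, I would instead argue: if $\WW$ has a genuine component $\WW_{m+1+2j}$ of top degree $N := m+1+2j$, choose $\xi$ so that the harmonic polynomial underlying $\WW_N$, restricted to the hyperplane $\ker\xi$ and then to $\Ss^{n-2}_\xi$, has a nonzero component of degree $N-1 \geq m$ (possible by a dimension count / by picking $\xi$ generic), and check that the transverse derivative $\partial_\varphi$ produces a degree-$(N-1)$ harmonic on $\Ss^{n-2}_\xi$ that is not cancelled by contributions from lower-degree $\WW_\ell$, $\ell < N$ (these can only produce tangential degrees $< N-1 \leq m$, hence they land in degrees $\leq m-1$ only if... — here one must be careful). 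If $N - 1 \geq m$ with equality, we are done; if $N-1 > m$ the degree is $> m$ a fortiori, so in all cases $\langle\xi,\nabla\WW\rangle|_{\Ss^{n-2}_\xi}$ has degree $\geq m$.

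The main obstacle I anticipate is controlling the \emph{cancellation}: a priori the degree-$(N-1)$ (or higher) tangential component coming from $\WW_N$ under the transverse-derivative-then-restrict operation could conceivably be cancelled by a similar component coming from some lower $\WW_\ell$; one must verify that the map $\WW \mapsto \langle\xi,\nabla\WW\rangle|_{\Ss^{n-2}_\xi}$, read off on the top tangential degree, is injective on the span of the relevant harmonics, uniformly or at least for a well-chosen $\xi$. I would handle this by exploiting that the operator $\partial_\varphi|_{\varphi=\pi/2}$ changes parity in $\varphi$, so it pairs degree-$\ell$ harmonics of $\Ss^{n-1}$ with tangential harmonics on $\Ss^{n-2}_\xi$ of degree $\equiv \ell - 1 \pmod 2$, and the top tangential degree achievable from $\WW_\ell$ is $\ell - 1$ (not $\ell$), attained with a nonzero coefficient. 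Thus $\WW_N$ uniquely contributes tangential degree $N-1$, and lower $\WW_\ell$ contribute only degrees $\leq \ell - 1 < N - 1$ — \emph{no} cancellation is possible at the top. Combined with the choice of $\xi$ making the underlying harmonic polynomial of $\WW_N$ nontrivial on $\ker\xi$ (so that its restriction to $\Ss^{n-2}_\xi$ retains degree $N-1$), this yields a component of degree $N - 1 \geq m$, completing the contradiction. A clean alternative, avoiding explicit Gegenbauer bookkeeping, is to phrase everything polynomially as in Lemma \ref{lemma:restriction-1}: $\langle\xi,\nabla\WW\rangle$ corresponds on the polynomial side to $\langle \xi, \nabla_E(\lambda^{-1}\WW)\rangle - (\text{radial correction})$, restrict to $H = \ker\xi$, and use that the directional derivative $\partial_{\vec n(\xi)}$ of a degree-$N$ homogeneous polynomial, restricted to $H$, is a nonzero degree-$(N-1)$ polynomial for generic $\xi$ whenever the original polynomial is nonzero — a statement provable by the same elliptic-regularity / polynomial-restriction trick used above.
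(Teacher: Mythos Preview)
Your approach is different from the paper's and contains a genuine gap at the key step. You correctly identify $\langle \xi, \nabla\WW\rangle|_{\Ss^{n-2}_\xi}(u) = -|\xi|\,(\partial_{\vec n(\xi)}P)(u)$ for the homogeneous extension $P$, and your no-cancellation observation is sound: if $N := \deg(\WW)$, then only the top harmonic $\WW_N$ can contribute a component of tangential degree $N-1$ on $\Ss^{n-2}_\xi$, since lower $\WW_\ell$ (with $\ell \leq N-2$) contribute at most degree $\ell-1 \leq N-3$. What you do \emph{not} establish is that this top contribution is nonzero for some $\xi$. Your justification --- that ``$\partial_{\vec n(\xi)}P_N|_{\ker\xi}$ is a nonzero degree-$(N-1)$ polynomial for generic $\xi$'' --- is insufficient: a nonzero homogeneous polynomial of degree $N-1$ on $\ker\xi \cong \R^{n-1}$ may have vanishing harmonic part and hence degree $< N-1$ on $\Ss^{n-2}_\xi$ (e.g.\ $|x'|^2$ has polynomial degree $2$ but spherical degree $0$). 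The statement you actually need is: for nonzero $P_N \in \mathbf{H}_N(\R^n)$ with $n \geq 3$, there exists $\vec n$ such that the $\Omega_{N-1}(\vec n^\perp)$-component of $(\partial_{\vec n}P_N)|_{\vec n^\perp}$ is nonzero. This is true (for instance, $I(P_N) := \int_{\Ss^{n-1}}\|\text{that component}\|^2\,d\vec n$ is an $\mathrm{SO}(n)$-invariant quadratic form on the irreducible $\Omega_N(\R^n)$, hence $I = c\|\cdot\|^2$; testing on $P_N = x_n H_{N-1}(x')$ gives $c>0$), but you have not supplied such an argument. Separately, your sentence about ``$\WW_N$ restricted to $\Ss^{n-2}_\xi$ having a nonzero component of degree $N-1$'' is incorrect by parity: $P_N|_{\ker\xi}$ is homogeneous of degree $N$, so its spherical decomposition sits in degrees $N, N-2, \ldots$ only.

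The paper proceeds quite differently. It does not isolate $\WW_N$; instead it extends $\WW$ as an $(m-1)$-homogeneous function, observes via \eqref{eq:xiderivative} that the contradiction hypothesis forces $d\WW(\xi^\sharp)|_{\ker\xi}$ to be a polynomial of degree $\leq m-2$ for \emph{every} $\xi$, and then runs a Taylor-expansion argument at an arbitrary point $v_0$: the remainder beyond order $m-1$ must be invariant under the isotropy group $G_{v_0} \subset \mathrm{SO}(n)$, and comparing different basepoints combined with irreducibility of each $\Omega_k$ forces all components of $\WW$ of degree $\geq m$ to vanish. This exploits the ``for all $\xi$'' hypothesis globally through the $\mathrm{SO}(n)$-action and avoids the Gegenbauer/branching bookkeeping entirely. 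Your route, once the gap is filled, is more direct at the top degree, but the paper's argument handles all degrees uniformly and does not require the harmonic-projection lemma you are missing.
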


This Lemma will be applied later in each fibre $E=T_xM$ and $\nabla$ will be the vertical gradient $\nabla_{\V}$; we will take $\xi \in T_x^*M$. If $m = 0$, degree $\geq 0$ also implies non-zero.

\begin{proof}
We assume the degree of $\langle \xi, \nabla\WW (\bullet)\rangle|_{\Ss^{n-2}_\xi}$ is always $< m$ (for all $\xi \neq 0$) and show that this forces $\WW$ to be of degree $\leq m$. In fact, we may assume without loss of generality that $\WW$ is either pure odd or pure even, and of the same parity as $m + 1$. Let us deal with the $m \geq 2$ case first. 

First of all, we extend the smooth function $\WW$ to an $(m - 1)$-homogeneous function on $E$ (which we still denote by $\WW$). In particular, this extension is smooth on $E \setminus \left\{0 \right\}$. We now claim that for every $\xi$, $\dd \WW (\xi^\sharp)$ is a homogeneous polynomial of degree $\leq m - 2$ on $\ker \xi$. Indeed, consider a point $v \in \ker \xi \setminus \left\{0\right\}$. The total gradient on $E$ is
\[
\nabla^{\mathrm{tot}} f = \sum_{i=1}^n \partial_{x_i} f (v). \partial_{x_i} = \nabla f + \dd f(\vec{n}). \vec{n},
\]
where $\vec{n} := \frac{v}{|v|}$, $\nabla$ denotes the gradient of $f$ restricted to the spheres and $(x_i)_{i = 1}^n$ are the coordinates induced by an orthonormal basis $(\e_i)_{i = 1}^n$ of $ E$. Hence, for $v \in \ker \xi$, we have:
\begin{equation}\label{eq:xiderivative}
\langle \xi , \nabla \WW (v) \rangle  = \sum_{i=1}^n \xi_i. \partial_{x_i} \WW (v) - \dd \WW(\vec{n}). \langle \xi, \vec{n}\rangle = \dd \WW(\xi^\sharp),
\end{equation}
where $\xi_i.$ denotes multiplication by $\xi_i$. Therefore, $\langle \xi , \nabla \WW (v) \rangle|_{\ker \xi}$ is a homogeneous function of degree $m-2$ whose restriction to the sphere $\Ss^{n-2}_\xi$ is of same parity as $m$, and thus has degree $\leq m - 2$. As a consequence, it is a homogeneous polynomial of degree $m-2$ on $\ker \xi$. 

We now fix an arbitrary $v_0 \in E \setminus \left\{ 0 \right\}$ and consider the Taylor-expansion of $\WW$ at this point:
\begin{equation}
\label{equation:taylor-malin}
\WW(v) = \underbrace{\sum_{|\alpha| \leq m - 1} (v-v_0)^\alpha (\alpha!)^{-1} \partial^\alpha_x f(v_0)}_{P(v):=} + R(v),
\end{equation}
where $R(v) = \mc{O}(|v-v_0|^{m})$. We consider $v_1 \in E \setminus \left\{0\right\}$, $w \in \mathrm{Span}(v_0,v_1)^\bot$. If we differentiate \eqref{equation:taylor-malin} in the $w$-direction and then restrict to the hyperplane $w^\bot$, then we know by the previous discussion that $\dd \WW(w) |_{w^\bot}$ is a polynomial of degree $\leq m-2$, and so is $\dd P(w)|_{w^\bot}$. Moreover, from Taylor's theorem $\dd R(w) = \mc{O}(|v-v_0|^{m - 1})$. As a consequence: $\dd (\WW - P)(w)|_{w^\perp} = \dd R(w)|_{w^\perp}$ is a polynomial of degree $\leq m-2$ which vanishes to order $m-1$ at $v_0$; it is therefore constant equal to $0$. Evaluating at $v_1$, this shows that $\dd R(w) = 0$ at $v_1$. 

We now introduce $G_{v_0} \subset \mathrm{SO}(n)$, the isotropy subgroup of $v_0$, i.e. the subgroup of rotations fixing the $v_0$ axis. By the previous discussion, $R$ satisfies the following (see Figure \ref{figure:sphere}): given a sphere $\Ss^{n-1}(r):=\left\{|v|=r\right\}$, $\gamma^* R = R$ for all $\gamma \in G_{v_0}$.
\begin{figure}[htbp!]
\centering
\includegraphics{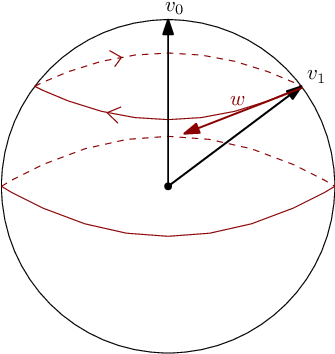}
\caption{The function $R$ is constant along the red orbits which correspond to the rotation around the $v_0$-axis.}
\label{figure:sphere}
\end{figure}

We restrict this equality to the unit sphere $\Ss^{n-1}$ and observe that \eqref{equation:taylor-malin} implies $\WW|_{\Ss^{n-1}} = q_{v_0} + S_{v_0}$, where $q_{v_0} \in \Omega_{\leq m - 1} = \oplus_{k \geq 0} \Omega_{m - 1 - k}$ is a sum of spherical harmonics of degree $\leq m-1$ and $S_{v_0}$ is invariant by the action of $G_{v_0}$. Note that $v_0$ is arbitrary and taking some other $v_1$, we see that $S_{v_1} - S_{v_0} \in \Omega_{\leq m - 1}$. As each $\Omega_{k}$ is a representation of $\mathrm{SO}(n)$ by pullback, in particular it is invariant by $G_{v_1}$. This gives that for all $\gamma \in G_{v_1}$, one has: $\gamma^*S_{v_1} - \gamma^*S_{v_0} = S_{v_1} - \gamma^* S_{v_0} \in \Omega_{\leq m - 1}$. Hence, for all $\gamma \in G_{v_1},  \gamma^* S_{v_0} - S_{v_0} \in \Omega_{\leq m - 1}$. Taking $\gamma' \in G_{v_2}$ for some other arbitrary $v_2$, we see that $(\gamma')^*\gamma^* S_{v_0} - (\gamma')^* S_{v_0} \in \Omega_{\leq m - 1}$ and since we also have $(\gamma')^* S_{v_0} - S_{v_0} \in \Omega_{\leq m - 1}$, this gives that $(\gamma')^*\gamma^* S_{v_0} - S_{v_0} \in \Omega_{\leq m - 1}$. By induction, for any $\gamma_1, \dotso, \gamma_{\ell}$ belonging to isotropy subgroups of $\mathrm{SO}(n)$, we have:
\[
\gamma_1^*\dotso \gamma_{\ell}^* S_{v_0} - S_{v_0} \in \Omega_{\leq m - 1}.
\]
As products of isotropy subgroups generate $\mathrm{SO}(n)$, we deduce that for all $\gamma \in \mathrm{SO}(n), \gamma^* S_{v_0} - S_{v_0}  \in \Omega_{\leq m - 1}$. Decomposing $S_{v_0} = \sum_{k \geq 0} (S_{v_0})_k$ into spherical harmonics, we then see that $\gamma^* (S_{v_0})_k = (S_{v_0})_k$ for all $k \geq m$ and $\gamma \in \mathrm{SO}(n)$. As $\Omega_k$ is irreducible \cite[Theorem 3.1]{Helgason-00}, this implies that $(S_{v_0})_k = 0$. Hence $S_{v_0}$ is of degree $\leq m - 1$ and $\WW|_{\mathbb{S}^{n-1}} = q_{v_0} + S_{v_0}$ is also of degree $\leq m - 1$. This completes the proof of the $m \geq 2$ case.

Finally, if $m = 0, 1$, then \eqref{eq:xiderivative} implies that $\dd \WW(\xi^\sharp)|_{\ker \xi} \equiv 0$. Then taking $R(v) := \WW(v)$ in \eqref{equation:taylor-malin}, it is straightforward that $\dd R(w) = 0$ at $v_1$ where $w, v_1$ are as before. The remainder of the proof works the same to show that $\WW|_{\mathbb{S}^{n-1}}$ is invariant under the isotropy subgroups, hence constant, contradicting that $\deg(\WW) > 0$.
\end{proof}

\subsection{The extension operator}

In this paragraph, we study an operator of extension from a hypersphere $\Ss^{n-2}_\xi$ to the whole sphere $\Ss^{n-1}$. First of all, for $m \in \Z_{\geq 1}$, we introduce the constant:
\begin{equation}
\label{equation:normalization}
C_m := \int_0^{\pi} \sin^{m-1}(\varphi) \dd \varphi = \sqrt{\pi} \dfrac{\Gamma(\frac{m}{2})}{\Gamma(\frac{m+1}{2})},
\end{equation}
where $\Gamma$ is the usual Gamma function. Given a smooth function $f \in C^\infty(\Ss^{n-2}_{\xi})$ or more generally, one can take a section $f \in C^\infty(\Ss^{n-2}_{\xi}, \pi^*\E)$, where $\pi : \Ss^{n-1} \rightarrow \left\{0\right\}$ is the projection, we define its \emph{extension of degree $k \in \N$ to the whole sphere} $\Ss^{n-1}$ by the formula:
\begin{equation}
\label{equation:extension-k}
E^{k}_{\xi}f \in L^2(\Ss^{n-1}, \pi^*\E),\quad E^k_{\xi}f\big(\cos(\varphi)\vec{n}(\xi) + \sin(\varphi) u\big) := \sin^k(\varphi) f(u).
\end{equation}
Note that $E^{k}_{\xi}$ extends to $L^2(\Ss^{n-2}_{\xi}, \pi^*\E)$ by continuity since by definition and \eqref{eq:Jacobian}
\begin{equation}\label{eq:E^k-squared}
	\|E^{k}_{\xi}f\|_{L^2(\mathbb{S}^{n-1}, \pi^*\E)}^2 = C_{2k + n -1} \|f\|_{L^2(\mathbb{S}^{n-2}_{\xi}, \pi^*\E)}^2.
\end{equation}
Moreover, we have (cf. \cite[Lemma B.1.1]{Lefeuvre-thesis}):
\begin{lemma}\label{lemma:useful-identity}
For any $f \in C^\infty(\mathbb{S}_\xi^{n-2}, \pi^*\E)$ and $f' \in \otimes^{m'}_S E^* \otimes \E$, and all $m \in \Z_{\geq 0}$, we have:
	\begin{equation}\label{eq:useful-identity}
		C_{m + m' + n - 1} \int_{\mathbb{S}^{n-2}_\xi} \langle{f(u), \pi_{m'}^*f'(u)}\rangle_{\E}. dS_\xi(u) = \langle{\pi_{\ker_{\imath_{\xi^\sharp}}}{\pi_{m'}}_*E^m_\xi(f), f'}\rangle_{\otimes_S^{m'}E^* \otimes \E}.
	\end{equation}
\end{lemma}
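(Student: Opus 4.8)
The plan is to unwind the definition of the adjoint ${\pi_{m'}}_*$ and then evaluate the resulting integral over $\Ss^{n-1}$ in the coordinates $(\varphi,u) \mapsto \cos(\varphi)\vec{n}(\xi) + \sin(\varphi)u$ (with $u \in \mathbb{S}^{n-2}_\xi$) introduced in \S\ref{ssection:restriction}, together with the Jacobian \eqref{eq:Jacobian}. Since $\pi^*\E \to \Ss^{n-1}$ is trivial in the Euclidean model, the Hermitian pairing $\langle\cdot,\cdot\rangle_\E$ simply passes through all integrals and plays no role beyond bilinearity.

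First I would reduce to the case $f' \in \ker \imath_{\xi^\sharp}$. As $\pi_{\ker \imath_{\xi^\sharp}}$ is a self-adjoint projection, the left-hand side of \eqref{eq:useful-identity} equals $\langle {\pi_{m'}}_* E^m_\xi(f), \pi_{\ker \imath_{\xi^\sharp}} f'\rangle_{\otimes_S^{m'}E^* \otimes \E}$. On the other hand, using the orthogonal splitting $\otimes^{m'}_S E^* \otimes \E = (\ran j_\xi \oplus^\perp \ker \imath_{\xi^\sharp})\otimes\E$ recalled in \S\ref{sssection:tensors-riemannian}, write $f' = \pi_{\ker \imath_{\xi^\sharp}} f' + \mc{S}(\xi \otimes g)$ for some $g \in \otimes^{m'-1}_S E^* \otimes \E$; evaluating the symmetrized tensor on the diagonal gives $\lambda_{m'}(\mc{S}(\xi \otimes g))(v) = \langle \xi, v\rangle\,\lambda_{m'-1}(g)(v)$, which vanishes on $\mathbb{S}^{n-2}_\xi = \{v \in \Ss^{n-1} : \langle \xi,v\rangle = 0\}$. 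Hence $\pi_{m'}^* f'$ and $\pi_{m'}^*(\pi_{\ker \imath_{\xi^\sharp}} f')$ agree on $\mathbb{S}^{n-2}_\xi$, so both sides of \eqref{eq:useful-identity} are unchanged when $f'$ is replaced by $\pi_{\ker \imath_{\xi^\sharp}} f'$; thus it suffices to prove the identity for $f' \in \ker \imath_{\xi^\sharp}$, in which case $\pi_{\ker \imath_{\xi^\sharp}} f' = f'$ and the projection can be dropped.

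Next comes a separation of variables. For $f' \in \ker \imath_{\xi^\sharp}$, writing $v = t\,\vec{n}(\xi) + v_H$ with $v_H \in H := \ker \xi$ and expanding $\lambda_{m'} f'(v) = f'(v,\dots,v)$ multilinearly, every term containing a copy of $\vec{n}(\xi) \parallel \xi^\sharp$ in a slot of $f'$ vanishes, so $\lambda_{m'} f'(v) = \lambda_{m'} f'(v_H)$; by $m'$-homogeneity this yields, for $u \in \mathbb{S}^{n-2}_\xi$,
\[
\pi_{m'}^* f'\big(\cos(\varphi)\vec{n}(\xi) + \sin(\varphi)u\big) = \sin^{m'}(\varphi)\,\pi_{m'}^* f'(u).
\]
Unwinding the adjoint and inserting the definition \eqref{equation:extension-k} of $E^m_\xi$ together with the change of variables \eqref{eq:Jacobian}, one gets
\begin{align*}
\langle {\pi_{m'}}_* E^m_\xi(f), f'\rangle_{\otimes_S^{m'}E^* \otimes \E}
&= \langle E^m_\xi(f), \pi_{m'}^* f'\rangle_{L^2(\Ss^{n-1},\pi^*\E)} \\
&= \int_0^\pi \int_{\mathbb{S}^{n-2}_\xi} \sin^{m+m'+n-2}(\varphi)\,\langle f(u), \pi_{m'}^* f'(u)\rangle_\E\, dS_\xi(u)\,d\varphi \\
&= C_{m+m'+n-1} \int_{\mathbb{S}^{n-2}_\xi} \langle f(u), \pi_{m'}^* f'(u)\rangle_\E\, dS_\xi(u),
\end{align*}
where the last equality is the definition \eqref{equation:normalization} of $C_{m+m'+n-1} = \int_0^\pi \sin^{m+m'+n-2}(\varphi)\,d\varphi$. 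This is exactly \eqref{eq:useful-identity}.

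No step is a serious obstacle. The only points deserving care are the reduction to $\ker \imath_{\xi^\sharp}$ (and the elementary identity $\lambda_{m'}(\mc{S}(\xi \otimes g)) = \langle \xi,\bullet\rangle\,\lambda_{m'-1}(g)$, immediate from evaluating the symmetrized tensor on diagonal arguments) and the bookkeeping that lets one factor out $\sin^{m'}(\varphi)$ cleanly from $\pi_{m'}^* f'$ on the slice $\mathbb{S}^{n-2}_\xi$.
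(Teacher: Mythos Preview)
Your proof is correct and follows essentially the same approach as the paper's: both arguments hinge on the separation of variables $\pi_{m'}^*(\pi_{\ker\imath_{\xi^\sharp}}f')(\cos\varphi\,\vec n(\xi)+\sin\varphi\,u)=\sin^{m'}(\varphi)\,\pi_{m'}^*(\pi_{\ker\imath_{\xi^\sharp}}f')(u)$, the Jacobian formula \eqref{eq:Jacobian}, the definition of $E^m_\xi$, and the adjoint relation for ${\pi_{m'}}_*$. The only cosmetic difference is that the paper goes from the left-hand side to the right-hand side (inserting $\pi_{\ker\imath_{\xi^\sharp}}$ directly into the integrand), whereas you first reduce explicitly to $f'\in\ker\imath_{\xi^\sharp}$ and then compute from right to left.
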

\begin{proof}
	The left hand side of \eqref{eq:useful-identity} equals, after using \eqref{equation:normalization}:
	\begin{align*}
		&\int_{\mathbb{S}^{n-2}_\xi} \int_0^\pi \left\langle{\sin^{m}(\varphi)f(u), \pi_{m'}^* \pi_{\ker \imath_{\xi^\sharp}} f'\left(\cos(\varphi).\vec{n}(\xi) + \sin(\varphi).u\right)}\right\rangle_{\E}. \\
		& \hspace{8cm} \sin^{n - 2}(\varphi). d\varphi. dS_\xi(u)\\
		&= \int_{\mathbb{S}^{n - 1}} \langle{E_\xi^mf, \pi_{m'}^* \pi_{\ker \imath_{\xi^\sharp}} f'}\rangle_{\E}. dS = \langle{\pi_{\ker_{\imath_{\xi^\sharp}}}{\pi_{m'}}_*E^m_\xi(f), f'}\rangle_{\otimes_S^{m'}E^* \otimes \E},
	\end{align*}
	where in the first line we used that $\langle{\xi, u}\rangle = 0$ on $\mathbb{S}^{n - 2}_{\xi}$ by definition and in the second equality we used the Jacobian formula \eqref{eq:Jacobian}. This concludes the proof.
\end{proof}

We have the following result on the degree:

\begin{lemma}
\label{lemma:extension}
For all $\xi \in E^*\setminus \left\{ 0 \right\}$, the following holds. Let $f \in C^\infty(\Ss^{n-2}_\xi)$ such that $\Deg(f) \geq m+1$. Then, $\Deg(E^m_{\xi}(f)) \geq m+1$.
\end{lemma}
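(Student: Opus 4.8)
The plan is to argue by contradiction and reduce the statement to the \emph{easy} half of Lemma~\ref{lemma:restriction-1}. The key observation is that, among all the equatorial subspheres $\Ss^{n-2}_\eta \subset \Ss^{n-1}$, the one attached to $\xi$ itself sees the extension $E^m_\xi f$ in the simplest possible way: a vector $v$ lies in $\Ss^{n-2}_\xi$ precisely when the latitude coordinate satisfies $\varphi = \pi/2$ in the decomposition $v = \cos(\varphi)\vec{n}(\xi) + \sin(\varphi) u$ (indeed $\langle \xi, v\rangle = |\xi|\cos\varphi$). Then $\sin^m(\varphi) = 1$, so by the very definition \eqref{equation:extension-k} the restriction $E^m_\xi f|_{\Ss^{n-2}_\xi}$ is nothing but $f$ itself. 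Consequently, any bound on the degree of $E^m_\xi f$ propagates to a bound on the degree of $f$, and this is exactly what we want to contradict.

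Concretely, suppose for contradiction that $\Deg(E^m_\xi f) \leq m$. Then $E^m_\xi f$ is a finite sum of spherical harmonics of degree $\leq m$ on $\Ss^{n-1}$; in particular it is a smooth function, which also legitimizes restricting it to the measure-zero subsphere $\Ss^{n-2}_\xi$ (an operation a priori meaningless for a general $L^2$ class, but harmless for a continuous representative). By the remark above this restriction equals the smooth function $f$. On the other hand, the easy ("only if") direction of Lemma~\ref{lemma:restriction-1}, valid since $n > 2$, says that a function on $\Ss^{n-1}$ of degree $\leq m$ restricts to a function of degree $\leq m$ on every equatorial hypersphere; applied to $\Ss^{n-2}_\xi$ this forces $\Deg(f) \leq m$, contradicting $\Deg(f) \geq m+1$. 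Hence $\Deg(E^m_\xi f) \geq m+1$, as claimed. One can even bypass Lemma~\ref{lemma:restriction-1}: writing $E^m_\xi f = P|_{\Ss^{n-1}}$ for a polynomial $P$ of degree $\leq m$ on $E$, the restriction $P|_{\ker \xi}$ is again a polynomial of degree $\leq m$ on $\ker\xi$, whose restriction to $\Ss^{n-2}_\xi$ is a sum of spherical harmonics of degree $\leq m$ on $\Ss^{n-2}_\xi$; but that restriction is $f$, so $\Deg(f)\leq m$ once more.

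There is essentially no hard step here: the whole proof is a one-line reduction to the already established easy half of Lemma~\ref{lemma:restriction-1}, exploiting the special geometry of the slice $\varphi = \pi/2$. The only point that genuinely deserves a word of care is the one flagged above — restriction of $E^m_\xi f$ to the equator $\Ss^{n-2}_\xi$ only makes sense once $E^m_\xi f$ is known to be continuous, which is precisely what the contradiction hypothesis $\Deg(E^m_\xi f) \leq m$ provides (alternatively, if one prefers to avoid this, run the polynomial-restriction variant described at the end of the previous paragraph, which never restricts an $L^2$ class).
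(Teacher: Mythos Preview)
Your proof is correct and takes a genuinely different route from the paper's. The paper also argues by contradiction assuming $\Deg(E^m_\xi f)\le m$, but instead of restricting to the equator it works at the \emph{North pole} $N=\vec n(\xi)$ (i.e.\ $\varphi=0$): since $\sin^m(\varphi)$ vanishes to order $m$ there, the $(m-1)$-jet of $E^m_\xi f$ at $N$ vanishes, and one can compute the $m$-th differential directly as $\dd^m E^m_\xi(f)_N(Z,\dots,Z)=m!\,r^m f(u)$ for $Z=rZ'$, $u=Z'\in\Ss^{n-2}_\xi$ under the identification $T_N\Ss^{n-1}\cong\ker\xi$. Thus $f=\tfrac{1}{m!}\pi_m^*(\dd^m E^m_\xi(f)_N)$ is the pullback of a symmetric $m$-tensor, forcing $\Deg(f)\le m$. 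Your approach, restricting to the equator $\varphi=\pi/2$ where $\sin^m\varphi=1$ and reading off $E^m_\xi f|_{\Ss^{n-2}_\xi}=f$ directly, is more elementary and avoids any jet computation; it also cleanly factors through the already-proved easy direction of Lemma~\ref{lemma:restriction-1} (and your polynomial-restriction variant makes it fully self-contained). The paper's version has the minor conceptual payoff of explicitly exhibiting the tensor whose $\pi_m^*$ recovers $f$, but for the purposes of the lemma your argument is both shorter and equally rigorous.
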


\begin{proof}
We argue by contradiction. We assume that $E^m_{\xi}(\WW)$ has degree $\leq m$. In particular, it is smooth. Moreover, observe that its $(m-1)$-th jet vanishes at the North pole $N := \left\{ \varphi=0\right\} = \vec{n}(\xi)$. We can therefore compute its differential of degree $m$. Let $0 \neq Z \in T_N \Ss^{n-1}$, $Z = r Z'$, where $|Z'|=1$. Note that in the $(\varphi,u)$-coordinates, $Z'$ corresponds to $\partial_{\varphi}$ at $(\varphi=0,u)$ where $u = Z' \in \Ss^{n-2}_{\xi}$. Then:
\begin{equation}\label{eq:mjet}
\begin{split}
\dd^m E^m_{\xi}(\WW)_N (Z, ..., Z) & = r^m \partial_\varphi^m (E^m_{\xi}(\WW)) (0,u) \\
&  = r^m \partial_\varphi^m\big|_{\varphi = 0}(\sin^m(\varphi) \WW|_{\Ss^{n-2}_{\xi}}(u)) = m! \times r^m \WW|_{\Ss_{\xi}^{n-2}}(u).
\end{split}
\end{equation}
There is a natural identification between $T_N \mathbb{S}^{n-1}$ and $\ker \xi = \vec{n}(\xi)^\perp$, and with this identification $\dd^m E^m_{\xi}(\WW)_N$ defines a symmetric $m$-tensor on $\ker \xi$. Then \eqref{eq:mjet} says $\WW|_{\Ss_{\xi}^{n-2}} = \frac{1}{m!} \pi_m^*(\dd^m E^m_{\xi}(\WW)_N)$ has degree $\leq m$, which is a contradiction.
\end{proof}

\subsection{Multiplication of spherical harmonics}

We end this section with standard results on multiplication of spherical harmonics:

\begin{lemma}
\label{lemma:multiplication}
Let $m, k \in \Z_{\geq 0}$ and assume without loss of generality that $m \geq k$. If $f \in \Omega_m(E), f' \in \Omega_{k}(E)$, then:
\[
f \times f' \in \oplus^k_{\ell = 0} \Omega_{m+k-2 \ell}(E).
\]
\end{lemma}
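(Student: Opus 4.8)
The statement is the classical Clebsch--Gordan-type decomposition for spherical harmonics: the product of two spherical harmonics of degrees $m\geq k$ decomposes into harmonics of degrees $m+k, m+k-2, \dots, m-k$. The natural route is to pass to homogeneous harmonic polynomials via the isomorphism $\lambda_m$ of \S\ref{sssection:tensors-euclidean}. Let $P\in\mathbf{H}_m(E)$ and $Q\in\mathbf{H}_k(E)$ be the harmonic homogeneous polynomials restricting to $f$ and $f'$ on $\Ss^{n-1}$. Their product $PQ$ is a homogeneous polynomial of degree $m+k$, so by the graded decomposition \eqref{eq:harmonicpolys} it can be written as $PQ=\sum_{j\geq 0}|v|^{2j}R_{m+k-2j}$ with $R_{m+k-2j}\in\mathbf{H}_{m+k-2j}(E)$. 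Restricting to the unit sphere $|v|=1$ kills the factors $|v|^{2j}$, so $f\times f'=\sum_{j\geq 0}R_{m+k-2j}|_{\Ss^{n-1}}\in\bigoplus_{j\geq 0}\Omega_{m+k-2j}(E)$.

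The remaining point is to show the sum truncates at $j=k$, i.e. no harmonic component of degree $<m-k$ appears. First I would observe that by parity $PQ$ contains only monomials of degree $\equiv m+k \pmod 2$, so the degrees appearing are $m+k, m+k-2, \dots$, all of the same parity as $m+k$; in particular the index set is already $\{m+k, m+k-2, \dots\}$. To get the lower cutoff at $m-k$, the cleanest argument is an orthogonality/degree-raising one: a harmonic polynomial $R_\ell$ of degree $\ell$ appears in $PQ$ iff $\langle PQ, |v|^{2j} S\rangle \neq 0$ for $S\in \mathbf{H}_\ell(E)$ and suitable $j$; equivalently, $\langle f f', h\rangle_{L^2(\Ss^{n-1})}\neq 0$ for some $h\in\Omega_\ell$. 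But $\langle ff', h\rangle = \langle f, \bar{f'} h\rangle$, and $\bar{f'}h$ is a product of spherical harmonics of degrees $k$ and $\ell$; inductively (or by the same polynomial argument applied to $Q$ and $S$) this product has top degree $\leq k+\ell$, hence has no component of degree $m$ unless $k+\ell\geq m$, i.e. $\ell\geq m-k$. This forces $R_\ell=0$ for $\ell<m-k$, giving exactly $f\times f'\in\bigoplus_{\ell=0}^{k}\Omega_{m+k-2\ell}(E)$.

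Alternatively, and perhaps more self-contained, one can prove the lower cutoff directly on polynomials: if $R_\ell$ with $\ell<m-k$ occurred in $PQ=\sum_j|v|^{2j}R_{m+k-2j}$, then pairing against $R_\ell$ in the inner product on $\mathbf{P}_{m+k}(E)$ and using that $\langle |v|^{2j}R_a, R_b\rangle$ vanishes unless $a=b$ and $j=0$ (the $|v|^{2j}\mathbf{H}_a$ summands are mutually orthogonal by \eqref{eq:harmonicpolys}), one reduces to showing $\langle PQ, R_\ell\rangle=0$. Writing the inner product on polynomials as $\langle A,B\rangle = (A(\partial)\bar{B})(0)$ (the Fischer/apolar pairing), $\langle PQ, R_\ell\rangle = (P(\partial)Q(\partial)\bar{R_\ell})(0)$; since $\deg R_\ell = \ell < m-k = \deg P - \deg Q$, applying $Q(\partial)$ first drops the degree of $\bar R_\ell$ to $\ell - k < m - 2k \leq m = \deg P$... this needs $\deg Q(\partial)\bar R_\ell = \ell-k < \deg P$ wait $\deg P = m$, and $\ell - k < m-2k \le m$, so $P(\partial)$ of a polynomial of degree $<m$ evaluated at $0$ is $0$. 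Hence $R_\ell=0$.

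\textbf{Main obstacle.} The decomposition into degrees of the correct parity and the upper bound $m+k$ are immediate from homogeneity; the only real content is the \emph{lower} cutoff at $m-k$ (nonvanishing would \emph{a priori} be allowed down to degree $0$). The cleanest resolution is the apolar-pairing / degree-count argument above; the bookkeeping with the Fischer inner product $\langle A,B\rangle=(A(\partial)\overline B)(0)$ and the orthogonality of the summands $|v|^{2j}\mathbf{H}_a(E)$ is the one step requiring a little care, but it is entirely standard (see e.g. the representation theory of $\mathrm{SO}(n)$ on spherical harmonics referenced via \cite{Helgason-00}). Since the statement is explicitly labelled ``standard'', I would keep the written proof to a few lines: invoke $\lambda_m$ and \eqref{eq:harmonicpolys}, note the parity restriction, and cite the Clebsch--Gordan decomposition for $\mathrm{SO}(n)$ for the precise range.
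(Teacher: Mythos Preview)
Your first argument for the lower cutoff is correct and is a genuinely different route from the paper's. You use the $L^2(\Ss^{n-1})$ pairing: for $h\in\Omega_\ell$ with $\ell<m-k$, $\langle ff',h\rangle=\langle f,\bar{f'}h\rangle$, and since $\bar{f'}h$ is the restriction of a homogeneous polynomial of degree $k+\ell<m$, it has no $\Omega_m$ component and the pairing vanishes. This only uses the \emph{upper} bound (which is immediate), so there is no circularity. The paper instead works entirely on $E$: it iterates the Leibniz rule to get $\Delta^{\ell}(PQ)=2^{\ell}\sum_{|\alpha|=\ell}\partial^\alpha P\,\partial^\alpha Q$, which vanishes for $\ell\ge k+1$ since $Q$ has degree $k$; together with the elementary fact that $\ker\Delta^{k+1}|_{\mathbf P_{m+k}}=\oplus_{j\le k}|v|^{2j}\mathbf H_{m+k-2j}$ this gives the cutoff. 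Your argument is arguably slicker; the paper's avoids any inner product and stays purely differential.

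Your second (Fischer/apolar) argument, however, has a real gap as written. You pair $PQ\in\mathbf P_{m+k}$ against $R_\ell\in\mathbf H_\ell$ with $\ell<m+k$, but that pairing is \emph{automatically} zero by homogeneity and carries no information about whether $R_\ell$ occurs in the decomposition. To extract the $R_\ell$-component you must pair $PQ$ against $|v|^{2j_0}R_\ell$ with $j_0=(m+k-\ell)/2$; computing $\langle PQ,|v|^{2j_0}R_\ell\rangle = R_\ell(\partial)\Delta^{j_0}(\overline{PQ})(0)$ then requires showing $\Delta^{j_0}(PQ)=0$ for $j_0\ge k+1$, which is exactly the paper's computation. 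So the apolar route, once repaired, collapses to the paper's proof rather than giving an independent one. If you want a short written proof, either keep your first argument or adopt the paper's two-line Laplacian iteration; drop the Fischer-pairing paragraph.
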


\begin{proof}
First of all, extending $f$ and $f'$ by $m$- and $k$-homogeneity to $E$, respectively, we directly see that $f \times f'$ is a homogeneous polynomial of degree $m+k$ and so by \eqref{eq:harmonicpolys}:
\[
f \times f' \in \oplus_{\ell \geq 0} |v|^{2 \ell} \mathbf{H}_{m+k-2 \ell}(E).
\]
The only non-trivial part is to show that the projection onto
\[
\oplus_{\ell \geq k+1} |v|^{2\ell} \mathbf{H}_{m+k-2 \ell}(E)
\]
is zero. For that it suffices to show that $\Delta^{\ell}(f \times f') = 0$ as long as $\ell \geq k+1$. Observe that:
\[
\Delta(f \times f') = (\Delta f) \times f' + 2 \nabla f \cdot \nabla f' + f \times (\Delta f') = 2 \sum_{i=1}^n \partial_{v_i}f \times \partial_{v_i}f',
\]
and thus by iteration:
\[
\Delta^{\ell}(f \times f') = 2^{\ell} \sum_{|\alpha| = \ell} \partial^\alpha_v f \times \partial^\alpha_v f',
\]
which clearly vanishes for $\ell \geq k+1$ as $f'$ is a polynomial of degree $k$.
\end{proof}

In the particular case where $k=1$, the previous lemma shows that $f \in \Omega_1(E)$ gives rise to two operators $f_\pm$ defined in the following way: if $\WW \in \Omega_m(E)$, then $f \times \WW = f_-\WW + f_+ \WW$ with $f_\pm \WW \in \Omega_{m \pm 1}(E)$. Moreover, by extending $f$ and $\WW$ as $1$- and $m$-homogeneous harmonic polynomials denoted by the same letter, we get ($\nabla$ denotes the total gradient of $E$)
\begin{equation}\label{eq:f_-}
	f_- \WW = \frac{1}{n + 2(m - 1)} \big(\nabla f \cdot \nabla \WW\big)|_{\mathbb{S}^{n-1}}.
\end{equation}
In fact, for non-zero $f$ the map $f_-: \Omega_m(E) \to \Omega_{m - 1}(E)$ is surjective, implying also that $f_+: \Omega_m(E) \to \Omega_{m + 1}(E)$ is injective (see \cite[Lemma 2.3]{Cekic-Lefeuvre-20}).

\begin{lemma}
\label{lemma:multiplication-surjective}
Assume $n \geq 2$ and let $m, k \in \Z_{\geq 0}$. Consider $\WW \in C^\infty(\Ss^{n-1})$ such that $\mathrm{deg}(\WW) \geq m$. Then, there exists $f \in \mathbf{S}_k(E)$ such that $\mathrm{deg}(f. \WW) \geq m+k$.
\end{lemma}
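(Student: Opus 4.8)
The plan is to take $f$ to be the restriction to $\Ss^{n-1}$ of the $k$-th power $\xi^k$ of a single fixed nonzero covector $\xi \in E^* \setminus \{0\}$, and to track the top-degree spherical harmonic component of $\WW$ through the $k$ successive multiplications by $\xi$, using repeatedly that the raising operator $\xi_+$ is injective. First one observes that $f := r(\xi^k)$ genuinely lies in $\mathbf{S}_k(E)$: this is immediate from the graded isomorphism $\pi_k^* = r \circ \lambda_k \colon \otimes^k_S E^* \to \mathbf{S}_k(E)$ of \S\ref{sssection:tensors-euclidean}, since $\xi^k = \lambda_k(\xi^{\otimes k})$ is a homogeneous polynomial of degree $k$; moreover $f \WW = \xi^k \WW$, all products being taken pointwise on $\Ss^{n-1}$.

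Assume first that $d := \mathrm{deg}(\WW) < \infty$, and write $\WW = \sum_{j \leq d} \WW_j$ with $\WW_j \in \Omega_j(E)$ and $\WW_d \neq 0$. By Lemma \ref{lemma:multiplication} (in the special case of multiplication by an element of $\Omega_1(E)$), $\xi$ maps $\Omega_j(E)$ into $\Omega_{j-1}(E) \oplus \Omega_{j+1}(E)$, the component into $\Omega_{j+1}(E)$ being the operator $\xi_+$, which is injective because $\xi \neq 0$ (this is recorded in the discussion following Lemma \ref{lemma:multiplication}). One then proves by a straightforward induction on $p \geq 0$ that $\xi^p \WW$ has degree exactly $d+p$, with $\Omega_{d+p}$-component equal to $\xi_+^{\, p}\WW_d \neq 0$: in $\xi \cdot (\xi^p\WW)$ only the top-degree component of $\xi^p\WW$ can contribute in degree $d+p+1$, and the injective map $\xi_+$ sends it to something nonzero. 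Specializing to $p = k$ gives $\mathrm{deg}(f\WW) = \mathrm{deg}(\xi^k\WW) = d+k \geq m+k$.

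It remains to handle the degenerate case $\mathrm{deg}(\WW) = +\infty$, which does not actually use the preceding argument. Fix orthonormal coordinates $(x_1,\dots,x_n)$ on $E$, so that $\sum_{i=1}^n x_i^2 \equiv 1$ on $\Ss^{n-1}$ and hence $\WW = \sum_{i=1}^n x_i^2 \WW$. If $\mathrm{deg}(x_i\WW)$ were finite for every $i$, then $\mathrm{deg}(x_i^2 \WW) \leq 1 + \mathrm{deg}(x_i\WW) < \infty$ and therefore $\mathrm{deg}(\WW) \leq \max_i \mathrm{deg}(x_i^2\WW) < \infty$, a contradiction; hence $\mathrm{deg}(x_{i_1}\WW) = +\infty$ for some index $i_1$. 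Iterating this $k$ times produces indices $i_1,\dots,i_k$ with $\mathrm{deg}(x_{i_k}\cdots x_{i_1}\WW) = +\infty$, and then $f := r(x_{i_k}\cdots x_{i_1}) \in \mathbf{S}_k(E)$ satisfies $\mathrm{deg}(f\WW) = +\infty \geq m+k$.

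The argument is essentially soft: all the ingredients — Lemma \ref{lemma:multiplication}, the injectivity of $\xi_+$ for $\xi \neq 0$, and the graded surjectivity of $\pi_k^*$ — are already available, so I do not anticipate a genuine obstacle. The only point that requires a little care is the inductive bookkeeping of the top-degree component in the finite-degree case; note that this route deliberately avoids the (true but more delicate) statement that $\xi^k\WW$ has infinite degree whenever $\WW$ does.
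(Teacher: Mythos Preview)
Your proof is correct but takes a different route from the paper's. The paper proves the $k=1$ case by picking \emph{any} $\ell \geq m$ with $\WW_\ell \neq 0$ and showing by a contradiction argument (using Euler's formula and the Laplacian) that some coordinate $v_i$ has $(v_i)_+\WW_\ell + (v_i)_-\WW_{\ell+2} \neq 0$; this handles finite and infinite degree uniformly, since only the $\ell$- and $(\ell+2)$-components of $\WW$ enter. Your finite-degree argument is cleaner: by taking $\ell = d = \deg(\WW)$ you kill the $\WW_{\ell+2}$ term, and then the bare injectivity of $\xi_+$ (already recorded in the text) gives $\deg(\xi^k\WW) = d+k$ for \emph{every} nonzero $\xi$, with no computation needed. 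The price is the separate infinite-degree case, which you dispatch by the elementary $\sum_i x_i^2 = 1$ trick. So your approach trades the paper's unified-but-computational argument for a case split with two soft arguments; in particular, in the finite-degree case you get the stronger conclusion that a single power $\xi^k$ works for any $\xi \neq 0$, whereas the paper only produces a product $f_k\cdots f_1$ of possibly different linear forms.
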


Equivalently, there exists $f \in \otimes^k_S E^*$ such that $\mathrm{deg}(\pi_k^*f. \WW) \geq m+k$. 

\begin{proof}
We first prove the case $k=1$; the cases $m = 0$ or $k = 0$ are trivial so we assume $m \geq 1$ from now on. We write $\WW = \sum_{j = 0}^\infty \WW_j$, where $\WW_j \in \Omega_j$, and denote by the same letter the harmonic extension of $\WW_j$ (as a $j$-homogeneous polynomial) to $\mathbb{R}^n$. Take $\ell \geq m$ such that $\WW_{\ell} \neq 0$, and assume for any $i = 1, \dotso, n$ that $(v_i)_+ \WW_{\ell} + (v_i)_- \WW_{\ell + 2} = 0$, which by \eqref{eq:f_-} is equivalent to:
	 \[(n + 2(\ell + 1))^{-1} \partial_{v_i} \WW_{\ell + 2} + v_i \WW_{\ell} - |v|^2 (n + 2(\ell - 1))^{-1} \partial_{v_i} \WW_{\ell} = 0.\]
	 Multiplying by $v_i$ and summing over $i$, we obtain using Euler's formula (i.e. homogeneity)
	 \[-\frac{\ell + 2}{n + 2(\ell + 1)} \WW_{\ell + 2} = \frac{n + \ell -2}{n + 2(\ell -1)}  |v|^2 \WW_{\ell}.\]
	Applying $\Delta$, this contradicts the fact that $\WW_{\ell} \neq 0$.	 
	 
For general $k \in \Z_{\geq 1}$, by iteratively applying the case $k = 1$ above, there exist	 $f_1, ...,f_k \in \Omega_1(E)$ such that $\mathrm{deg}(f_k \cdots f_1 \WW) \geq m+k$. Since $f := f_k \cdots f_1 \in \mathbf{S}_k(E)$, this completes the proof.
\end{proof}

Note that there is a straightforward extension to the bundle case (just by applying the previous lemma coordinate-wise), that is, when considering sections of a trivial bundle $\pi^*\E \to \Ss^{n-1}$, where $\pi : \Ss^{n-1} \to \left\{0\right\}$ is the constant map. We record it here and leave the proof as an exercise for the reader:

\begin{lemma}
\label{lemma:multiplication-surjective-bundle}
Let $m, k \in \Z_{\geq 0}$. Consider $\WW \in C^\infty(\Ss^{n-1},\pi^*\E)$ such that $\mathrm{deg}(\WW) \geq m$. Then, there exists $f \in \mathbf{S}_k(E) \otimes \End_{\mathrm{sk}}(\E)$ such that $\mathrm{deg}(f. \WW) \geq m+k$.
\end{lemma}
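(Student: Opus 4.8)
The plan is to reduce to the scalar statement of Lemma~\ref{lemma:multiplication-surjective} by working in a global frame and then isolating a single component with a well-chosen constant endomorphism. Since $\pi:\Ss^{n-1}\to\left\{0\right\}$ is the constant map, the bundle $\pi^*\E\to\Ss^{n-1}$ is trivial, and a choice of orthonormal basis $(e_1,\dots,e_r)$ of the fibre $\E$ yields a global orthonormal frame. Write $\WW=\sum_{j=1}^r \WW_j\otimes e_j$ with $\WW_j\in C^\infty(\Ss^{n-1})$. By definition of the degree of a section, $\mathrm{deg}(\WW)=\max_{j}\mathrm{deg}(\WW_j)$, so the hypothesis $\mathrm{deg}(\WW)\geq m$ gives an index $j_0$ with $\mathrm{deg}(\WW_{j_0})\geq m$.

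First I would apply Lemma~\ref{lemma:multiplication-surjective} to the scalar function $\WW_{j_0}$: there exists $f_0\in\mathbf{S}_k(E)$ with $\mathrm{deg}(f_0\WW_{j_0})\geq m+k$. Next, let $P_{j_0}:=e_{j_0}\otimes e_{j_0}^*\in\End(\E)$ be the orthogonal projection onto the line $\C e_{j_0}$; since $P_{j_0}$ is self-adjoint, $A:=i\,P_{j_0}$ is skew-Hermitian, i.e. $A\in\End_{\mathrm{sk}}(\E)$. Set $f:=f_0\otimes A\in\mathbf{S}_k(E)\otimes\End_{\mathrm{sk}}(\E)$. Then, pointwise on $\Ss^{n-1}$,
\[
f.\WW=\sum_{j=1}^r (f_0\WW_j)\otimes (Ae_j)=(f_0\WW_{j_0})\otimes (i\,e_{j_0}),
\]
because $Ae_j=0$ for $j\neq j_0$ while $Ae_{j_0}=i\,e_{j_0}$. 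Hence $\mathrm{deg}(f.\WW)=\mathrm{deg}(f_0\WW_{j_0})\geq m+k$, which is the claim.

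There is no genuine obstacle here; the only point requiring (minor) care is the choice of endomorphism. One cannot simply multiply $\WW$ component-wise, since for a general $A\in\End_{\mathrm{sk}}(\E)$ the top-degree parts of the summands in $\sum_j (f_0\WW_j)\otimes(Ae_j)$ could cancel; taking $A$ to be a multiple of the rank-one projection onto the "good" component $e_{j_0}$ both avoids this cancellation and, after inserting the factor $i$, keeps $A$ inside the skew-Hermitian endomorphisms. Alternatively one could iterate the $k=1$ case as in the proof of Lemma~\ref{lemma:multiplication-surjective}, but the direct reduction above is shorter.
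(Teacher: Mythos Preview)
Your proof is correct and follows precisely the approach the paper indicates (it leaves this lemma as an exercise with the hint ``applying the previous lemma coordinate-wise''): pick a component of highest degree, apply Lemma~\ref{lemma:multiplication-surjective} to it, and use a rank-one skew-Hermitian endomorphism to isolate that component. One tiny remark: for $f=f_0\otimes iP_{j_0}$ to take values in $\End_{\mathrm{sk}}(\E)$ pointwise you want $f_0$ real-valued, which the proof of Lemma~\ref{lemma:multiplication-surjective} (products of coordinate functions) indeed provides.
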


\section{Pseudodifferential nature of perturbed generalized X-ray transforms}

\label{section:pdo}

Under a weaker form, the results of this section can be found in \cite{Guillarmou-17-1,Gouezel-Lefeuvre-19} and in \cite[Chapter 2]{Lefeuvre-thesis} where the principal symbol of the generalized X-ray transform is computed in details. We here need a more general result where we ``sandwich" (pseudo)differential operators (we recall that the constants $C_\bullet$ were defined in \eqref{equation:normalization}):

\begin{proposition}
\label{proposition:sandwich}
Let $P_R,P_L : C^\infty(SM,\pi^* \E) \rightarrow C^\infty(SM,\pi^*\E)$ be differential operators of degree $m_R,m_L \geq 0$ and fix $m_1,m_2 \in \N_0$. Then the operator
\[
A_{P_R,P_L} := {\pi_{m_1}}_* P_L \mc{I}_{\nabla^{\E}} P_R \pi_{m_2}^*,
\]
is a classical\footnote{See below \eqref{equation:pdo-expansion} for a definition.} pseudodifferential operator of order $m := m_R + m_L -1$ in
\[
A_{P_R,P_L} \in \Psi^m(M,\otimes^{m_2}_S T^*M \otimes \E \rightarrow \otimes^{m_1}_S T^*M \otimes \E).
\]
Moreover, its principal symbol satisfies, for any $f \in \otimes^{m_2}_S T_x^*M \otimes \E_x$ and $f' \in \otimes^{m_1}_S T_x^*M \otimes \E_x$:
\begin{equation}\label{eq:symbol-1}
\begin{split}
& \langle \sigma_{A_{P_R,P_L}}(x,\xi) f, f' \rangle_{\otimes^{m_1}_S T^*M_x \otimes \E_x} \\
& = \dfrac{2 \pi}{|\xi|} \int_{\Ss^{n-2}_\xi} \left\langle \sigma_{P_R}\big((x,u),\xi_{\HH}(x,u)\big) (\pi_{m_2}^* f(u)), \sigma_{P^*_L}\big((x,u),\xi_{\HH}(x,u)\big)(\pi_{m_1}^* f'(u)) \right\rangle_{\E_x} \\
& \hspace{10cm} \dd S_\xi(u),
\end{split}
\end{equation}
where $\xi_{\HH}(x,u) := \xi\left(\dd_{x,u} \pi(\bullet)\right)$. More explicitly, the principal symbol of $A_{P_R, P_L}$ is given by the formula, for any $m \in \mathbb{N}_0$:
\begin{equation}
\begin{split}
\label{eq:symbol-2}
	& \sigma_{A_{P_R, P_L}}(x, \xi) f \\
	& = \frac{2\pi}{|\xi|} C_{m_1 + m + n - 1}^{-1} \pi_{\ker \imath_{\xi^\sharp}} {\pi_{m_1}}_* E_\xi^m \Big[\sigma_{P_L P_R} \big(x, u, \xi_{\HH}(x, u)\big)
	 (\pi_{m_2}^* \pi_{\ker \iota_{\xi^\sharp}} f)\Big].
	 \end{split}
\end{equation}
\end{proposition}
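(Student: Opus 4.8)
The plan is to reduce the statement to a microlocal factorization of $\mc{I}_{\nabla^{\E}} = \RR_+ + \RR_-$ near the conormal directions, then compute the principal symbol by a stationary-phase/transport-equation analysis. First I would recall the key geometric input: by \eqref{equation:transverse} (and its manifold-with-boundary analogue following from (NCP) and (HTS)), the stable and unstable bundles $E_s, E_u$ of the geodesic flow are transverse to the vertical subspace $\V$, hence the conormal bundle $N^*(\text{flow-invariant distributions})$ avoids the "bad" directions; this is precisely what makes $R_{\pm,0}$ (and more generally the Schwartz kernel of $\mc{I}_{\nabla^{\E}}$) have wavefront set contained in the conormal of the diagonal union the (anti)stable/unstable directions, and only the conormal-to-diagonal piece contributes to the principal symbol after the $\pi_{m_i}^*, {\pi_{m_i}}_*$ averaging. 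Following \cite{Guillarmou-17-1} and \cite[Chapter 2]{Lefeuvre-thesis}, I would write $\mc{I}_{\nabla^{\E}}$ microlocally near a point $(x_0,\xi_0)\in T^*M\setminus\{0\}$ as a Fourier integral operator associated to the flowout, split the time integral $\int_0^{\pm\infty} e^{-t\X}\,dt$ into a short-time part $|t|<\delta$ and a long-time part, observe that the long-time part contributes a smoothing operator after the vertical averaging (because $\dd\varphi_t(\V)$ becomes transverse to $\V$ by \eqref{equation:conjugate}), and that the short-time part is, up to lower order, the free transport operator whose kernel is the delta distribution on the geodesic through $(x,v)$.

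Next I would carry out the symbol computation. Conjugating $A_{P_R,P_L}$ by $\pi_{m_i}^*$ turns the base-manifold operator into one on $SM$; the operators $P_R, P_L$ contribute their principal symbols $\sigma_{P_R}, \sigma_{P_L}$ evaluated along the geodesic, and the horizontal covector $\xi_{\HH}(x,u) = \xi(\dd_{x,u}\pi(\bullet))$ appears because, along the geodesic direction $u$, the phase of the oscillatory integral differentiates in the base variable only. Localizing the integral over the geodesic segments emanating from $(x,u)$ and integrating in $u$ over $S_xM$, the leading contribution comes from the set where the geodesic stays conormal to $\xi$, i.e. $u\in\Ss^{n-2}_\xi$; the transverse directions are integrated out by stationary phase producing the Jacobian factor and the $2\pi/|\xi|$ prefactor (the $|\xi|$ coming from the scaling of the phase $t\langle\xi,\cdot\rangle$ and the $2\pi$ from the one-dimensional Fourier inversion in the time variable, together with the two contributions from $\RR_+$ and $\RR_-$). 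This yields exactly \eqref{eq:symbol-1}, pairing $\sigma_{P_R}(\pi_{m_2}^*f(u))$ against $\sigma_{P_L^*}(\pi_{m_1}^*f'(u))$ in $\E_x$. To pass to the more explicit form \eqref{eq:symbol-2}, I would use the adjoint identity $\langle \sigma_{P_L^*} a, b\rangle = \langle a, \sigma_{P_L} b\rangle$ to rewrite the integrand as $\langle \sigma_{P_L P_R}\big(x,u,\xi_{\HH}(x,u)\big)(\pi_{m_2}^*\pi_{\ker\imath_{\xi^\sharp}}f)(u),\, \pi_{m_1}^* f'(u)\rangle_{\E_x}$ (inserting $\pi_{\ker\imath_{\xi^\sharp}}$ harmlessly since $\pi_{m_i}^*$ already kills the $j_\xi$-range, by \eqref{eq:sigma-proj-ker}), and then apply the extension-operator identity \eqref{eq:useful-identity} of Lemma \ref{lemma:useful-identity} with $m'=m_1$ and $m$ equal to the degree of the extension: the left-hand side of \eqref{eq:useful-identity} is, up to the normalization $C_{m_1+m+n-1}$, exactly the $\Ss^{n-2}_\xi$-integral, and the right-hand side rewrites it as a pairing with $\pi_{\ker\imath_{\xi^\sharp}}{\pi_{m_1}}_* E_\xi^m[\cdots]$ on the base. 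Matching the normalization constants from \eqref{eq:E^k-squared} and \eqref{equation:normalization} gives \eqref{eq:symbol-2}.

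The remaining verifications are that $A_{P_R,P_L}$ is genuinely a classical pseudodifferential operator of the stated order $m=m_R+m_L-1$ — this follows because differentiating under the time integral shows that each of $P_L\RR_\pm(0)P_R$ has a full symbol admitting an asymptotic expansion in homogeneous terms, the subleading ones coming from curvature and the subprincipal parts of $P_R,P_L$, and the long-time tails are residual — and that the order is correct: $\pi_{m_i}^*$ and ${\pi_{m_i}}_*$ have order $0$, $\mc{I}_{\nabla^{\E}}$ has order $-1$ (it is, up to the rank-one projection $\Pi_+$ in the untwisted case, the sum $R_{+,0}+R_{-,0}$ which smooths by one derivative in the elliptic directions), and $P_R,P_L$ contribute $m_R,m_L$. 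The main obstacle, as usual in this circle of ideas, is the careful justification that the long-time/off-conormal part of the flowout FIO contributes no term of order $\geq m$: this is where the Anosov (or hyperbolic-trapped-set) hypothesis and the transversality \eqref{equation:transverse}–\eqref{equation:conjugate} are essential, and it requires either the anisotropic-Sobolev-space propagation estimates of Pollicott–Ruelle theory (to control $\RR_\pm(0)$ globally) or, more classically, a decomposition into a finite sum of short-time pieces plus a smoothing remainder as in \cite[Chapter 2]{Lefeuvre-thesis}. Everything else is a (lengthy but routine) stationary-phase bookkeeping of Jacobians and normalization constants.
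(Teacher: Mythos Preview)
Your proposal is essentially correct and follows the same strategy as the paper: reduce to a short-time piece $\mc{I}_\eps = \int_{-\eps}^{\eps} e^{tX}\,dt$ modulo smoothing (using the wavefront set of $R_{\pm,0}$ together with the transversality conditions \eqref{equation:transverse}--\eqref{equation:conjugate}), then compute the symbol by stationary phase, and finally obtain \eqref{eq:symbol-2} from \eqref{eq:symbol-1} via Lemma~\ref{lemma:useful-identity}. The paper's implementation is somewhat more concrete than your FIO-factorization language: it tests $A_{P_R,P_L}$ directly against Lagrangian states $e^{iS/h}\widetilde f$ and uses H\"ormander's characterization \eqref{equation:pdo-expansion} of classical $\Psi$DOs; after passing to coordinates $(t,\varphi,u)$ on $(-\eps,\eps)\times S_{x_0}M$ with $v=\cos\varphi\,\vec n(\xi_0)+\sin\varphi\,u$, the phase $\Phi_u(t,\varphi)=S(\pi(\varphi_t(x_0,v)))$ has a nondegenerate critical point at $(t,\varphi)=(0,\pi/2)$ with Hessian determinant $-|\xi_0|^2$, and a two-dimensional stationary phase (not a one-dimensional Fourier inversion in $t$ as you wrote) produces the factor $2\pi/|\xi_0|$ directly, then one integrates the resulting amplitude over $u\in\Ss^{n-2}_{\xi_0}$. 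This concrete route also makes the full classical expansion and the remainder bounds \eqref{equation:negligible} transparent without invoking FIO composition theorems, but the underlying mechanism is exactly what you describe.
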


\begin{remark}\rm This was originally proved with $P_L = P_R = \mathbbm{1}$ in \cite{Guillarmou-17-1}, see also \cite{Lefeuvre-thesis}. Note that one could actually take $P_R,P_L$ in Proposition \ref{proposition:sandwich} to be pseudodifferential of arbitrary order (that only makes the proof slightly longer but the idea is the same).
\end{remark}

In the following, we will refer to this result as the sandwich Proposition \ref{proposition:sandwich}. In the case where $\E = \C \times M$, the formula reads (using that $\sigma_{P_L^*} = \overline{\sigma_{P_L}}$):
\[
\begin{split}
&\langle \sigma_{A_{P_R,P_L}}(x,\xi) f, f' \rangle_{\otimes^{m_1}_S T^*M_x \otimes \E_x}\\
& = \dfrac{2 \pi}{|\xi|} \int_{\Ss^{n-2}_\xi}  \sigma_{P_R}\big((x,u),\xi_{\HH}(x,u)\big) \sigma_{P_L}\big((x,u),\xi_{\HH}(x,u)\big). \pi_{m_2}^* f(u). \overline{\pi_{m_1}^* f'(u)} \\
& \hspace{11cm} \dd S_\xi(u).
\end{split}
\]
We will only prove Proposition \ref{proposition:sandwich} in the case of the trivial line bundle with trivial connection in order to simplify the discussion; the generalization to the twisted case is straightforward modulo some tedious notation. We also make the following important remark:

\begin{remark}
\label{remark:lift}
Proposition \ref{proposition:sandwich} can also be generalized by considering differential operators
\[
\begin{split}
& P_R : C^\infty(SM, \otimes^{m_2}_S T^*(SM)) \to C^\infty(SM), \\
& P_L : C^\infty(SM) \rightarrow C^\infty(SM, \otimes^{m_1}_S T^*(SM)),
\end{split}
\]
(of degree $m_R,m_L \geq 0$) and looking at the operator:
\[
A_{P_R,P_L} := {\pi_{m_1,\mathrm{Sas}}}_* P_L \mc{I} P_R \pi_{m_2,\mathrm{Sas}}^*,
\]
where $\pi_{m_2,\mathrm{Sas}}^*$ denotes the Sasaki lift introduced in \eqref{equation:lift-sasaki}. The same proof shows that this operator is pseudodifferential of order $m_R + m_L -1$ with principal symbol satisfying:
\begin{equation}%\label{eq:symbol-2}
\begin{split}
& \langle \sigma_{A_{P_R,P_L}}(x,\xi) f, f' \rangle_{\otimes^{m_1}_S T^*M_x} \\
& = \dfrac{2 \pi}{|\xi|} \int_{\Ss^{n-2}_\xi}  \sigma_{P_R}\big((x,u),\xi_{\HH}(x,u)\big) (\pi_{m_2,\mathrm{Sas}}^* f(u)). \overline{\sigma_{P^*_L}\big((x,u),\xi_{\HH}(x,u)\big)(\pi_{m_1,\mathrm{Sas}}^* f'(u))} \\
& \hspace{11cm} \dd S_\xi(u),
\end{split}
\end{equation}
where $\xi_{\HH}(x,u) := \xi\left(\dd_{x,u} \pi(\bullet)\right)$. We leave this claim as an exercise for the reader.
\end{remark}

First of all, let us fix $\varepsilon > 0$ and a cut off function $\chi \in C_0^\infty(\mathbb{R}; [0, 1])$, symmetric around zero, such that 
\[
	\chi(t) = 1,\,\, t \in \left[-\frac{\varepsilon}{2}, \frac{\varepsilon}{2}\right], \quad \chi(t) = 0,\,\, |t| \geq \varepsilon.
\]
We set (here $e^{tX} = \varphi_t^*$):
\[
	\mc{I}_\eps := \int_{-\eps}^{+\eps} \chi(t) e^{tX} \dd t.
\]

\begin{lemma}
\label{lemma:simplification}
The following property holds:
\[
{\pi_{m_1}}_* P_L \mc{I} P_R \pi_{m_2}^* - {\pi_{m_1}}_* P_L \mc{I}_\eps P_R \pi_{m_2}^* \in \Psi^{-\infty}(M, \otimes^{m_2}_S T^*M \rightarrow \otimes^{m_1}_S T^*M).
\]
\end{lemma}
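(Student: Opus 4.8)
The plan is to show that the difference operator $\mc{I} - \mc{I}_\eps$ contributes a smoothing operator after sandwiching between $P_L, P_R$ and the pullback/pushforward maps. Recall from \eqref{equation:capital-i} that $\mc{I} = R_{+,0} + R_{-,0} + \Pi_+$, and by definition $\mc{I}_\eps = \int_{-\eps}^{\eps} e^{tX}\,\dd t$. Write $R_{\pm,0}$ using the integral formula for the resolvent with the resonance at $0$ subtracted: for $\Re z > 0$ one has $R_+(z) = -\int_0^{+\infty} e^{-tz} e^{-tX}\,\dd t$ (with $e^{-tX} = \varphi_{-t}^* = $ transport), and near $z=0$, $R_+(z) = -\Pi_+/z - R_{+,0} + \OO(z)$, so after removing the pole $R_{+,0}$ can be represented (in the anisotropic spaces) as an integral of $e^{-tX}$ against $t$ from $0$ to $+\infty$ with the $\Pi_+$ part subtracted; symmetrically for $R_{-,0}$ from $0$ to $-\infty$. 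Splitting each such integral at $t = \pm\eps$, the part with $|t| \leq \eps$ reassembles (up to a $\Pi_+$ correction and an explicit bounded operator) into $\mc{I}_\eps$, while the remaining ``tail'' part is $\int_{|t|\geq\eps} (\cdots) e^{tX}\,\dd t$.

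First I would make this bookkeeping precise: using the resolvent identities \eqref{eq:resolvent-identities}, write $\mc{I} - \mc{I}_\eps$ as a finite combination of $\Pi_+$, $e^{\pm\eps X}$ composed with $R_{\pm,0}$-type operators, and honest tail integrals $\int_{\eps}^{\infty} e^{-tX}\,\dd t$-type objects (which converge on the anisotropic spaces $\mc{H}^s_\pm$ because the flow contracts/expands the relevant directions and because the $\Pi_\pm$-part has been removed). The $\Pi_+$ term, being rank one with smooth Schwartz kernel $\mathbf{1}\otimes\mu$, and the terms $e^{\pm \eps X} R_{\pm,0}$ have wavefront sets controlled by the flow. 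The key point is then the following wavefront statement: for $t$ bounded away from $0$, the Schwartz kernel of $e^{tX}$ (i.e. $\varphi_t^*$) has wavefront set on the graph of $\dd\varphi_t^\top$, which — because of the transversality \eqref{equation:transverse} (equivalently no conjugate points \eqref{equation:conjugate}, i.e. $\V \cap \dd\varphi_t(\V) = \{0\}$ for $t\neq 0$) — meets $N^*(\text{diag}) \subset T^*(SM\times SM)$ cleanly, and after composing with $\pi_m^*$ on the right and ${\pi_m}_*$ on the left (which only add conormal directions to the fibers of $\pi$), the composite kernel on $M\times M$ has wavefront disjoint from the conormal to the diagonal. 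More precisely, the conormal to the diagonal of $M\times M$ pulls back under $(\pi\times\pi)$ to covectors that are horizontal in both factors; but $e^{tX}$ for $t\neq 0$ relates a horizontal-in-the-first covector to one which, by the absence of conjugate points, has a nonzero vertical component in the second factor (or vice versa), so the composition ${\pi_{m_1}}_* P_L\, e^{tX} P_R\, \pi_{m_2}^*$ has smooth kernel for each fixed $t\neq 0$, with seminorms integrable in $t$ after accounting for the decay coming from the anisotropic spaces.

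The step I expect to be the main obstacle is making the wavefront/propagation argument uniform and integrable in the tail variable $t$: one needs that the smoothing seminorms of the kernel of ${\pi_{m_1}}_* P_L e^{tX} P_R \pi_{m_2}^*$ (and of the pole-subtracted resolvent tails) decay fast enough in $t$ to survive integration to $t = \pm\infty$, which is exactly where the anisotropic Sobolev space construction and the exponential contraction \eqref{equation:anosov} enter — the differential operators $P_L, P_R$ of finite order only cost finitely many derivatives, while the flow supplies exponential gain transverse to $\R X$. I would organize this by first reducing, via a partition of unity and the standard pseudolocal property, to estimating the kernel microlocally near the conormal of the diagonal, then invoking the Anosov structure: a covector in $E_s^*$ (resp. $E_u^*$) is contracted (resp. expanded) under the flow, and a neighborhood of the diagonal's conormal is pushed into a region where the anisotropic space enforces high regularity; combined with $|\dd\varphi_t|$-decay this yields kernels in $C^{p(k)}$ with norms $\OO(e^{-ct})$, hence the difference lies in $\Psi^{-\infty}$ (resp. in the limited-regularity calculus $\Psi^{-\infty}_{(k)}$ for all $k$). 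Everything else — the algebraic manipulations with \eqref{eq:resolvent-identities}, the observation that $\Pi_+$ is smoothing, and that $\mc{I}_\eps$ is the ``diagonal cut'' — is routine.
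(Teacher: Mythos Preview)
Your bookkeeping identity is right: using \eqref{eq:resolvent-identities} one gets exactly
\[
R_{+,0}+R_{-,0}-\mc{I}_\eps = e^{-\eps X}R_{+,0}+e^{\eps X}R_{-,0}-2\eps\,\Pi_+,
\]
with no residual ``honest tail integrals'' left over. The $\Pi_+$ term is smoothing, and the paper does the same reduction. Your geometric instincts (use \eqref{equation:transverse} and \eqref{equation:conjugate}) are also correct.

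There is, however, a genuine error in the core step. You claim that ${\pi_{m_1}}_* P_L\, e^{tX} P_R\, \pi_{m_2}^*$ has \emph{smooth} kernel for each fixed $t\neq 0$. This is false: for instance with $P_L=P_R=\mathbbm{1}$ and $m_1=m_2=0$, the kernel of ${\pi_0}_* e^{tX}\pi_0^*$ is a distribution supported on the distance-sphere $\{d(x,y)=|t|\}$ (a conormal distribution there), not a smooth function. What you correctly observed --- that its wavefront is \emph{disjoint from the conormal to the diagonal} --- is a much weaker statement and does not give smoothing. The smoothing only emerges after integrating in $t$, because the conormal singularities on the moving spheres smear out; but making this precise is exactly as hard as proving the wavefront bound for the resolvent itself, and your sketch of how to control the seminorms uniformly in $t$ is not a proof.

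The paper avoids this entirely by treating $e^{-\eps X}R_{+,0}$ as a single object and invoking the known wavefront set of the resolvent \cite[Proposition~3.3]{Dyatlov-Zworski-16}:
\[
\WF'(R_{+,0}) \subset \Delta(T^*(SM))\cup\Omega_+\cup (E_u^*\times E_s^*).
\]
One then shows directly that $K:={\pi_{m_1}}_* P_L e^{-\eps X}R_{+,0}P_R\pi_{m_2}^*$ maps $\mc{D}'\to C^\infty$: for $f\in\mc{D}'(M,\otimes^{m_2}_S T^*M)$ one has $\WF(\pi_{m_2}^*f)\subset\V^*$, hence $\WF(R_{+,0}P_R\pi_{m_2}^*f)\subset E_u^*\cup\bigcup_{t\geq 0}\Phi_t(\V^*\cap\ker\imath_X)$; the shift $e^{-\eps X}$ replaces $t\geq 0$ by $t\geq\eps$; pushforward by ${\pi_{m_1}}_*$ only retains covectors in $\V^*$, and $\V^*\cap\bigl(E_u^*\cup\bigcup_{t\geq\eps}\Phi_t(\V^*\cap\ker\imath_X)\bigr)=\{0\}$ by \eqref{equation:transverse} and \eqref{equation:conjugate}. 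No $t$-integration or decay estimate is needed --- all of that is already packaged into the resolvent's wavefront set. Your ``main obstacle'' thus evaporates once you use the right black box.
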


\begin{proof}
Recall that $\mc{I} = R_{+,0} + R_{-,0} + \Pi_{+}$ and the term $\Pi_{+}$ will only contribute to a smoothing operator. We first derive an auxiliary identity; we start by the following
\[
	\int_0^\infty \chi'(t) e^{-t(z + X)} \dd t = -\id + \left(\int_0^\infty \chi(t) e^{-t(z + X)} \dd t\right) \circ (X + z),\quad z \in \mathbb{C},
\]
where we integrated by parts in the equality. Composing on the right by $(X + z)^{-1}$ with $z \neq 0$ close to zero, using the meromorphic extension and taking the bounded terms at $z = 0$ we get:
\begin{equation}\label{eq:resolvent-integrate-by-parts}
	R_{+, 0} = \int_0^\infty \chi(t) e^{-tX} \dd t - \left(\underbrace{\int_0^\infty \chi'(t) e^{-tX} \dd t}_{\mc{R}:=}\right) \circ R_{+, 0} + \left(\int_0^\infty t\chi'(t) \dd t\right) \cdot \Pi_+,
\end{equation}
where for the last term we used that $\varphi_{-t}^* \Pi_+ = \Pi_+$ since $\Pi_+$ is the orthogonal projection onto constant functions; integrating by parts the multiplier in the last term simplifies to $\int_0^\infty t\chi'(t) \dd t = - \int_0^\infty \chi(t) \dd t$. Using the analogous formula for $R_{-, 0}$, and that $\Pi_+$ is smoothing, we see it suffices to prove that the middle term of \eqref{eq:resolvent-integrate-by-parts} contributes to a smoothing operator, that is, 
\[ K := {\pi_{m_1}}_* P_L \mc{R} R_{+, 0} P_R \pi_{m_2}^* \in \Psi^{-\infty}(M, \otimes^{m_2}_S T^*M \rightarrow \otimes^{m_1}_S T^*M).\]

It is sufficient to prove that if $f \in \mc{D}'(M, \otimes^{m_2}_S T^*M)$, one has $Kf \in C^\infty(M,\otimes^{m_1}_S T^*M)$. For that, we will use the wavefront set calculus of Hörmander \cite[Chapter 8]{Hormander-90}.

Using the notation of \S \ref{ssection:elementary}, define the subbundles $\HH^*,\V^* \subset T^*(SM)$ such that $\HH^*(\HH \oplus \R \cdot X) = 0, \V^*(\V) = 0$. Observe that since $\pi_{m_2}^*$ is a pullback operator, we have $\WF(\pi_{m_2}^* f) \subset \V^*$ (see also \cite[Lemma 2.1]{Lefeuvre-19-1} for a detailed proof). Since $P_R$ is a differential operator, we have $\WF(P_R \pi_{m_2}^* f) \subset \V^*$. We then use the characterization of the wavefront set of the resolvent $R_{+,0}$ in \cite[Proposition 3.3]{Dyatlov-Zworski-16}, namely\footnote{We use the standard conventions, namely if $B : C^\infty(M) \rightarrow \mc{D}'(M)$ is a linear operator with kernel $K_B \in \mc{D}'(M \times M)$, we define $\WF'(B) := \left\{(x,\xi,y,\eta) \in T^*M \times T^*M ~|~ (x,\xi,y,-\eta) \in \WF(K_B)\right\}$.}:
\begin{equation}
\label{equation:dz}
\WF'(R_{+,0}) \subset \Delta(T^*(SM)) \cup \Omega_{+} \cup E_u^* \times E_s^*,
\end{equation}
where $\Delta(T^*(SM))$ is the diagonal in $T^*(SM) \times T^*(SM)$, and
\[
\Omega_{+} := \left\{ (\Phi_{t}(z,\xi),(z,\xi)) ~|~ t \geq 0,~ \langle \xi, X(z) \rangle = 0\right\}
\]
is the positive flow-out and $\Phi_t : T^*(SM) \rightarrow T^*(SM)$ is the symplectic lift of the geodesic flow $(\varphi_t)_{t \in \R}$, given by $\Phi_t(z,\xi) := (\varphi_t(z), \dd \varphi_t^{- \top}(z)(\xi))$\footnote{We use ${}^{-\top}$ to denote the inverse transpose.}. From \eqref{equation:dz} we obtain using \cite[Theorem 8.2.13]{Hormander-90}:
\begin{equation}\label{eq:wf-dz}
	\WF(R_{+,0}  P_R \pi_{m_2}^*f) \subset \mathbb{V}^* \cup E_u^* \cup_{t \geq 0} \Phi_t(\V^* \cap \ker \imath_X).
\end{equation}

Next, we show that since $\mc{R}$ is given by integration along the flow, it is microlocally smoothing outside $\ker \imath_X$ (i.e. it is smoothing in the elliptic set of $X$). For that, fix an arbitrary $k \in \mathbb{Z}_{\geq 1}$, and let $B \in \Psi^0(\M)$ be arbitrary microlocally equal to $\id$ near $\ker \imath_X$ (where $\M := SM$), that is, $\WF(\id - B)$ does not intersect a conical neighbourhood of $\ker \imath_X$. We will show that $\mc{R}(\id - B)$ is smoothing for any such $B$. By ellipticity, there are $E \in \Psi^{-k}(\M)$ and $F \in \Psi^{-\infty}(\M)$ such that
\[X^k E = \id - B + F.\]
Therefore we can compute, for an arbitrary $u \in \mc{D}'(\M)$ that
\[\mc{R}(\id - B) u + \mc{R}F u = \int_0^\infty \chi'(t) \varphi_{-t}^* X^k E u \dd t = \int_0^\infty (\partial_t^{k + 1}\chi)(t) \varphi_{-t}^* Eu \dd t,\]
where in the last equality we used that $\varphi_{-t}^* X^k = (-\partial_t)^k \varphi_{-t}^*$ and integrated by parts $k$ times. Since $\mc{R}F u \in C^\infty(\M)$, and $E \in \Psi^{-k}(\M)$ where $k$ could be chosen arbitrary, we conclude that $\mc{R}(\id - B) \in \Psi^{-\infty}(\M)$, proving the claim. Therefore, by \eqref{eq:wf-dz} and by the behaviour of the wavefront set under pullbacks (see \cite[Theorem 8.2.4]{Hormander-90}):
\begin{align*}
	&\WF(P_L \mc{R} R_{+,0} P_R \pi_{m_2}^*f) \subset \WF(\mc{R} R_{+,0}  P_R \pi_{m_2}^*f)\\
	&\hspace{150pt} \subset  \cup_{t \in [\varepsilon/2, \varepsilon]} \Phi_{t}(\mathbb{V}^* \cap \ker \imath_X) \cup E_u^* \cup_{t \in [\eps/2, \eps]} \Phi_t(\V^* \cap \ker \imath_X).
\end{align*}

Next, as ${\pi_{m_1}}_*$ is a pushforward, we obtain (see \cite[Proposition 4.12]{Melrose-03}):
\[
\begin{split}
& \WF(Kf) \\
& \subset \left\{ (x,\xi) \in T^*M ~|~  \exists v \in S_xM,\, \xi(\dd_{(x,v)} \pi(\bullet)) \in E_u^* \cup_{t \in [\eps/2, \eps]} \Phi_t(\V^* \cap \ker \imath_X)\right\},
\end{split}
\]
where we note that $\xi(\dd_{(x,v)} \pi(\bullet)) \in \V^*(x,v)$. As a consequence, the lemma is proved if we can show that $\V^* \cap \left(E_u^* \cup_{t \geq [\eps/2, \eps]} \Phi_t(\V^* \cap \ker \imath_X)\right) = \left\{ 0 \right\}$. But this follows from \eqref{equation:transverse} and \eqref{equation:conjugate}, completing the proof.
\end{proof}

We now turn to the sandwich Proposition \ref{proposition:sandwich}. For that, it is convenient to use the historical characterization of pseudodifferential operators \cite[Definition 2.1]{Hormander-65} which we now recall: $P$ is a pseudodifferential operator of order $m \in \R$ if $P:C^\infty(M) \to C^\infty(M)$ is continuous, and there exists a sequence $s_0=0 < s_1 < ...$ of real numbers converging to $+\infty$ such that for all $f \in C^\infty(M),\, S \in C^\infty(M)$ such that $\dd S \neq 0$ on $\mathrm{supp}(f)$, there is an asymptotic expansion:
\begin{equation}
\label{equation:pdo-expansion}
e^{-i \frac{S}{h}} P(f e^{i \frac{S}{h}}) \sim h^{-m} \sum_{j=0}^{+\infty} P_j(f,S) h^{s_j}.
\end{equation}
By this, we mean that for every integer $N > 0$, for every compact set \footnote{A set $A \subset C^\infty(M)$ is bounded if there exists a sequence $(A_k)_{k \in \Z_{\geq 0}}$ such that for all $f \in A, \|f\|_{C^k(M)} \leq A_k$. It is known that $A$ is compact if and only if it is closed and bounded.}
$K$ of real-valued functions $S \in C^\infty(M)$ with $\dd S \neq 0$ on $\mathrm{supp}(f)$, for every $0 < h < 1$, the following holds: the error term
\begin{equation}
\label{equation:negligible}
h^{-s_N+m}\left(e^{-i \frac{S}{h}} P\left(f e^{i \frac{S}{h}}\right) - h^{-m}\sum_{j=0}^{N-1} P_j(f,S)h^{s_j}\right)
\end{equation}
belongs to a bounded set in $C^\infty(M)$ with bound independent of $h$ and $S \in K$. In particular, $P$ is \emph{classical} if and only if in the sum \eqref{equation:pdo-expansion} the $s_j$'s take integer values.

\begin{proof}[Proof of Proposition \ref{proposition:sandwich}] We first note that the formula \eqref{eq:symbol-2} is an immediate consequence of \eqref{eq:symbol-1} and Lemma \ref{lemma:useful-identity}; henceforth we focus on \eqref{eq:symbol-1}. We divide the proof in two steps. \\

\emph{1. Principal symbol computation.} For the moment, let us assume that the operator is pseudodifferential and compute its principal symbol. By Lemma \ref{lemma:simplification}, we can replace $\mc{I}$ by $\mc{I}_\eps$ in the definition of the operator, that is, it suffices to compute the principal symbol of ${\pi_{m_1}}_* P_L \mc{I}_\eps P_R \pi_{m_2}^*$.

Take a Lagrangian state $f_h := e^{i \frac{S}{h}} \widetilde{f}$, where $S \in C^\infty(M)$ is a real-valued, smooth phase such that $S(x_0)=0,\, \dd S(x_0) = \xi_0$, $\widetilde{f} \in C^\infty(M,\otimes^{m_2}_S T^*M)$ and $\widetilde{f}(x_0) := f \in\otimes^{m_2}_S T^*_{x_0}M$, and further assume that $\dd S$ does not vanish on the support of $\widetilde{f}$. As $P_R$ is a differential operator, we have:
\[
\begin{split}
P_R \pi_{m_2}^* f_h & = P_R\left(e^{i \frac{\pi_0^*S}{h}} \pi_{m_2}^* \widetilde{f}\right) \\
& = h^{-m_R} e^{i \frac{\pi_0^*S}{h}}\left(\sigma_{P_R}\big(\bullet, S_\HH(\bullet)\big). \pi_{m_2}^*\widetilde{f}(\bullet) + \mc{O}_{C^\infty}(h)\right),
\end{split}
\]
where $S_{\HH}(x, v) := \dd_x S \circ \dd_{(x,v)} \pi$. Hence:
\[
\begin{split}
&\mc{I}_\eps P_R \pi_{m_2}^* f_h (x,v)\\
& =  \int_{-\eps}^{+\eps} h^{-m_R} e^{\frac{i}{h} S(\pi(\varphi_t(x,v)))} \chi(t)\\
&\times  \left(\sigma_{P_R}\big(\varphi_t(x,v), S_\HH(\varphi_t(x,v))\big) \pi_{m_2}^* \widetilde{f}(\varphi_t(x,v)) + \mc{O}_{C^\infty}(h)\right) \dd t,
\end{split}
\]
and thus:
\[
\begin{split}
& P_L \mc{I}_\eps P_R \pi_{m_2}^* f_h (x,v) \\
& = \int_{-\eps}^{+\eps} h^{-(m_R+m_L)} e^{\frac{i}{h} S(\pi(\varphi_t(x,v)))} \chi(t)\Big( \sigma_{P_R} \big( \varphi_t(x,v), S_\HH(\varphi_t(x,v)) \big)  \\
& \hspace{2cm} \times \sigma_{P_L}\big((x,v), S^{(t)}_\HH(x,v)\big) \pi_{m_2}^* \widetilde{f}(\varphi_t(x,v)) + \mc{O}_{C^\infty}(h)\Big) \dd t,
\end{split}
\]
where $S^{(t)}_\HH(x,v) := \dd_x S \circ \dd_{(x, v)} \pi \circ \dd_{(x,v)} \varphi_t $ (and $S^{(0)}_{\HH} = S_{\HH}$). This gives for $m=m_R+m_L-1$ and any $f' \in \otimes_S^{m_1}T_{x_0}^*M$:
\begin{equation}\label{eq:aux00}
\begin{split}
& \langle \sigma_{A_{P_R,P_L}}(x_0,\xi_0) f, f' \rangle_{\otimes^{m_1}_S T_{x_0}^*M} =  \lim_{h \rightarrow 0} h^{m} \langle (A_{P_R,P_L} f_h)(x_0), f' \rangle_{\otimes^{m_1}_S T_{x_0}^*M} \\
& =  \lim_{h \rightarrow 0}   h^{m} \int_{S_{x_0}M} (P_L \mc{I}_\eps P_R \pi_{m_2}^* f_h)(x_0,v). \overline{\pi_{m_1}^* f'}(x_0,v) \dd S_{x_0}(v) \\
& = \lim_{h \rightarrow 0}   h^{-1} \int_{S_{x_0}M} \int_{-\eps}^{+\eps} e^{\frac{i}{h} S(\pi(\varphi_t(x_0,v)))}\chi(t). \overline{\pi_{m_1}^* f'}(v) \\
& \times \left[ \sigma_{P_R} \big( \varphi_t(x_0,v), S_\HH(\varphi_t(x_0,v)) \big).  \sigma_{P_L}\big((x_0,v), S^{(t)}_\HH(x_0,v)\big). \pi_{m_2}^* \widetilde{f}(\varphi_t(x_0,v)) \right. \\
& \hspace{8cm} \left. + \mc{O}_{C^\infty}(h)\right]  \dd t\dd S_{x_0}(v),
\end{split}
\end{equation}
where $\dd S_{x_0}$ stands for the round measure on the sphere $S_{x_0}M$. As we shall see, the term $h^{-1}$ comes from the fact that we will perform a stationary phase over a two-dimensional space.

We define the (real) phase $\Phi : (-\eps,\eps) \times S_{x_0}M  \rightarrow \R$ by
\begin{equation}\label{eq:phase}
	\Phi(t,v) := S(\pi(\varphi_t(x_0,v))).
\end{equation}
We recall (see \S \ref{ssection:restriction}) the diffeomorphism, singular at the poles, $\Ss_{\xi_0}^{n-2} \times (0,\pi) \ni (u,\varphi) \mapsto v(u,\varphi) := \cos(\varphi) \vec{n}(\xi_0) + \sin(\varphi) u \in S_{x_0}M$. Observe that for fixed $u \in \Ss_{\xi_0}^{n-2}$, the phase $\Phi_u : (-\eps,\eps) \times (0,\pi) \rightarrow \R$ defined by $\Phi_u(t,\varphi) := \Phi(t, v(u, \varphi))$ has a critical point at $t=0,\varphi=\frac{\pi}{2}$ and the determinant of the Hessian at this point is equal to $-|\dd S(x_0)|^2 = -|\xi_0|^2$ (see the proof of \cite[Theorem 4.4]{Gouezel-Lefeuvre-19} or \cite[Theorem 2.5.1]{Lefeuvre-thesis} for further details). Hence by the stationary phase lemma \cite[Theorem 3.16]{Zworski-12}, for any $u \in \mathbb{S}^{n-2}_{\xi_0}$, writing $v = v(u, \varphi)$:
\begin{equation}\label{eq:stationary-phase}
\begin{split}
&\lim_{h \rightarrow 0}  h^{-1}   \int_0^\pi \int_{-\eps}^{+\eps} e^{\frac{i}{h} \Phi_u(t,\varphi)}\chi(t).\overline{\pi_{m_1}^* f'}\big(v\big). \Big(\mc{O}_{C^\infty}(h) +\\
&+ \sigma_{P_R} \big( \varphi_t(x_0,v), S_\HH(\varphi_t(x_0,v))\big).  \sigma_{P_L}\big((x_0,v), S^{(t)}_\HH(x_0,v)\big). \pi_{m_2}^* \widetilde{f}(\varphi_t(x_0,v))\Big) \\
& \hspace{8cm} \sin^{n-2}(\varphi) \dd t  \dd \varphi\\
& = \dfrac{2 \pi}{|\xi_0|} \sigma_{P_R} \big( (x_0,u), S_\HH(x_0,u)\big).  \sigma_{P_L}\big((x_0,u), S_\HH(x_0,u) \big). \pi_{m_2}^*f(u).  \overline{\pi_{m_1}^* f'}(u).% \dd S_{\xi_0}(u).
\end{split}
\end{equation}
Using that this limit is uniform in $u$ and integrating over $\mathbb{S}^{n-2}_{\xi_0}$, inserting into \eqref{eq:aux00}, as well as recalling the Jacobian formula \eqref{eq:Jacobian}, completes the proof.\\

\emph{2. Pseudodifferential nature.} By the characterization \eqref{equation:pdo-expansion} of $\Psi$DOs via the asymptotic expansion, the proof is very similar to the first point except that one needs to go to arbitrary order in the expansions. For the sake of simplicity, we assume that $m_1 = m_2 = 0$ (this does not change the nature of the proof). By Lemma \ref{lemma:simplification}, it suffices to show that ${\pi_{0}}_* P_L \mc{I}_\eps P_R \pi_{0}^*$ is a pseudodifferential operator of order $m$. Consider an arbitrary $f \in C^\infty(M)$ and a compact set $K \subset C^\infty(M)$ of (real) phases $S$ such that $dS \neq 0$ on $\supp(f)$. 

Since $P_R$ is differential, we can write
\[
P_R \pi_0^*\left(f e^{i\frac{S}{h}}\right) = h^{-m_R} e^{i\frac{\pi_0^*S}{h}} \underbrace{\sum_{j=0}^{m_R} P_R^{(j)}(\pi_0^*f,\pi_0^*S) h^{j}}_{=:f_h} = h^{-m_R} e^{i \frac{\pi_0^* S}{h}} f_h,
\]
where $P_R^{(j)}(\pi_0^*f,\pi_0^*S)(x,v)$ depends on the jet of order $\leq j$ of $f$ at $x$ (and on the $(m_R-j)$-th jet of the phase $S$). Then:
\[
\mc{I}_{\eps} P_R \pi_0^*\left(f e^{i\frac{S}{h}}\right)(x,v) = h^{-m_R} \int_{-\eps}^{+\eps} e^{\frac{i}{h}\pi_0^*S(\varphi_t(x,v))} \chi(t) f_h(\varphi_t(x,v)) \dd t,
\]
which gives:
\[
\begin{split}
& P_L \mc{I}_{\eps}  P_R \pi_0^*\left(f e^{i\frac{S}{h}}\right)(x,v) \\
& = h^{-(m_R+m_L)} \int_{-\eps}^{+\eps} e^{\frac{i}{h}\pi_0^*S(\varphi_t(x,v))} \chi(t) \sum_{k=0}^{m_L} h^k P^{(k)}_L\left(e^{tX} f_h, e^{tX} \pi_0^*S\right)(x,v) \dd t,
\end{split}
\]
and thus:
\begin{equation}\label{eq:F-expression}
\begin{split}
& {\pi_0}_* P_L \mc{I}_{\eps}  P_R \pi_0^*\left(f e^{i\frac{S}{h}}\right)(x) \\
&= h^{-(m_R+m_L)} \int_{S_{x}M} \int_{-\eps}^{+\eps} e^{\frac{i}{h} \pi_0^*S(\varphi_t(x,v))} \chi(t) \sum_{k=0}^{m_L} h^k P^{(k)}_L\left(e^{tX} f_h, e^{tX} \pi_0^*S\right)(x,v) \dd t \dd v.
\end{split}
\end{equation}

We now split to cases according to the location of $x$ and the value of $dS(x)$ as follows. Since $K \subset C^\infty(M)$ is compact, and $dS \neq 0$ on $\supp(f)$, there is an open neighbourhood $N$ of $\supp(f)$ and $\delta = \delta(K) > 0$ such that 
\[|dS(x)| \geq \delta, \quad \forall x \in N.\] 
We introduce $\xi := \dd S(x)$ and first consider the case $x \in N$. We will use the coordinates $(u, \varphi) \in \Ss^{n-2}_\xi \times (0,\pi)$ on $S_{x}M$ as in the previous step, and write $\Phi_u(t, \varphi)$ for the phase introduced in \eqref{eq:phase}. It is possible to compute the derivatives of $\Phi_u$ at $t = 0$ as follows:
\begin{align*}
	&\partial_{t}\Phi_u(0, \varphi) = \cos(\varphi) |dS(x)|, \quad \partial_\varphi \Phi_u(0, \varphi) = 0,\\
	&\partial_\varphi \partial_{t} \Phi_u(0, \varphi) = -\sin(\varphi) |dS(x)|, \quad \partial_\varphi^2 \Phi_u(0, \varphi) = 0, \quad \partial_t^2\Phi_u(0, \tfrac{\pi}{2}) = \mathrm{Hess}(S)_{(0, \tfrac{\pi}{2})}(u, u),
\end{align*}
where $\mathrm{Hess}(S)$ denotes the Hessian of $S$ in the $(t, \varphi)$ coordinates. Therefore, the only critical point of $\Phi_u$ on $\{t = 0\}$ is at $(t, \varphi) = (0, \frac{\pi}{2})$, and here the Hessian of $\Phi_u$ is non-degenerate. It follows that $(0, \frac{\pi}{2})$ is an isolated critical point, and moreover using Taylor's theorem that there is a $C > 0$ depending only on $K$ such that the derivative $d\Phi_u(t, \varphi)$ vanishes only at $(0, \frac{\pi}{2})$ for 
\[
	(t, \varphi) \in \mc{S} := \left\{(t, \varphi) \in (-\eps, \eps) \times (0, \pi) \mid |t| + |\tfrac{\pi}{2} - \varphi| < C|dS(x)|^2\right\}.
\] 
Let $\psi \in C_0^\infty(\mathbb{R})$ be a cut off function such that $\psi(t) = 1$ for $|t| < \beta := \min(\tfrac{\eps}{4}, \tfrac{\pi}{4}, \tfrac{C}{4} |dS(x)|^2)$ and $\psi(t) = 0$ for $|t| > 2\beta$, such that it is bounded uniformly in $C^\infty(\mathbb{R})$ depending on $S \in K$; here it is important to note that by assumption $|dS(x)| \geq \delta(K) > 0$. We also note that this construction of $\psi$ can be made to depend smoothly on $x \in U$ for an open set $U \subset N$ of points close to $x$; we note that $\varphi$ in this case encodes the distance to the equator $\ker(dS(x)) \cap S_xM$. For simplicity, we drop $x$ from the notation of $\psi$.

Using the formula \eqref{eq:Jacobian} and writing $v = v(u, \varphi)$ we obtain:
\[
\begin{split}
& {\pi_0}_* P_L  \mc{I}_{\eps}  P_R \pi_0^*\left(f e^{i\frac{S}{h}}\right)(x) =h^{-(m_R+m_L)}  \\
&\times \int_{\Ss^{n-2}_{\xi}} \int_0^{\pi} \int_{-\eps}^{+\eps} e^{\frac{i}{h}\pi_0^*S(\varphi_t(x,v))} \chi(t) \big(\psi(t) \psi(\varphi - \tfrac{\pi}{2}) + (1 - \psi(t)\psi(\varphi - \tfrac{\pi}{2}))\big)\\ 
&\times\underbrace{\sum_{k=0}^{m_L}  h^k P^{(k)}_L\left(e^{tX} f_h, e^{tX} \pi_0^*S\right)(x,v)}_{F :=} \sin^{n-2}(\varphi) \dd t  \dd \varphi \dd S_\xi(u)~ \\
& = h^{-(m_R+m_L)} \int_{\Ss^{n-2}_{\xi}} \left(\int_0^{\pi} \int_{-\eps}^{+\eps} e^{\frac{i}{h}\pi_0^*S(\varphi_t(x,v(u,\varphi)))}  \sum_{i = 1}^2 F_i(x,u,\varphi,t) \dd t \dd \varphi\right) \dd S_\xi(u),
\end{split}
\]
where the terms $F_i(x, u, \varphi, t)$ for $i = 1, 2$ represent the terms appearing in the second  line (in the order of appearance). We study each term separately. For $F_1$ (which may be also seen as a smooth function on $(-\eps, \eps) \times SU$), as in \eqref{eq:stationary-phase}, for $x \in U$ and $\xi = dS(x)$, and $u \in \Ss^{n-2}_\xi$, we apply the stationary phase lemma \cite[Theorem 3.16]{Zworski-12} at $t=0$, $\varphi=\frac{\pi}{2}$, which gives:% (recall that $\pi_0^*S(x,v) = S(x)$):
\begin{equation}
\label{equation:truth}
\begin{split}
&h^{-(m_R+m_L-1)} \int_{\Ss^{n-2}_{\xi}} e^{i\frac{S(x)}{h}} \left(\sum_{\ell=0}^{N-1} h^\ell A_{2\ell}(x,u,D_{\varphi},D_t) F_1\left(x,u,\frac{\pi}{2}, 0\right) + h^N R'_N(x,u) \right) \dd S_\xi(u) \\
& =  h^{-(m_R+m_L-1)} e^{i\frac{S(x)}{h}} \\
& \times \left( \sum_{\ell=0}^{N-1}h^\ell \int_{\Ss^{n-2}_\xi} A_{2\ell}(x,u,D_{\varphi},D_t)  F_1\left(x,u,\frac{\pi}{2},0\right) \dd S_\xi(u) + h^NR_N(x)\right).
\end{split}
\end{equation}
Here, for any $\ell \in \N$, $A_{2\ell}(x,u,D_\varphi,D_t)$ is a differential operator of degree $\leq 2\ell$ depending smoothly on $x \in U$ and $u$ and $R'_N$ satisfies the bound
\[
	\|R_N'\|_{C^0(SU \cap \ker dS)} \leq C_N' \|F_{1}\|_{C^{2N+3}( (-\eps, \eps) \times S U)},
\]
where $SU$ denotes the unit tangent bundle of $U$ (where $F_{1}$ is defined). The order $2N+3 = 2N + 2 + 1$ comes from the remainder term in \cite[Theorem 3.16]{Zworski-12}. After integration in the variable $u$, i.e. setting $R_N(x) = \int_{\mathbb{S}_{\xi}^{n-2}} R'_N(x, u)\dd S_{\xi}(u)$, this gives:
\begin{equation}
\label{equation:rn}
\|R_N\|_{C^0(U)} \leq C_N \|F_{1}\|_{C^{2N+3}(SU)},
\end{equation}
and one can control higher order derivatives of $R_N$ in the same fashion (up to increasing the order of the norm on the right-hand side of \eqref{equation:rn}). 

Next, observe that by definition:
\[
\begin{split}
& F_{1}(x,u,\varphi,t) \\
& =  \sin^{n-2}(\varphi) \chi(t) \psi(t) \psi(\tfrac{\pi}{2} - \varphi) \sum_{k=0}^{m_L} P^{(k)}_L\left(e^{tX}  \sum_{j=0}^{m_R} P_R^{(j)}(\pi_0^*f,\pi_0^*S) h^{j}, e^{tX} \pi_0^*S\right)(x,v(u,\varphi)) h^k,
\end{split}
\]
which implies that the $C^k$-norms of $F_{1}$ are controlled by the $C^{k'}$-norms of $f$ and $S$ (for some $k' \gg k$), that is, the remainder $R_N$ is indeed negligible in the sense of \eqref{equation:negligible}. Also note that $A_{2\ell}(x,u,D_{\varphi},D_t)F_{1}(x,u,\frac{\pi}{2},0)$ depends only on a finite number $K(\ell)$ of derivatives of the function $f$ and the phase $S$ at $x$. Hence \eqref{equation:truth} shows that the term corresponding to $F_1$ has the correct asymptotic expansion.

For the term $F_2$ in \eqref{equation:truth}, it is more convenient to go one step back and to write it as
\begin{equation}\label{eq:F_2}
	h^{-(m_R+m_L)} \int_{S_{x}M} \int_{-\eps}^{+\eps} e^{\frac{i}{h} \pi_0^*S(\varphi_t(x,v))} \chi(t) \left(1 - \psi(t) \psi(\tfrac{\pi}{2} - \varphi)\right) F(t, x, v) \dd t \dd v,
\end{equation}
where the coordinate $\varphi$ is encodes the distance to $\ker (dS(x)) \cap S_xM$ (as explained above). Now, assume that the phase $\Phi$ (introduced in \eqref{eq:phase}) has a critical point $(t, v) \in \mathbb{R} \times S_xM$, that is,
\[
	dS(\gamma(t)) \big(d\pi(\varphi_t(x, v)) X(\varphi_t(x, v))\big) = 0, \quad dS (\gamma(t)) \big(d\pi(\varphi_t(x, v)) d\varphi_t(x, v) V\big) = 0,
\]
where $\gamma(t) = \pi(\varphi_t(x, v))$, for any vertical vector field $V \in \mathbb{V}(x, v)$. Then either $t = 0$, in which case we have $v \in \mathbb{S}_{\xi}^{n - 2}$ and the cut off in \eqref{eq:F_2} is zero, or $t \neq 0$, so by the absence of conjugate points (see \eqref{equation:conjugate}), we conclude that $dS(\gamma(t)) = 0$ and therefore $F(t', x, v) = 0$ for $|t' - t|$ uniformly small independent of $S \in K$ (using that $P_{L/R}$ are differential operators and $|dS| \geq \delta$ on $N$). It follows, upon applying the (non-)stationary phase lemma similarly to the argument in the previous paragraph, that the expression in \eqref{eq:F_2} equals $\mc{O}_{C^\infty}(h^\infty)$ for $x \in U$, with the seminorms uniformly bounded as $S \in K$. This completes the discussion for the case $x \in N$ and show the required asymptotic expansion.

Finally, it remains to deal with the case $x \not \in N$; for that we go back to \eqref{eq:F-expression}. Then in particular $f = 0$ near $x$ (with the neighbourhood independent of $S \in K$) and by the analysis of the phase function $\Phi$ from the previous step (note that in this case the integrand vanishes for $|t|$ small enough uniformly in $S \in K$) we conclude similarly using the (non-)stationary phase lemma that near $x$ the expression \eqref{eq:F-expression} contributes to $\mc{O}_{C^\infty}(h^\infty)$ with seminorms uniformly bounded with respect to $S \in K$. This completes the proof.
\end{proof}

\begin{remark}\rm
	Note that in the above proof the fact that $P_{L/R}$ are differential operators gets used in the last two paragraphs through their \emph{locality}. In the more general case of arbitrary pseudodifferential operators $P_{L/R}$, this has to be replaced by \emph{pseudolocality} and a similar proof applies.
\end{remark}

\section{Generic injectivity with respect to the connection}

\label{section:genericity-connection}

We now prove Theorem \ref{theorem:genericity-connection} in this section. This case is much less technical than the metric case but still provides a good insight on the argument. In what follows, differentiation will be mostly carried out without recalling that the objects depend smoothly on the parameter and we refer the reader to \S\ref{ssection:resolvent-perturbations-theory}, Lemma \ref{lemma:differentiabilty} for further details. \\

\subsection{Preliminary remarks}

Consider an Anosov Riemannian manifold, denoted by $(M,g)$, with a Hermitian vector bundle $\E \rightarrow M$, equipped with a unitary connection $\nabla^{\E}$. Consider a linear perturbation $\nabla^{\E} + \tau \Gamma$ for some skew-Hermitian $\Gamma \in C^\infty(M,T^*M \otimes \End_{\mathrm{sk}}(\E))$ and $\tau \in \R$, and the operator $\X_\tau := \pi^*(\nabla^{\E} + \tau  \Gamma)_X$, where we recall that $\pi: SM \to M$ is the footpoint projection. We set $\X := \X_0$. For the sake of simplicity, let us assume that $\ker \X|_{C^\infty(SM, \E)} = \{0\}$\footnote{\label{footnote:1}The following arguments can be generalized to the case where $\ker \X$ consists of \emph{stable} elements of degree $0$ (equivalently, we will say that $\ker \X$ is stably non-empty): by stable, we mean that any perturbation of the operator will still have the same resonant space at $0$ and that this space only contains elements of degree $0$. This is the case for the operator $X$ acting on functions as it always has $\C \cdot \mathbf{1}$ (the constant sections) as resonant space at $z=0$; this is also the case for $(\pi^* \nabla^{\End(\E)})_X$ as it always contains $\C \cdot \mathbbm{1}_{\E}$ and is generically equal to $\C \cdot \mathbbm{1}_{\E}$ by \cite{Cekic-Lefeuvre-20} (where $\nabla^{\End(\E)}$ is the induced connection on $\End(\E)$). Instead of taking the resolvent at $0$, one needs to work with the holomorphic part of the resolvent. This is done in the metric case, see \S\ref{section:genericity-metric}. For the sake of simplicity, we assume in this section that $\ker \X$ is trivial.}, which is generically true by \cite{Cekic-Lefeuvre-20}. We will consider the operator
\[
P_\tau := \pi_{\ker D_0^*} \Delta_0 \Pi_m^\tau \Delta_0 \pi_{\ker D^*_0},
\]
where $\Pi_m^\tau = \pi_{m*}(\RR_+^{\tau} + \RR_-^{\tau})\pi_m^*$, $\Delta_0$ is an elliptic, formally self-adjoint, positive, pseudodifferential operator with diagonal principal symbol of order $k>1/2$ that induces an isomorphism on Sobolev spaces $H^s$ for any $s$ and $D_\tau$ denotes the twisted (with respect to $\nabla^{\E} + \tau \Gamma$) symmetric derivative on tensors, as in \S\ref{sssection:tensors-twisted}. Here $\RR_\pm^\tau := - \RR_\pm^\tau(z=0)$ is the (opposite of) resolvent at zero of $\X_\tau$ as defined in \S\ref{sssection:t-g-x-ray}. It is important to note that 
\[\pi_{\ker D^*_\tau} \Pi_m^\tau = \Pi_m^\tau \pi_{\ker D^*_\tau} = \Pi_m^\tau,\]
and that by continuity we have:
\begin{lemma}
\label{lemma:isomorphisms}
	For any given $s \in \mathbb{R}$, for $|\tau|$ small enough, the map $\tau \mapsto \|\pi_{\ker D_\tau^*}\|_{H^s \to H^s}$ is continuous. Moreover, for all $s \in \R$, there exists $\varepsilon = \varepsilon(s) > 0$ and $C = C(s) > 0$ such that for $|\tau| < \varepsilon$:
	\begin{equation}\label{eq:firstpoint}
	\forall f \in H^{s+k}(M,\otimes^m_S T^*M \otimes \E), ~~~	\|\pi_{\ker D_\tau^*} \Delta_0 \pi_{\ker D_\tau^*}f\|_{H^s} \geq C\|\pi_{\ker D_\tau^*}f\|_{H^{s + k}}.
	\end{equation}
	Hence for any $s \in \mathbb{R}$, for all $|\tau| < \varepsilon$, where $\varepsilon = \varepsilon(s) > 0$ is small enough, the following maps are isomorphisms:
	\[
	\begin{split}
	&\pi_{\ker D^*_\tau} \Delta_0 \pi_{\ker D^*_0}: \ker D_0^* \cap H^{s + k} \to \ker D_\tau^* \cap H^{s}, \\ &\pi_{\ker D^*_0} \Delta_0 \pi_{\ker D^*_\tau}: \ker D_\tau^* \cap H^{s + k} \to \ker D_0^* \cap H^{s}.
	\end{split}
	\]
\end{lemma}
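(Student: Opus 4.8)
The plan is to handle the three assertions in sequence: the first is essentially the formula for the orthogonal projection, the second is a perturbation of an elliptic estimate at $\tau=0$, and the third is a soft Fredholm/perturbation argument built on the second. Throughout I write $Q_\tau:=\pi_{\ker D^*_\tau}$ and I would use the twisted analogue of \eqref{equation:projection-pi}, namely $Q_\tau=\mathbbm{1}-D_\tau(D_\tau^*D_\tau)^{-1}D_\tau^*$.

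\emph{Continuity of $\tau\mapsto Q_\tau$.} Since the perturbation $\tau\Gamma$ is of order $0$, \eqref{equation:de} shows $D_\tau=D_0+\tau B$ with $B\in\Psi^0(M,\otimes^m_S T^*M\otimes\E)$ a $\tau$-independent operator, hence $\tau\mapsto D_\tau\in\Psi^1$ is affine and $\tau\mapsto D_\tau^*D_\tau\in\Psi^2$ quadratic. The operator $D_\tau^*D_\tau$ is elliptic, formally self-adjoint and non-negative; its kernel is trivial for $|\tau|$ small, for if $D_\tau f=0$ then $\X_\tau\pi_m^*f=\pi_{m+1}^*D_\tau f=0$ by \eqref{equation:xe}, so $\pi_m^*f=0$ and $f=0$, using that $\ker\X_\tau=\{0\}$ for small $|\tau|$ (the resonance at $0$ is stable under perturbation, see \S\ref{ssection:resolvent-perturbations-theory}, and $\ker\X_0=\{0\}$ is the standing assumption). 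Consequently $(D_\tau^*D_\tau)^{-1}\in\Psi^{-2}$ exists and, by a Neumann-series argument around $(D_0^*D_0)^{-1}$, depends continuously on $\tau$ in $\Psi^{-2}$; composing, $\tau\mapsto Q_\tau\in\Psi^0$ is continuous. I would emphasise here the fact used repeatedly below: by \eqref{eq:sigma-proj-ker} the principal symbol $\sigma_{Q_\tau}(x,\xi)=\pi_{\ker\imath_{\xi^\sharp}}$ is \emph{independent of $\tau$}, because $\sigma_{D_\tau}=\sigma_{D_0}$.

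\emph{The case $\tau=0$.} Set $T_0:=Q_0\Delta_0 Q_0$. It is formally self-adjoint, and for $f\in\ker D_0^*$ one has $\langle T_0 f,f\rangle_{L^2}=\langle\Delta_0 f,f\rangle_{L^2}\geq c_0\|f\|_{L^2}^2$ for some $c_0>0$, since $\Delta_0$ is positive, self-adjoint and invertible (its $L^2$-spectrum is discrete and contained in $[c_0,\infty)$); hence $T_0$ is injective on $\ker D_0^*$. Its principal symbol on $\ker D_0^*$ is $\sigma_{\Delta_0}(x,\xi)\,\pi_{\ker\imath_{\xi^\sharp}}$, which is invertible, so $T_0$ is elliptic on $\ker D_0^*$ (a parametrix is $Q_0\Delta_0^{-1}Q_0$ modulo smoothing) and therefore Fredholm as a map $\ker D_0^*\cap H^{s+k}\to\ker D_0^*\cap H^s$ for every $s$, of index $0$ by self-adjointness. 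An injective Fredholm operator of index $0$ is an isomorphism, so $T_0$ is invertible for all $s$; in particular $\|Q_0\Delta_0 Q_0 f\|_{H^s}\geq C\|Q_0 f\|_{H^{s+k}}$ for all $f$, which is \eqref{eq:firstpoint} at $\tau=0$.

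\emph{Perturbation.} From the continuity of Step 1, $\|Q_\tau\Delta_0 Q_\tau-Q_0\Delta_0 Q_0\|_{H^{s+k}\to H^s}\to0$ as $\tau\to0$ (write the difference as $(Q_\tau-Q_0)\Delta_0 Q_\tau+Q_0\Delta_0(Q_\tau-Q_0)$). Applying the $\tau=0$ estimate to $Q_\tau f$ and using $Q_0Q_\tau=Q_\tau+(Q_0-Q_\tau)Q_\tau$ gives $\|Q_\tau\Delta_0 Q_\tau f\|_{H^s}\geq(C-o(1))\|Q_\tau f\|_{H^{s+k}}$, which yields \eqref{eq:firstpoint} once $|\tau|<\varepsilon(s)$ small enough. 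For the isomorphism claim I would first observe that $Q_\tau|_{\ker D_0^*\cap H^s}\colon\ker D_0^*\cap H^s\to\ker D_\tau^*\cap H^s$ and $Q_0|_{\ker D_\tau^*\cap H^s}\colon\ker D_\tau^*\cap H^s\to\ker D_0^*\cap H^s$ are mutually inverse up to an $o(1)$ error — because $\sigma_{Q_\tau}$ is $\tau$-independent, $Q_0 Q_\tau=Q_0+Q_0(Q_\tau-Q_0)$ restricts to $\mathrm{Id}+o(1)$ on $\ker D_0^*$ and symmetrically $Q_\tau Q_0$ restricts to $\mathrm{Id}+o(1)$ on $\ker D_\tau^*$ — so both maps are isomorphisms for $|\tau|$ small. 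Then $Q_0\,(Q_\tau\Delta_0 Q_0)=Q_0 Q_\tau\Delta_0 Q_0\colon\ker D_0^*\cap H^{s+k}\to\ker D_0^*\cap H^s$ converges in operator norm to $Q_0\Delta_0 Q_0=T_0$ as $\tau\to0$ and is therefore an isomorphism for small $|\tau|$; composing with $(Q_0|_{\ker D_\tau^*\cap H^s})^{-1}$ shows $\pi_{\ker D^*_\tau}\Delta_0\pi_{\ker D^*_0}\colon\ker D_0^*\cap H^{s+k}\to\ker D_\tau^*\cap H^s$ is an isomorphism. The second map is treated symmetrically, or by taking $L^2$-adjoints, since $(\pi_{\ker D^*_0}\Delta_0\pi_{\ker D^*_\tau})^*=\pi_{\ker D^*_\tau}\Delta_0\pi_{\ker D^*_0}$.

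The one point requiring care — and the only genuine subtlety — is that the target $\ker D_\tau^*\cap H^s$ moves with $\tau$, so one cannot directly invoke stability of the Fredholm index. This is exactly what is resolved by the observation that $D_\tau-D_0$ has order $0$, so that $\sigma_{D_\tau}$, and hence the principal symbol $\pi_{\ker\imath_{\xi^\sharp}}$ of $\pi_{\ker D^*_\tau}$, does not depend on $\tau$: this makes the projections $\pi_{\ker D^*_\tau}$ restrict to isomorphisms between the various kernels, letting one conjugate the whole family back onto the fixed space $\ker D_0^*$ and reduce to a norm-small perturbation of the invertible operator $T_0$.
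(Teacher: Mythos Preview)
Your proof is correct and follows the same overall architecture as the paper (continuity of the projection, then the elliptic estimate, then the isomorphisms), but the execution of the last two steps is different. For \eqref{eq:firstpoint} the paper argues by contradiction and compactness: it first proves a weak estimate $\|\pi_{\ker D_\tau^*}f\|_{H^{s+k}}\lesssim\|\pi_{\ker D_\tau^*}\Delta_0\pi_{\ker D_\tau^*}f\|_{H^s}+\|\pi_{\ker D_\tau^*}f\|_{H^{s+k-1}}$ via commutator manipulations, then extracts a limit from a normalised violating sequence, bootstraps to smoothness and uses positivity of $\Delta_0$ to reach a contradiction. Your route---invoke Fredholm theory to show $T_0=\pi_{\ker D_0^*}\Delta_0\pi_{\ker D_0^*}$ is invertible on $\ker D_0^*$, then perturb by continuity---is cleaner and bypasses the compactness step entirely. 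For the isomorphism claim the paper derives from \eqref{eq:firstpoint} a lower bound for $\pi_{\ker D_\tau^*}\Delta_0\pi_{\ker D_0^*}$ and its adjoint, hence injectivity and closed range, and concludes surjectivity from injectivity of the $L^2$-adjoint; you instead conjugate the whole family back to the fixed space $\ker D_0^*$ using that $\pi_{\ker D_\tau^*}$ and $\pi_{\ker D_0^*}$ are mutual near-inverses between the moving kernels, which makes the $\tau$-independence of $\sigma_{\pi_{\ker D_\tau^*}}$ the visible driving mechanism. Both routes ultimately rest on the same two ingredients (continuity of $Q_\tau$ in $\Psi^0$ and invertibility at $\tau=0$), so the difference is one of packaging rather than substance.
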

\begin{proof}
The first claim follows from the formula (see \eqref{equation:projection-pi}):
	\begin{equation}\label{eq:projection-op-auxiliary}
		\pi_{\ker D_\tau^*} = \mathbbm{1} - D_\tau (D_\tau^* D_\tau)^{-1} D_\tau^*,
	\end{equation}
	once we show that $\tau \mapsto \|(D_\tau^*D_\tau)^{-1}\|_{H^s \to H^{s + 2}}$ is well-defined and continuous for $|\tau|$ small enough. Firstly, note that if $D_0 f = 0$, then $\X \pi_{m}^*f = 0$ by \eqref{equation:xe} and this implies $f = 0$ by our assumptions, so the map $D_0^*D_0$ is invertible on Sobolev spaces.\footnote{If $\ker \X$ is not empty but stably non-empty, this argument also works.} Using the identity
	\[
		D_\tau^* D_\tau = D_0^*D_0 \left(\mathbbm{1} - (D_0^* D_0)^{-1} (\underbrace{D_0^*D_0 - D_\tau^* D_\tau}_{\tau S_\tau :=})\right),
	\]
	and the fact that $\|S_\tau\|_{H^{s + 2} \to H^{s + 1}} = \mc{O}(1)$ as $\tau \to 0$ (which follows upon applying the formula \eqref{equation:de} and its adjoint), we conclude by inverting this identity that $(D_\tau^* D_\tau)^{-1}$ is an isomorphism $H^s \to H^{s + 2}$ for $|\tau|$ small enough, and moreover by using Neumann series that 
	\[
		(D_\tau^*D_\tau)^{-1} - (D_0^* D_0)^{-1} = \tau (D_0^* D_0)^{-1}S_\tau \left(\mathbbm{1} - \tau (D_0^*D_0)^{-1} S_\tau\right)^{-1} (D_0^*D_0)^{-1} = \mc{O}_{H^{s} \to H^{s + 3}}(\tau).
	\]
	From here we deduce using \eqref{eq:projection-op-auxiliary}
	\begin{equation}\label{eq:auxiliary-uniform-bound}
	\begin{split}
		\pi_{\ker D_\tau^*} - \pi_{\ker D_0^*} &= (D_0 - D_\tau) (D_0^* D_0)^{-1} D_0 + D_\tau \left((D_0^*D_0)^{-1} - (D_\tau^*D_\tau)^{-1}\right) D_0^*\\ 
		&+ D_\tau (D_\tau^*D_\tau)^{-1} (D_0^* - D_\tau^*) = \mc{O}_{H^s \to H^{s + 1}}(\tau),
	\end{split}
	\end{equation}
	as $\tau \to 0$, where the constant depends on $s$; the claim follows.

Next, since $\Delta_0$ is an isomorphism on Sobolev spaces $H^s$, we have that $\|\Delta_0 f\|_{H^s} \geq C_s \|f\|_{H^{s + k}}$ for some $C_s > 0$. Using the identity
	\[\pi_{\ker D_\tau^*} \Delta_0 \pi_{\ker D_\tau^*} = [\pi_{\ker D_\tau^*} - \pi_{\ker D_0^*}, \Delta_0] \pi_{\ker D_\tau^*} + [\pi_{\ker D_0^*}, \Delta_0]\pi_{\ker D_\tau^*} + \Delta_0 \pi_{\ker D_\tau^*},\]
	as well as that $[\pi_{\ker D_0^*} - \pi_{\ker D_\tau^*}, \Delta_0] = \mc{O}_{H^{s + k - 1} \to H^s}(1)$ as $\tau \to 0$ (which follows from \eqref{eq:auxiliary-uniform-bound} with possibly a different constant, by replacing $s$ there with $s - 1$ and $s + k - 1$), we obtain the estimate
	\begin{equation}
	\begin{split}
	\label{eq:remainder}
		\|\pi_{\ker D_\tau^*}f\|_{H^{s + k}} & \leq C_s \|\Delta_0 \pi_{\ker D_\tau^*}f\|_{H^s} \\
		&  \leq C' (\|\pi_{\ker D_\tau^*} \Delta_0 \pi_{\ker D_\tau^*}f\|_{H^s} + \|\pi_{\ker D_\tau^*}f\|_{H^{s + k - 1}})
		\end{split}
	\end{equation}
	for some constant $C' = C'(s) > 0$ independent of $\tau$. To show \eqref{eq:firstpoint}, we argue by contradiction and assume there is a sequence $f_n \in \ker D_{\tau_n}^*$ with $\|f_n\|_{H^{s + k}} = 1$ and $\|\pi_{\ker D_{\tau_n}}^* \Delta_0f_n\|_{H^s} \to 0$. We assume $\tau_n \to 0$ (but the same argument works if $\tau_n \to \tau$ for some $\tau \neq 0$). By compactness, we may assume $f_n$ converges in $H^{s + k -1}$. In fact, by \eqref{eq:remainder} we have:
	\begin{align*}
		\|f_n - f_m\|_{H^{s + k}} &\leq \|\pi_{\ker D_{\tau_n}^*}(f_n - f_m)\|_{H^{s + k}} + \|(\pi_{\ker D_{\tau_n}^*} - \pi_{\ker D_{\tau_m}^*}) f_m\|_{H^{s + k}}\\
		&\leq C'\|\pi_{\ker D_{\tau_n}^*}\Delta_0 \pi_{\ker D_{\tau_n}^*}f_m\|_{H^s} + C'\|\pi_{\ker D_{\tau_n}^*} (f_n - f_m)\|_{H^{s + k -1}} \\
		& \hspace{7cm} + o(1)\\
		&\leq C'\|\pi_{\ker D_{\tau_m}^*} \Delta_0 f_m\|_{H^s} + C'\|(\pi_{\ker D_{\tau_n}^*} - \pi_{\ker D_{\tau_m}^*}) \Delta_0 f_m\|_{H^s}\\
		 &+ C'\|\pi_{\ker D_{\tau_n}^*} \Delta_0 (\pi_{\ker D_{\tau_n}^*} - \pi_{\ker D_{\tau_m}^*}) f_m\|_{H^s} + o(1) = o(1).
	\end{align*}
	as $m, n \to \infty$. In the second line, we used that $\pi_{\ker D_{\tau_n}^*} - \pi_{\ker D_{\tau_m}^*} = o_{H^{s + k} \to H^{s + k}}(1)$ (which follows from \eqref{eq:auxiliary-uniform-bound} with possibly a different constant, by replacing $s$ there with $s + k$) and $\|f_m\|_{H^{s+k}} = 1$, \eqref{eq:remainder}, and the fact that $\|\pi_{\ker D_{\tau_n}}^* \Delta_0f_n\|_{H^s} = o(1)$. In the last line, we also used the assumption that $f_n$ converges in $H^{s + k - 1}$. Therefore, $(f_n)_{n \in \mathbb{N}}$ is a Cauchy sequence in $H^{s + k}$ and it converges to some $f \in H^{s + k}$ with $\|f\|_{H^{s + k}}=1$, $D_0^* f = 0$ and $\pi_{\ker D_0^*} \Delta_0 f = 0$. Using that $[\pi_{\ker D_0^*}, \Delta_0]f = -\Delta_0f$ and the fact that $[\pi_{\ker D_0^*}, \Delta_0] \in \Psi^{k - 1}$ implies by elliptic regularity that $f \in H^{s + k + 1}$. Bootstrapping we get $f \in C^\infty$ and so
	\[
		0 = \langle{\pi_{\ker D_0^*}\Delta_0f, f}\rangle_{L^2} = \langle{\Delta_0f, f}\rangle_{L^2},		\]
	which means that $f = 0$ as $\Delta_0$ was chosen to be positive. This contradicts that $\|f\|_{H^{s + k}} = 1$ and proves \eqref{eq:firstpoint}.

	Finally, by the first point we have $\|\pi_{\ker D_0^*} - \pi_{\ker D_\tau^*}\|_{H^s \to H^s} = o_s(1)$ as $\tau \to 0$, so by \eqref{eq:firstpoint} for small $|\tau|$ depending on $s$ we get
		\begin{align}\label{eq:closedrange1}
			\begin{split}
				\|\pi_{\ker D_\tau^*} \Delta_0 \pi_{\ker D_0^*}f\|_{H^s} &\geq \|\pi_{\ker D_0^*} \Delta_0 \pi_{\ker D_0^*} f\|_{H^s} \\
				& \hspace{1cm} - \|(\pi_{\ker D_0^*} - \pi_{\ker D_\tau^*} ) \Delta_0 \pi_{\ker D_0^*}f\|_{H^{s}}\\
			&\geq \frac{C(s)}{2} \|\pi_{\ker D_0^*}f\|_{H^{s+k}}.
			\end{split}
		\end{align}
	Similarly, using \eqref{eq:firstpoint} for $|\tau|$ small enough we obtain:
\begin{equation}\label{eq:closedrange2}
	\|\pi_{\ker D_\tau^*} \Delta_0 \pi_{\ker D_0^*}f\|_{H^s} \geq \frac{C(s)}{2} \|\pi_{\ker D_0^*}f\|_{H^{s+k}}.
\end{equation}
		 Estimates \eqref{eq:closedrange1} and \eqref{eq:closedrange2} show that the operators $\pi_{\ker D_0^*} \Delta_0 \pi_{\ker D_\tau^*}$, $\pi_{\ker D_\tau^*} \Delta_0 \pi_{\ker D_0^*}$ are injective and have a closed range for $|\tau|$ small enough, and then the surjectivity follows since their $L^2$-adjoints are injective. This completes the proof. 
\end{proof}

Next, using Proposition \ref{proposition:sandwich}, the fact that $\Delta_0$ acts diagonally to principal order and the equation \eqref{eq:sigma-proj-ker}, we obtain that for $\xi \in T^*_xM \setminus \{0\}$:
\[
	\sigma(P_\tau)(x, \xi) = \frac{2\pi}{|\xi|} C_{2m + n - 1}^{-1}\sigma(\Delta_0)^2(x, \xi) \pi_{\ker  \imath_{\xi^\sharp}} {\pi_m}_*\pi_m^*  \pi_{\ker  \imath_{\xi^\sharp}} \otimes \mathbbm{1}_{\E_x}.
\]
Note that here we simply use Proposition \ref{proposition:sandwich} to compute the symbol of  $\Pi_m$, and then the pseudodifferential nature and the symbol of $P_\tau$ follow from the usual pseudodifferential calculus. Therefore, the symbol of $P_\tau$ at $(x, \xi)$ is invertible on $\ker_{\imath_{\xi^\sharp}}$ and by standard microlocal analysis for each $\tau$ there exist pseudodifferential operators $Q_\tau$ and $R_\tau$ of respective orders $1 - 2k$ and $-\infty$ (cf. \cite[Lemma 2.5.3]{Lefeuvre-thesis}) , such that 
\[Q_\tau P_\tau = \pi_{\ker D_0^*} + R_\tau.\]
Using that $\Pi_m^\tau \geq 0$ we get $P_\tau \geq 0$ and it follows that $(P_\tau + 1)^{-1}: L^2 \cap \ker D_0^* \to L^2 \cap \ker D_0^*$ is compact and thus the spectrum of $P_\tau$ is well-defined. It is discrete, non-negative and accumulates at infinity, and the eigenfunctions of $P_\tau$ are smooth. Moreover, by Lemma \ref{lemma:isomorphisms} and again using that $\Pi_m^\tau \geq 0$, for small $|\tau|$ we have that
\[\pi_{\ker D_\tau^*} \Delta_0 \pi_{\ker D_0^*}: \ker P_\tau|_{C^\infty \cap \ker D_0^*} \xrightarrow{\sim} \ker \Pi_m^\tau|_{C^\infty \cap \ker D_0^*}\]
is an isomorphism. Therefore we see that for each $\tau$, $0$ is an eigenvalue of $\Pi_m^{\tau}$ if and only if $0$ is an eigenvalue of $P_\tau$.

\subsection{Variations of the ground state}\label{ssection:variations_ground_state}

We assume that $\ker P_0|_{C^\infty \cap \ker D_0^*}$ is $d$-dimensional, for some $d \geq 1$, and spanned by $u_1, ..., u_d \in C^\infty(M, \otimes_S^m T^*M \otimes \E) \cap \ker D_0^*$ with $\langle u_i, u_j \rangle_{L^2} = \delta_{ij}$. Let $\Pi_\tau$ be the $L^2$-orthogonal spectral projector
\begin{equation}
\label{equation:pi-tau}
\Pi_\tau := \dfrac{1}{2 \pi i} \oint_{\gamma'} (z-P_\tau)^{-1} \dd z,
\end{equation}
where $\gamma'$ is a small circle centred around $0$ and not containing any other eigenvalue of $P_0$ in its interior. In particular, we have $\Pi_{\tau=0} = \sum_{i=1}^d \langle \bullet , u_i \rangle_{L^2} u_i$ and by ellipticity of $P_0$ on $\ker D_0^*$ we have the meromorphic expansion close to zero, valid on $S := L^2 \cap \ker D_0^*$
\begin{equation}\label{eq:expansion-appendix}
	(z - P_0)^{-1} = \frac{\Pi_0}{z} - P_0^{-1} + zH_1 + \mc{O}(z^2),
\end{equation}
for some maps $P_0^{-1}, H_1: S \to S$, where we recall our notational conventions were explained in \S \S \ref{ssection:convention}. These maps satisfy the relations (cf. \eqref{eq:resolvent-identities}):
\begin{equation}\label{eq:resolvent-identities-appendix}
	P_0 P_0^{-1} = P_0^{-1} P_0 = \id - \Pi_0, \quad P_0H_1 = -P_0^{-1}, \quad \Pi_0 P_0^{-1} = P_0^{-1} \Pi_0 = 0.
\end{equation}
We introduce $\lambda_\tau$ as the sum of the eigenvalues of $P_\tau$ inside $\gamma'$:
\begin{equation}
\label{equation:lambda-tau}
\lambda_{\tau} := \Tr(P_\tau \Pi_\tau).
\end{equation}
Note that both $\tau \mapsto \Pi_\tau \in \mc{L}(L^2)$ and $\tau \mapsto \lambda_\tau \in \C$ are smooth by standard elliptic theory (see \cite[Section 4]{Cekic-Lefeuvre-20}). Observe that $\lambda_{\tau=0} = 0$ and as $P_\tau \geq 0$, we have $\lambda_{\tau} \geq 0$. Our goal is to produce a small perturbation $\nabla^{\E} + \tau \Gamma$ (where $\Gamma$ is skew-Hermitian) such that $\lambda_{\tau} > 0$ for $\tau \neq 0$. This will say that at least one of the eigenvalues was ejected from $0$ and that $\ker P_{\tau}$ is at most $(d-1)$-dimensional (for $\tau \neq 0$). Iterating the process, we will then obtain a perturbation of $\nabla^{\E}$ with injective (twisted) X-ray transform.

We make the easy observation that the first variation is zero, as $\lambda_{\tau = 0} = 0$ is a local minimum of the smooth function $\tau \mapsto \lambda_\tau$:
\begin{equation}\label{eq:firstvariation-lambda}
	\dot{\lambda}_{\tau = 0} = 0.
\end{equation}
Next, we note that since $P_\tau u_i = 0$ and $\Pi_m^\tau \geq 0$, we have $\Pi_m^\tau \Delta_0u_i = 0$. Therefore, by Lemma \ref{lemma:relation-twisted}, there exists $\VV_i \in C^\infty(SM, \E)$ such that
\begin{equation}\label{eq:u_iVV_i}
	\pi_m^*\Delta_0 u_i = \mathbf{X} \VV_i, \quad \Pi_+\VV_i = 0.
\end{equation}
By the mapping properties of $\X$ (see \eqref{eq:mapping-property-X}), we have that if $m$ is even then $\VV_i$ may be chosen odd and vice versa, if $m$ is odd then $\VV_i$ may be chosen even.

\subsubsection{Second order variations.}

We now compute $\ddot{\lambda}_{\tau = 0}$. For simplicity, when clear from the context we will drop the $\tau = 0$ subscript and simply write $\dot{\lambda}$ and $\ddot{\lambda}$. We start with an abstract lemma, valid in a more general setting (this will also get used in the metric case, see Lemma \ref{lemma:second-variation-resolvent} below):

\begin{lemma}
\label{lemma:abstract}
	The following variational formula holds:
	\[
		\ddot{\lambda}_{\tau = 0} = \Tr(\ddot{P}_0 \Pi_0) - 2\Tr(\Pi_0 \dot{P}_0 P_0^{-1} \dot{P}_0 \Pi_0).
	\]
\end{lemma}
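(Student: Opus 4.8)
The plan is to expand everything in powers of $\tau$ around $\tau=0$ using the contour-integral formula \eqref{equation:pi-tau} for the spectral projector and \eqref{equation:lambda-tau} for $\lambda_\tau$, and to collect the coefficient of $\tau^2$. First I would write, for $z$ on the small circle $\gamma'$, the resolvent identity
\[
(z-P_\tau)^{-1} = (z-P_0)^{-1} + (z-P_0)^{-1}\,(P_\tau-P_0)\,(z-P_\tau)^{-1},
\]
iterate it once more to get the second-order Neumann-type expansion
\[
(z-P_\tau)^{-1} = (z-P_0)^{-1} + \tau\,(z-P_0)^{-1}\dot P_0 (z-P_0)^{-1} + \tfrac{\tau^2}{2}(z-P_0)^{-1}\ddot P_0 (z-P_0)^{-1} + \tau^2 (z-P_0)^{-1}\dot P_0 (z-P_0)^{-1}\dot P_0 (z-P_0)^{-1} + \mc{O}(\tau^3),
\]
where $\dot P_0 = \partial_\tau P_\tau|_{\tau=0}$, $\ddot P_0 = \partial_\tau^2 P_\tau|_{\tau=0}$ (these exist and are pseudodifferential by Lemma~\ref{lemma:differentiabilty} and the perturbation theory recalled in \S\ref{ssection:resolvent-perturbations-theory}). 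Integrating over $\gamma'$ and using that $\Pi_\tau=\tfrac{1}{2\pi i}\oint_{\gamma'}(z-P_\tau)^{-1}\,dz$, I obtain $\Pi_\tau$ as a power series in $\tau$ with operator coefficients.

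Next I would substitute this into $\lambda_\tau=\Tr(P_\tau\Pi_\tau)$ and expand $P_\tau=P_0+\tau\dot P_0+\tfrac{\tau^2}{2}\ddot P_0+\mc{O}(\tau^3)$ as well. Collecting the $\tau^2$-coefficient of $\Tr(P_\tau\Pi_\tau)$ then reduces the whole computation to evaluating contour integrals of the form $\tfrac{1}{2\pi i}\oint_{\gamma'} z^{a}\,(\ast)\,dz$ where each $(\ast)$ is a product of factors $(z-P_0)^{-1}$ interlaced with $\dot P_0$ or $\ddot P_0$; one plugs in the Laurent expansion \eqref{eq:expansion-appendix}, namely $(z-P_0)^{-1}=\Pi_0/z - P_0^{-1}+zH_1+\mc{O}(z^2)$, and reads off residues. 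The key algebraic inputs that make the terms collapse are the identities \eqref{eq:resolvent-identities-appendix}: $\Pi_0 P_0^{-1}=P_0^{-1}\Pi_0=0$, $\Pi_0^2=\Pi_0$, $P_0\Pi_0=\Pi_0 P_0=0$ (since $0\in\ker P_0$), and $P_0 H_1=-P_0^{-1}$. A useful preliminary reduction is that $P_0\Pi_\tau$ can be replaced by $(P_\tau-(P_\tau-P_0))\Pi_\tau$, and more efficiently one uses the standard trick $\Tr(P_\tau\Pi_\tau)=\tfrac{1}{2\pi i}\oint_{\gamma'} z\,\Tr\big((z-P_\tau)^{-1}\big)\,dz - \tfrac{1}{2\pi i}\oint_{\gamma'}\Tr\big(P_0(z-P_\tau)^{-1}\big)\,dz$ together with $\Tr(AB)=\Tr(BA)$ to rotate the $P_0$ factor into a position where $P_0(z-P_0)^{-1}$ simplifies to $-\,\id+z(z-P_0)^{-1}$ on $S$. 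After cancellation, the surviving residues at $z=0$ are precisely $\Tr(\ddot P_0\Pi_0)$ from the $\ddot P_0$-term and $-2\,\Tr(\Pi_0\dot P_0 P_0^{-1}\dot P_0\Pi_0)$ from the double $\dot P_0$-term (the cross terms with $\dot P_0$ alone integrate to $0$, consistent with \eqref{eq:firstvariation-lambda}, and the $H_1$-contributions are killed by $\Pi_0 P_0^{-1}=0$ and the fact that $\Pi_0$ is a rank-$d$ projector commuting with nothing problematic). Finite-rank traces and cyclicity are legitimate here because $\Pi_0$ has finite rank and all operators in sight are bounded on $S=L^2\cap\ker D_0^*$.

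The main obstacle is purely bookkeeping: one has to be careful that the expansion of $\Pi_\tau$ contributes at order $\tau^2$ both through its own $\tau^2$-term and through the product with the $\tau^1$-term of $P_\tau$, and that the Laurent expansion \eqref{eq:expansion-appendix} only holds on the invariant subspace $S$, so every manipulation must be done after compressing with $\Pi_0$ or using that $\gamma'$ encircles only the eigenvalue $0$ of $P_0$ (so the part of the resolvent holomorphic inside $\gamma'$ is exactly $-P_0^{-1}+zH_1+\cdots$). Once one commits to computing $\tfrac{1}{2\pi i}\oint_{\gamma'}(\cdots)\,dz$ term by term and uses \eqref{eq:resolvent-identities-appendix} systematically to discard every term containing a factor $\Pi_0 P_0^{-1}$ or $P_0\Pi_0$, the stated formula $\ddot\lambda=\Tr(\ddot P_0\Pi_0)-2\,\Tr(\Pi_0\dot P_0 P_0^{-1}\dot P_0\Pi_0)$ drops out.
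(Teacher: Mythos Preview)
Your proposal is correct and follows essentially the same contour-integral/residue computation as the paper; the paper just organizes it slightly differently by first computing $\dot\Pi_0$ and $\ddot\Pi_0$ explicitly via the same integrals and then applying the Leibniz rule $\ddot\lambda = \Tr(\ddot P_0\Pi_0 + 2\dot P_0\dot\Pi_0 + P_0\ddot\Pi_0)$, whereas you expand $(z-P_\tau)^{-1}$ first and collect the $\tau^2$-coefficient directly. One minor slip: your ``standard trick'' identity for $\Tr(P_\tau\Pi_\tau)$ is miswritten (the correct reduction is $P_\tau(z-P_\tau)^{-1}=z(z-P_\tau)^{-1}-\id$, giving $\lambda_\tau=\tfrac{1}{2\pi i}\oint_{\gamma'} z\,\Tr((z-P_\tau)^{-1})\,dz$), but this does not affect your main argument.
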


\begin{proof}
	We compute:
	\begin{align*}
		\dot{\Pi}_0 &= \frac{1}{2\pi i} \oint_{\gamma'} (z - P_0)^{-1} \dot{P}_0 (z - P_0)^{-1} dz = -\left(\Pi_0 \dot{P}_0 P_0^{-1} + P_0^{-1} \dot{P}_0 \Pi_0\right),\\
		\ddot{\Pi}_0 &= 2 \times \frac{1}{2\pi i} \oint_{\gamma'} (z - P_0)^{-1} \dot{P}_0 (z - P_0)^{-1} \dot{P}_0 (z - P_0)^{-1} dz \\
		& + \frac{1}{2\pi i} \oint_{\gamma'} (z - P_0)^{-1} \ddot{P}_0 (z - P_0)^{-1} dz.
	\end{align*}
	We expand the second formula using \eqref{eq:expansion-appendix} to get:
	\begin{multline}\label{eq:Pi-ddot-abstract}
		\ddot{\Pi}_0 = 2\left[\Pi_0 \dot{P}_0 \Pi_0 \dot{P}_0 H_1 + \Pi_0 \dot{P}_0 P_0^{-1} \dot{P}_0 P_0^{-1} + \Pi_0 \dot{P}_0 H_1 \dot{P}_0 \Pi_0 + P_0^{-1} \dot{P}_0 \Pi_0 \dot{P}_0 P_0^{-1}\right.\\
		 \left.+ P_0^{-1} \dot{P}_0 P_0^{-1} \dot{P}_0 \Pi_0 + H_1 \dot{P}_0 \Pi_0 \dot{P}_0 \Pi_0\right] - \left(\Pi_0 \ddot{P}_0 P_0^{-1} + P_0^{-1} \ddot{P}_0 \Pi_0\right).
	\end{multline}
	Therefore, we compute using \eqref{eq:resolvent-identities-appendix}:
		\begin{equation}\label{eq:P_0Pi-ddot-appendix}
		P_0\ddot{\Pi}_0 = 2(\id - \Pi_0) \dot{P}_0 \left(\Pi_0 \dot{P}_0 P_0^{-1} + P_0^{-1} \dot{P}_0 \Pi_0\right) - 2P_0^{-1} \dot{P}_0 \Pi_0 \dot{P}_0 \Pi_0 - (\id - \Pi_0) \ddot{P}_0 \Pi_0,
		\end{equation}
		which implies that, using the cyclicity of the trace (here and below, we use the fact that $\mathrm{Tr}(AB) = \mathrm{Tr}(BA)$ for two bounded operators $A, B$ on a Hilbert space $\mc{H}$, as soon as one of them has finite rank, see \cite[Appendix B.4]{Dyatlov-Zworski-19}) and \eqref{eq:resolvent-identities-appendix}:
		\[\Tr\left(P_0\ddot{\Pi}_0\right) = 2\Tr\left(\dot{P}_0 \Pi_0 \dot{P}_0 P_0^{-1}\right) = 2\Tr\left(\Pi_0 \dot{P}_0 P_0^{-1}\dot{P}_0 \Pi_0\right).\]
		Finally, we obtain using once more the cyclicity of the trace:
		\begin{align*}
			\ddot{\lambda} &= \Tr\left(\ddot{P}_0 \Pi_0 + 2\dot{P}_0 \dot{\Pi}_0 + P_0 \ddot{\Pi}_0 \right) = \Tr\left(\ddot{P}_0 \Pi_0\right) - 2\Tr\left(\Pi_0 \dot{P}_0 P_0^{-1} \dot{P}_0 \Pi_0\right).
		\end{align*}
\end{proof}

Next, we compute $\dot{P}_0, \ddot{P}_0$ and apply Lemma \ref{lemma:abstract}. Before doing that, note that by Lemma \ref{lemma:isomorphisms}, on $\ker D_0^*$ we have 
\begin{equation}\label{eq:P_0^-1}
	P_0^{-1} = (\pi_{\ker D_0^*} \Delta_0 \pi_{\ker D_0^*})^{-1} \Pi_m^{-1} (\pi_{\ker D_0^*} \Delta_0 \pi_{\ker D_0^*})^{-1},
\end{equation}
where $\Pi_m^{-1}$ stands for the holomorphic part of the resolvent at zero as defined in \S\ref{ssection:convention} that is, if $(\Pi_m- z)^{-1} = \frac{\Pi_{m, 0}}{z} + R_m(z)$, where $\Pi_{m, 0}$ is the orthogonal projection onto $\ker \Pi_m|_{\ker D_0^*}$ zero and $R_m(z)$ holomorphic close to $z = 0$ then $\Pi_m^{-1} := R_m(0)$.

\begin{lemma}
\label{lemma:second-variation}
We have:
\begin{equation}
\begin{split}
\label{eq:inter}
\ddot{\lambda}  & = 2 \sum_{i=1}^d \big\langle \mc{I}_{\nabla^{\E}} (\pi_1^*\Gamma. \VV_i), \pi_1^*\Gamma. \VV_i \big\rangle_{L^2} \\
& \hspace{2cm} - \big\langle{\Pi_m^{-1} \pi_{m*} \mc{I}_{\nabla^{\E}} (\pi_1^*\Gamma. \VV_i), {\pi_{m}}_*\mc{I}_{\nabla^{\E}} (\pi_1^*\Gamma. \VV_i) }\big\rangle_{L^2}.
\end{split}
\end{equation}
\end{lemma}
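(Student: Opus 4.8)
The plan is to feed explicit formulas for $\dot P_0$ and $\ddot P_0$ into the abstract variational identity of Lemma \ref{lemma:abstract}. Since $\Delta_0$ and $\pi_{\ker D_0^*}$ are built from the fixed connection $\nabla^{\E}$ and do not depend on $\tau$, only the middle factor $\Pi_m^\tau = {\pi_m}_* \mc{I}_{\nabla^{\E}+\tau\Gamma}\pi_m^*$ gets differentiated, so that $\dot P_0 = \pi_{\ker D_0^*}\Delta_0 {\pi_m}_* \dot{\mc{I}}\, \pi_m^*\Delta_0 \pi_{\ker D_0^*}$ and $\ddot P_0 = \pi_{\ker D_0^*}\Delta_0 {\pi_m}_* \ddot{\mc{I}}\, \pi_m^*\Delta_0\pi_{\ker D_0^*}$, where $\dot{\mc{I}}$ and $\ddot{\mc{I}}$ denote the first and second $\tau$-derivatives at $\tau=0$ of $\tau \mapsto \mc{I}_{\nabla^{\E}+\tau\Gamma}$. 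Writing $\X_\tau = \X + \tau A$ with $A := \pi_1^*\Gamma$ (a bounded, skew-adjoint, zeroth order multiplication operator), the $\tau$-differentiability of the resolvents $\RR_\pm^\tau(0)$, as well as the absence of a resonance at $0$ for $|\tau|$ small, follows from the perturbation theory recalled in \S\ref{ssection:resolvent-perturbations-theory} and Lemma \ref{lemma:differentiabilty}; differentiating $\RR_\pm^\tau(0) = (\mp\X_\tau)^{-1}$ in $\tau$ and using $\mc{I}_{\nabla^{\E}+\tau\Gamma} = -\RR_+^\tau(0)-\RR_-^\tau(0)$ yields, after rewriting in terms of $\RR_\pm = -\RR_\pm(0)$,
\[
\dot{\mc{I}} = -\RR_+ A\RR_+ + \RR_- A\RR_-, \qquad \ddot{\mc{I}} = 2\RR_+ A\RR_+ A\RR_+ + 2\RR_- A\RR_- A\RR_-.
\]

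Next I would evaluate the cross term $\Tr(\Pi_0\dot P_0 P_0^{-1}\dot P_0\Pi_0) = \sum_{i=1}^d \langle P_0^{-1}\dot P_0 u_i,\dot P_0 u_i\rangle_{L^2}$ (using $\Pi_0 u_i = u_i$ and the self-adjointness of $\dot P_0$, which follows from $\dot{\mc{I}}^*=\dot{\mc{I}}$ via $A^*=-A$ and $\RR_+^*=\RR_-$). Using \eqref{eq:u_iVV_i}, i.e. $\pi_m^*\Delta_0 u_i = \X\VV_i$, together with the resolvent identities $\RR_\pm\X = \X\RR_\pm = \pm\id$ (the twisted analogue of \eqref{eq:resolvent-identities}, the spectral projectors at $0$ being trivial since $\ker\X=\{0\}$), one gets $\dot{\mc{I}}\,\X\VV_i = -(\RR_++\RR_-)(A\VV_i) = -\mc{I}_{\nabla^{\E}}(\pi_1^*\Gamma.\VV_i)$, hence $\dot P_0 u_i = -\pi_{\ker D_0^*}\Delta_0\, w_i$ with $w_i := {\pi_m}_*\mc{I}_{\nabla^{\E}}(\pi_1^*\Gamma.\VV_i)$. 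The key observation is that $w_i$ is automatically solenoidal: from $\X\mc{I}_{\nabla^{\E}}=0$ and the adjoint of \eqref{equation:xe} (namely $D_0^*\,{\pi_m}_* = -{\pi_{m-1}}_*\X$, using $\X^*=-\X$), one obtains $D_0^*\, {\pi_m}_*\mc{I}_{\nabla^{\E}} = 0$, so $w_i \in \ker D_0^*$ and therefore $\dot P_0 u_i = -\pi_{\ker D_0^*}\Delta_0\pi_{\ker D_0^*}\,w_i$. Plugging into the formula \eqref{eq:P_0^-1} for $P_0^{-1}$ on $\ker D_0^*$ and using the self-adjointness of $\pi_{\ker D_0^*}\Delta_0\pi_{\ker D_0^*}$, this gives $\langle P_0^{-1}\dot P_0 u_i,\dot P_0 u_i\rangle_{L^2} = \langle \Pi_m^{-1}w_i, w_i\rangle_{L^2}$, so that $\Tr(\Pi_0\dot P_0 P_0^{-1}\dot P_0\Pi_0) = \sum_{i=1}^d\langle \Pi_m^{-1}w_i, w_i\rangle_{L^2}$.

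For the term $\Tr(\ddot P_0\Pi_0) = \sum_{i=1}^d \langle \ddot P_0 u_i, u_i\rangle_{L^2}$, I would again use $\pi_m^*\Delta_0 u_i = \X\VV_i$ to reduce $\langle \ddot P_0 u_i, u_i\rangle_{L^2}$ to $\langle \ddot{\mc{I}}\,\X\VV_i,\X\VV_i\rangle_{L^2}$, and then slide the outermost $\RR_\pm$ in each of the two summands of $\ddot{\mc{I}}$ onto the two $\X\VV_i$ factors using $\RR_\pm\X=\pm\id$, $\RR_+^*=\RR_-$ and $A^*=-A$; the sign bookkeeping collapses the expression to $2\langle(\RR_++\RR_-)(A\VV_i), A\VV_i\rangle_{L^2} = 2\langle \mc{I}_{\nabla^{\E}}(\pi_1^*\Gamma.\VV_i),\pi_1^*\Gamma.\VV_i\rangle_{L^2}$, whence $\Tr(\ddot P_0\Pi_0) = 2\sum_{i=1}^d\langle\mc{I}_{\nabla^{\E}}(\pi_1^*\Gamma.\VV_i),\pi_1^*\Gamma.\VV_i\rangle_{L^2}$. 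Substituting the two computations into Lemma \ref{lemma:abstract}, $\ddot\lambda = \Tr(\ddot P_0\Pi_0) - 2\Tr(\Pi_0\dot P_0 P_0^{-1}\dot P_0\Pi_0)$, produces exactly \eqref{eq:inter}.

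The only genuinely delicate points are the two algebraic collapses driven by $\RR_\pm\X=\pm\id$, $\RR_+^*=\RR_-$ and $A^*=-A$, where the signs must be tracked carefully, and the observation that ${\pi_m}_*\mc{I}_{\nabla^{\E}}$ always lands in $\ker D_0^*$: it is this last fact that puts $\dot P_0 u_i$ into the $\Delta_0$-conjugated form $-\pi_{\ker D_0^*}\Delta_0\pi_{\ker D_0^*}w_i$, so that the explicit inverse \eqref{eq:P_0^-1} applies cleanly and no spurious cross-terms arise. Everything else is the routine Neumann-series differentiation of a resolvent and the bookkeeping of adjoints.
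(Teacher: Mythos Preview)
Your proposal is correct and follows essentially the same approach as the paper: differentiate the resolvents to get $\dot{\mc I}$ and $\ddot{\mc I}$, collapse the outermost $\RR_\pm$ against $\X\VV_i$ using $\RR_\pm\X=\pm\id$, $\RR_+^*=\RR_-$, $A^*=-A$, and then invoke \eqref{eq:P_0^-1}. The one point you make explicit that the paper glosses over is that $w_i = {\pi_m}_*\mc I_{\nabla^{\E}}(\pi_1^*\Gamma.\VV_i)\in\ker D_0^*$ (via $D_0^*{\pi_m}_* = -{\pi_{m-1}}_*\X$ and $\X\mc I_{\nabla^{\E}}=0$), which is indeed what makes the simplification with $P_0^{-1}$ go through cleanly.
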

\begin{proof}
We start with the first term in \eqref{eq:inter}. For the variation of the resolvent, recalling that (similarly to the metric case in \eqref{eq:resolvent-identities})
\begin{equation}
\label{equation:ajout}
\X\RR_- = \RR_-\X= -\mathbbm{1}, \qquad \X\RR_+ = \RR_+\X = \mathbbm{1}, \qquad \RR_+^* = \RR_-,
\end{equation}
as $z=0$ is not a resonance by assumption, and the notation of \S \ref{sssection:t-g-x-ray} ($\gamma$ is a small loop around zero), we have:
\begin{equation}\label{eq:resolvent-second-variaiton-connection}
	\begin{split}
	\ddot{\RR}_+ &= \partial_\tau^2|_{\tau = 0} \frac{1}{2\pi i} \oint_\gamma \frac{1}{z} (z + \X_\tau)^{-1} dz = \partial_\tau^2|_{\tau = 0} \X_\tau^{-1} = 2\RR_+ \pi_1^*\Gamma \RR_+ \pi_1^*\Gamma \RR_+,\\
	\ddot{\RR}_- &= (\ddot{\RR}_+)^*  = \partial_\tau^2|_{\tau = 0} \frac{1}{2\pi i} \oint_\gamma \frac{1}{z} (z - \X_\tau)^{-1} dz \\
	& = \partial_\tau^2|_{\tau = 0} (-\X_\tau)^{-1} = 2\RR_- \pi_1^*\Gamma \RR_-\pi_1^*\Gamma \RR_-.
	\end{split}
\end{equation}
We remark that here we strongly use the facts that $\X_\tau$ is linear in $\tau$ (so $\partial_\tau^2 \X_\tau = 0$) and that $\X_{\tau}$ is invertible with inverse denoted by $\X_\tau^{-1}$ (as we shall see below in Lemma \ref{lemma:second-variation-resolvent} this signficantly complicates in the case of metrics and more terms appear). Therefore
\[\ddot{P}_0 = 2 \pi_{\ker D_0^*} \Delta_0 \pi_{m*} \left(\RR_+ \pi_1^*\Gamma \RR_+ \pi_1^*\Gamma \RR_+ + \RR_- \pi_1^*\Gamma \RR_- \pi_1^*\Gamma \RR_-\right) \pi_m^* \Delta_0 \pi_{\ker D_0^*}\]
and we obtain, using that $\X \VV_i = \pi_m^*\Delta_0 u_i$ (see \eqref{eq:u_iVV_i}):
\begin{equation}
\label{eq:term1}
\begin{split}
\Tr(\ddot{P}_0 \Pi_0) & = 2\sum_{i=1}^d \big\langle \big(\RR_+ \pi_1^*\Gamma \RR_+ \pi_1^*\Gamma \RR_+ + \RR_- \pi_1^*\Gamma \RR_- \pi_1^*\Gamma \RR_-\big) \X \VV_i, \X \VV_i \big\rangle_{L^2} \\
& = 2 \sum_{i=1}^d \big\langle \mc{I}_{\nabla^{\E}} (\pi_1^* \Gamma. \VV_i), \pi_1^* \Gamma. \VV_i \big\rangle_{L^2},
\end{split}
\end{equation}
using \eqref{equation:ajout} in the second line, as well as that $\Gamma$ is skew-Hermitian.

For the second term of \eqref{eq:inter}, we first observe that similarly to \eqref{eq:resolvent-second-variaiton-connection}
\begin{equation*}%\label{eq:resolvent-first-variaiton-connection}
	\dot{\RR}_+ = \partial_\tau|_{\tau = 0} \X_\tau^{-1} = -\RR_+ \pi_1^*\Gamma \RR_+, \quad
	\dot{\RR}_- = \partial_\tau|_{\tau = 0} (-\X_\tau)^{-1} = \RR_- \pi_1^*\Gamma \RR_-.
\end{equation*}
Therefore it holds that
\begin{equation}\label{eq:dot-P_0}
	\dot{P}_0 = \pi_{\ker D_0^*} \Delta_0 {\pi_m}_* (-\RR_+ \pi_1^*\Gamma \RR_+ + \RR_- \pi_1^*\Gamma \RR_-) \pi_m^*\Delta_0 \pi_{\ker D_0^*},
\end{equation}
and we finally obtain that, using $(\dot{P}_0)^* = \dot{P}_0$:
\begin{equation}
\label{equation:p-moins-un}
\begin{split}
	\Tr&\left(\Pi_0 \dot{P}_0 P_0^{-1} \dot{P}_0 \Pi_0\right) \\
	&=  \sum_{i=1}^d \left\langle P^{-1}_0 \dot{P}_0 u_i, \dot{P}_0 u_i \right\rangle_{L^2}\\
	&= \sum_{i = 1}^d \left\langle{P_0^{-1} \pi_{\ker D_0^*}\Delta_0 \mc{I}_{\nabla^{\E}} \left(\pi_1^*\Gamma. \VV_i\right), \pi_{\ker D_0^*} \Delta_0 \mc{I}_{\nabla^{\E}} \left(\pi_1^*\Gamma. \VV_i\right)}\right\rangle_{L^2}\\
	&= \sum_{i = 1}^d\left\langle{\Pi_m^{-1} \pi_{m*} \mc{I}_{\nabla^{\E}} (\pi_1^*\Gamma. \VV_i), \pi_{m*} \mc{I}_{\nabla^{\E}} (\pi_1^*\Gamma. \VV_i)}\right\rangle_{L^2}.
	\end{split}
\end{equation}
Here in the second line we used \eqref{eq:dot-P_0} and \eqref{eq:u_iVV_i}; in the final line we used  \eqref{eq:P_0^-1}. This proves the announced result.
\end{proof}

\subsubsection{Properties of the operators involved in the second variation}

First of all we note that by Proposition \ref{proposition:sandwich}, $\Pi_m$ and hence $\Pi_m^{-1} \pi_{\ker D_0^*}$, are pseudodifferential operators of orders $-1$ and $1$, respectively. More precisely, recall $\Pi_m^{-1}$ was defined just below \eqref{eq:P_0^-1} as the holomorphic part of $(\Pi_m-z)^{-1}$ at zero and is thus a pseudodifferential operator; $\pi_{\ker D_0^*}$ is a pseudodifferential operator using the formula analogous to \eqref{equation:projection-pi}. It follows that, for $(x, \xi) \in T^*M\setminus 0$ (cf. \cite[Lemma 2.5.3]{Lefeuvre-thesis}):
\begin{equation}\label{eq:symbols-Pi_m-Pi_m^-1}
\begin{split}
	\sigma_{\Pi_m}(x,\xi) &= C^{-1}_{n-1+2m} \dfrac{2\pi}{|\xi|} \pi_{\ker \imath_{\xi^\sharp}} {\pi_m}_* \pi_m^* \pi_{\ker \imath_{\xi^\sharp}} \otimes \mathbbm{1}_{\E_x},\\
	\sigma_{\Pi_m^{-1} \pi_{\ker D_0^*}}(x,\xi) &= C_{n-1+2m} \dfrac{|\xi|}{2\pi} (\pi_{\ker \imath_{\xi^\sharp}} {\pi_m}_* \pi_m^* \pi_{\ker \imath_{\xi^\sharp}})^{-1} \pi_{\ker \imath_{\xi^\sharp}} \otimes \mathbbm{1}_{\E_x}.
\end{split}
\end{equation}
We now fix $\VV \in C^\infty(SM,\pi^*\E)$ and introduce the multiplication map
\[
M_{\VV} : C^\infty(SM,\pi^* \End(\E)) \rightarrow C^\infty(SM,\pi^*\E), \quad M_{\VV} A := A.\VV.
\]	
Its adjoint is given by $M_{\VV}^* \WW = \langle{\bullet, \VV}\rangle_{\E} \otimes \WW$, for any $\WW \in C^\infty(SM, \pi^*\E)$. 
Next, we show that the terms appearing in the formula for $\ddot{\lambda}$ in Lemma \ref{lemma:second-variation} have pseudodifferential nature:

\begin{lemma}
\label{lemma:symbol-q}
For all $\VV \in C^\infty(SM,\pi^*\E)$, the operator
\begin{equation}\label{eq:Q_v-def}
	Q_{\VV} := {\pi_m}_* \mc{I}_{\nabla^{\E}} M_{\VV} \pi_1^* \in \Psi^{-1}\left(M, T^*M \otimes \End(\E) \rightarrow \otimes^m_S T^*M \otimes \E\right)
\end{equation}
is pseudodifferential of order $-1$ with principal symbol for $(x,\xi) \in T^*M \setminus \left\{ 0 \right\}$
\[
\sigma_{Q_{\VV}}(x,\xi) \in \mathrm{Hom}(T_x^*M \otimes \End(\E_x), \otimes^m_S T_x^*M \otimes \E_x),
\]
given by, for $B \in T_x^*M \otimes \End(\E_x)$:
\[
\sigma_{Q_{\VV}}(x,\xi) B  = C_{n-1+2m}^{-1} \dfrac{2 \pi}{|\xi|} \pi_{\ker \imath_{\xi^\sharp}} {\pi_m}_* E_{\xi}^m \left( \pi_1^* (\pi_{\ker \imath_{\xi^\sharp}}B). \VV(x,\bullet)\right).
\]
\end{lemma}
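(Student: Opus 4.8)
The plan is to recognise $Q_{\VV}$ as a ``sandwiched'' operator in the sense of the sandwich Proposition \ref{proposition:sandwich}. The point is that $M_{\VV}$ is nothing but multiplication by the smooth section $\VV$, i.e. the bundle homomorphism $\pi^*\End(\E) \to \pi^*\E$, $A \mapsto A.\VV$, which is a differential operator of order $0$; likewise $\mathbbm{1}: C^\infty(SM,\pi^*\E) \to C^\infty(SM,\pi^*\E)$ is a differential operator of order $0$. Now Proposition \ref{proposition:sandwich} is phrased for $P_R, P_L$ acting on $C^\infty(SM,\pi^*\E)$ and for $\pi_{m_2}^*$ applied to $\E$-valued tensors, but inspecting its proof one sees that the only features used are that $P_R$, $P_L$ are differential operators and that $\pi_{m_2}^*$, ${\pi_{m_1}}_*$ are the tensorial pullback and pushforward — in particular the wavefront-set argument of Lemma \ref{lemma:simplification} and the stationary-phase computation are insensitive to which (pullback) bundle $P_R$ is defined on. Hence the first step is to record that Proposition \ref{proposition:sandwich} extends verbatim to the case in which $P_R$ is a differential operator $C^\infty(SM,\pi^*\F) \to C^\infty(SM,\pi^*\E)$ for an arbitrary Hermitian bundle $\F \to M$, the rightmost factor being $\pi_{m_2}^*$ applied to $\F$-valued symmetric $m_2$-tensors (and symmetrically for $P_L$ and the left factor), with the same conclusion and the same symbol formulas \eqref{eq:symbol-1}--\eqref{eq:symbol-2}.

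Applying this extension with $\F := \End(\E)$, $m_2 := 1$, $P_R := M_{\VV}$, $P_L := \mathbbm{1}$, $m_1 := m$ and left bundle $\E$ immediately yields that $Q_{\VV}$ is a classical pseudodifferential operator of order $m_R + m_L - 1 = -1$ in $\Psi^{-1}(M, T^*M \otimes \End(\E) \to \otimes^m_S T^*M \otimes \E)$. For the principal symbol I would then simply feed \eqref{eq:symbol-2} with the relevant data: since $M_{\VV}$ has order $0$ its principal symbol is the bundle map itself, $\sigma_{M_{\VV}}\big((x,u),\eta\big)A = A.\VV(x,u)$, independent of the covector $\eta$, and $\sigma_{\mathbbm{1}} = \mathbbm{1}$, so that $\sigma_{P_L P_R} = \sigma_{M_{\VV}}$. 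Choosing the free parameter in \eqref{eq:symbol-2} equal to $m$ (so that $C_{m_1 + m + n - 1} = C_{n-1+2m}$) and using that $\pi_1^*(\pi_{\ker\imath_{\xi^\sharp}}B)$ and $\pi_1^* B$ coincide on $\Ss^{n-2}_\xi$ — they differ by an element of $\ran(j_\xi)$, which vanishes on vectors $u \perp \xi$ — one obtains exactly the stated formula for $\sigma_{Q_{\VV}}(x,\xi)B$. Alternatively one can run \eqref{eq:symbol-1} directly and then invoke Lemma \ref{lemma:useful-identity} with $m' = m$ to convert the integral over $\Ss^{n-2}_\xi$ into the claimed expression; the constant $C_{n-1+2m}$ comes precisely from that identity.

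I do not expect any real obstacle here: there is no new analytic content, the meromorphic continuation of $\mc{I}_{\nabla^{\E}}$ on anisotropic Sobolev spaces being already in hand, and composition with the order-zero operators $M_{\VV}$ and $\mathbbm{1}$ affects neither wavefront sets nor symbol orders. The only point requiring (minor) care is checking the bundle-valued extension of Proposition \ref{proposition:sandwich} stated in the first step, which is routine given its proof.
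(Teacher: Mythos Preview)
Your proposal is correct and is exactly the approach taken in the paper: the paper's proof is the single sentence ``This follows directly from Proposition \ref{proposition:sandwich}'', and you have simply spelled out the (routine) bundle-valued extension and the choice of parameters $m_1=m$, $m_2=1$, $P_R=M_{\VV}$, $P_L=\mathbbm{1}$ needed to read off the symbol from \eqref{eq:symbol-2}.
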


\begin{proof}
This follows directly from Proposition \ref{proposition:sandwich}.
\end{proof}

\begin{lemma}
\label{lemma:symbol-l}
	For any $\VV \in C^\infty(SM, \pi^*\E)$, the operator 
	\begin{equation}\label{eq:L_v-def}
		L_{\VV} := \pi_{1*} M_{\VV}^* \mc{I}_{\nabla^{\E}} M_{\VV} \pi_1^* \in \Psi^{-1}(M, T^*M \otimes \End(\E) \rightarrow T^*M \otimes \End(\E))
	\end{equation}
	is pseudodifferential of order $-1$ with principal symbol for $(x,\xi) \in T^*M \setminus \left\{ 0 \right\}$
	\[
	\sigma_{L_{\VV}}(x,\xi) \in \End(T^*_xM \otimes \End(\E_x))
	\]
	given by: 
	\[\sigma_{L_{\VV}}(x,\xi)  = C_{n-1+2m}^{-1} \frac{2\pi}{|\xi|}\pi_{\ker \imath_{\xi^\sharp}}{\pi_1}_* M_{\VV}^* (E^m_{\xi})^* E^m_{\xi} M_{\VV} \pi_1^*\pi_{\ker \imath_{\xi^\sharp}}.\]
\end{lemma}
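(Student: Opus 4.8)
The plan is to deduce Lemma~\ref{lemma:symbol-l} from the sandwich Proposition~\ref{proposition:sandwich} in essentially the same way as Lemma~\ref{lemma:symbol-q}, but now with a differential operator on \emph{both} sides of $\mc{I}_{\nabla^{\E}}$. First I would observe that the map $M_{\VV}\pi_1^* : C^\infty(M,T^*M\otimes\End(\E))\to C^\infty(SM,\pi^*\E)$ is of the form $P_R$ acting after a pullback: writing $f\in C^\infty(M,T^*M\otimes\End(\E))$, the composition $\WW\mapsto \pi_1^*\WW$ is a pullback of a $1$-tensor (with values in $\End(\E)$), and then $M_{\VV}$ is the order-$0$ differential operator of pointwise multiplication by the fixed smooth section $\VV\in C^\infty(SM,\pi^*\E)$, mapping $\pi^*\End(\E)$-valued functions to $\pi^*\E$-valued functions. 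Hence $M_{\VV}\pi_1^*$ fits the framework of Proposition~\ref{proposition:sandwich} (or more precisely its twisted/bundle generalization, which the authors note works verbatim) with $P_R = M_{\VV}$, $m_R=0$, $m_2=1$; its principal symbol is $\sigma_{M_{\VV}}\big((x,u),\eta\big) = $ (pointwise multiplication by $\VV(x,u)$), independent of $\eta$. On the left, $\pi_{1*}M_{\VV}^*$ is exactly the transpose of $M_{\VV}\pi_1^*$, so it is $P_L = M_{\VV}^*$ followed by the pushforward ${\pi_1}_*$, with $m_L=0$, $m_1=1$, and $\sigma_{P_L^*} = \sigma_{M_{\VV}}$.

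With these identifications, Proposition~\ref{proposition:sandwich} immediately gives that $L_{\VV} = {\pi_1}_* M_{\VV}^* \mc{I}_{\nabla^{\E}} M_{\VV}\pi_1^*$ is a classical $\Psi$DO of order $m_R+m_L-1 = -1$ in $\Psi^{-1}(M,T^*M\otimes\End(\E)\to T^*M\otimes\End(\E))$, and formula~\eqref{eq:symbol-1} (in its bundle form) yields, for $B,B'\in T_x^*M\otimes\End(\E_x)$,
\[
\big\langle \sigma_{L_{\VV}}(x,\xi)B, B'\big\rangle = \frac{2\pi}{|\xi|}\int_{\Ss^{n-2}_\xi}\big\langle \pi_1^*(\pi_{\ker\imath_{\xi^\sharp}}B)(u).\VV(x,u),\ \pi_1^*(\pi_{\ker\imath_{\xi^\sharp}}B')(u).\VV(x,u)\big\rangle_{\E_x}\, dS_\xi(u),
\]
where the projections $\pi_{\ker\imath_{\xi^\sharp}}$ appear exactly as in Lemma~\ref{lemma:symbol-q}: the sandwich formula applied with the operator $\pi_{\ker D_0^*}$-compressed data produces $\pi_{m_i}^*\pi_{\ker\imath_{\xi^\sharp}}$ on each side (this is the content of \eqref{eq:symbol-2} combined with $\sigma_{\pi_{\ker D^*}} = \pi_{\ker\imath_{\xi^\sharp}}$ from \eqref{eq:sigma-proj-ker}). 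It remains to recognize the right-hand side as the quadratic form of the claimed operator. By Lemma~\ref{lemma:useful-identity} applied in the fibre $E=T_xM$ (with the bundle $\E_x$), the map $f\mapsto \pi_{\ker\imath_{\xi^\sharp}}{\pi_1}_*E^m_\xi(f)$ is the adjoint, up to the constant $C_{m+1+n-1}=C_{n+2m}$... more precisely: the identity~\eqref{eq:useful-identity} with $m'=1$ states $C_{m+1+n-1}\int_{\Ss^{n-2}_\xi}\langle f(u),\pi_1^*f'(u)\rangle\, dS_\xi = \langle \pi_{\ker\imath_{\xi^\sharp}}{\pi_1}_* E^m_\xi f, f'\rangle$, so the integral of $\langle g(u),\pi_1^*f'(u)\rangle_{\E}$ over $\Ss^{n-2}_\xi$ can be rewritten via $(E^m_\xi)^*$ of the fibrewise pushforward. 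Writing $g(u) = \pi_1^*(\pi_{\ker\imath_{\xi^\sharp}}B)(u).\VV(x,u) = M_{\VV}\pi_1^*(\pi_{\ker\imath_{\xi^\sharp}}B)(u)$, the displayed integral becomes $\tfrac{2\pi}{|\xi|}\langle E^m_\xi M_{\VV}\pi_1^*\pi_{\ker\imath_{\xi^\sharp}}B,\ E^m_\xi M_{\VV}\pi_1^*\pi_{\ker\imath_{\xi^\sharp}}B'\rangle_{L^2(\Ss^{n-1})}$ up to the normalization $C_{n+2m}$ coming from \eqref{eq:E^k-squared} and \eqref{eq:useful-identity}; pulling $(E^m_\xi)^*E^m_\xi$ together and the pushforward ${\pi_1}_*$ and final projection $\pi_{\ker\imath_{\xi^\sharp}}$ out of the pairing gives exactly $\langle C_{n-1+2m}^{-1}\tfrac{2\pi}{|\xi|}\pi_{\ker\imath_{\xi^\sharp}}{\pi_1}_*M_{\VV}^*(E^m_\xi)^*E^m_\xi M_{\VV}\pi_1^*\pi_{\ker\imath_{\xi^\sharp}}B,\ B'\rangle$, which is the asserted formula for $\sigma_{L_{\VV}}(x,\xi)$.

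The only genuinely non-routine point is bookkeeping the normalization constant: one must check that all the $\sin$-power Jacobian factors collapse correctly so that the single constant $C_{n-1+2m}$ appears, consistently with Lemma~\ref{lemma:symbol-q} and with \eqref{eq:symbols-Pi_m-Pi_m^-1}. This is purely a matter of matching the exponent $m$ in $E^m_\xi$ against the degree-$m$ pullback $\pi_m^*$ (here effectively hidden in how $\VV$ enters, since $\VV_i$ has a specific parity and interacts with $\pi_m^*\Delta_0 u_i = \X\VV_i$ through Proposition~\ref{proposition:sandwich}); concretely $m_1+m+n-1 = 1 + m + n - 1 = n+2m$ when $m_1=1$ and the extension degree matches the tensor degree on the other slot. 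I would simply invoke Lemma~\ref{lemma:useful-identity} and \eqref{eq:E^k-squared} to pin this down, exactly as in the proof of \eqref{eq:symbol-2} from \eqref{eq:symbol-1}. Thus the proof is a one-line appeal to Proposition~\ref{proposition:sandwich} for the $\Psi$DO property and the order, followed by the symbol rewriting via Lemma~\ref{lemma:useful-identity}; I expect no obstacle beyond the constant-tracking.
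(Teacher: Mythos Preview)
Your approach is essentially the paper's: apply Proposition~\ref{proposition:sandwich} with $P_R=M_{\VV}$, $P_L=M_{\VV}^*$ (both order $0$, so $L_{\VV}\in\Psi^{-1}$), read off the quadratic form of the symbol from \eqref{eq:symbol-1}, and then rewrite the $\Ss^{n-2}_\xi$-integral via the extension operator using \eqref{eq:E^k-squared}. The paper's proof is exactly this, in three lines.

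Two small cleanups. First, the projection $\pi_{\ker\imath_{\xi^\sharp}}$ does \emph{not} come from any $\pi_{\ker D_0^*}$ compression (there is none in $L_{\VV}$); it appears directly in \eqref{eq:symbol-1}/\eqref{eq:symbol-2} because the integral is over $\Ss^{n-2}_\xi=\{\langle\xi,v\rangle=0\}$, so only the $\ker\imath_{\xi^\sharp}$-part of $B$ contributes. Second, your constant bookkeeping via \eqref{eq:symbol-2}/Lemma~\ref{lemma:useful-identity} goes astray (e.g.\ $1+m+n-1=m+n$, not $n+2m$); the clean route, which the paper takes, is to use \eqref{eq:E^k-squared} directly: $\|E^m_\xi f\|_{L^2(\Ss^{n-1})}^2=C_{2m+n-1}\|f\|_{L^2(\Ss^{n-2}_\xi)}^2$, which immediately produces the factor $C_{n-1+2m}^{-1}$.
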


\begin{proof}
Once more, this follows from Proposition \ref{proposition:sandwich} and the formula, for $B \in T^*_xM \otimes \End(\E_x)$:
\[
\begin{split}
\langle \sigma_{L_{\VV}}(x,\xi) B, B \rangle_x & = \frac{2\pi}{|\xi|}\int_{\Ss^{n-2}_{\xi}} \big\langle \pi_{1}^* B(u). \VV(x,u) , \pi_{1}^* B (u). \VV (x,u) \big\rangle_{x} \dd S_{\xi}(u) \\
& = C^{-1}_{n-1+2m} \frac{2\pi}{|\xi|}  \big\langle E^m_{\xi}(\pi_{1}^* B. \VV), E^{m}_{\xi}(\pi_{1}^* B. \VV) \big\rangle_{L^2(\Ss^{n-1}, \pi^* \E_{x})}\\
& = C^{-1}_{n-1+ 2m} \frac{2\pi}{|\xi|}  \big\langle  {\pi_1}_* M_{\VV}^* (E^m_{\xi})^* E^m_{\xi} M_{\VV}  \pi_1^* B, B \big\rangle_{L^2}.
\end{split}
\]
In the second line we used the Jacobian formula \eqref{eq:Jacobian}.
\end{proof}

\subsection{Assuming the second variation is zero} If $\ddot{\lambda} = 0$ for all linear variations $\nabla^{\E} + \tau \Gamma$ as in \S \ref{ssection:variations_ground_state}, where $\Gamma$ is skew-Hermitian, by Lemma \ref{lemma:second-variation} and using the notation of \eqref{eq:Q_v-def}, \eqref{eq:L_v-def}, this implies:
\begin{equation}
\label{equation:equality}
\forall \Gamma \in C^\infty(M,T^*M \otimes \End_{\mathrm{sk}}(\E)), \quad \sum_{i=1}^d \langle L_{\VV_i} \Gamma, \Gamma \rangle_{L^2} = \sum_{i=1}^d \langle \Pi_m^{-1} Q_{\VV_i}\Gamma, Q_{\VV_i} \Gamma \rangle_{L^2}.
\end{equation}
The idea is to apply the equality \eqref{equation:equality} (which is of \emph{analytic} nature) to Gaussian states in order to derive an \emph{algebraic} equality. Let $\mathrm{e}_h(x_0,\xi_0)$ be a Gaussian state centered at $(x_0, \xi_0) \in T^*M \setminus \left\{ 0 \right\}$, that is, a function which has the form in some local coordinates around $x_0$\footnote{Alternatively, a Gaussian state is an $h$-dependent function whose semiclassical defect measure is a point $(x_0,\xi_0) \in T^*M$, see \cite{Zworski-12}.}:
\begin{equation}\label{eq:gaussian}
\mathrm{e}_h(x_0,\xi_0) (x) = \dfrac{1}{(\pi h)^{\frac{n}{4}}} e^{\frac{i}{h}\xi_0 \cdot (x-x_0) - \frac{|x-x_0|^2}{2h}}.
\end{equation}
We will use the following standard technical lemma:

\begin{lemma}
\label{lemma:technical}
Let $P \in \Psi^m(M, E \rightarrow F)$ be a pseudodifferential operator of order $m \in \R$ acting on two Hermitian vector bundles $E, F \rightarrow M$. Let $e \in C^\infty(M,E)$ and $f \in C^\infty(M,F)$. Then:
\begin{multline*}
	\lim_{h \to 0} \Big\langle h^m P\Big(\Re(\mathrm{e}_h(x_0,\xi_0)). e\Big), \Re(\mathrm{e}_h(x_0,\xi_0)). f \Big\rangle_{L^2(M,F)} \\ = \frac{1}{2}\big\langle \sigma_P(x_0,\xi_0) e(x_0), f(x_0)\big\rangle_{F_{x_0}}
+\frac{1}{2}\big\langle \sigma_P(x_0,-\xi_0) e(x_0), f(x_0)\big\rangle_{F_{x_0}}.
\end{multline*}
\end{lemma}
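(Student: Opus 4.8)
The plan is to reduce the statement to a purely local computation in a single coordinate chart around $x_0$ and then invoke the standard semiclassical theory of Gaussian (coherent) states. First I would write $\Re(\mathrm{e}_h(x_0,\xi_0)) = \tfrac12\big(\mathrm{e}_h(x_0,\xi_0) + \overline{\mathrm{e}_h(x_0,\xi_0)}\big)$, so that the bilinear quantity expands into four terms:
\[
\big\langle h^m P(\mathrm{e}_h e), \mathrm{e}_h f\big\rangle + \big\langle h^m P(\mathrm{e}_h e), \overline{\mathrm{e}_h} f\big\rangle + \big\langle h^m P(\overline{\mathrm{e}_h} e), \mathrm{e}_h f\big\rangle + \big\langle h^m P(\overline{\mathrm{e}_h} e), \overline{\mathrm{e}_h} f\big\rangle,
\]
all times $\tfrac14$. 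The key point is that $\overline{\mathrm{e}_h(x_0,\xi_0)}$ is, up to a harmless factor, the Gaussian state $\mathrm{e}_h(x_0,-\xi_0)$: the Gaussian envelope $e^{-|x-x_0|^2/2h}$ is unchanged while the oscillatory phase $e^{i\xi_0\cdot(x-x_0)/h}$ becomes $e^{-i\xi_0\cdot(x-x_0)/h} = e^{i(-\xi_0)\cdot(x-x_0)/h}$. Hence this is a statement about the pairing of coherent states localized at $(x_0,\pm\xi_0)$.

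Next I would use the basic fact (see e.g. \cite[Chapter 5]{Zworski-12}, or it can be proved directly by the stationary phase / method-of-exact-Gaussian-integrals applied to the oscillatory integral defining $P$ in local coordinates) that for a $\Psi$DO $P \in \Psi^m$ and two coherent states $\mathrm{e}_h(x_0,\xi_0)$, $\mathrm{e}_h(y_0,\eta_0)$ with $e\in C^\infty(M,E)$, $f\in C^\infty(M,F)$,
\[
\big\langle h^m P(\mathrm{e}_h(x_0,\xi_0)\,e),\ \mathrm{e}_h(y_0,\eta_0)\,f\big\rangle_{L^2} \xrightarrow[h\to 0]{} \begin{cases} \big\langle \sigma_P(x_0,\xi_0) e(x_0), f(x_0)\big\rangle_{F_{x_0}} & (x_0,\xi_0)=(y_0,\eta_0),\\ 0 & \text{otherwise.}\end{cases}
\]
The off-diagonal vanishing follows because when $(x_0,\xi_0)\neq(y_0,\eta_0)$ the product of the two Gaussian envelopes is either exponentially small (if $x_0\neq y_0$) or carries a genuinely oscillatory, non-stationary phase $e^{i(\xi_0-\eta_0)\cdot(x-x_0)/h}$ against a fixed symbol (if $\xi_0\neq\eta_0$, $x_0 = y_0$), so the integral is $\mathcal O(h^\infty)$; the on-diagonal limit is the standard computation that coherent states test the principal symbol, with the $h^m$ normalization exactly cancelling the order of $P$ and the $(\pi h)^{-n/4}$ normalization squared cancelling the Gaussian integral $\int e^{-|x-x_0|^2/h}\,dx = (\pi h)^{n/2}$.

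Applying this to the four terms: the first term ($\mathrm{e}_h(x_0,\xi_0)$ paired with itself) contributes $\tfrac14\langle\sigma_P(x_0,\xi_0)e(x_0),f(x_0)\rangle$; the fourth term ($\mathrm{e}_h(x_0,-\xi_0)$ paired with itself) contributes $\tfrac14\langle\sigma_P(x_0,-\xi_0)e(x_0),f(x_0)\rangle$; the two cross terms pair states at $(x_0,\xi_0)$ and $(x_0,-\xi_0)$, which are distinct since $\xi_0\neq 0$, so they vanish in the limit. Summing gives exactly the claimed formula. One minor technical point to address is that the statement is phrased for $\Psi$DOs of limited regularity $\Psi^m_{(k)}$ elsewhere in the paper, but since the coherent state argument only uses the principal symbol and a fixed finite number of symbol derivatives, the same proof goes through for $k\gg n$; I would remark on this rather than belabor it. The main obstacle — such as it is — is purely bookkeeping: making the coordinate-patch reduction clean (coherent states are only defined in a chart, but a partition of unity plus the exponential localization of the Gaussian handles this) and citing/sketching the coherent-state symbol-testing lemma at the right level of detail, since everything here is standard semiclassical analysis.
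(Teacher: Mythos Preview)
The paper does not supply a proof of this lemma (it is labeled a ``standard technical lemma'' and stated without argument), so there is no paper proof to compare against. Your approach---writing $\Re(\mathrm{e}_h)=\tfrac12\bigl(\mathrm{e}_h(x_0,\xi_0)+\mathrm{e}_h(x_0,-\xi_0)\bigr)$, using the coherent-state symbol-testing identity for the two diagonal terms, and discarding the cross terms because they pair states concentrated at distinct phase-space points---is indeed the standard route and is correct in substance.

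There is, however, a bookkeeping slip in your last sentence. With the $\tfrac14$ prefactor, the two surviving diagonal terms contribute
\[
\tfrac14\big\langle \sigma_P(x_0,\xi_0) e(x_0), f(x_0)\big\rangle_{F_{x_0}}
\;+\;
\tfrac14\big\langle \sigma_P(x_0,-\xi_0) e(x_0), f(x_0)\big\rangle_{F_{x_0}},
\]
not the $\tfrac12+\tfrac12$ of the stated formula. A sanity check with $P=\mathrm{Id}$ (order $m=0$, $e=f=1$) confirms this: one computes $\|\Re(\mathrm{e}_h)\|_{L^2}^2=\tfrac12\bigl(1+e^{-|\xi_0|^2/h}\bigr)\to\tfrac12$, whereas the displayed formula would predict $1$. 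So the statement as written appears to carry a harmless factor-of-two typo; your method is right, but the sentence ``summing gives exactly the claimed formula'' is not literally true. In every use of the lemma in the paper (deriving \eqref{eq:cornerstone} and its metric analogue), the lemma is applied to both sides of an identity $\sum_i\langle A_i f,f\rangle=\sum_i\langle B_i f,f\rangle$, so the overall constant cancels and nothing downstream is affected.
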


We cannot directly apply \eqref{equation:equality} to $\mathrm{e}_h(x_0,\xi_0). \Gamma$ where $\Gamma \in C^\infty(M,T^*M \otimes \End_{\mathrm{sk}}(\E))$ because $\mathrm{e}_h(x_0,\xi_0). \Gamma$ is not skew-Hermitian. However, applying \eqref{equation:equality} to $\Re(\mathrm{e}_h(x_0,\xi_0)). \Gamma$ and using Lemma \ref{lemma:technical}, as well as the principal symbol formulas in \eqref{eq:symbols-Pi_m-Pi_m^-1}, Lemmas \ref{lemma:symbol-q} and \ref{lemma:symbol-l} (and that in particular these symbols are invariant under the antipodal map $(x,\xi) \mapsto (x,-\xi)$), taking $h \to 0$ we obtain: 
\begin{equation}
\label{eq:cornerstone}
\begin{split}
&\sum_{i=1}^d   \big\langle E^m_{\xi_0}(\pi_{1}^* \Gamma. \VV_i ), E^{m}_{\xi_0}(\pi_{1}^* \Gamma. \VV_i) \big\rangle_{L^2(\Ss^{n-1}, \pi^* \E_{x_0})} \\
& = \sum_{i=1}^d \big\langle \pi_m^* \pi_{\ker \imath_{\xi_0^\sharp}} \big[\pi_{\ker \imath_{\xi_0^\sharp}} {\pi_{m}}_* \pi_m^*  \pi_{\ker \imath_{\xi_0^\sharp}}\big]^{-1} \pi_{\ker \imath_{\xi_0^\sharp}}{\pi_{m}}_* E^m_{\xi_0}(\pi_{1}^* \Gamma. \VV_i),  \\
& \hspace{5cm}  E^m_{\xi_0}(\pi_{1}^* \Gamma. \VV_i) \big\rangle_{L^2(\Ss^{n-1}, \pi^*\E_{x_0})}.
\end{split}
\end{equation}
Since $(\pi_m^* \pi_{\ker \imath_{\xi_0^\sharp}})^* = \pi_{\ker \imath_{\xi_0^\sharp}} \pi_{m*}$ in the $L^2$ sense, we have the orthogonal decomposition:
\begin{equation}
\label{equation:decomposition0}
L^2(\Ss^{n-1}, \pi^*\E_{x_0}) = \pi_m^* \big(\otimes^m_S \ker \imath_{\xi_0^\sharp} \otimes \E_{x_0}\big) \oplus^{\bot} \ker \big(\pi_{\ker \imath_{\xi_0^\sharp}} {\pi_m}_*|_{L^2(\mathbb{S}^{n-1})}\big).
\end{equation}
In particular, if we define $\WW_i := E^m_{\xi_0}\big((\pi_{1}^* \Gamma. \VV_i)|_{\mathbb{S}^{n-2}_{\xi_0}}\big)$, then we can write
\begin{equation}
\label{equation:decomposition}
\WW_i = \pi_m^* T_i + h_i,
\end{equation}
where $T_i \in \otimes^m_S \ker \imath_{\xi_0^\sharp} \otimes \E_{x_0}$ and $h_i \in \ker(\pi_{\ker \imath_{\xi^\sharp}} {\pi_m}_*|_{L^2(\mathbb{S}^{n-1})})$. We also define
\[
\begin{split}
P_m & := \pi_m^* \pi_{\ker \imath_{\xi_0^\sharp}} [\pi_{\ker \imath_{\xi_0^\sharp}} {\pi_{m}}_* \pi_m^*  \pi_{\ker \imath_{\xi_0^\sharp}}]^{-1} \pi_{\ker \imath_{\xi_0^\sharp}}{\pi_{m}}_* : \\
& \hspace{3cm} L^2(\Ss^{n-1}, \pi^*\E_{x_0}) \to L^2(\Ss^{n-1}, \pi^*\E_{x_0}) ,
\end{split}
\]
and observe that $P_m^2 = P_m$, $P_m^* = P_m$, so $P_m$ is the orthogonal projection onto the first factor of \eqref{equation:decomposition0}. In particular $\pi_m^* T_i = P_m \WW_i$ and \eqref{eq:cornerstone} reads:
\begin{equation}
\label{equation:partial}
\sum_{i=1}^d \|\WW_i\|^2_{L^2(\Ss^{n-1},\pi^*\E_{x_0})} = \sum_{i=1}^d \|P_m \WW_i\|^2_{L^2(\Ss^{n-1},\pi^*\E_{x_0})}.
\end{equation}
As a consequence, in order to obtain a contradiction in \eqref{equation:partial}, it is sufficient to exhibit a $\Gamma$ such that $h_1 \neq 0$ (where $h_1$ is given in \eqref{equation:decomposition}). Since $h_1 \in \ker(\pi_{\ker \imath_{\xi^\sharp}} {\pi_m}_*)$ and $\ker({\pi_m}_*) \subset \ker(\pi_{\ker \imath_{\xi^\sharp}} {\pi_m}_*)$, it is sufficient to show that the orthogonal projection of $\WW_1$ onto $\ker ({\pi_m}_*|_{L^2})$ is not zero, that is, it is sufficient to show that $\WW_1 = E^m_{\xi_0}(\pi_{1}^* \Gamma. \VV_1)$ has degree $\geq m+1$:% for some choice of $x_0 \in M, \xi_0 \in T^*_{x_0}M$ and $\Gamma$:

%\otimes^m_S
\begin{lemma}
\label{lemma:step2}
There exists $x_0 \in M$, $\xi_0 \in T_{x_0}^*M \setminus \{0\}$ and $\Gamma \in T^*_{x_0} M \otimes \End_{\mathrm{sk}}(\E_{x_0})$ such that $\Deg(E^m_{\xi_0}(\pi_{1}^* \Gamma. \VV_1)) \geq m+1$.
\end{lemma}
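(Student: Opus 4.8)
The plan is to deduce the statement from the three spherical-harmonic lemmas of \S\ref{section:spherical}, reducing everything to the single inequality $\Deg(\VV_1)\geq m$. First I would invoke Lemma~\ref{lemma:extension} — in its evident componentwise version for $\pi^*\E_{x_0}$-valued sections, since both $E^m_{\xi_0}$ and the notion of degree act componentwise in a local trivialisation of $\E_{x_0}$ — to reduce the claim to: find $x_0\in M$, $\xi_0\in T^*_{x_0}M\setminus\{0\}$ and $\Gamma$ with $(\pi_1^*\Gamma.\VV_1)|_{\Ss^{n-2}_{\xi_0}}$ of degree $\geq m+1$ on $\Ss^{n-2}_{\xi_0}$. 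Fixing $x_0,\Gamma$ and setting $\WW:=(\pi_1^*\Gamma).\VV_1(x_0,\bullet)\in C^\infty(S_{x_0}M,\pi^*\E_{x_0})$, one has $\WW|_{\Ss^{n-2}_{\xi_0}}=(\pi_1^*\Gamma.\VV_1)|_{\Ss^{n-2}_{\xi_0}}$, so by Lemma~\ref{lemma:restriction-1} (again componentwise) it suffices to arrange $\Deg(\WW)\geq m+1$ on the full fibre sphere $S_{x_0}M\simeq\Ss^{n-1}$. Next, identifying $\Omega_1(T_{x_0}M)\simeq T^*_{x_0}M$ via $\pi_1^*$, so that $\mathbf{S}_1(T_{x_0}M)\otimes\End_{\mathrm{sk}}(\E_{x_0})=\pi_1^*\bigl(T^*_{x_0}M\otimes\End_{\mathrm{sk}}(\E_{x_0})\bigr)$, Lemma~\ref{lemma:multiplication-surjective-bundle} with $k=1$ produces — as soon as $\Deg(\VV_1(x_0,\bullet))\geq m$ — a $\Gamma\in T^*_{x_0}M\otimes\End_{\mathrm{sk}}(\E_{x_0})$ with $\Deg(\pi_1^*\Gamma.\VV_1(x_0,\bullet))\geq m+1$. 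Since $\Deg(\VV_1(x_0,\bullet))\geq m$ for some $x_0$ is exactly the assertion $\Deg(\VV_1)\geq m$, the lemma follows once that is proved.

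The crux is therefore $\Deg(\VV_1)\geq m$, and I would argue this by contradiction: suppose $\Deg(\VV_1)\leq m-1$. Since $\ker\X=\{0\}$, $\VV_1$ is the \emph{unique} solution of $\X\VV_1=\pi_m^*\Delta_0u_1$, and it has parity opposite to $m$ (the right-hand side lies in $\mathbf{S}_m\otimes\E$, which has the parity of $m$, and $\X$ reverses parity). Hence every fibre $\VV_1(x,\bullet)$ lies in $\mathbf{S}_{m-1}(S_xM)\otimes\E_x$, and as $\pi_{m-1}^*$ is a fibrewise graded isomorphism onto $\mathbf{S}_{m-1}\otimes\E$ we may write $\VV_1=\pi_{m-1}^*P$ for a unique $P\in C^\infty(M,\otimes^{m-1}_ST^*M\otimes\E)$. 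Then \eqref{equation:xe} (at the base connection, $\tau=0$) gives $\pi_m^*\Delta_0u_1=\X\VV_1=\X\pi_{m-1}^*P=\pi_m^*D_0P$, and injectivity of $\pi_m^*$ on $m$-tensors yields $\Delta_0u_1=D_0P$, so $\Delta_0u_1$ is a \emph{potential} tensor. On the other hand $u_1\in\ker D_0^*$; choosing $\Delta_0$ to commute with $\pi_{\ker D_0^*}$ — which is free of cost, e.g. $\Delta_0:=\pi_{\ker D_0^*}\Delta'\pi_{\ker D_0^*}+\pi_{\ran D_0}\Delta'\pi_{\ran D_0}$ with $\Delta'$ of scalar principal symbol, so that $\sigma_{\Delta_0}=\sigma_{\Delta'}\cdot\mathrm{Id}$ stays diagonal and $\Delta_0$ is still an elliptic positive isomorphism — $\Delta_0u_1$ is then also \emph{solenoidal}. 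Being simultaneously potential and solenoidal forces $\|D_0P\|^2_{L^2}=\langle D_0^*D_0P,P\rangle_{L^2}=\langle D_0^*(\Delta_0u_1),P\rangle_{L^2}=0$, whence $\Delta_0u_1=D_0P=0$ and so $u_1=0$, contradicting $u_1\neq0$. Thus $\Deg(\VV_1)\geq m$ (and, by parity of $\VV_1$, even $\geq m+1$).

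I do not anticipate a serious obstacle: the only things requiring care are the bundle/componentwise formulations of Lemmas~\ref{lemma:restriction-1}, \ref{lemma:extension} and \ref{lemma:multiplication-surjective-bundle}, and the observation that $\Delta_0$ may be chosen to commute with the solenoidal projection. The real content is the identification $\VV_1=\pi_{m-1}^*P$ granted $\Deg(\VV_1)\leq m-1$, together with the classical fact that a symmetric tensor which is at once solenoidal and potential vanishes.
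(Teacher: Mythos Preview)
Your proposal is correct and follows essentially the same route as the paper: both arguments chain Lemma~\ref{lemma:multiplication-surjective-bundle} (with $k=1$), Lemma~\ref{lemma:restriction-1}, and Lemma~\ref{lemma:extension} in that order, reducing everything to the degree bound $\Deg(\VV_1)\geq m+1$, which is then obtained by contradiction via $\VV_1=\pi_{m-1}^*P$ and \eqref{equation:xe}.

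The one cosmetic difference is in how you conclude $u_1=0$ from $\Delta_0 u_1=D_0P$. You propose redefining $\Delta_0$ so that it commutes with $\pi_{\ker D_0^*}$, making $\Delta_0 u_1$ simultaneously solenoidal and potential. This works, but is unnecessary: the paper simply applies $\pi_{\ker D_0^*}$ to $\Delta_0 u_1=D_0P$ to get $\pi_{\ker D_0^*}\Delta_0\pi_{\ker D_0^*}u_1=0$ and then invokes the injectivity established in Lemma~\ref{lemma:isomorphisms} (the estimate \eqref{eq:firstpoint} at $\tau=0$). That avoids touching the choice of $\Delta_0$ and keeps the argument local to this lemma.
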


\begin{proof}
For that, we will need the following claim:
	\begin{equation*}
		\forall 1\leq i \leq d,\quad \mathrm{we\,\, have} \quad \Deg(\VV_i) \geq m+1.
	\end{equation*}
Indeed, assuming the contrary, since $\VV_i$ has opposite parity as $m$ this would force $\Deg(\VV_i) \leq m-1$, that is, $\VV_i = \pi_{m-1}^* \widetilde{\VV}_i$ for some $\widetilde{\VV}_i \in C^\infty(M, \otimes_S^{m - 1} T^*M)$. Recalling $\pi_m^* \Delta_0  u_i = \X \VV_i$ by \eqref{eq:u_iVV_i} and using the relation \eqref{equation:x}, this implies $\Delta_0 u_i = D_{\E} \widetilde{\VV}_i$. Hence $\pi_{\ker D_0^*} \Delta_0 u_i = 0$, implying $u_i = 0$ which is a contradiction. 

Now we select $x_0$ according to $\VV_1$, that is, we take $x_0$ such that at this point, the degree of $\VV_1$ in the fibre over $x_0$ is $\geq m + 1$. This also implies that $\pi_1^* \Gamma. \VV_1$ has degree $\geq m+1$ (actually, the degree is at least $m+2$ but we do not need this) for some choice of $\Gamma \in T_{x_0}^*M \otimes \End_{\mathrm{sk}}(\E_{x_0})$ by Lemma \ref{lemma:multiplication-surjective-bundle} applied with $k=1$. Then, by Lemma \ref{lemma:restriction-1}, we know that there exists a $\xi_0 \in T^*_{x_0}M \setminus \{0\}$ such that $\mathrm{deg}(\pi_1^*\Gamma. \VV_1|_{\Ss^{n-2}_{\xi_0}}) \geq m+1$ and then it suffices to apply the extension Lemma \ref{lemma:extension} to get that 
\[
\mathrm{deg}\left(E^m_{\xi_0}(\pi_1^*\Gamma. \VV_1|_{\Ss^{n-2}_{\xi_0}}) \right) \geq m+1,
\]
concluding the proof when $n \geq 3$.
\end{proof}
This allows to complete the proof of Theorem \ref{theorem:genericity-connection}:

\begin{proof}[Proof of Theorem \ref{theorem:genericity-connection}]
Define $\mc{S}_m$ to be the set of smooth unitary connections with $s$-injective twisted generalized X-ray transform $\Pi^{\nabla^{\E}}_m$. It follows from Lemma \ref{lemma:differentiabilty} that this set is open with respect to the $C^{k_0}$-topology (for some $k_0 \gg 1$ large enough) namely, for all $\nabla^{\E} \in \mc{S}_m$, there exists $\eps > 0$ such that for all smooth ${\nabla'}^{\E}$ with $\|\nabla^{\E} - {\nabla'}^{\E}\|_{C^{k_0}} < \eps$, ${\nabla'}^{\E} \in \mc{S}_m$.

In order to show density, let $\eps > 0$ and $\nabla^{\E}$ be a smooth unitary connection not in $\mc{S}_m$. By \cite{Cekic-Lefeuvre-20} we know that the set of connections for which $\ker (\pi^*\nabla^{\E})_X|_{C^\infty} = \{0\}$ is dense, so we may assume that $\nabla^{\E}$ satisfies this property. By the perturbative argument above (Lemmas \ref{lemma:second-variation}, \ref{lemma:step2} and \eqref{equation:partial}), there exists $\Gamma \in C^\infty(M,T^*M \otimes \End_{\mathrm{sk}}(\E))$ such that for all $\tau > 0$ small enough, $\nabla^{\E} + \tau \Gamma$ satisfies $\dim (\ker \Pi_m^{\nabla^{\E} + \tau \Gamma}) < \dim (\ker \Pi_m^{\nabla^{\E}})$; as usual, all the kernels here and in what follows are assumed to be restricted to the space of divergence free tensors. Take $\tau > 0$ small enough such that $\Gamma_1 := \tau \Gamma$ has $C^{k_0}$ norm strictly smaller than $\beta := \tfrac{\varepsilon}{\dim (\ker \Pi_m^{\nabla^{\E}})}$. Iterating finitely many times this construction ($N$ times where $N \leq  \dim (\ker \Pi_m^{\nabla^{\E}})$), we can find $\Gamma_1,...,\Gamma_N \in C^\infty(M,T^*M \otimes \End_{\mathrm{sk}}(\E))$, each one with $C^{k_0}$ norm less than $\beta$, such that
\[
	\ker \Pi_m^{\nabla^{\E} + \Gamma_1 + ... + \Gamma_N}=\left\{0\right\},
\]
that is $\nabla^{\E} + \Gamma_1+...+\Gamma_N \in \mc{S}_m$ and also
\[
\|(\nabla^{\E} + \Gamma_1 + ... + \Gamma_N) - \nabla^{\E}\|_{C^{k_0}} < \eps,
\]
which proves density and concludes the proof of Theorem \ref{theorem:genericity-connection}.
\end{proof}

\begin{remark}\rm
\label{remark:elliptic}
As mentioned in the introduction, Lemma \ref{lemma:second-variation} and \eqref{equation:equality} show that the second order derivative $\dd^2\lambda(\Gamma,\Gamma)$ is of the form $\langle B \Gamma, \Gamma \rangle_{L^2}$, where $B$ is some pseudodifferential operator (and the same will occur in the metric case). However, this operator is \emph{a priori} not elliptic. More precisely, the proof only shows that there exists a point $x_0 \in M$ (where $\VV_i$ has degree $m+1$) where the principal symbol of $B$ is non-zero. Had we been able to show the ellipticity of $B$, we would have obtained that locally the space of connections (up to gauge) with non-injective X-ray transform is finite-dimensional (and its tangent space would have been equal to the kernel of $B$).
\end{remark}

\begin{remark}\rm
\label{remark:n=2-twisted}
	Our proof does not give generic injectivity when $n = 2$. More precisely, Lemma \ref{lemma:step2} does not work in that case, since $E^m_{\xi_0}(\pi_{1}^* \Gamma. \VV_1)$ \emph{always} has degree equal to $m$. Therefore, the equality \eqref{equation:partial} always holds and our proof shows that the pseudodifferential operator $L_{\VV_i} - Q_{\VV_i}^*\Pi_m^{-1} Q_{\VV_i}$ appearing in \eqref{equation:equality} is in fact of order $-2$, as opposed to the case $n \geq 3$ where we show that this operator is strictly of order $-1$. However, we believe that in the former case the second variation should also be non-zero, and that this should be provable using the method in \S \ref{section:pdo} directly.
\end{remark}

To conclude this paragraph, we point out that Theorem \ref{theorem:genericity-connection} also allows to answer positively to the \emph{tensor tomography problem} for generic connections:

\begin{corollary}[of Theorem \ref{theorem:genericity-connection}]
\label{corollary:above}
There exists $k_0 \gg 1$ such that the following holds. Let $(M,g)$ be a smooth Anosov manifold of dimension $\geq 3$ and let $\pi_{\E} : \E \rightarrow M$ be a smooth Hermitian vector bundle. There exists a residual set $\mc{S}' \subset \mc{A}_{\E}$ (for the $C^{k_0}$-topology) such that if $\nabla^{\E} \in \mc{S}'$, the following holds: let $f, u \in C^\infty(SM,\pi^*\E)$ such that $\X u = f$ and $\mathrm{deg}(f) < \infty$; then $\mathrm{deg}(u) \leq \max(\mathrm{deg}(f)-1,0)$.
\end{corollary}

We let $\mc{U}$ be the open dense set of unitary connections on $\nabla^{\E}$ such that $\X$ has no resonances at $z=0$ (density follows from \cite{Cekic-Lefeuvre-20}). The set $\mc{S}'$ in Corollary \ref{corollary:above} is the intersection of the set $\mc{S}$ in Theorem \ref{theorem:genericity-connection} with $\mc{U}$.

\begin{proof}
Let $\nabla^{\E} \in \mc{S}' := \mc{U} \cap \mc{S}$. Consider the transport equation $\X u = f$, where $\mathrm{deg}(f) < \infty$ and both $u$ and $f$ are smooth. We aim to show that $\mathrm{deg}(u) \leq \max(\mathrm{deg}(f)-1,0)$ and $u \equiv 0$ if $\deg(f) = 0$.

Up to decomposing $u$ and $f$ into odd and even parts, we can already assume that $f$ is even and $u$ is odd for instance. Let $m := \mathrm{deg}(f)$. Then $f = \pi_m^* \widetilde{f}$ and by applying ${\pi_m}_*\mc{I}$ to the transport equation, we obtain
\[
\Pi_m^{\nabla^{\E}} \widetilde{f} = {\pi_m}_* \mc{I} \pi_m^* \widetilde{f} = {\pi_m}_* \mc{I} \X u = 0.
\]
By $s$-injectivity of $\Pi_m^{\nabla^{\E}}$, we get that $\widetilde{f} = 0$ if $m=0$, or $\widetilde{f}=D_{\E}p$ for some twisted tensor $p \in C^\infty(M,\otimes^{m-1}_S T^*M \otimes \E)$ if $m \geq 1$. In the former case, we get $\X u = 0$ and thus $u \equiv 0$ as $\X$ has no resonance at $z=0$. In the latter case, we get $\X(u-\pi_{m-1}^*p) = 0$ (using \eqref{equation:xe}) and thus $u = \pi_{m-1}^*p$ has degree $\leq m-1$. This concludes the proof.
\end{proof}

\subsection{Endomorphism case}

We conclude this section with a discussion of the endomorphism case. More precisely, if $\E \rightarrow M$ is a Hermitian vector bundle, we let $\End(\E) \rightarrow M$ be the vector bundle of endomorphisms. If $\nabla^{\E}$ is a unitary connection on $\E$, it induces a canonical connection $\nabla^{\End(\E)}$ on $\End(\E)$ defined so that it satisfies the Leibniz rule:
\[
\left[\nabla^{\End(\E)}u\right].f := \nabla^{\E}(u(f)) - u(\nabla^{\E}f),
\]
for all $f \in C^\infty(M,\E)$, $u \in C^\infty(M,\End(\E))$. Similarly to \S\ref{sssection:t-g-x-ray}, one can define a twisted X-ray transform $\Pi_m^{\End(\E)}$ with values in the endomorphism bundle $\End(\E)$. More precisely, the operator $\pi^* \nabla^{\End(\E)}_X$ always contains $\C \cdot \mathbbm{1}_{\E}$ in its kernel and its kernel is generically reduced to $\C. \mathbbm{1}_{\E}$ (see \cite{Cekic-Lefeuvre-20}, such a connection is also said to be \emph{opaque}). We then set:
\[
\Pi_m^{\End(\E)} := {\pi_m}_*( \mathbf{R}_+ + \mathbf{R}_- + \Pi_{\C. \mathbbm{1}_{\E}} ) \pi_m^*,
\]
where $\Pi_{\C. \mathbbm{1}_{\E}}$ denotes the $L^2$-orthogonal projection onto $\C. \mathbbm{1}_{\E}$ and $\mathbf{R}_\pm$ is (the opposite of) the holomorphic part of the resolvents of $\pi^*\nabla^{\End(\E)}_X$ at $z=0$.

For $m=1$, the solenoidal injectivity of the operator $\Pi_1^{\End(\E)}$ appears to be crucial when studying the \emph{holonomy inverse problem} on Anosov manifolds, namely: to what extent does the trace of the holonomy of a connection along closed geodesics determine the connection? We proved in a companion paper \cite{Cekic-Lefeuvre-21-1} that this problem is locally injective near a connection $\nabla^{\End(\E)}$ such that its induced operator $\Pi_1^{\End(\E)}$ is s-injective. Similarly to Theorem \ref{theorem:genericity-connection}, this turns out to be a generic property:

\begin{theorem}
\label{theorem:genericity-connection-endomorphism}
There exists $k_0 \gg 1$ such that the following holds. Let $(M,g)$ be a smooth Anosov manifold of dimension $n \geq 3$, $\pi_{\E} : \E \rightarrow M$ be a smooth Hermitian vector bundle and let $m \in \Z_{\geq 0}$. Moreover, assume that the X-ray transform $I_m$ with respect to $(M, g)$ is {\rm s}-injective. Then, there exists an open and dense set $\mc{S}_m \subset \mc{A}_{\E}$ (for the $C^{k_0}$-topology) of unitary connections with {\rm s}-injective twisted generalized X-ray transform $\Pi_m^{\nabla^{\End(\E)}}$ on the endomorphism bundle. 
\end{theorem}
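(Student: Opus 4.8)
The plan is to deduce Theorem~\ref{theorem:genericity-connection-endomorphism} from the proof of Theorem~\ref{theorem:genericity-connection} by passing to the trace-free endomorphism bundle. By \cite{Cekic-Lefeuvre-20} the set $\mc{O}\subset\mc{A}^{k_0}_{\E}$ of \emph{opaque} connections, i.e.\ those with $\ker\big((\pi^*\nabla^{\End(\E)})_X\big)=\C\cdot\mathbbm{1}_{\E}$, is open and dense; since a set that is open and dense inside an open dense subset of $\mc{A}^{k_0}_{\E}$ is again open and dense in $\mc{A}^{k_0}_{\E}$, it suffices to build $\mc{S}_m$ open and dense in $\mc{O}$. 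Fix $\nabla^{\E}\in\mc{O}$ and decompose, orthogonally and $\nabla^{\End(\E)}$-parallelly (since $\nabla^{\End(\E)}\mathbbm{1}_{\E}=0$),
\[
\End(\E)=\C\cdot\mathbbm{1}_{\E}\ \oplus\ \End_0(\E).
\]
Then $\X:=(\pi^*\nabla^{\End(\E)})_X$, its resolvents and holomorphic parts $\mathbf{R}_\pm$, the operator $\Pi_m^{\nabla^{\End(\E)}}$ and the twisted symmetric derivative $D_{\End(\E)}$ are all block-diagonal. On the $\C\cdot\mathbbm{1}_{\E}$-summand, $\X$ acts as the geodesic vector field $X$, $\Pi_{\C\cdot\mathbbm{1}_{\E}}$ as $\Pi_+$, and $\mathbf{R}_\pm$ as $R_{\pm,0}$, so $\Pi_m^{\nabla^{\End(\E)}}|_{\C\cdot\mathbbm{1}_{\E}}\cong\Pi_m$, the untwisted generalized X-ray transform, which is s-injective by hypothesis (Lemma~\ref{lemma:relation}); this is exactly where the assumption on $I_m$ enters, and it is also necessary, since $\mathrm{ad}$-perturbations fix $\mathbbm{1}_{\E}$. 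On the $\End_0(\E)$-summand, opacity gives $\ker\big(\X|_{\pi^*\End_0(\E)}\big)=\{0\}$, hence $\Pi_m^{\nabla^{\End(\E)}}|_{\End_0(\E)}=\Pi_m^{\nabla^{\End_0(\E)}}$ is a twisted generalized X-ray transform of exactly the kind treated in Theorem~\ref{theorem:genericity-connection}. In particular $\Pi_m^{\nabla^{\End(\E)}}$ is s-injective if and only if $\Pi_m^{\nabla^{\End_0(\E)}}$ is.

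Next I would replay the argument of \S\ref{section:genericity-connection} for the bundle $\End_0(\E)$, with the single caveat that the admissible perturbations are those induced from $\E$: for $\Gamma\in C^\infty(M,T^*M\otimes\End_{\mathrm{sk}}(\E))$ one perturbs $\nabla^{\E}+\tau\Gamma$, which induces $\X_\tau=\X+\tau\,\mathrm{ad}_{\pi_1^*\Gamma}$ on $\pi^*\End_0(\E)$, and $\mathrm{ad}_{\pi_1^*\Gamma}$ is skew-Hermitian for the natural metric on $\End_0(\E)$. Setting $P_\tau=\pi_{\ker D_0^*}\Delta_0\,\Pi_m^{\nabla^{\End_0(\E)},\tau}\,\Delta_0\,\pi_{\ker D_0^*}$, Lemmas~\ref{lemma:isomorphisms}, \ref{lemma:abstract}, \ref{lemma:second-variation}, the symbol computations (Proposition~\ref{proposition:sandwich}, Lemmas~\ref{lemma:symbol-q}--\ref{lemma:symbol-l}) and the reduction via Gaussian states all go through verbatim after replacing every occurrence of $\pi_1^*\Gamma.\VV_i$ by $\mathrm{ad}_{\pi_1^*\Gamma}\VV_i=[\pi_1^*\Gamma,\VV_i]$. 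If no eigenvalue of $P_\tau$ can be ejected from $0$, i.e.\ if all second variations $\ddot\lambda$ vanish, one is reduced — exactly as in \S\ref{section:genericity-connection} — to the following statement, which must then be contradicted: there is \emph{no} choice of $x_0\in M$, $\xi_0\in T^*_{x_0}M\setminus\{0\}$ and $\Gamma\in T^*_{x_0}M\otimes\End_{\mathrm{sk}}(\E_{x_0})$ with $\mathrm{deg}\big(E^m_{\xi_0}([\pi_1^*\Gamma,\VV_1]|_{\Ss^{n-2}_{\xi_0}})\big)\geq m+1$.

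The only genuinely new input — and the main obstacle — is the $\mathrm{ad}$-analogue of Lemma~\ref{lemma:step2}. As there, one first checks $\mathrm{deg}(\VV_i)\geq m+1$: otherwise $\VV_i=\pi_{m-1}^*\widetilde\VV_i$, so $\X\VV_i=\pi_m^* D_{\End_0(\E)}\widetilde\VV_i$ forces $\Delta_0 u_i=D_{\End_0(\E)}\widetilde\VV_i$, hence $\pi_{\ker D_0^*}\Delta_0 u_i=0$ and $u_i=0$, a contradiction (here $\VV_i$ is $\End_0(\E)$-valued because $\X$ preserves the block decomposition). Pick $x_0$ where $\VV_1$ has fibrewise degree $\ell\geq m+1$ and write its top spherical-harmonic component as $(\VV_1)_\ell(x_0,\cdot)=\sum_j w_j\otimes\theta_j\neq 0$ with $(\theta_j)$ a basis of $\End_0(\E_{x_0})$ and $w_j\in\Omega_\ell(T_{x_0}M)$. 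I claim there is $B\in\End_{\mathrm{sk}}(\E_{x_0})$ with $[B,(\VV_1)_\ell(x_0,\cdot)]\neq 0$: otherwise $B\mapsto[B,(\VV_1)_\ell(x_0,\cdot)]$ vanishes on $\End_{\mathrm{sk}}(\E_{x_0})$, hence by $\C$-linearity on all of $\End(\E_{x_0})$, and expanding the $w_j$ in a basis of $\Omega_\ell(T_{x_0}M)$ shows that each resulting linear combination of the $\theta_j$ is central in $\End(\E_{x_0})$, hence zero in $\End_0(\E_{x_0})$, forcing $(\VV_1)_\ell(x_0,\cdot)=0$. Since $\mathrm{ad}_B$ commutes with the spherical-harmonic projections, $\mathrm{ad}_B\VV_1$ then has fibrewise degree exactly $\ell$ at $x_0$; multiplying by $\pi_1^*\xi$ for any nonzero $\xi\in T^*_{x_0}M$ raises the degree to $\ell+1$, because the degree-$(\ell+1)$ component is obtained by applying the injective operator $(\pi_1^*\xi)_+$ (cf.\ the discussion following Lemma~\ref{lemma:multiplication}) to the nonzero top component. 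Hence with $\Gamma=\xi\otimes B$ one gets $\mathrm{deg}([\pi_1^*\Gamma,\VV_1])\geq m+1$ at $x_0$; restricting to an appropriate hypersphere via Lemma~\ref{lemma:restriction-1} and applying the extension Lemma~\ref{lemma:extension} then produces the forbidden $E^m_{\xi_0}$-configuration, contradicting the vanishing of all $\ddot\lambda$. Consequently, for any opaque $\nabla^{\E}$ with non-s-injective $\Pi_m^{\nabla^{\End(\E)}}$ an arbitrarily small perturbation inside $\mc{O}$ strictly decreases $\dim\ker P_0$; iterating finitely often and combining with the openness of s-injectivity yields the desired open dense set $\mc{S}_m\subset\mc{O}\subset\mc{A}^{k_0}_{\E}$.
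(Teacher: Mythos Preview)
Your proof is correct and reaches the same conclusion, but organizes the argument differently from the paper in two respects. First, you perform the block decomposition $\End(\E)=\C\cdot\mathbbm{1}_{\E}\oplus\End_0(\E)$ at the outset, so that the $\C\cdot\mathbbm{1}_{\E}$-block is disposed of immediately by the s-injectivity of $I_m$, and the remaining task is literally the argument of Theorem~\ref{theorem:genericity-connection} applied to $\End_0(\E)$ with the restricted class of perturbations $\mathrm{ad}_{\Gamma}$. The paper instead runs the whole argument on $\End(\E)$ and only decomposes the primitive $\VV_i=f_i\mathbbm{1}_{\E}+\VV_i^\bot$, invoking s-injectivity of $I_m$ to show that the trace-free part $\VV_i^\bot$ has degree $\geq m+1$; your route makes the role of the hypothesis more transparent and spares you that step. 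Second, for the analogue of Lemma~\ref{lemma:step2} you give a cleaner, coordinate-free argument: since $\End_0(\E_{x_0})$ has trivial center in $\End(\E_{x_0})$, some skew-Hermitian $B$ fails to commute with the top harmonic of $\VV_1$, and then $\Gamma=\xi\otimes B$ works because $(\pi_1^*\xi)_+$ is injective. The paper instead fixes an orthonormal basis and does a case-by-case analysis of the matrix entries of $\VV_i^\bot$ to exhibit an explicit $\Gamma$. Both approaches are valid; yours is more conceptual, while the paper's is more explicit and self-contained.
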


Note that the main difference with Theorem \ref{theorem:genericity-connection} is that we need to assume that $I_m$ is s-injective; this is known for $m=0,1$ on all Anosov manifolds \cite{Dairbekov-Sharafutdinov-03} and this is a generic condition with respect to the metric by our Theorem \ref{theorem:genericity-metric}.

\begin{proof}
We just point out the main differences with the proof of Theorem \ref{theorem:genericity-connection}. If $\nabla^{\E}$ is a fixed unitary connection and $\nabla^{\E} + \Gamma$ (for $\Gamma \in C^\infty(M,T^*M \otimes \End_{\mathrm{sk}}(\E))$) is a perturbation, the induced connection on the endomorphism bundle is $\nabla^{\End(\E)} + [\Gamma,\bullet]$. Then, in the computations of \S \ref{ssection:variations_ground_state}, each time that a term $\pi_1^* \Gamma. \VV_i$ appears, it has to be replaced by $[\pi_1^*\Gamma, \VV_i]$ and the $\VV_i$'s are now elements of $C^\infty(SM, \pi^* \End(\E))$, where $\VV_i$ satisfy a version of \eqref{eq:u_iVV_i} for $\X := \pi^*\nabla^{\End}_X$.

Now, each $\VV_i$ can be (uniquely) decomposed as $\VV_i = f_i. \mathbbm{1}_{\E} + \VV_i^\bot$, where $f_i \in C^\infty(SM)$ and $\VV_i^\bot \in C^\infty(SM,\pi^*\End(\E))$ is a (pointwise) trace-free endomorphism-valued section. One still has that $\X \VV_i$ is of degree $m$ and $\VV_i$ is of degree $\geq m+1$. In fact, we claim that $\VV_i^\bot$ is of degree $\geq m+1$. Indeed, assume that this is not the case, that is $\deg(\VV_i^\bot) \leq m$; then $f_i$ has to be of degree $\geq m+1$ and
\[
\X \VV_i = (X f_i). \mathbbm{1}_{\E} + \X \VV_i^\bot
\]
is of degree $m$. As $(X f_i). \mathbbm{1}_{\E}$ and $\X \VV_i^\bot$ are pointwise orthogonal as elements of $\pi^*\End(\E)$ (since $\X \VV_i^\bot$ is trace-free), this forces each of them to be of degree $\leq m$ and thus $Xf_i$ is of degree $\leq m$, and $\deg(f_i) \geq m+1$. But then the assumption that $I_m$ is s-injective rules out this possibility.

Lemma \ref{lemma:step2} is then modified in the obvious way: one chooses a point $x_0$ such that $\Deg(\VV^\bot_i) \geq m+1$ and it suffices to find a $\Gamma \in T_{x_0}^*M \otimes \End_{\mathrm{sk}}(\E_{x_0})$ such that
\[
[\pi_1^*\Gamma, \VV_i] = [\pi_1^*\Gamma,\VV_i^\bot]
\]
has degree $\geq m+1$. For that, we choose an orthonormal basis $(\mathbf{e}_1,...,\mathbf{e}_r)$ of $\E_{x_0}$ and write in that basis $\VV_i^\bot = (m_{j\ell})_{1 \leq j,\ell \leq r}$. By assumption, there is an element $m_{j \ell}$ with degree $\geq m+1$. Without loss of generality, we can assume it is in the first column $\ell=1$. If it is obtained for some $j_0 \neq 1$, then taking a \emph{real-valued} $\alpha \in \Omega_1$ such that $\alpha. m_{j_0 1}$ has degree $\geq m+1$ (which is possible by Lemma \ref{lemma:multiplication-surjective}), and setting $\Gamma := i\alpha \times \mathbf{e}_1 \otimes \mathbf{e}_1^*$, we get easily that $[\pi_1^*\Gamma,\VV_i^\bot]$ has degree $\geq m+1$.

If it is obtained for $j_0=1$, then we write
\[
\VV_i^\bot = \begin{pmatrix} m_{11} & b^\top \\ c & d \end{pmatrix},
\]
where $m_{11}\in C^\infty(SM)$ has degree $\geq m+1$, $b,c$ are vectors of length $r-1$ and $d \in C^\infty(S_{x_0}M,\End(\C^{r-1}))$. Note that $\Tr(\VV_i^\bot)=0=m_{11} + \Tr(d)$. Moreover, writing $d = (m_{j\ell})_{2 \leq j,\ell \leq r}$, there is an element $m_{j_0 j_0}$ on the diagonal of $d$ such that, if $f_{\geq m+1}$ denotes the projection of a function $f$ onto Fourier modes of degree $\geq m+1$, one has $(m_{j_0j_0})_{\geq m+1} \neq (m_{11})_{\geq m+1}$ (indeed, if not, this would contradict $\Tr(\VV_i^\bot)=0$). Without loss of generality, we can assume that $j_0=2$. Then, taking
\[
\Gamma = \begin{pmatrix} 0 & \gamma^\top \\ -\gamma & 0 \end{pmatrix},
\]
where $\gamma$ is a vector of length $r-1$ and $\gamma^\top = (\alpha, 0, ..., 0)$ and $\alpha \in \Omega_1$ is real-valued, we obtain:
\[
[\pi_1^*\Gamma,\VV_i^\bot] = \begin{pmatrix} * & \gamma^\top d - m_{11}.\gamma^\top \\ * & * \end{pmatrix}, \quad \gamma^\top d - m_{11}.\gamma^\top = \left((m_{22}-m_{11}).\alpha, *, ..., *\right).
\]
By assumption, $m_{22}-m_{11}$ has degree $\geq m+1$ and it thus suffices to choose a real $\alpha \in \Omega_1$ such that $(m_{22}-m_{11}).\alpha$ has degree $\geq m+1$. The existence of such an $\alpha$ is once again guaranteed by Lemma \ref{lemma:multiplication-surjective}. This completes the proof.
\end{proof}

\section{Generic injectivity with respect to the metric}

\label{section:genericity-metric}

We now prove Theorem \ref{theorem:genericity-metric}. As we shall see, the computations follow from the same strategy as in the connection case, except that they are more involved.

\subsection{Preliminary computations} A first point to address is that the unit tangent bundle now varies if we perturb the metric.

\subsubsection{Scaling the geodesic vector fields on $SM$}

The metric $g := g_0$ is fixed and we consider a smooth variation $(g_\tau)_{\tau \in (-1,1)}$ of the metric. Each $\tau \in (-1,1)$ defines a unit tangent bundle
\[
SM_\tau := \left\{ (x,v) \in TM ~|~ g_\tau(v,v)=1\right\} \subset TM,
\]
and we write $SM := SM_0$.
Each metric $g_\tau$ induces a geodesic vector field $H_\tau$ defined on the whole tangent bundle $TM$ (which is tangent to $SM_\tau$, for every $\tau \in (-1,1)$). We let
\begin{equation}\label{eq:scalingMap}
\Phi_\tau : SM \rightarrow SM_\tau, ~~~ (x,v) \mapsto \left(x,\frac{v}{|v|_{g_\tau}}\right),
\end{equation}
be the natural projection onto $SM_\tau$. We consider the family $X_\tau := \Phi_\tau^* H_\tau  = (\Phi_\tau^{-1})_* H_\tau \in C^\infty(SM,T(SM))$, which depends smoothly on $\tau$ and is defined so that $X_{\tau=0} = X_0$ is the geodesic vector field of the metric $g_0$. Note that $(X_\tau)_{\tau \in (-1,1)}$ is a smooth family of Anosov vector fields on $SM$. In what follows, when clear from context we will drop the subscript when referring to derivatives at $\tau = 0$. We will use $\sharp : T^*(SM) \to T(SM)$ to denote the musical isomorphism with respect to the Sasaki metric.

\begin{lemma}
\label{lemma:projection2}
We have: 
\begin{equation}
\label{equation:dot-x}
\begin{split}
\dot{X}_{\tau = 0}& = - \frac{1}{2} \pi_2^* \dot{g} \cdot X + \frac{1}{2} J \left[\nabla_{\V}(\pi_2^*\dot{g}) + \nabla_{\HH}(\pi_2^*\dot{g}) \right] \\
& \hspace{2cm} - J \left( (\nabla^{\mathrm{Sas}}_X \pi^*_{2,\mathrm{Sas}} \dot{g})(X, \pi_{\HH}(\bullet))\right)^\sharp + \pi^*_{2,\mathrm{Sas}}\dot{g}(X, \pi_{\HH}(\bullet))^\sharp.
\end{split}
\end{equation}
\end{lemma}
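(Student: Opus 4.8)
The plan is to use the fact that, for each $\tau$, the vector field $X_\tau$ is the Reeb vector field of the contact form $\beta_\tau := \Phi_\tau^*\alpha_\tau$ on the \emph{fixed} manifold $SM$, where $\alpha_\tau$ is the canonical contact form of $(SM_\tau,g_\tau)$ (i.e. \eqref{eq:alphaDef} with $g$ replaced by $g_\tau$): indeed, pulling back a contact form by a diffeomorphism pulls back its Reeb field, and $X = X_0$ is the Reeb field of $\alpha = \alpha_0$ by the identity just before \eqref{eq:alphaDef}. Differentiating the two defining relations $\beta_\tau(X_\tau)=1$ and $\iota_{X_\tau}d\beta_\tau=0$ at $\tau=0$ then reduces the computation of $\dot X$ to (i) computing $\dot\beta$ and (ii) inverting an elementary linear system using the contact/complex/Sasaki structure.

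First I would compute $\dot\beta$. Since $\pi\circ\Phi_\tau=\pi$ by \eqref{eq:scalingMap}, formula \eqref{eq:alphaDef} gives $\beta_\tau|_{(x,v)}(\xi)=|v|_{g_\tau}^{-1}\,g_\tau(\dd_{(x,v)}\pi(\xi),v)$. Differentiating at $\tau=0$, using $|v|_{g_0}=1$, $\partial_\tau|_{\tau=0}|v|^2_{g_\tau}=\dot g_x(v,v)$ and $\dd\pi(X)=v$ (so that $\dot g_x(\dd\pi(\xi),v)=\pi_{2,\mathrm{Sas}}^*\dot g(\xi,X)$), one obtains $\dot\beta=-\demi\,\pi_2^*\dot g\cdot\alpha+\iota_X\pi_{2,\mathrm{Sas}}^*\dot g$. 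Differentiating the Reeb relations then gives $\alpha(\dot X)=-\dot\beta(X)=-\demi\,\pi_2^*\dot g$ and $\iota_{\dot X}d\alpha=-\iota_X d\dot\beta$. Because $\alpha=g_{\mathrm{Sas}}(X,\bullet)$, i.e. $\alpha^\sharp=X$, the first relation yields the $\R X$-component $-\demi\,\pi_2^*\dot g\cdot X$ of $\dot X$, and the second, combined with \eqref{eq:contact-complex-sasaki} and $\iota_X d\alpha=0$, yields $\pi_{\HH}\dot X+\pi_{\V}\dot X=-J(\iota_X d\dot\beta)^\sharp$; hence $\dot X=-\demi\,\pi_2^*\dot g\cdot X-J(\iota_X d\dot\beta)^\sharp$, and it only remains to expand the last term.

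For the expansion, write $\phi:=\pi_2^*\dot g$, $\omega:=\pi_{2,\mathrm{Sas}}^*\dot g$ and $h:=\iota_X\omega$, so $d\dot\beta=-\demi(d\phi\wedge\alpha+\phi\,d\alpha)+dh$ and, using $\iota_X d\alpha=0$ and $\alpha(X)=1$, $\iota_X d\dot\beta=-\demi(X\phi)\alpha+\demi\,d\phi+\iota_X dh$. Expanding $dh$ with the Levi--Civita connection $\nabla^{\mathrm{Sas}}$ of the Sasaki metric, and using that the orbits of the geodesic flow are Sasaki geodesics, so that $\nabla^{\mathrm{Sas}}_X X=0$ (this follows from the Sasaki geodesic equations, see \cite{Paternain-99}), together with $(\nabla^{\mathrm{Sas}}_Z\omega)(X,X)=Z\phi-2\,\omega(\nabla^{\mathrm{Sas}}_Z X,X)$, one rewrites $\iota_X d\dot\beta=-\demi(X\phi)\alpha-\demi\,d\phi+(\nabla^{\mathrm{Sas}}_X\omega)(X,\bullet)+\big(Z\mapsto\omega(\nabla^{\mathrm{Sas}}_Z X,X)\big)$. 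Applying $-J(\bullet)^\sharp$ and using the gradient decomposition $\nabla^{\mathrm{Sas}}\phi=(d\phi)^\sharp=X\phi\cdot X+\nabla_{\HH}\phi+\nabla_{\V}\phi$ (so that the $\R X$-pieces cancel against those of the last two terms), together with the behaviour of $\nabla^{\mathrm{Sas}}_\bullet X$ and of $(\nabla^{\mathrm{Sas}}_X\omega)(X,\bullet)=(\nabla^{\mathrm{Sas}}_X\pi_{2,\mathrm{Sas}}^*\dot g)(X,\bullet)$ with respect to the splitting $T(SM)=\R X\oplus\HH\oplus\V$ (via the Sasaki structure equations of \cite{Paternain-99}, which bring in the curvature tensor), the horizontal part reassembles as $\demi\,J[\nabla_{\V}\phi+\nabla_{\HH}\phi]$ while the vertical part, after the curvature terms cancel in pairs, reassembles as $-J\big((\nabla^{\mathrm{Sas}}_X\pi_{2,\mathrm{Sas}}^*\dot g)(X,\pi_{\HH}(\bullet))\big)^\sharp+\pi_{2,\mathrm{Sas}}^*\dot g(X,\pi_{\HH}(\bullet))^\sharp$; together with the $\R X$-term this is exactly \eqref{equation:dot-x}.

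The hard part will be precisely this last reorganization: carefully tracking $J$, $\sharp$, $\pi_{\HH}$, $\pi_{\V}$ and the Sasaki Levi--Civita connection — in particular the curvature terms produced by the structure equations — and matching every constant and every sign. A useful cross-check, which in addition shows that $\dot X$ has no $\HH$-component, is to compute directly from \eqref{eq:scalingMap} that $\dd\pi(\dot X|_{(x,v)})=-\demi\,\phi(x,v)\,v$ and that $\mc{K}(\dot X|_{(x,v)})$ is the $g_x$-orthogonal projection onto $v^{\bot}$ of $-\dot C(v,v)$, where $\dot C$ is the variation of the Levi--Civita connection (a symmetric $TM$-valued $2$-tensor given by the standard formula in terms of $\nabla\dot g$); these determine the $\R X$- and $\V$-components of $\dot X$ and can be matched term by term with \eqref{equation:dot-x}.
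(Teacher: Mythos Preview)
Your strategy coincides with the paper's: pull back the contact form to $SM$, differentiate the Reeb conditions $\iota_{X_\tau}\beta_\tau=1$ and $\iota_{X_\tau}d\beta_\tau=0$ at $\tau=0$ to obtain $\alpha(\dot X)=-\tfrac12\pi_2^*\dot g$ and $\dot X=-\tfrac12\pi_2^*\dot g\cdot X-J(\iota_X d\dot\beta)^\sharp$, and then compute $\iota_X dA$ for $A=\iota_X\pi_{2,\mathrm{Sas}}^*\dot g$. (The paper's $\alpha_\tau$ is your $\beta_\tau$, a harmless swap of names.)

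The only genuine difference is how $\iota_X dA$ is evaluated. You propose a global computation via the Sasaki Levi--Civita connection, invoking the structure equations for $\nabla^{\mathrm{Sas}}_{\bullet}X$ and then arguing that the resulting curvature terms cancel in pairs. The paper instead fixes a point $p\in M$, chooses a geodesic orthonormal frame $(E_i)$ at $p$ with horizontal lifts $X_i$ and $Y_i:=JX_i$, and evaluates $\iota_X dA(X_i)$, $\iota_X dA(Y_i)$ directly from the Cartan formula $dA(W,Z)=W\!\cdot\!A(Z)-Z\!\cdot\!A(W)-A([W,Z])$, using only that $\nabla^{\mathrm{Sas}}_X X_i$ and $\nabla^{\mathrm{Sas}}_{X_i}X$ are vertical at $p$ and that $[X,Y_i]=-X_i$ there. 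This yields the closed formula for $\iota_X dA$ in two lines, with no curvature tensor ever appearing. Your route is correct in principle, but the cancellations you allude to are precisely what the frame trick bypasses; the paper's method is shorter and less error-prone. (One small slip in your write-up: $\tfrac12 J[\nabla_{\V}\phi+\nabla_{\HH}\phi]$ is not purely horizontal, since $J\nabla_{\HH}\phi\in\V$; the $\HH$/$\V$ split of the final answer mixes the four displayed terms differently than you indicate.)
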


\begin{proof}
	Write $\beta_\tau(x, v)(\xi) = \langle{v, d\pi(x, v) \xi}\rangle_{g_\tau(x)}$ for the contact $1$-form on $SM_\tau$. Writing $\alpha_\tau := \Phi_\tau^*\beta_\tau$ and $\alpha := \beta_0$, and using $\pi \circ \Phi_{\tau} = \pi$, we obtain for any $\xi \in T_{(x, v)}SM$:
	\[
	\alpha_\tau(x, v)(\xi) = |v|_{g_\tau}^{-1} g_\tau(v, \dd \pi(x,v)(\xi)).
	\]
	Differentiating in $\tau$ and restricting to $\tau = 0$, we obtain the relation:
	\begin{equation}\label{eq:alphadot0}
	\dot{\alpha} = -\frac{1}{2} \pi_2^*\dot{g} \cdot \alpha + \dot{g}(v, \dd \pi(x,v)(\bullet)).
	\end{equation}
	The pullback vector field $X_\tau$ is uniquely determined by the relations: $\iota_{X_\tau} \alpha_\tau = 1$ and $\iota_{X_\tau} d\alpha_\tau = 0$. Differentiating, we get $\iota_{\dot{X}} \alpha + \iota_X \dot{\alpha} = 0$ and thus using \eqref{eq:alphadot0}, $\alpha(\dot{X}) = -\frac{1}{2} \pi_2^*\dot{g}$. Since we can decompose $\dot{X} = \alpha(\dot{X}) X + Y$ for some $Y$ orthogonal to $X$ this gives the first term in \eqref{equation:dot-x}. It remains to compute $Y$. 
	
	For that, we introduce the $1$-form, defined for $(x, v) \in SM$ as
	\[
	A(x, v)(\bullet) :=  \dot{g}(v, \dd \pi(x,v)(\bullet)) = \pi_{2,\mathrm{Sas}}^* \dot{g}(X,\bullet),
	\]
	using the Sasaki lift introduced in \eqref{equation:lift-sasaki}. The first step is to compute $\iota_X dA$ and we claim:
	\begin{equation}\label{eq:iota-XdA}
	\iota_X dA = - d (\pi_2^*\dot{g}) + (\nabla^{\mathrm{Sas}}_X \pi^*_{2,\mathrm{Sas}} \dot{g})(X, \pi_{\HH_\mathrm{tot}}(\bullet)) - \pi^*_{2,\mathrm{Sas}}\dot{g} (X,J \pi_{\V}(\bullet)).
	\end{equation}
	Recall here that $\HH_{\mathrm{tot}} = \HH \oplus \R \cdot X$ denotes the total horizontal space, as explained in \S\ref{ssection:elementary}, and $\pi_{\HH_{\mathrm{tot}}}$ is the orthogonal projection onto this space (with respect to the Sasaki metric).

	Note that $A$ defines a $1$-form on $TM$ and we will first compute $\iota_X dA$ on $TM$. Then $\iota_X dA$ on $SM$ is just the restriction. For $W, Z \in C^\infty(TM, T(TM))$, we have the formula:
	\begin{equation}
	\label{equation:formulas}
	\begin{split}
	dA(W, Z) = W \cdot \left( \pi^*_{2,\mathrm{Sas}}\dot{g}(X, Z) \right) - Z \cdot \left(\pi^*_{2,\mathrm{Sas}}\dot{g}(X, W)\right) - \pi^*_{2,\mathrm{Sas}}\dot{g}(X,[W, Z]).
	\end{split}
	\end{equation}
	We now fix a point $p \in M$ and take a geodesic orthonormal frame $(E_1,...,E_n)$ around $p$, i.e. such that $\nabla_{E_i} E_j(p) = 0$. Let $X_i$ be the horizontal lift of $E_i$. We have that $\nabla_{X_i}^{\mathrm{Sas}} X$ is vertical (see \cite[Lemma 1.25]{Paternain-99}) and $\nabla^{\mathrm{Sas}}_X X_i$ is also vertical at $p$ (as $\pi$ is a Riemannian submersion). Hence by \eqref{equation:formulas}, at the point $p$:
	\begin{equation}\label{eq:iota-X_i}
	\begin{split}
	\iota_X dA(X_i) &= (\nabla^{\mathrm{Sas}}_X \pi^*_{2,\mathrm{Sas}} \dot{g})(X,X_i) + \pi^*_{2,\mathrm{Sas}} \dot{g}(\nabla^{\mathrm{Sas}}_X X, X_i) +\pi^*_{2,\mathrm{Sas}} \dot{g}(X, \nabla^{\mathrm{Sas}}_X  X_i) \\
	& - X_i \cdot \pi_2^*\dot{g} - \pi^*_{2,\mathrm{Sas}} \dot{g}(X, \underbrace{[X,X_i]}_{= \nabla^{\mathrm{Sas}}_X X_i - \nabla^{\mathrm{Sas}}_{X_i} X})\\
	& =(\nabla^{\mathrm{Sas}}_X \pi^*_{2,\mathrm{Sas}} \dot{g})(X,X_i) - X_i \cdot \pi_2^*\dot{g}.
	\end{split}
	\end{equation}
	Introducing $Y_i := J X_i$, it can also be checked that $[X,Y_i] = JY_i = -X_i$ at the point $p$ (this is an immediate consequence of the fact that $[X_i, Y_j] = 0$ at $p$, see \cite[Exercise 1.26]{Paternain-99}). Thus by \eqref{equation:formulas}, at $p$:
	\begin{equation}\label{eq:iota-Y_i}
	\begin{split}
	\iota_X dA (Y_i) & = X \cdot \pi^*_{2,\mathrm{Sas}} \dot{g}(X,Y_i)- Y_i \cdot \left( \pi^*_{2,\mathrm{Sas}} \dot{g}(X,X)\right) - \pi^*_{2,\mathrm{Sas}} \dot{g}(X,[X,Y_i])\\
	&  = - Y_i \cdot \pi_2^*\dot{g} -  \pi^*_{2,\mathrm{Sas}} \dot{g}(X,JY_i).
	\end{split}
	\end{equation}
	Combining \eqref{eq:iota-X_i} and \eqref{eq:iota-Y_i} immediately yields \eqref{eq:iota-XdA} and proves the claim.

	Hence, combining \eqref{eq:iota-XdA} with \eqref{eq:alphadot0}, we get
	\begin{align*}
	\iota_X d \dot{\alpha} &= -\frac{1}{2} X(\pi_2^*\dot{g}).\alpha + \frac{1}{2} d(\pi_2^*\dot{g}) + \iota_X dA\\
	&= - \frac{1}{2}  X (\pi_2^* \dot{g}). \alpha - \frac{1}{2} d (\pi_2^* \dot{g}) + (\nabla^{\mathrm{Sas}}_X \pi^*_{2,\mathrm{Sas}} \dot{g})(X, \pi_{\HH_{\mathrm{tot}}}(\bullet)) - \pi^*_{2,\mathrm{Sas}} \dot{g} (X,J \pi_{\V}(\bullet)).
	\end{align*}
Using $\iota_X d\dot{\alpha} + \iota_{\dot{X}} d\alpha =\iota_X d\dot{\alpha} + \iota_Y  d\alpha = 0$ together with \eqref{eq:contact-complex-sasaki}, we get:
\[
\begin{split}
	Y  = -J (\iota_X d \dot{\alpha})^\sharp  & =  \frac{1}{2} J \left( \nabla_{\V} \pi_2^* \dot{g} + \nabla_{\HH} \pi_2^* \dot{g}\right) - J \left( (\nabla^{\mathrm{Sas}}_X \pi^*_{2, \mathrm{Sas}} \dot{g})(X, \pi_{\HH}(\bullet))\right)^\sharp \\
	& + J \left( \pi^*_{2,\mathrm{Sas}} \dot{g} (X,J \pi_{\V}(\bullet)) \right)^\sharp .
	\end{split}
	\]
		Note here that we used $(\nabla^{\mathrm{Sas}}_X \pi^*_{2, \mathrm{Sas}} \dot{g})(X, \pi_{\HH}(\bullet)) = (\nabla^{\mathrm{Sas}}_X \pi^*_{2, \mathrm{Sas}} \dot{g})(X, \pi_{\HH_{\mathrm{tot}}}(\bullet)) - X(\pi_2^*\dot{g}). \alpha$, valid since $X(\pi_2^*\dot{g}) = (\nabla^{\mathrm{Sas}}_X\pi_{2, \mathrm{Sas}}^*\dot{g})(X, X)$. The last term in the expression for $Y$ can be simplified as $\pi^*_{2,\mathrm{Sas}} \dot{g}(X,\pi_{\HH}(\bullet))^\sharp$ (which boils down to the identity $J \pi_{\mathbb{V}} J = -\pi_{\mathbb{H}}$ on $\ker \alpha$), which completes the proof.
\end{proof}

The last two terms of \eqref{equation:dot-x} vanish for a conformal perturbation. We introduce the differential operator $\Lambda^{\mathrm{conf}} \in \mathrm{Diff}^1(SM,\C \rightarrow T_{\mathbb{C}}(SM))$ of order one
\begin{equation}\label{eq:LambdaDef}
	\Lambda^{\mathrm{conf}}(f) := \frac{1}{2}\left(-f X + J\left[ \nabla_{\V}f + \nabla_{\HH}f \right]\right).
\end{equation}
We also introduce $\Lambda^{\mathrm{aniso}} \in \mathrm{Diff}^1(SM, \otimes^2_S T^*(SM) \to T_{\mathbb{C}}(SM))$ of order one by:
\begin{equation}
\label{equation:LambdaAniso}
\Lambda^{\mathrm{aniso}}(f) :=  - J \left( (\nabla^{\mathrm{Sas}}_X f)(X, \pi_{\HH}(\bullet))\right)^\sharp + f(X, \pi_{\HH}(\bullet))^\sharp
\end{equation}
By construction $\dot{X} = \Lambda^{\mathrm{conf}}(\pi_2^*f) + \Lambda^{\mathrm{aniso}}(\pi_{2,\mathrm{Sas}}^*f)$. In order to manipulate compact notations, we will write $\dot{X} := \Lambda(\pi_2^*f)$, although there is some abuse of notations here as there are two distinct lifts of $f$ to $SM$.

We now compute the symbols of $\Lambda^{\mathrm{conf,aniso}}$. They will be useful in the perturbation arguments in the following sections. 

\begin{lemma}
\label{lemma:vector}
For any $(x, v, \xi) \in T(SM)$, we have:
\[
	\sigma(\Lambda^{\mathrm{conf}})(x, v, \xi) = \frac{i}{2} J(x, v) (\pi_{\HH}\xi^\sharp + \pi_{\V} \xi^\sharp) = \frac{i}{2} J(x, v) \big(\xi^\sharp - \xi(X(x, v)). X(x, v)\big).
\]
\end{lemma}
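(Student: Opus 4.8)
The plan is to read off the principal symbol of $\Lambda^{\mathrm{conf}}$ directly from its definition \eqref{eq:LambdaDef}, using that the principal symbol is additive over the three summands and that each summand is either of order $0$ or order $1$. The term $-\tfrac12 f X$ is a multiplication operator (order $0$), so it contributes nothing to the order-$1$ symbol. Thus only the two gradient terms $\tfrac12 J\nabla_{\V}f$ and $\tfrac12 J\nabla_{\HH}f$ matter.

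First I would recall that for a first-order differential operator of the form $f \mapsto \nabla_{\V} f$ (resp. $\nabla_{\HH} f$), which is the composition of the Sasaki gradient $\nabla^{\mathrm{Sas}}$ with the orthogonal projection $\pi_{\V}$ (resp. $\pi_{\HH}$) onto the corresponding subbundle of $T(SM)$, the principal symbol at a covector $\xi \in T^*_{(x,v)}(SM)$ is $i\,\pi_{\V}\xi^\sharp$ (resp. $i\,\pi_{\HH}\xi^\sharp$). Indeed, the symbol of $\nabla^{\mathrm{Sas}}$ is $\eta \mapsto i\eta^\sharp$ (with the convention used elsewhere in the paper, e.g. $\sigma_D(x,\xi) = i j_\xi$), and composing with a fixed bundle projection just composes the symbol with that projection pointwise. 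Since $J = J(x,v)$ is a smooth bundle endomorphism of $\ker\alpha = \HH\oplus\V$, multiplication by $J$ contributes $J(x,v)$ to the symbol. Hence
\[
\sigma(\Lambda^{\mathrm{conf}})(x,v,\xi) = \tfrac{i}{2}\,J(x,v)\big(\pi_{\HH}\xi^\sharp + \pi_{\V}\xi^\sharp\big),
\]
which is the first claimed formula.

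For the second equality I would use the orthogonal splitting $T(SM) = \R X \oplus^\bot \V \oplus^\bot \HH$ from \eqref{equation:splitting}: any vector $\xi^\sharp \in T(SM)$ decomposes as $\xi^\sharp = g_{\mathrm{Sas}}(\xi^\sharp, X)\,X + \pi_{\V}\xi^\sharp + \pi_{\HH}\xi^\sharp$, and since $\alpha$ is the metric dual of $X$ (i.e. $g_{\mathrm{Sas}}(X,\bullet) = \alpha$, as $|X|_{\mathrm{Sas}} = 1$), we have $g_{\mathrm{Sas}}(\xi^\sharp, X) = \alpha(\xi^\sharp) = \xi(X(x,v))$. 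Therefore $\pi_{\HH}\xi^\sharp + \pi_{\V}\xi^\sharp = \xi^\sharp - \xi(X(x,v))\,X(x,v)$, giving the second form of the symbol. There is no serious obstacle here; the only point requiring a little care is keeping the factor-of-$i$ and factor-of-$\tfrac12$ conventions consistent with the rest of the paper (in particular that $J$ acts on $\ker\alpha$ and annihilates the $X$-direction, so whether one writes $\pi_{\HH}\xi^\sharp + \pi_{\V}\xi^\sharp$ or $\xi^\sharp - \xi(X)X$ inside $J(x,v)(\cdot)$ is immaterial). One should also note that although $\Lambda^{\mathrm{conf}}$ is stated to take values in the complexified tangent bundle, the symbol formula is the natural $\C$-linear extension and needs no modification.
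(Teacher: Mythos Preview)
Your proposal is correct and is essentially the same as the paper's proof: both identify that the zeroth-order term $-\tfrac12 f X$ drops out and that the symbol comes from the $J(\nabla_{\HH}+\nabla_{\V})$ part, with the second equality following from the orthogonal splitting \eqref{equation:splitting}. The only cosmetic difference is that the paper verifies $\sigma(\nabla_{\HH}+\nabla_{\V})(\xi)=i(\pi_{\HH}\xi^\sharp+\pi_{\V}\xi^\sharp)$ by an explicit Lagrangian-state computation (applying $\Lambda^{\mathrm{conf}}$ to $e^{iS/h}f$ and taking $h\to 0$), whereas you cite this symbol formula directly; the content is identical.
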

\begin{proof}
	Consider the Lagrangian state $e^{i\frac{S}{h}} f$, where $S(x,v) = 0, \dd S(x,v) = \xi$ and $f(x,v) = 1$. By \eqref{eq:LambdaDef}, we compute
	\begin{equation}\label{eq:LambdaLagrangian}
		\Lambda^{\mathrm{conf}}(e^{i\frac{S}{h}}f) = -\frac{1}{2}e^{i \frac{S}{h}}\Big[f X - \frac{i}{h} fJ(\nabla_{\mathbb{H}}S + \nabla_{\mathbb{V}} S) - J(\nabla_{\HH}f + \nabla_{\V} f)\Big].
	\end{equation}
	We may directly read off the principal symbol from this expression:
	\[
	\begin{split}
	\sigma(\Lambda^{\mathrm{conf}})(x, v, \xi) & = \lim_{h \to 0} h \Lambda^{\mathrm{conf}}(e^{i \frac{S}{h}} f) (x, v) \\
	&= \frac{i}{2} J(\nabla_{\HH}S + \nabla_{\V}S)(x, v) = \frac{i}{2} J(x, v)(\pi_{\HH}\xi^\sharp + \pi_{\V}\xi^\sharp).
	\end{split} \]
\end{proof}

We have:

\begin{lemma}
\label{lemma:LambdaAnisoSymbol}
For any $(x,v,\xi) \in T(SM)$ and $f \in \otimes^2_S T^*_{(x,v)}(SM)$, we have:
\[
\sigma(\Lambda^{\mathrm{aniso}})(x,v,\xi)f = - i \langle \xi, X(x,v) \rangle. J(x, v) \big(f(X(x, v),\pi_{\HH}(\bullet))\big)^\sharp
\]
\end{lemma}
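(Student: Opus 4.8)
The plan is to imitate the proof of Lemma~\ref{lemma:vector} by testing $\Lambda^{\mathrm{aniso}}$ on a Lagrangian state and reading off the leading-order term in $h$. Concretely, fix $(x,v,\xi)\in T(SM)$ and a smooth real phase $S$ with $S(x,v)=0$, $\dd S(x,v)=\xi$, together with a symmetric $2$-tensor $\widetilde f\in C^\infty(SM,\otimes^2_ST^*(SM))$ with $\widetilde f(x,v)=f$. Apply $\Lambda^{\mathrm{aniso}}$ to $e^{iS/h}\widetilde f$. In the definition \eqref{equation:LambdaAniso} there are two terms: the algebraic term $\widetilde f(X,\pi_{\HH}(\bullet))^\sharp$ contributes only at order $h^0$ (no derivatives fall on the phase), so it does not enter the principal symbol; the term $-J\big((\nabla^{\mathrm{Sas}}_X \widetilde f)(X,\pi_{\HH}(\bullet))\big)^\sharp$ contains one covariant derivative, and the leading contribution in $h$ comes from letting that $X$-derivative hit $e^{iS/h}$, producing a factor $\tfrac{i}{h}(XS) = \tfrac{i}{h}\langle\xi, X(x,v)\rangle$ while the rest of the tensor is frozen at its value at $(x,v)$.

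Thus, after multiplying by $h$ and sending $h\to 0$, only the covariant-derivative term survives and it produces
\[
\sigma(\Lambda^{\mathrm{aniso}})(x,v,\xi)f = -i\,\langle\xi, X(x,v)\rangle\, J(x,v)\big(f(X(x,v),\pi_{\HH}(\bullet))\big)^\sharp,
\]
which is exactly the claimed formula. The only things to be careful about are: first, that $\nabla^{\mathrm{Sas}}$ applied to the scalar factor $e^{iS/h}$ along $X$ really yields $(XS)/h$ times the undifferentiated tensor plus lower order, which is immediate since $\nabla^{\mathrm{Sas}}_X(e^{iS/h}\widetilde f) = \tfrac{i}{h}(XS)\,e^{iS/h}\widetilde f + e^{iS/h}\nabla^{\mathrm{Sas}}_X\widetilde f$ by the Leibniz rule; and second, that the musical isomorphism $\sharp$, the projection $\pi_{\HH}$, and $J$ are all zeroth-order (tensorial) operations, hence pass unchanged to the principal symbol and may be evaluated at the base point $(x,v)$.

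The argument is essentially the same one-line computation as in Lemma~\ref{lemma:vector}, so I do not expect any genuine obstacle; the only mild subtlety worth stating explicitly is the bookkeeping that the purely algebraic summand $f(X,\pi_{\HH}(\bullet))^\sharp$ of $\Lambda^{\mathrm{aniso}}$ is of order zero and therefore invisible to the principal symbol, so that the symbol is carried entirely by the first-order piece $-J\big((\nabla^{\mathrm{Sas}}_X f)(X,\pi_{\HH}(\bullet))\big)^\sharp$. One can phrase this cleanly by noting $\Lambda^{\mathrm{aniso}}\in\mathrm{Diff}^1$ and that its order-one part is the $\nabla^{\mathrm{Sas}}_X$ term, whose symbol is obtained by replacing $\nabla^{\mathrm{Sas}}_X$ with $i\langle\xi,X\rangle$. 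I would present the proof in two short sentences: write out $\Lambda^{\mathrm{aniso}}(e^{iS/h}\widetilde f)$ via Leibniz, then take $\lim_{h\to0} h\,\Lambda^{\mathrm{aniso}}(e^{iS/h}\widetilde f)(x,v)$.
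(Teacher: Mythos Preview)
Your proposal is correct and follows essentially the same approach as the paper: both test $\Lambda^{\mathrm{aniso}}$ on a Lagrangian state $e^{iS/h}\widetilde f$, observe that the algebraic term $f(X,\pi_{\HH}(\bullet))^\sharp$ is of order zero and hence does not contribute to the principal symbol, and then apply the Leibniz rule to $\nabla^{\mathrm{Sas}}_X(e^{iS/h}\widetilde f)$ to extract the factor $\tfrac{i}{h}(XS)$, yielding the claimed formula after multiplying by $h$ and sending $h\to 0$.
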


\begin{proof}
We see that it suffices to compute the principal symbol of the first term in \eqref{equation:LambdaAniso} as the second term is of lower order. We take a Lagrangian state $e^{i \frac{S}{h}} f$ with $S(x,v) = 0, \dd S(x,v)= \xi$ and $f \in C^\infty(SM,\otimes^2_S T^*(SM)),f(x) =: f_0$. We have:
\[
\begin{split}
&\sigma(\Lambda^{\mathrm{aniso}})(x,v,\xi)f_0 \\
& = \lim_{h \to 0} h \Lambda^{\mathrm{aniso}}(e^{i\frac{S}{h}}f)(x,v) \\
& =  \lim_{h \to 0} - h J(x, v) \left( \frac{i}{h} XS. \left[f(X,\pi_{\HH}(\bullet))\right]^\sharp + e^{i \frac{S}{h}} \left[(\nabla^{\mathrm{Sas}}_X f)(X, \pi_{\HH}(\bullet))\right]^\sharp \right)(x, v) \\
& = - i \langle \xi, X(x,v) \rangle. J(x, v) \big(f_0(X(x, v),\pi_{\HH}(\bullet))\big)^\sharp.
\end{split} 
\]
\end{proof}

Eventually, we compute the divergence of $\dot{X}$ in a geometric way. We prove:

\begin{lemma}
\label{lemma:divergence}
	The following formula holds:
	\begin{equation}\label{eq:div-X-dot-full}
		\divv(\dot{X}) = X \Big( \pi_0^* \Tr_{g_0} (\dot{g}) - \frac{n}{2}\pi_2^*\dot{g}\Big).
	\end{equation}
\end{lemma}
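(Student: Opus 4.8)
\emph{Plan.} The key input is that the geodesic flow of any metric preserves its Liouville (Sasaki) volume, and the only work is to see how this volume is distorted by the rescaling maps $\Phi_\tau$ from \eqref{eq:scalingMap}. Let $H_\tau$ be the geodesic vector field of $g_\tau$ on $SM_\tau$; it is the Reeb field of the contact form $\beta_\tau(x,v)(\bullet)=\langle v,\dd\pi(\bullet)\rangle_{g_\tau(x)}$ (as in the proof of Lemma \ref{lemma:projection2}), so $\iota_{H_\tau}\beta_\tau=1$ and $\iota_{H_\tau}d\beta_\tau=0$, whence $\Lie_{H_\tau}\mu_\tau=0$ for the Liouville volume $\mu_\tau:=\tfrac1{(n-1)!}\beta_\tau\wedge(d\beta_\tau)^{n-1}$ on $SM_\tau$. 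Pulling back by $\Phi_\tau$ and using $X_\tau=\Phi_\tau^*H_\tau$, the volume form $\Omega_\tau:=\Phi_\tau^*\mu_\tau=\tfrac1{(n-1)!}(\Phi_\tau^*\beta_\tau)\wedge(d\Phi_\tau^*\beta_\tau)^{n-1}$ on $SM$ satisfies $\Lie_{X_\tau}\Omega_\tau=0$, and $\Omega_0=d\vol_{g_{\mathrm{Sas}}}$ by \eqref{eq:dvolSasaki}. First I would write $\Omega_\tau=e^{\rho_\tau}\Omega_0$ with $\rho_0=0$; then the convention $\divv(Z)\,d\vol_{g_{\mathrm{Sas}}}=-\Lie_Z d\vol_{g_{\mathrm{Sas}}}$ and $\Lie_{X_\tau}\Omega_\tau=0$ give $\divv(X_\tau)=X_\tau\rho_\tau$ for all small $\tau$. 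Differentiating at $\tau=0$ and using $\rho_0=0$ yields $\divv(\dot X)=X\dot\rho$, where $\dot\rho:=\partial_\tau|_{\tau=0}\rho_\tau$. So it remains to show $\dot\rho=\pi_0^*\Tr_{g_0}(\dot g)-\tfrac n2\pi_2^*\dot g$.

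To compute $\dot\rho$ I would use the disintegration of the Liouville volume over $M$: $\mu_\tau=\pi^*(d\vol_{g_\tau})\otimes d\omega_{g_\tau}$, where $d\vol_{g_\tau}$ is the Riemannian volume of $(M,g_\tau)$ and $d\omega_{g_\tau,x}$ is the round measure on the unit sphere $S_xM_\tau\subset(T_xM,g_\tau(x))$; this holds because $\pi\colon(SM,g_{\mathrm{Sas}})\to(M,g)$ is a Riemannian submersion whose fibres are precisely these round spheres. Since $\Phi_\tau$ fixes base points, $\Omega_\tau=\pi^*(d\vol_{g_\tau})\otimes\Phi_{\tau,x}^*(d\omega_{g_\tau,x})$, hence
\[
\rho_\tau=\log\!\big(d\vol_{g_\tau}/d\vol_{g_0}\big)+\log\!\big(\Phi_{\tau,x}^*d\omega_{g_\tau,x}/d\omega_{g_0,x}\big).
\]
The first term contributes the standard $\partial_\tau|_{\tau=0}=\tfrac12\Tr_{g_0}(\dot g)$. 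For the second term I would fix $x$, work in a fibre $E=T_xM$ with linear coordinates in which $g_0(x)=\mathrm{Id}$ and $g_\tau(x)=\mathrm{Id}+\tau h+O(\tau^2)$ with $h$ symmetric, so that $\pi_2^*\dot g(x,v)=h(v,v)=:\phi_0(v)$ and $\Tr_{g_0}\dot g|_x=\Tr h$.

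The fibre computation is the crux. Write $r_\tau(v):=v/\sqrt{g_\tau(v,v)}$ for the radial projection of $E\setminus\{0\}$ onto the $g_\tau$-unit sphere, so that $r_\tau|_{\Ss^{n-1}}=\Phi_{\tau,x}$. The $g_\tau$-unit outward normal of $S_xM_\tau$ is the position vector, so $d\omega_{g_\tau,x}=\iota_N dV_{g_\tau}|_{S_xM_\tau}$ with $N=\sum_i v^i\partial_{v^i}$ the radial (Euler) field and $dV_{g_\tau}=\sqrt{\det g_\tau}\,dv=(1+\tfrac\tau2\Tr h)\,dv+O(\tau^2)$; hence $\Phi_{\tau,x}^*d\omega_{g_\tau,x}=(1+\tfrac\tau2\Tr h)\,(r_\tau^*\sigma)|_{\Ss^{n-1}}+O(\tau^2)$ with $\sigma:=\iota_N dv$. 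One checks that $\sigma=|v|^n\,\pi_S^*d\omega_{\Ss^{n-1}}$ (both sides are annihilated by $\iota_N$, are $n$-homogeneous, and agree on $\Ss^{n-1}$), where $\pi_S(v)=v/|v|$, and that $r_\tau$ is scale invariant so $\pi_S\circ r_\tau=\pi_S$; therefore $r_\tau^*\sigma=|r_\tau(v)|^n\,\pi_S^*d\omega_{\Ss^{n-1}}$, and on $\Ss^{n-1}$ one has $|r_\tau(v)|=(1+\tau\phi_0(v))^{-1/2}=1-\tfrac\tau2\phi_0(v)+O(\tau^2)$. Collecting terms,
\[
\Phi_{\tau,x}^*d\omega_{g_\tau,x}=\Big(1+\tfrac\tau2\Tr h-\tfrac{n\tau}{2}\phi_0(v)\Big)d\omega_{\Ss^{n-1}}+O(\tau^2),
\]
so the second term of $\rho_\tau$ contributes $\partial_\tau|_{\tau=0}=\tfrac12\Tr_{g_0}\dot g-\tfrac n2\pi_2^*\dot g$. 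Adding the two contributions gives $\dot\rho=\Tr_{g_0}\dot g-\tfrac n2\pi_2^*\dot g=\pi_0^*\Tr_{g_0}\dot g-\tfrac n2\pi_2^*\dot g$, and $\divv(\dot X)=X\dot\rho$ is \eqref{eq:div-X-dot-full}.

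I expect the main (mild) obstacle to be the bookkeeping: justifying the disintegration $\mu_\tau=\pi^*d\vol_{g_\tau}\otimes d\omega_{g_\tau}$ cleanly, and carefully tracking the pullbacks through the (scale-invariant, basepoint-preserving) map $r_\tau$ to first order in $\tau$ — everything else is elementary. An alternative, entirely within the contact-form framework already set up, is to compute $\dot\rho$ directly from $\dot\alpha=-\tfrac12\pi_2^*\dot g\cdot\alpha+A$ (see \eqref{eq:alphadot0}) and $d\vol_{g_{\mathrm{Sas}}}=\tfrac1{(n-1)!}\alpha\wedge(d\alpha)^{n-1}$: expanding $\partial_\tau|_{\tau=0}\big(\alpha\wedge(d\alpha)^{n-1}\big)$ reduces the claim to the identity $\alpha\wedge dA\wedge(d\alpha)^{n-2}=\tfrac{1}{n-1}\big(\pi_0^*\Tr_{g_0}\dot g-\pi_2^*\dot g\big)\,\alpha\wedge(d\alpha)^{n-1}$, which can in turn be extracted by contracting with $X$ and invoking the formula \eqref{eq:iota-XdA} for $\iota_X dA$; this route is more computational, which is why I would present the disintegration argument.
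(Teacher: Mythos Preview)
Your proof is correct and follows essentially the same approach as the paper: both use that $H_\tau$ preserves the Liouville volume to reduce to $\divv(\dot X)=X\dot{\mc J}$ (your $\dot\rho=\dot{\mc J}$), then compute the Jacobian of $\Phi_\tau$ via the disintegration of the Sasaki volume into base and fibre parts. The only cosmetic difference is that the paper packages the fibre computation into a standalone linear-algebra lemma about scaling maps between ellipsoidal spheres (obtaining $\mc J_\tau=\frac{\det g_\tau}{\det g_0}(\pi_2^*g_\tau)^{-n/2}$ exactly), whereas you do it directly to first order in $\tau$ via the identity $\iota_N dv=|v|^n\pi_S^*d\omega_{\Ss^{n-1}}$.
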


In local coordinates $(x_i)_{1 \leq i \leq n}$ where $g_0$ and $\dot{g}$ are identified with $n \times n$ symmetric matrices, we have $\Tr_{g_0}(\dot{g}) = \Tr(g_0^{-1} \dot{g})$.

\begin{proof}
	Write $\Omega_\tau := d\vol_{\mathrm{Sas}, g_\tau}$ for the Sasaki volume form of $g_\tau$ in $SM_\tau$ and $\Omega := \Omega_0$. Write $\mc{J}_\tau$ for the Jacobian of $\Phi_\tau$ (where we recall $\Phi_\tau$ was introduced in \eqref{eq:scalingMap}), i.e. $\Phi_\tau^*\Omega_\tau = \mc{J}_\tau \Omega$. Observe that
	\[-\divv(X_\tau) \Omega = \Lie_{X_\tau} \Omega = \Phi_\tau^*(\Lie_{H_\tau}(\mc{J}_\tau^{-1}\circ \Phi_\tau^{-1}.\Omega_\tau)) = \mc{J}_\tau. X_\tau(\mc{J}_\tau^{-1}). \Omega.\]
	It follows that $\divv(X_\tau) = -\mc{J}_\tau. X_\tau (\mc{J}_\tau^{-1}) =  \mc{J}_\tau^{-1}. X_\tau \mc{J}_\tau$ (the second equality follows from the product rule) and differentiating at $\tau = 0$ and using $\mc{J}_0 = 1$:
	\begin{equation}\label{eq:div-X-dot}
		\divv(\dot{X}) = X\dot{\mc{J}}.
	\end{equation}
	In what follows we compute $\mc{J}_\tau$. We will use that
	\[
		d\vol_{\mathrm{Sas}, g_\tau}(x, v) = \pi^*d\vol_{g_\tau}(x, v) \wedge d\vol_{S_xM_\tau} (v).
	\]
We need the following auxiliary lemma:
	
	\begin{lemma}\label{lemma:linearalgebra}
		Let $A_0$ and $A_1$ be two symmetric, positive definite matrices, and denote by $\mathbb{S}^{n - 1}_{A_i} = \{x \in \mathbb{R}^n \mid \langle{A_ix, x}\rangle = 1\} \subset \mathbb{R}^n$ the unit sphere with respect to the metric induced by $A_i$; denote by $d\vol_{A_i}$ the induced volume form on $\mathbb{S}^{n-1}_{A_i}$. If $R(x) = \frac{x}{\sqrt{\langle{A_1x, x}\rangle}}$ is the scaling map between the two spheres, then for $x \in \mathbb{S}^{n-1}_{A_0}$:
		\begin{equation}\label{eq:scaling-linear-algebra}
			R^*d\vol_{A_1}(x) = \Big(\sqrt{\frac{\det A_1}{\det A_0}}. \langle{A_1x, x}\rangle^{-\frac{n}{2}}\Big) d\vol_{A_0}(x).
		\end{equation}
	\end{lemma}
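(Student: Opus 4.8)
\textbf{Plan of proof for Lemma \ref{lemma:linearalgebra}.}

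The statement is a routine exercise in linear algebra and the behaviour of volume forms under homogeneous maps, so I would keep the argument short and conceptual. The plan is to reduce to the standard sphere $\mathbb{S}^{n-1} = \mathbb{S}^{n-1}_{\id}$ by a linear change of variables, then compute the Jacobian of the resulting scaling map $R$ directly. Concretely, write $A_1 = B^\top B$ for some invertible $B$ (e.g. $B = A_1^{1/2}$), so that the map $y \mapsto B y$ sends $\mathbb{S}^{n-1}$ diffeomorphically onto $\mathbb{S}^{n-1}_{A_1}$; pulling back $d\vol_{A_1}$ by this map produces $\det(B)$ times the round volume form on $\mathbb{S}^{n-1}$, i.e. $\sqrt{\det A_1}\, d\vol_{\id}$, because $B$ is the differential of a linear isometry $(\mathbb{R}^n,\id) \to (\mathbb{R}^n, A_1)$ up to the overall scaling factor $\det B$. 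A symmetric statement holds for $A_0$. So it suffices to establish the identity in the case $A_0 = A_1 = \id$, where $R$ is the radial retraction $x \mapsto x/|x|$ from an ellipsoid (the image of $\mathbb{S}^{n-1}_{A_0}$) — but actually once both sides are transported to $\mathbb{S}^{n-1}$ the map becomes $x \mapsto x/\sqrt{\langle Cx,x\rangle}$ for a single positive definite matrix $C$, and one only needs one Jacobian computation.

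The key computation is then the following: for a positive definite symmetric matrix $C$ and the map $R_C : \mathbb{S}^{n-1} \to \mathbb{S}^{n-1}_{C}$, $R_C(x) = x/\sqrt{\langle Cx, x\rangle}$, one has $R_C^* d\vol_{C} = \langle Cx,x\rangle^{-n/2} \sqrt{\det C}\, d\vol_{\id}$. To see this, extend $R_C$ to a map on a neighbourhood of $\mathbb{S}^{n-1}$ in $\mathbb{R}^n$ by the same formula (it is $0$-homogeneous), and note that $d\vol_{C}$ on $\mathbb{S}^{n-1}_C$ is $\iota_{N_C} (\sqrt{\det C}\, dx_1 \wedge \cdots \wedge dx_n)$ where $N_C$ is the outward unit normal with respect to the Euclidean structure twisted by $C$; equivalently, one can compute with the standard formula $d\vol = \iota_{\rho} (d^n x)$ on a star-shaped hypersurface, where $\rho$ is the Euler (radial) vector field. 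Since $R_C$ is radial-direction-preserving and $0$-homogeneous, $dR_C$ maps the radial direction to $0$ and on the tangent space $T_x\mathbb{S}^{n-1}$ it acts by the scalar $\langle Cx,x\rangle^{-1/2}$ composed with the identity; hence the pullback of the $(n-1)$-form picks up a factor $\langle Cx,x\rangle^{-(n-1)/2}$ from the $(n-1)$ tangent directions, while the passage from the standard volume density on $\mathbb{S}^{n-1}_C$ (normalised by $\sqrt{\det C}$ inside the ambient form) back to the standard sphere via the radial contraction contributes an extra factor $\langle Cx,x\rangle^{-1/2}$, giving the total power $-n/2$. Assembling the two reductions above with $C = A_0^{-1/2} A_1 A_0^{-1/2}$ (so that $\det C = \det A_1/\det A_0$ and $\langle C(A_0^{1/2}x), A_0^{1/2}x\rangle = \langle A_1 x,x\rangle$ for $x \in \mathbb{S}^{n-1}_{A_0}$) yields exactly \eqref{eq:scaling-linear-algebra}.

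The only genuine obstacle is bookkeeping: correctly tracking how many factors of $\langle A_1 x, x\rangle^{-1/2}$ appear — one from each of the $n-1$ tangential directions and one more from the radial/normal normalisation of the volume form — to land on the exponent $-n/2$ rather than $-(n-1)/2$ or $-(n+1)/2$, and similarly making sure the $\sqrt{\det A_1/\det A_0}$ comes out on the correct side. A clean way to avoid sign and normalisation errors is to verify the formula on the test function $f \equiv 1$, i.e. to check that integrating both sides over $\mathbb{S}^{n-1}_{A_0}$ recovers the known volume $\vol(\mathbb{S}^{n-1}_{A_1})$ (which is $\omega_{n-1}/\sqrt{\det A_1}$ for the appropriate normalisation), and then to note that the pointwise identity follows because $R$ is a diffeomorphism and the claimed density is continuous and positive. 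I would present the proof via the radial-contraction formula $d\vol_{A} = \iota_{\rho}\big(\sqrt{\det A}\, dx_1\wedge\cdots\wedge dx_n\big)\big|_{\mathbb{S}^{n-1}_A}$ together with $R^*\rho = \langle A_1 x,x\rangle^{-1/2}\rho$ on the relevant tangent spaces, which makes the exponent count transparent.
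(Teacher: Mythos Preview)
Your proposal is correct and shares the paper's overall structure: reduce to $A_0 = \id$ via a linear change of variables, then compute the Jacobian of the scaling map using the contraction formula $d\vol_A = \sqrt{\det A}\,\iota_x(dx_1\wedge\cdots\wedge dx_n)$ on $\mathbb{S}^{n-1}_A$. The execution of the key step differs slightly. The paper carries out an explicit coordinate computation at the single point $x = e_n$ (writing down $dR(e_i)$ and evaluating the contracted top form), then invokes $\mathrm{SO}(n)$-symmetry to propagate the identity to all of $\mathbb{S}^{n-1}$. Your argument is instead coordinate-free and global: with $\lambda := \langle Cx,x\rangle^{-1/2}$ one has $R_C(x) = \lambda x$ and $dR_C(v_i) = \lambda v_i + c_i x$ for tangent vectors $v_i \in T_x\mathbb{S}^{n-1}$, so $\omega(R_C(x), dR_C(v_1), \ldots, dR_C(v_{n-1})) = \lambda^n\,\omega(x,v_1,\ldots,v_{n-1})$ directly. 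This is arguably cleaner, as it avoids the symmetry step.

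One small correction worth making when you write it up: your statement that $dR_C|_{T_x\mathbb{S}^{n-1}}$ ``acts by the scalar $\langle Cx,x\rangle^{-1/2}$ composed with the identity'' is not literally true --- for $v$ with $\langle x,v\rangle = 0$ one finds $dR_C(v) = \lambda v - \lambda^3\langle Cx,v\rangle\, x$, and the second term need not vanish. However, since the form you are pulling back is $\iota_{R_C(x)}\omega$ and $R_C(x)$ is parallel to $x$, these radial corrections are annihilated, so your exponent count $-n/2$ is correct.
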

	\begin{proof}
		We first show the claim for $A_0 = \id$. Denote $A_1 = A$ and write $R^*d\vol_{A} = j.d\vol_{\mathbb{S}^{n-1}}$ for some function $j$ on $\mathbb{S}^{n-1}$. Observe that $d\vol_A(x) = \sqrt{\det A}. \imath_{\sum_i x_i e_i} |dx|$ for $x \in \mathbb{S}^{n-1}_A$, where we write $|dx| = dx_1 \wedge \dotso \wedge dx_n$, $e_i$ for the standard basis vectors of $\mathbb{R}^n$ identified with $\partial_i$; $\sum_i x_ie_i = x$ is the outer unit normal to $\mathbb{S}^{n-1}_A$ at $x$. It is straightforward to compute:
		\[\forall i = 1, \dotso, n - 1, \quad d_{e_n}R(e_i) = \partial_iR(e_n) = \frac{1}{\sqrt{A_{nn}}} \left(e_i - e_n. \frac{A_{in}}{A_{nn}}\right).\]
		This shows the claim at $x = e_n$:
		\begin{align*}
			& R^*(d\vol_A)(e_n)(e_1, \dotso, e_{n - 1}) \\
			&= \sqrt{\det A}. \imath_{\frac{e_n}{\sqrt{A_{nn}}}} dx_1 \wedge \dotso \wedge dx_n\left(..., \frac{1}{\sqrt{A_{nn}}} \left(e_i - e_n. \frac{A_{in}}{A_{nn}}\right),...\right)\\
			&= (-1)^{n-1}\sqrt{\det A}. \langle{Ae_n, e_n}\rangle^{-\frac{n}{2}}.
		\end{align*}
		For general $x \in \mathbb{S}^{n-1}$, consider a $B \in SO(n)$ such that $B(e_n) = x$. Using that $B:\mathbb{S}^{n-1}_{B^TAB} \to \mathbb{S}^{n-1}_A$ is an isometry and the previous computation, the formula \eqref{eq:scaling-linear-algebra} for $A_0 = \id$ follows.
		
		For general $A_0$, simply consider a linear coordinate change given by $B$ with $B^TA_0B = \id$ and apply the previous result to $A = B^TA_1B$. This completes the proof.
	\end{proof}
Using Lemma \ref{lemma:linearalgebra}, we see that in local coordinates:
\begin{align*}
	\Phi_\tau^*(\pi^*d\vol_{g_\tau} \wedge d\vol_{S_xM_\tau}) &= \pi^*d\vol_{g_\tau} \wedge \Phi_\tau^*(d\vol_{S_xM_\tau})\\
	&= \sqrt{\det g_\tau} |dx| \wedge \sqrt{\frac{\det g_\tau}{\det g_0}}. (\pi_2^*g_\tau)^{-\frac{n}{2}} d\vol_{S_xM}\\
	&=  \frac{\det g_\tau}{\det g_0}. (\pi_2^*g_\tau)^{-\frac{n}{2}}. \pi^*d\vol_{g_0} \wedge d\vol_{S_xM}.
\end{align*}
This shows that $\mc{J}_\tau = \frac{\det g_\tau}{\det g_0}. (\pi_2^*g_\tau)^{-\frac{n}{2}}$, so taking the derivative at $\tau = 0$ and using \eqref{eq:div-X-dot} completes the proof and shows \eqref{eq:div-X-dot-full}.
\end{proof}

In what follows, we will frequently use the operators, defined for a distribution $\VV \in \mc{D}'(SM)$:
\begin{equation}\label{eq:sandwich-auxiliary-operator}
	\forall u \in C^\infty(SM), \quad M_{\VV}u :=\VV. u, \quad \forall Z \in C^\infty(SM, T(SM)), \quad N_{\VV}Z := Z\VV. 
\end{equation}
Sometimes $u$ or $Z$ will also be singular, in which case we will have to justify the extension of the corresponding operator to such functions.

\subsubsection{Metric-dependent generalized X-ray transform}
\label{sssection:metric-dependent-X-ray}

For each $\tau \in (-\varepsilon, \varepsilon)$ small enough, we can consider the positive resolvent $\C \ni z \mapsto (-X_\tau -z)^{-1}$ and the negative resolvent $\C \ni z \mapsto (X_\tau -z)^{-1}$. Since we have $X_\tau = (\Phi_\tau)^*H_\tau$, the resolvents satisfy on $C^\infty(SM)$ that $(\pm X_\tau - z)^{-1} = (\Phi_{\tau})^*(\pm H_\tau - z)^{-1} (\Phi_{\tau})_*$ and therefore also 
\begin{equation}\label{eq:resolventRescaled}
	R_{\pm, 0}^{X_\tau} = (\Phi_{\tau})^*R_{\pm, 0}^{H_\tau}(\Phi_{\tau})_*, \quad \Pi^{X_\tau}_\pm = (\Phi_{\tau})^*\Pi_\pm^{H_\tau} (\Phi_{\tau})_*,
\end{equation}
where the superscript denotes the vector field with respect to which the resolvent is taken, $R_{\pm, 0}^\bullet$ is its holomorphic part at zero and $\Pi_\pm^\bullet$ is the orthogonal projection to the resonant space at zero. From now on we drop the $\bullet = X_\tau$ superscript and simply write $\tau$ instead. By \eqref{eq:Pi_pm} we have
\begin{equation}\label{eq:Pi_pm-tau}
\Pi_\pm^\tau = \langle \bullet, \mu_\tau \rangle \mathbf{1},
\end{equation}
where $\mathbf{1}$ is the constant function and $\mu_\tau$ is the pullback by $\Phi_\tau$ of the normalised Liouville measure on $SM_\tau$ such that $\langle \mathbf{1}, \mu_\tau\rangle = 1$.

Let $(\pi_m^{g_\tau})^*: C^\infty(M, \otimes_S^m T^*M) \to C^\infty(SM_\tau)$ be the canonical pullback operator; we write $\pi_m^* := (\pi_m^{g_0})^*$. We may then compute, for a symmetric $m$-tensor $f$:
\[
\begin{split}
(\Phi_\tau)^* (\pi_m^{g_\tau})^* f (x, v) & = (\pi_m^{g_\tau})^*f\left(x, \frac{v}{|v|_{g_\tau}}\right) \\
&= f_x\left(\frac{v}{|v|_{g_\tau}}, \dotsc, \frac{v}{|v|_{g_\tau}}\right) = |v|_{g_\tau}^{-m} \pi_m^*f(x, v).
\end{split}
\]
We denote $\chi_\tau(x, v) = |v|^{-m}_{g_\tau}$, so that by the previous equality:
\begin{equation}\label{eq:pushforwardRescaled}
	\chi_\tau \pi_m^* = (\Phi_\tau)^* (\pi_m^{g_\tau})^*, \quad (\pi_m)_* \chi_\tau = (\pi_m^{g_\tau})_* (\Phi_\tau)_*,
\end{equation}
%(\pi_{m, \tau})^* := (\pi_{m, \tau})_* :=
where the lower star denotes the pushforward, that is the $L^2$ adjoint of the pullback operator. We are in position to introduce the generalised $X$-ray transform with respect to $H_\tau$ and study its properties under re-scaling by \eqref{eq:resolventRescaled} and \eqref{eq:pushforwardRescaled}:
\begin{equation}
\label{equation:pi}
\Pi_m^{\tau} := (\pi_{m}^{g_\tau})_* (R_{+,0}^{H_\tau} + R_{-,0}^{H_\tau} + \Pi_+^{H_\tau}) (\pi_{m}^{g_\tau})^* = {\pi_m}_* \chi_\tau (\underbrace{R_{+,0}^\tau + R_{-,0}^\tau + \Pi_+^\tau}_{\mc{I}^\tau :=}) \chi_\tau \pi_m^*.
\end{equation}
Moreover, the family
\[
(-\varepsilon, \varepsilon) \ni \tau \mapsto \Pi_m^\tau \in \Psi^{-1}(M, \otimes^m_S T^*M \rightarrow \otimes^m_S T^*M)
\]
depends smoothly on $\tau$ as stated in Lemma \ref{lemma:differentiabilty}.

We keep the same strategy as in \S\ref{section:genericity-connection} and define
\begin{equation}\label{eq:P_tau-metric-def}
P_\tau = \pi_{\ker D_0^*} \Delta_0 \Pi_m^\tau \Delta_0 \pi_{\ker D^*_0},
\end{equation}
where $\Delta_0$ is an elliptic, formally self-adjoint, positive, pseudodifferential operator of order $k>1/2$ with diagonal principal symbol that induces an isomorphism on Sobolev spaces. As in Lemma \ref{lemma:isomorphisms} (more precisely, apply Lemma \ref{lemma:isomorphisms} to the trivial vector bundle $\E = \M \times \mathbb{C}$ equipped with the trivial unitary connection $d$, and note that as explained in Footnote \eqref{footnote:1} the kernel of $X$ is stably non-empty, so the lemma applies in our setting), the maps:
\begin{align*}
	\pi_{\ker D^*_\tau} \Delta_0 \pi_{\ker D^*_0}&: \ker D_0^* \cap H^s(M,\otimes^m_S T^*M) \to \ker D_\tau^* \cap H^{s-k}(M,\otimes^m_S T^*M),\\
	\pi_{\ker D^*_0} \Delta_0 \pi_{\ker D^*_\tau}&: \ker D_\tau^* \cap H^s(M,\otimes^m_S T^*M) \to \ker D_0^*  \cap H^{s-k}(M,\otimes^m_S T^*M),
\end{align*}
are isomorphisms for $\tau$ small enough depending on $s \in \mathbb{R}$. In particular, as before, $\Pi_m^\tau$ is solenoidal injective (i.e. injective on symmetric tensors in $\ker D^*_\tau$) if and only if $P_\tau$ is solenoidal injective. In what follows we assume that $(u_i)_{i = 1}^d$ is an $L^2$-orthonormal basis of eigenstates of $P_0$ at $0$. As in \eqref{equation:lambda-tau}, we let $\lambda_\tau$ be the sum of the eigenvalues of $P_\tau$ inside a small contour near $0$.

\subsection{Variations of the ground state} We now compute the variations with respect to $\tau$.

\subsubsection{First order variations}

\label{sssection:first-metric}

As in the connection case, the first order variation of $\lambda_{\tau}$ at $\tau=0$ is $\dot{\lambda}_{\tau = 0} = 0$ and the second order variation is given by Lemma \ref{lemma:abstract}. We thus need to compute each term involved in the second derivative $\ddot{\lambda}_{\tau = 0}$, namely $\dot{P}_0$ and $\ddot{P}_0$. More precisely, the goal of this section is to compute $\sum_{i=1}^d \langle P_0^{-1} \dot{P}_0 u_i, \dot{P}_0 u_i \rangle_{L^2}$.

We assume that $g_\tau = g + \tau f$, where $f \in C^\infty(M,\otimes^2_S T^*M)$ is a perturbation such that $\dot{g} = f$. We write
\begin{align}\label{eq:resolvent-expansion}
\begin{split}
-(-X_{\tau}-z)^{-1} &= (z+X_{\tau})^{-1} = \dfrac{\Pi^\tau_+}{z} + R^\tau_{+,0} + z R^{\tau}_{+,1} + z^2 R^\tau_{+,2} + \mc{O}(z^3),\\
-(X_\tau-z)^{-1} &= (z-X_{\tau})^{-1} = \dfrac{\Pi^\tau_-}{z} + R^\tau_{-,0} + z R^{\tau}_{-,1} + z^2 R^\tau_{-,2} + \mc{O}(z^3),
\end{split}
\end{align}
for the meromorphic expansion of the resolvents at zero. First of all, we compute the derivatives of $\chi_\tau$ at $\tau = 0$: 
\begin{align}
\label{equation:chi-dot}
\begin{split}
\dot{\chi}_{\tau=0}(x,v) &= -\frac{m}{2} \dot{g}(v,v) = -\frac{m}{2} \pi_2^* f (x,v),\\
\ddot{\chi}_{\tau = 0}(x,v) &= \frac{m}{2}\left(\frac{m}{2} + 1\right) [\pi_2^* f]^2 (x,v).
\end{split}
\end{align}
In the following we recall that $\Pi_+^\tau = \Pi_-^\tau$ (see \eqref{eq:Pi_pm-tau}). We now turn to the first order variation of the resolvent:

\begin{lemma}
\label{lemma:first-variation-resolvent}
We have $\dot{\Pi}_\pm = - \Pi_\pm \dot{X} R_{+,0}$ and:
\begin{align*}
\dot{R}_{+,0}&= -\left(R_{+,0} \dot{X} R_{+,0} + \Pi_+ \dot{X} R_{+,1}  \right), \quad \dot{R}_{-,0} = + \left( R_{-,0} \dot{X} R_{-,0} + \Pi_- \dot{X} R_{-,1} \right).
\end{align*}
\end{lemma}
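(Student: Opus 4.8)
The plan is to differentiate the meromorphic expansions \eqref{eq:resolvent-expansion} in $\tau$ and collect terms at each power of $z$. Start from the defining identities $(z+X_\tau)(z+X_\tau)^{-1} = \mathbbm{1}$ and $(z-X_\tau)(z-X_\tau)^{-1}=\mathbbm{1}$, differentiate at $\tau=0$ to obtain $\dot X (z+X)^{-1} + (z+X)\,\partial_\tau (z+X_\tau)^{-1}|_0 = 0$, i.e.\ $\partial_\tau(z+X_\tau)^{-1}|_0 = -(z+X)^{-1}\dot X (z+X)^{-1}$, and similarly $\partial_\tau(z-X_\tau)^{-1}|_0 = (z-X)^{-1}\dot X (z-X)^{-1}$. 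Here one must be slightly careful since $(z\pm X)^{-1}$ is only defined after passing to the anisotropic Sobolev spaces $\mc H^s_\pm$; the perturbation theory of \S\ref{ssection:resolvent-perturbations-theory} (Lemma \ref{lemma:differentiabilty} and the references therein) guarantees that $\tau \mapsto (z\pm X_\tau)^{-1}$ is smooth as a family of bounded operators on a fixed such space, so this manipulation is legitimate.

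Next I would substitute the Laurent expansions \eqref{eq:resolvent-expansion} into these formulas. For the positive resolvent, $\partial_\tau(z+X_\tau)^{-1}|_0 = -\big(\tfrac{\Pi_+}{z}+R_{+,0}+zR_{+,1}+\dots\big)\dot X\big(\tfrac{\Pi_+}{z}+R_{+,0}+zR_{+,1}+\dots\big)$. The left-hand side equals $\tfrac{\dot\Pi_+}{z}+\dot R_{+,0}+z\dot R_{+,1}+\dots$. Matching the $z^{-2}$ coefficient gives $\Pi_+\dot X\Pi_+ = 0$, which holds automatically because $\Pi_+ = \langle\bullet,\mu\rangle\mathbf{1}$ (see \eqref{eq:Pi_pm}) and $\dot X$ is a vector field, so $\dot X\Pi_+ = \dot X(\langle\bullet,\mu\rangle\mathbf{1}) = 0$ since $\dot X$ annihilates constants; in particular every term in the product in which $\dot X$ is immediately preceded by $\Pi_+$ or immediately followed by $\Pi_+$ applied first (i.e.\ $\dot X \Pi_+$) vanishes. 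Using $\dot X\Pi_+ = 0$ kills the $zR_{+,1}$-on-the-right contributions and collapses the product dramatically. The $z^{-1}$ coefficient then reads $\dot\Pi_+ = -\Pi_+\dot X R_{+,0} - R_{+,0}\dot X\,\dot X\cdot(\text{nothing})\ldots$; more precisely the only surviving $z^{-1}$ term is $-\Pi_+\dot X R_{+,0}$ (the term $-R_{+,0}\dot X\Pi_+$ vanishes), giving $\dot\Pi_+ = -\Pi_+\dot X R_{+,0}$. The $z^0$ coefficient gives $\dot R_{+,0} = -\big(R_{+,0}\dot X R_{+,0} + \Pi_+\dot X R_{+,1} + R_{+,1}\dot X\Pi_+\big)$ and again $R_{+,1}\dot X\Pi_+ = 0$, leaving $\dot R_{+,0} = -(R_{+,0}\dot X R_{+,0} + \Pi_+\dot X R_{+,1})$. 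The negative resolvent is handled identically with the opposite overall sign coming from $(z-X_\tau)$, yielding $\dot\Pi_- = -\Pi_-\dot X R_{-,0}$ and $\dot R_{-,0} = R_{-,0}\dot X R_{-,0} + \Pi_-\dot X R_{-,1}$; one uses $\Pi_+ = \Pi_-$ to write these uniformly. (Alternatively, $\dot R_{-,0} = (\dot R_{+,0})^*$ follows from $R_{-,0} = R_{+,0}^*$ in \eqref{eq:resolvent-identities} together with $\dot X^* = -\dot X + \divv(\dot X)$ from \eqref{eq:vector_field_formal_adjoint}, which gives a useful cross-check.)

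The step requiring the most care is bookkeeping the coefficient matching while correctly using the relation $\dot X\Pi_+ = 0$ (equivalently $\Pi_\pm$ composed on the appropriate side with $\dot X$ vanishes), since there are several potentially-dangerous terms involving $R_{\pm,1}$ and $\Pi_\pm$ that must be shown to drop out — this is exactly what makes the final formulas as clean as stated. I do not anticipate any genuine analytic obstacle beyond invoking the smoothness of the resolvent family from \S\ref{ssection:resolvent-perturbations-theory}; the content is purely the algebra of Laurent-series coefficient comparison together with the explicit form \eqref{eq:Pi_pm} of the residue $\Pi_\pm$.
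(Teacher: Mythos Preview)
Your approach is correct and essentially identical to the paper's: the paper extracts the Laurent coefficients via the contour-integral representations $R_{+,0}^\tau=\tfrac{1}{2\pi i}\oint_\gamma (z+X_\tau)^{-1}\tfrac{dz}{z}$ and $\Pi_+^\tau=\tfrac{1}{2\pi i}\oint_\gamma (z+X_\tau)^{-1}\,dz$, differentiates under the integral, and applies the residue theorem, which amounts to exactly the coefficient matching you perform. One small slip: your intermediate claim $\dot\Pi_- = -\Pi_-\dot X R_{-,0}$ has the wrong sign (the correct formula from your own method is $\dot\Pi_- = +\Pi_-\dot X R_{-,0}$), but this does not affect the lemma as stated, since $\Pi_+=\Pi_-$ gives $\dot\Pi_-=\dot\Pi_+=-\Pi_\pm\dot X R_{+,0}$ directly.
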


\begin{proof}
Let us deal with the second equality (the third one is similar). By \eqref{eq:resolvent-expansion}, we have (recall that the contour $\gamma$ around zero was defined in \S \ref{ssection:resolvent-perturbations-theory}):
\begin{equation}\label{eq:resolvent-+-0-tau}
	R^{\tau}_{+,0} = \dfrac{1}{2 \pi i} \oint_{\gamma} (z+X_\tau)^{-1} \dfrac{\dd z}{z},
\end{equation}
and differentiating with respect to $\tau$, we get:
\[
\begin{split}
\dot{R}_{+,0} & = - \dfrac{1}{2 \pi i} \oint_{\gamma} (z+X)^{-1}  \dot{X} (z+X)^{-1}  \dfrac{\dd z}{z} \\
& =- \left(R_{+,0} \dot{X} R_{+,0} + \Pi_+ \dot{X} R_{+,1} + R_{+,1} \dot{X} \Pi_+ \right).
\end{split}
\]
To conclude, it suffices to observe that $\dot{X} \Pi_+ = 0$ as $\dot{X}$ is a vector field and the range of $\Pi_+$ is always the constant functions $\C \cdot \mathbf{1}$.

As far as the derivative of the spectral projection is concerned, one starts with the equality
\begin{equation}\label{eq:Pi-+-0-tau}
\Pi^\tau_+ = \dfrac{1}{2 \pi i} \oint_{\gamma} (z+X_\tau)^{-1} \dd z,
\end{equation}
and then differentiates with respect to $\tau$ similarly as above; we omit the details.
\end{proof}

Therefore, recalling \eqref{equation:pi}, by \eqref{equation:chi-dot} and Lemma \ref{lemma:first-variation-resolvent} we obtain at $\tau = 0$:
\begin{equation}\label{eq:Pi_m^tau-first-derivative}
\begin{split}
\dot{\Pi}_m & = {\pi_m}_*\left( -\frac{m}{2} \left(\pi_2^* f. \mc{I} + \mc{I} \pi_2^* f\right) + \dot{\mc{I}} \right) \pi_m^*\\
&= {\pi_m}_*\Big( -\frac{m}{2} \left(\pi_2^* f. \mc{I} + \mc{I} \pi_2^* f \right)  -\left(R_{+,0} \dot{X} R_{+,0} + \Pi_+ \dot{X} R_{+,1}  \right)\\  
&+ \left( R_{-,0} \dot{X} R_{-,0} + \Pi_- \dot{X} R_{-,1} \right) 
 \left.  - \Pi_+ \dot{X} R_{+,0} \right) \pi_m^*.
\end{split}
\end{equation}

Next, we observe that for $m$ \emph{odd}, ${\pi_m}_* \mathbf{1} = 0$ and also ${\pi_m}_* (\pi_2^*f) = 0$ and as a consequence, the terms in \eqref{eq:Pi_m^tau-first-derivative} involving $\Pi_\pm$ disappear; when \emph{$m$ is even}, this simplification no longer occurs. 
Recalling the definition \eqref{eq:P_tau-metric-def} of $P_\tau$, we obtain from \eqref{eq:Pi_m^tau-first-derivative} the general expression (valid for $m$ odd or even):
\[
\begin{split}
&\dot{P}_0   = {\pi_{\ker D^*_0}} \Delta_0 {\pi_m}_*\Big( -\frac{m}{2}\big(\pi_2^* f .(R_{+,0} + R_{-,0})+ (R_{+,0} + R_{-,0})\pi_2^* f\big) \\
& \hspace{5cm} \left. -R_{+,0} \dot{X} R_{+,0}    +  R_{-,0} \dot{X} R_{-,0} \right) \pi_m^* \Delta_0 {\pi_{\ker D^*_0}}\\
& + \eps(m) {\pi_{\ker D^*_0}} \Delta_0 {\pi_m}_* \\
& \left(-\frac{m}{2}\pi_2^*f.\Pi_+ \underbrace{- \frac{m}{2}\Pi_+ \pi_2^*f - \Pi_+ \dot{X} R_{+,1} + \Pi_{-} \dot{X} R_{-,1} - \Pi_{+} \dot{X} R_{+,0}  }_{S(\pi_2^*f):=}\right) \\
& \hspace{10cm}\pi_m^* \Delta_0 {\pi_{\ker D^*_0}},
\end{split}
\]
where $\eps(m)=0$ for $m$ odd and $\eps(m)=1$ for $m$ even. This last term is isolated on purpose because as we shall see, it only contributes to a smoothing remainder in the following argument and will therefore disappear in the principal symbol computations.

We let $u_i \in C^\infty(M,\otimes^m_S T^*M) \cap \ker D^*_0$ be one of the elements in the kernel of $P_{0}$. Note that the operator $\Pi_m$ being \emph{real}, we can always assume that the $u_i$'s are real-valued. This implies using Lemma \ref{lemma:relation} that (as in \S \ref{section:genericity-connection})
\begin{equation}\label{eq:Xv_i}
	\pi_m^* \Delta_0 \pi_{\ker D^*_0} u_i = X \VV_i, \quad \mathrm{for\,\, some}\quad\VV_i \in C^\infty(SM)\quad \mathrm{with}\quad \Pi_+\VV_i = 0,
\end{equation}
which can also be chosen real-valued (since $X$ is real). Hence, using \eqref{eq:resolvent-identities} and the fact that $(R_{+,0} + R_{-,0}) X = 0$, $X \Pi_+ = \Pi_+X = \dot{X}\Pi_+ = 0$, and recalling that $\dot{X} = \Lambda(\pi_2^*f)$, we obtain:
\begin{equation}\label{eq:P_0-dot-u_i}
\begin{split}
\dot{P}_0 u_i &= -\frac{m}{2} {\pi_{\ker D^*_0}} \Delta_0 {\pi_m}_*  \big(\pi_2^*f. (R_{+,0} + R_{-,0})+ (R_{+,0} + R_{-,0})\pi_2^* f\big)  X \VV_i \\
& + {\pi_{\ker D^*_0}} \Delta_0 {\pi_m}_*\big(-R_{+,0} \dot{X} \underbrace{R_{+,0}X}_{= \mathbbm{1} - \Pi_+} \VV_i    +  R_{-,0} \dot{X} \underbrace{R_{-,0} X}_{=-\mathbbm{1} + \Pi_-} \VV_i\big) \\
&  + \eps(m) {\pi_{\ker D^*_0}} \Delta_0 {\pi_m}_* \left(-\frac{m}{2} \pi_2^*f. \Pi_+  + S(\pi_2^*f) \right) X\VV_i \\
&= - {\pi_{\ker D^*_0}} \Delta_0 {\pi_m}_*(R_{+,0}+R_{-,0}) \left[ \frac{m}{2} \pi_2^* f. X \VV_i +  \Lambda(\pi_2^* f) \VV_i \right] \\
&   \hspace{5cm} + \eps(m) {\pi_{\ker D^*_0}} \Delta_0 {\pi_m}_* S(\pi_2^*f) X \VV_i.
\end{split}
\end{equation}
Let us introduce shorthand notation for the operators arising in \eqref{eq:P_0-dot-u_i}, for any $\VV \in C^\infty(SM)$ and any symmetric $2$-tensor $h \in C^\infty(M, \otimes_S^2T^*M)$:
\begin{equation}
\label{equation:bvqv}
	B_{\VV} h := {\pi_m}_* S(\pi_2^*h) X \VV_i, \quad Q_{\VV} h := {\pi_m}_* (R_{+,0}+R_{-,0}) \left[ \frac{m}{2} \pi_2^* h. X \VV+  \Lambda(\pi_2^* h) \VV \right].
\end{equation}
By using the facts that $P_0^{-1} = (\pi_{\ker D_0^*} \Delta_0 \pi_{\ker D_0^*})^{-1} \Pi_m^{-1} (\pi_{\ker D_0^*} \Delta_0 \pi_{\ker D_0^*})^{-1}$ on $\ker D_0^*$ and the equalities $D_0^* {\pi_m}_* (R_{+, 0} + R_{-, 0}) = D_0^* {\pi_m}_* S(\pi_2^*f) = 0$, we obtain from \eqref{eq:P_0-dot-u_i} (similarly to \eqref{equation:p-moins-un}):
\begin{equation}
\label{equation:p-moins-un-metrique}
\begin{split}
 \langle P_0^{-1} \dot{P}_0 u_i, \dot{P}_0 u_i \rangle_{L^2} & =  \left\langle \Pi_m^{-1} Q_{\VV_i}f, Q_{\VV_i}f \right\rangle_{L^2} + \varepsilon(m)\left\langle \Pi_m^{-1} B_{\VV_i}f, B_{\VV_i}f \right\rangle_{L^2} \\
 & - 2\varepsilon(m) \Re \left(\left\langle Q_{\VV_i}f, B_{\VV_i}f \right\rangle_{L^2}\right).
\end{split}
\end{equation}

We now prove that the operator $B_{\VV}$ introduced in \eqref{equation:bvqv} is in fact smoothing:

\begin{lemma}
\label{lemma:smoothing-1}
$B_{\VV_i} \in \Psi^{-\infty}(M, \otimes^2_S T^*M \rightarrow \otimes^m_S T^*M)$ is a smoothing operator.
\end{lemma}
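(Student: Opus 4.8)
The claim is that $B_{\VV_i} = {\pi_m}_* S(\pi_2^*\bullet)\, X\VV_i$ is smoothing. Recall from \eqref{eq:P_0-dot-u_i}--\eqref{equation:bvqv} that the operator $S(\pi_2^*f)$ is a sum of terms each of which contains a spectral projector $\Pi_+ = \Pi_-$ as a factor: explicitly
\[
S(\pi_2^*f) = -\tfrac{m}{2}\Pi_+ \pi_2^*f - \Pi_+ \dot X R_{+,1} + \Pi_- \dot X R_{-,1} - \Pi_+ \dot X R_{+,0},
\]
with $\dot X = \Lambda(\pi_2^*f)$ a (first-order, $f$-dependent) differential operator. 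The plan is simply to exploit the fact that $\Pi_+ = \langle \bullet, \mu\rangle \mathbf{1}$ is a rank-one operator whose range is the constants $\C\cdot\mathbf{1}$, hence itself a smoothing operator (its Schwartz kernel is $\mathbf{1}\otimes\mu$, which is smooth in the first variable), and that composition of a smoothing operator with any $\Psi$DO on either side, as well as with the bounded operators $R_{\pm,0}, R_{\pm,1}, \dot X, \pi_2^*, {\pi_m}_*$, remains smoothing.

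More precisely, first I would observe that $B_{\VV_i} f = {\pi_m}_* S(\pi_2^* f)\, X\VV_i$ can be rewritten, for each of the four summands of $S$, in the form ${\pi_m}_* \Pi_\pm A_f$, where $A_f$ is an operator built from $\dot X = \Lambda(\pi_2^* f)$ (a differential operator, so maps smooth sections of $\pi^*\E$ — here the trivial bundle — to smooth sections), the holomorphic resolvent pieces $R_{\pm,0}, R_{\pm,1}$ (bounded on anisotropic Sobolev spaces, cf. \S\ref{ssection:ruelle}), $\pi_2^* f$ (multiplication by a smooth function), and $X\VV_i \in C^\infty(SM)$. The point is that $A_f$ applied to $X\VV_i$ (and its derivatives in the parameter) lands in some fixed anisotropic Sobolev space, while $\Pi_\pm$ kills all the regularity issues: for any distribution $w$, $\Pi_\pm w = \langle w, \mu\rangle\mathbf{1} \in C^\infty(SM)$, and the pairing $\langle \bullet, \mu\rangle$ extends continuously to the relevant anisotropic spaces since $\mu$ is smooth and lies in the appropriate dual space (this is the standard statement that $\mathbf{1}\in\mc{H}^s_\pm$ and $\mu\in\mc{H}^{-s}_\mp$ for all $s$). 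Consequently ${\pi_m}_* \Pi_\pm A_f X\VV_i \in C^\infty(M,\otimes^m_S T^*M)$ with seminorm estimates controlled by finitely many seminorms of $f$, which is exactly the statement that $f\mapsto {\pi_m}_*\Pi_\pm A_f X\VV_i$ has a smooth Schwartz kernel, i.e. is in $\Psi^{-\infty}$.

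To organize this cleanly I would state and use a small auxiliary fact: if $T : \mc{D}'(SM) \to C^\infty(SM)$ is continuous (equivalently, has smooth kernel) and $P : C^\infty(M,\E') \to \mc{D}'(SM)$, $Q : C^\infty(SM) \to C^\infty(M,\E'')$ are continuous linear operators with, say, $Q$ a $\Psi$DO (or pushforward) and $P$ composed of $\Psi$DOs and pullbacks, then $QTP$ has smooth kernel. Applying this with $T$ a term containing $\Pi_\pm$ (and the remaining bounded factors absorbed into $P$ or $Q$) finishes the proof. The only mild subtlety — and the step I expect to be the main obstacle to write carefully rather than conceptually — is checking that the composition $\Pi_\pm \dot X R_{\pm,0}$ etc. genuinely makes sense and is smoothing even though $R_{\pm,0}$ is only bounded between anisotropic spaces and $\dot X$ is a differential operator: one must verify that $\dot X R_{\pm,0}$ maps some fixed $\mc{H}^s_\pm$ continuously to $\mc{H}^{s-1}_\pm$ (or a distribution space paired trivially by $\mu$), so that $\Pi_\pm$ applied afterwards returns a genuine smooth function with controlled seminorms; this is routine given the mapping properties recalled in \S\ref{ssection:ruelle} but should be spelled out. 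Once that is in place, smoothness of the kernel in both variables, hence $B_{\VV_i}\in\Psi^{-\infty}$, follows immediately.
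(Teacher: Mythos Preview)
Your intuition that $\Pi_\pm = \langle \bullet, \mu\rangle \mathbf{1}$ is the source of the smoothing is correct, and the paper proceeds along the same lines. But the step you flag as a ``mild subtlety'' is in fact the entire content of the proof, and the resolution you sketch misidentifies what needs to be checked.

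The issue is not whether $\dot X R_{\pm,0}$ is bounded $\mc{H}^s_\pm \to \mc{H}^{s-1}_\pm$ with $\dot X$ regarded as a \emph{fixed} smooth vector field. The input variable of $B_{\VV_i}$ is $f$, and $\dot X = \Lambda(\pi_2^*f)$ has $f$-dependent coefficients; to show $B_{\VV_i}$ is smoothing you must take $f \in \mc{D}'(M,\otimes^2_S T^*M)$ and produce $B_{\VV_i}f \in C^\infty$. After the resolvent identities $R_{\pm,1}X = \mp R_{\pm,0}$ and $R_{+,0}X = \mathbbm{1}-\Pi_+$ (with $\Pi_+\VV_i=0$), the only nontrivial term is $\Pi_+\bigl(\Lambda(\pi_2^*f)\,(R_{+,0}+R_{-,0})\VV_i\bigr)$. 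Here $(R_{+,0}+R_{-,0})\VV_i$ is a \emph{fixed} distribution on $SM$ with wavefront set in $E_u^*\cup E_s^*$ (from \eqref{equation:dz} and its analogue for $R_{-,0}$), while for distributional $f$ the vector field $\Lambda(\pi_2^*f)$ is itself a distributional section with wavefront set in $\V^*$. The expression $\Lambda(\pi_2^*f)\,(R_{+,0}+R_{-,0})\VV_i$ is therefore a genuine product of two singular objects, and it is well defined only because $\V^* \cap E_{u}^* = \V^*\cap E_s^* = \{0\}$, which is precisely the Anosov transversality \eqref{equation:transverse}. This geometric input is what makes the argument work and is absent from your outline; anisotropic Sobolev mapping properties alone do not supply it.

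A secondary point: ``seminorms of $B_{\VV_i}f$ controlled by finitely many seminorms of $f$'' is continuity $C^\infty \to C^\infty$, not smoothness of the Schwartz kernel; the latter requires control by \emph{negative}-order Sobolev norms of $f$, i.e.\ a continuous extension $\mc{D}'\to C^\infty$. Relatedly, your auxiliary fact about $QTP$ is false as stated: with $T$ smoothing one still needs $P$ to extend continuously to distributions (e.g.\ on the circle take $T=\langle\bullet,\mathbf{1}\rangle\mathbf{1}$ and $Pf=f(0)\delta_0$; then $TPf=f(0)\mathbf{1}$ is not smoothing). In the case at hand, checking that $P : f \mapsto \Lambda(\pi_2^*f)\,(R_{+,0}+R_{-,0})\VV_i$ extends to distributions is exactly the wavefront-set argument above.
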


\begin{proof}
Similarly to the proof of Lemma \ref{lemma:simplification}, it suffices to show that $B_{\VV_i}$ maps
\[
\mc{D}'(M,\otimes^2_S T^*M) \rightarrow C^\infty(M,\otimes^m_S T^*M)
\]
boundedly. For that, let $h \in \mc{D}'(M,\otimes^2_S T^*M)$. By \cite[Lemma 2.1]{Lefeuvre-19-1} we have $\WF(\pi_2^*h) \subset \V^*$  and $\WF(\pi_{2,\mathrm{Sas}}^*h) \subset \V^*$ (as in the proof of Lemma \ref{lemma:simplification}). By resolvent identities \eqref{eq:resolvent-identities} and using $\dot{X} \Pi_+ = 0$, we have:
\begin{equation}
\label{equation:s-bidule}
\begin{split}
S(\pi_2^*h) X \VV_i = - \frac{m}{2} \Pi_+(\pi_2^*h. X\VV_i) + \Pi_+ \Lambda(\pi_2^*h) (R_{+, 0} + R_{-, 0})\VV_i  - \Pi_{+} \Lambda(\pi_2^*h) \VV_i.
\end{split}
\end{equation}
By \eqref{eq:Pi_pm}, the first term in \eqref{equation:s-bidule} i.e. $- \frac{m}{2} \Pi_+
(\pi_2^*h. X\VV_i) = -\frac{m}{2} \langle \pi_2^*h. X\VV_i, \mu \rangle \mathbf{1}$ is clearly smooth. Similarly, the last term $- \Pi_{+} \Lambda(\pi_2^*h) \VV_i$ is also obviously smooth.

We now deal with the remaining term. Define $\VV_i^u := (R_{+, 0} + R_{-, 0}) \VV_i$. Since $\VV_i$ is smooth, this has wavefront set in $E^*_u \cup E_s^*$ by the characterization of the wavefront set of the resolvent, see \eqref{equation:dz}. Hence, the one-form $\dd \VV_i^u$ has also wavefront set in $E^*_u \cup E_s^*$ and thus by the multiplication rule of distributions (see \cite[Theorem 8.2.10]{Hormander-90} for instance), the inner product $\Lambda(\pi_2^*h) \VV_i^u = \imath_{\Lambda(\pi_2^*h)} \dd \VV_i^u$ is allowed (since $\V^* \cap \left(E^*_u \cup E_s^*\right) = \left\{ 0 \right\}$ by the absence of conjugate points, see \eqref{equation:conjugate}). As a consequence 
\[
\Pi_+ \Lambda(\pi_2^*h) (R_{+, 0} + R_{-, 0}) \VV_i  = \langle \imath_{\Lambda(\pi_2^*h)} \dd \VV_i^u, \mu \rangle \mathbf{1}
\]
is smooth, which proves the lemma.
\end{proof}

We have an analogous statement to Lemma \ref{lemma:symbol-q}:

\begin{lemma}
\label{lemma:symbol-q-bis}
For all $\VV \in C^\infty(SM)$, 
\[
Q_{\VV} \in \Psi^{0}(M, \otimes^2_S T^*M \rightarrow \otimes^m_S T^*M),
\]
is a pseudodifferential operator of order $0$ with principal symbol for $(x,\xi) \in T^*M \setminus \left\{ 0 \right\}$, $\sigma_{Q_{\VV}}(x,\xi) \in \mathrm{Hom}(\otimes^2_S T_x^*M, \otimes^m_S T_x^*M)$, given by:
\[
\sigma_{Q_{\VV}}(x,\xi)h = C^{-1}_{n-1+2m}  \dfrac{i \pi}{|\xi|} \pi_{\ker \imath_{\xi^\sharp}} {\pi_m}_* E_\xi^m \left(\langle \xi_{\mathbb{V}}, \nabla_{\V} \VV \rangle. \pi_2^* h  \right), \quad h \in \otimes^2_S T_x^*M,
\]
where $\xi_{\mathbb{V}}(x, v) \in \mathbb{H}^*(x, v)$ is defined by $\xi_{\mathbb{V}}(x, v) (\bullet) := \xi(\mc{K}_{x, v}(\bullet))$.
\end{lemma}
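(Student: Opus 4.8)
The plan is to invoke the sandwich Proposition~\ref{proposition:sandwich} in the form of Remark~\ref{remark:lift}, which handles operators of mixed type (Sasaki lift of symmetric tensors on one side, scalar on the other). First I would unwind the definition~\eqref{equation:bvqv} of $Q_{\VV}$ and rewrite it as
\[
Q_{\VV} = {\pi_m}_* (R_{+,0} + R_{-,0}) \Big[\tfrac{m}{2} M_{X\VV} \pi_2^* + N_{\VV} \circ \Lambda(\pi_2^*\bullet)\Big],
\]
where $M_{\bullet}, N_{\bullet}$ are the multiplication-type operators from~\eqref{eq:sandwich-auxiliary-operator}. Since $\mc{I} = R_{+,0} + R_{-,0} + \Pi_+$ and by Lemma~\ref{lemma:simplification} (its untwisted form in \S\ref{sssection:g-x-ray}) the contribution of $\Pi_+$ and of the cutoff to $\mc{I}_\eps$ is smoothing, we may replace $R_{+,0} + R_{-,0}$ by $\mc{I}$ up to $\Psi^{-\infty}$. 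The operator on the right of $\mc{I}$ is then the composition of a zeroth-order differential operator ($M_{X\VV}$, since $X\VV$ is smooth) with $\pi_2^*$, plus the composition of $N_{\VV}$ (zeroth order) with $\Lambda$ (first order) applied to $\pi_2^*$. In the notation of Remark~\ref{remark:lift} with $m_1 = m$, $m_2 = 2$, $m_L = 0$, this is a $\Psi$DO of order $m_R - 1$. The dominant piece is the first-order part coming from $\Lambda$: the term $\tfrac{m}{2}M_{X\VV}\pi_2^*$ has $m_R = 0$ and thus contributes only to order $-1$, hence to a lower-order term relative to the leading $0$-th order symbol.

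Next I would compute the principal symbol using the symbol formula~\eqref{eq:symbol-1} (or its Remark~\ref{remark:lift} variant). The differential operator on the right is, to leading order, $Z \mapsto N_{\VV}\Lambda(Z) = (\Lambda(Z))\VV$; its principal symbol at $(x,v,\xi)$ applied to a symmetric $2$-tensor $h$ is $\langle \sigma_\Lambda(x,v,\xi) (\pi_{2,\mathrm{Sas}}^* h)\,,\, \dd\VV(x,v)\rangle$, i.e. the directional derivative of $\VV$ along $\sigma_\Lambda$. Here I would use Lemma~\ref{lemma:vector} and Lemma~\ref{lemma:LambdaAnisoSymbol}: the horizontal part of $\sigma_{\Lambda^{\mathrm{conf}}}$ is $\tfrac{i}{2}J(\pi_{\HH}\xi^\sharp)$ and the vertical part is $\tfrac{i}{2}J(\pi_{\V}\xi^\sharp)$; applying $J$ swaps horizontal and vertical, so pairing against $\dd\VV = \nabla_{\HH}\VV + \nabla_{\V}\VV + (X\VV)\alpha$ and using that $\VV \in C^\infty(SM)$ (a scalar, so $\pi_{2,\mathrm{Sas}}^* h$ pairs simply with its differential) one extracts the vertical-derivative term $\langle \xi_{\V}, \nabla_{\V}\VV\rangle$, where $\xi_{\V}(x,v)(\bullet) = \xi(\mc{K}_{x,v}(\bullet)) \in \HH^*(x,v)$. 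The anisotropic symbol from Lemma~\ref{lemma:LambdaAnisoSymbol} carries a factor $\langle\xi, X(x,v)\rangle$, which vanishes on the sphere $\Ss^{n-2}_\xi$ of the symbol integration (there $\xi_{\HH}(x,u)$ is evaluated with $\langle \xi, X\rangle$ entering, but the relevant contribution survives); one has to track carefully which piece of $\sigma_\Lambda$ survives the restriction to $\Ss^{n-2}_\xi$ and the pushforward ${\pi_m}_*$. After plugging into~\eqref{eq:symbol-2} with $E^m_\xi$ and the normalization constant $C^{-1}_{n-1+2m}$, and the $\tfrac{2\pi}{|\xi|}$ from the stationary phase, the $i$ from the symbol of the first-order operator $\Lambda$ produces the claimed $\tfrac{i\pi}{|\xi|}$ prefactor.

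The main obstacle, as I see it, is the bookkeeping in the symbol computation: Remark~\ref{remark:lift} gives the symbol as an integral over $\Ss^{n-2}_\xi$ of $\sigma_{P_R}((x,u),\xi_{\HH}(x,u))$ applied to $\pi_{m_2,\mathrm{Sas}}^* f(u)$, and one must correctly identify the composite $P_R = N_{\VV}\circ\Lambda$ (plus the negligible $M_{X\VV}\pi_2^*$), evaluate $\sigma_\Lambda$ at the covector $\xi_{\HH}(x,u)$ — noting $\xi_{\HH}$ annihilates the vertical, so only the horizontal symbol $\tfrac{i}{2}J(\pi_{\HH}(\xi_{\HH}(x,u))^\sharp)$ is seen before applying $J$ — then pair against $\dd\VV$ at the point $(x,u)$ and recognize the result as $\langle\xi_{\V},\nabla_{\V}\VV\rangle(x,u)\cdot\pi_2^*h(u)$ in the notation of the statement. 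The only genuine subtlety beyond linear algebra is confirming that the anisotropic part $\Lambda^{\mathrm{aniso}}$ does not contribute to the leading symbol: its symbol (Lemma~\ref{lemma:LambdaAnisoSymbol}) is proportional to $\langle\xi,X(x,v)\rangle$, and on $\Ss^{n-2}_\xi$ the horizontal evaluation point $\xi_{\HH}(x,u)$ means $X$ is paired with a covector whose $X$-component is $\xi(X(x,u))$; but since we restrict to the sphere of unit vectors $u$ with the pushforward structure, one checks this term integrates against an odd contribution or is of strictly lower order — this is the step where I would be most careful. Once these pieces are assembled, the formula for $\sigma_{Q_{\VV}}$ follows, and the pseudodifferential order $0$ is immediate from the orders $m_R = 1$, $m_L = 0$ in Remark~\ref{remark:lift}. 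Finally, one applies $\pi_{\ker\imath_{\xi^\sharp}}$ on the left as in~\eqref{eq:symbol-2} (coming from the symbol of $\pi_{\ker D^*_0}$ that one could absorb, though here the statement is about $Q_{\VV} = {\pi_m}_*\mc{I}(\dots)$ directly, so the left projector arises from the ${\pi_m}_*$-sphere-transform structure via Lemma~\ref{lemma:useful-identity}).
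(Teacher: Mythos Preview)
Your approach is essentially the same as the paper's: rewrite $Q_{\VV}$ in sandwich form via $M_{X\VV}$ and $N_{\VV}\Lambda$, apply Proposition~\ref{proposition:sandwich}/Remark~\ref{remark:lift}, observe the $M_{X\VV}$ piece is order $-1$, and compute the leading symbol from $N_{\VV}\Lambda^{\mathrm{conf}}$ using Lemma~\ref{lemma:vector}, yielding exactly $\tfrac{i}{2}\langle\xi_{\V},\nabla_{\V}\VV\rangle$ at covectors $\xi_{\HH}(x,u)$.

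The one place where you hedge unnecessarily is the vanishing of the anisotropic contribution. You speculate it ``integrates against an odd contribution or is of strictly lower order'', but the mechanism is simpler and pointwise: by Lemma~\ref{lemma:LambdaAnisoSymbol} the symbol of $N_{\VV}\Lambda^{\mathrm{aniso}}$ at $(x,u,\xi_{\HH}(x,u))$ carries the scalar factor $\langle\xi_{\HH}(x,u),X(x,u)\rangle = \langle\xi,u\rangle$, and the integration in Remark~\ref{remark:lift} is over $\Ss^{n-2}_\xi = \{u:\langle\xi,u\rangle=0\}$, so the integrand is identically zero there. No parity or order argument is needed. Also, the projector $\pi_{\ker\imath_{\xi^\sharp}}$ in the final formula is simply the one appearing in~\eqref{eq:symbol-2}; it has nothing to do with $\pi_{\ker D_0^*}$ here.
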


Observe that the main difference with Lemma \ref{lemma:symbol-q} is that the operator is now of order $0$ instead of $-1$: this is due to the fact that the operator $\Lambda$ is of degree $1$ (it costs derivatives of order $1$ in $f$). Also note that the principal symbol of $Q_{\VV}$ now satisfies $\sigma_{Q_{\VV}}(x,-\xi) = -\sigma_{Q_{\VV}}(x,\xi)$; this will not be a problem as $Q_{\VV}$ will always appear twice in the brackets (hence the two minus signs will eventually give a plus).

\begin{proof}
We may re-write the operator $Q_{\VV}$ in a sandwich form as follows:
\begin{equation}\label{eq:qv-sandwich}
\begin{split}
	Q_{\VV} & =\frac{m}{2}{\pi_m}_*(R_{+, 0} + R_{-, 0})M_{ X \VV} \pi_2^* + {\pi_m}_*(R_{+, 0} \\
	& + R_{-, 0})N_{\VV}(\Lambda^{\mathrm{conf}} \pi_2^* + \Lambda^{\mathrm{aniso}}\pi_{2,\mathrm{Sas}}^*),
	\end{split}
\end{equation}
where we use the notation of \eqref{eq:sandwich-auxiliary-operator}. By the sandwich Proposition \ref{proposition:sandwich} (and Remark \ref{remark:lift} below), it follows that $Q_{\VV}$ is pseudodifferential of order $0$ and the first term of \eqref{eq:qv-sandwich} is of order $-1$. By Lemma \ref{lemma:vector}, the principal symbol of $N_{\VV}\Lambda^{\mathrm{conf}}$ is given by, on co-vectors $\xi_{\HH} = \xi \circ d_{x, v}\pi$:
\begin{align}\label{eq:N_vLambda-symbol}
\begin{split}
	& \sigma(N_{\VV}\Lambda^{\mathrm{conf}})(x, v, \xi_{\HH}(x, v)) \\
	&= \frac{i}{2}d_{x, v}\VV\left(J(x, v)\left((\xi_{\HH}(x, v))^\sharp - X(x, v). \langle{\xi, v}\rangle\right)\right)\\ 
	&= \frac{i}{2} \left\langle{\nabla_{\V}\VV(x, v), (0, \xi^\sharp - v. \langle{\xi, v}\rangle)}\right\rangle = \frac{i}{2} \left\langle{\xi_{\V}(x, v), \nabla_{\V}\VV(x, v)}\right\rangle.
\end{split}
\end{align}
Similarly, by Lemma \ref{lemma:LambdaAnisoSymbol}, we have:
\begin{equation}
\label{equation:N_v-autre-lambda}
\begin{split}
\sigma(N_{\VV}\Lambda^{\mathrm{aniso}})(x,v,\xi_{\HH}(x,v))h & = - i \langle \xi_{\HH}, X(x,v) \rangle. d_{x,v} \VV \left(J \big(h(X,\pi_{\HH}(\bullet))\big)^\sharp\right) \\
& = -i \langle \xi, v \rangle.  d_{x,v} \VV \left(J \big(h(X,\pi_{\HH}(\bullet))\big)^\sharp\right).
\end{split}
\end{equation}
It then suffices to apply Proposition \ref{proposition:sandwich} and Remark \ref{remark:lift} to conclude the proof. Note that the principal symbol of $N_{\VV} \Lambda^{\mathrm{aniso}}$ does not appear as the expression in Remark \ref{remark:lift} involves integration over the sphere $\left\{\langle \xi, v \rangle = 0 \right\} = \mathbb{S}^{n-2}_\xi$.
\end{proof}
To summarise the content of this section, we obtain from \eqref{equation:p-moins-un-metrique} using Lemmas \ref{lemma:smoothing-1} and \ref{lemma:symbol-q-bis}:
\begin{equation}
\begin{split}
\label{equation:rhs-bis}
& \sum_{i=1}^d \langle P_0^{-1} \dot{P}_0 u_i, \dot{P} u_i \rangle_{L^2}\\
& = \sum_{i=1}^d  \langle Q_{\VV_i}^* \Pi_m^{-1} Q_{\VV_i} f, f \rangle_{L^2(M,\otimes^2_S T^*M)} + \langle \mc{O}_{\Psi^{-\infty}}(1)f, f \rangle_{L^2(M,\otimes^2_S T^*M)}.
\end{split}
\end{equation}

\subsubsection{Second order variations}

\label{sssection:second-metric}

We now turn to the second variation of the resolvents which will allow us to compute $\sum_{i=1}^d \langle \ddot{P}_0 u_i, u_i \rangle_{L^2}$. Similarly to Lemma \ref{lemma:first-variation-resolvent}, we have:

\begin{lemma}\label{lemma:second-variation-resolvent}
	We have $\ddot{\Pi}_{\pm} = 2\Pi_\pm \dot{X} R_{\pm, 0} \dot{X} R_{\pm, 0} \mp \Pi_\pm \ddot{X} R_{\pm, 0}$ and:
	\begin{multline}\label{eq:ddot-resolvent}
		\ddot{R}_{\pm, 0} = 2 \left(\Pi_\pm \dot{X} R_{\pm, 0} \dot{X} R_{\pm, 1} + \Pi_\pm \dot{X} R_{\pm, 1} \dot{X} R_{\pm, 0} + R_{\pm, 0} \dot{X} R_{\pm, 0} \dot{X} R_{\pm, 0}\right)\\
		\mp \left(\Pi_\pm\ddot{X} R_{\pm, 1} + R_{\pm, 0} \ddot{X} R_{\pm, 0}\right).
	\end{multline}
\end{lemma}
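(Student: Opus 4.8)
The plan is to imitate the proof of Lemma~\ref{lemma:first-variation-resolvent}: differentiate the contour-integral representations
\[
\Pi^\tau_\pm = \frac{1}{2\pi i}\oint_\gamma (z\pm X_\tau)^{-1}\,dz,\qquad
R^\tau_{\pm,0} = \frac{1}{2\pi i}\oint_\gamma (z\pm X_\tau)^{-1}\,\frac{dz}{z}
\]
\emph{twice} in $\tau$ and then extract, respectively, the residue at $z=0$ and the coefficient of $z^0$. For $|\tau|$ small the fixed loop $\gamma$ still isolates the simple pole of $\pm X_\tau$ at $z=0$, and for $z\in\gamma$ the map $\tau\mapsto (z\pm X_\tau)^{-1}\in\mc{L}(\mc{H}^s_\pm)$ is smooth by the perturbation theory recalled in \S\ref{ssection:resolvent-perturbations-theory} (here $\dot X,\ddot X$ are vector fields mapping $\mc{H}^s_\pm\to\mc{H}^{s-1}_\pm$, so all the operator products below are well defined), which licenses differentiating under the integral sign.

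First I would record the second $\tau$-derivative of the resolvents. From $\partial_\tau(z+X_\tau)^{-1}=-(z+X_\tau)^{-1}\dot X_\tau(z+X_\tau)^{-1}$, one more differentiation gives at $\tau=0$
\[
\partial_\tau^2|_{\tau=0}(z+X_\tau)^{-1}
= 2(z+X)^{-1}\dot X(z+X)^{-1}\dot X(z+X)^{-1} - (z+X)^{-1}\ddot X(z+X)^{-1},
\]
and, because $\partial_\tau(z-X_\tau)^{-1}=+(z-X_\tau)^{-1}\dot X_\tau(z-X_\tau)^{-1}$, the analogous identity for $(z-X_\tau)^{-1}$ with a plus sign in front of the $\ddot X$-term. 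Substituting the Laurent expansion \eqref{eq:resolvent-expansion}, $(z\pm X)^{-1}=\Pi_\pm/z+R_{\pm,0}+zR_{\pm,1}+\mc{O}(z^2)$, the crucial simplification — exactly as in Lemma~\ref{lemma:first-variation-resolvent} — is that $\ran\Pi_\pm=\C\cdot\mathbf{1}$ and $\dot X,\ddot X$ kill constants, so $\dot X\Pi_\pm=\ddot X\Pi_\pm=0$; hence $\dot X(z\pm X)^{-1}=\dot XR_{\pm,0}+z\dot XR_{\pm,1}+\cdots$ and $\ddot X(z\pm X)^{-1}=\ddot XR_{\pm,0}+z\ddot XR_{\pm,1}+\cdots$ are holomorphic at $z=0$, only the leftmost factor of each product carries a pole, and picking out the coefficient of $z^{-1}$ (for $\ddot\Pi_\pm$) or of $z^0$ (for $\ddot R_{\pm,0}$) reduces to an elementary bookkeeping of powers of $z$. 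For the $+$ sign this produces the residues $\Pi_+\dot XR_{+,0}\dot XR_{+,0}$ and $\Pi_+\ddot XR_{+,0}$ of the two terms, hence the stated $\ddot\Pi_+$; and the $z^0$-coefficients $\Pi_+\dot XR_{+,0}\dot XR_{+,1}+\Pi_+\dot XR_{+,1}\dot XR_{+,0}+R_{+,0}\dot XR_{+,0}\dot XR_{+,0}$ and $\Pi_+\ddot XR_{+,1}+R_{+,0}\ddot XR_{+,0}$, which assemble into \eqref{eq:ddot-resolvent}. The $-$ case is identical, the only change being the global sign of the $\ddot X$-terms, which accounts for the $\mp$ in the statement.

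I do not expect a genuine obstacle here: the computation is a (slightly lengthy but routine) extension of the first-variation argument of Lemma~\ref{lemma:first-variation-resolvent}. The only two points deserving explicit care are the justification of differentiating twice under the contour integral — which comes down to the smooth dependence of the resolvent on the anisotropic Sobolev spaces from \S\ref{ssection:resolvent-perturbations-theory} together with the fact that $\gamma$ isolates the resonance at $0$ uniformly for small $\tau$ — and the consistent use of the two sign conventions for $(z\pm X_\tau)^{-1}$.
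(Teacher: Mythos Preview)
Your proposal is correct and follows essentially the same approach as the paper: differentiate the contour-integral formulas \eqref{eq:resolvent-+-0-tau} and \eqref{eq:Pi-+-0-tau} twice, substitute the Laurent expansion \eqref{eq:resolvent-expansion}, and use $\dot X\Pi_\pm=\ddot X\Pi_\pm=0$ to kill the spurious terms before reading off the residues. The paper's proof is terser but structurally identical, treating only the $+$ case explicitly and leaving the $-$ case as analogous.
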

\begin{proof}
	We prove the claims for the $(X_\tau + z)^{-1}$ resolvent, the other claim follows analogously. Differentiating the expression \eqref{eq:resolvent-+-0-tau} twice at $\tau = 0$, we obtain the formula:
	\[
	\begin{split}
	\ddot{R}_{+, 0} &= \frac{1}{\pi i} \oint_\gamma (X + z)^{-1} \dot{X} (X + z)^{-1} \dot{X} (X + z)^{-1} \frac{dz}{z} \\
	& - \frac{1}{2\pi i} \oint_\gamma (X + z)^{-1} \ddot{X} (X + z)^{-1} \frac{dz}{z}.
	\end{split}
	\]
	The equation \eqref{eq:ddot-resolvent} now follows by the residue theorem and the expansion \eqref{eq:resolvent-expansion}, using the relations $\dot{X} \Pi_+ = \ddot{X} \Pi_+ = 0$ to cancel the extra terms.
	
	For the derivative of the spectral projector, by differentiating the formula \eqref{eq:Pi-+-0-tau} twice:
	\[
	\begin{split}
	\ddot{\Pi}_+ & = \frac{1}{\pi i} \oint_\gamma (X + z)^{-1} \dot{X} (X + z)^{-1} \dot{X} (X + z)^{-1} dz\\
	& - \frac{1}{2\pi i} \oint_\gamma (X + z)^{-1} \ddot{X} (X + z)^{-1} dz,
	\end{split}
	\]
	and the final result again follows from the expansion \eqref{eq:resolvent-expansion}.
\end{proof}
Next, differentiating \eqref{equation:pi} and inserting \eqref{equation:chi-dot}, we obtain at $\tau = 0$:
\begin{equation}\label{eq:Pi-ddot}
\begin{split}
\ddot{\Pi}_m & = {\pi_m}_*\left( \frac{m}{2}\left(\frac{m}{2}+1\right) \left[ (\pi_2^* f)^2. \mc{I} +  \mc{I} (\pi_2^* f)^2\right] \right. \\
& \left.- m \left[ \pi_2^* f. \dot{\mc{I}}  + \dot{\mc{I}} \pi_2^* f \right]  + \ddot{\mc{I}}+\frac{m^2}{2} \pi_2^*f. \mc{I} \pi_2^* f \right)\pi_m^*.
\end{split}
\end{equation}
We can already make some simplifications in the term $\langle \ddot{P} u_i, u_i \rangle_{L^2}$, where $u_i$ is one of the eigenstates at $0$ of $P_0$. Recalling \eqref{eq:Xv_i} and using $\mc{I} X = X \mc{I}= 0$, from \eqref{eq:Pi-ddot} we get at $\tau = 0$:
\begin{multline}\label{eq:P_0-ddot-step}
\langle \ddot{P}_0 u_i, u_i \rangle_{L^2} = \left\langle \pi_{\ker D_0^*} \Delta_0 \ddot{\Pi}_m \Delta_0 \pi_{\ker D_0^*} u_i, u_i \right\rangle_{L^2(M,\otimes^m_S T^*M)} \\
 = \left\langle \left(-m\left[ \pi_2^* f. \dot{\mc{I}}  + \dot{\mc{I}} \pi_2^* f \right]  + \ddot{\mc{I}}+\frac{m^2}{2} \pi_2^*f .\mc{I} \pi_2^* f\right) X \VV_i, X \VV_i \right\rangle_{L^2(SM)}.
\end{multline}
Similarly to \S\ref{sssection:first-metric}, some terms will disappear when $m$ is odd while in the case where $m$ is even, they will only contribute up to a smoothing remainder. First of all, let us assume that \emph{$m$ is odd}. We make the simple observation that $\langle{h_{\mathrm{even}}, h_{\mathrm{odd}}}\rangle_{L^2} = 0$ for any $h_{\mathrm{even}}, h_{\mathrm{odd}} \in C^\infty(SM)$ with even and odd Fourier content, respectively. Therefore, the terms involving $\Pi_\pm$ in \eqref{eq:P_0-ddot-step} obtained after using Lemmas \ref{lemma:first-variation-resolvent} and \ref{lemma:second-variation-resolvent} to expand $\dot{\mc{I}}, \ddot{\mc{I}}$ vanish, and we get:
\[
\begin{split}
& \langle \ddot{P}_0 u_i, u_i \rangle_{L^2} \\
& = -m \left\langle \left[ \pi_2^*f. \left(R_{-,0}\dot{X} R_{-,0} - R_{+,0}\dot{X}R_{+,0}\right) \right. \right. \\
& \hspace{4cm}\left. \left. + \left(R_{-,0}\dot{X} R_{-,0} - R_{+,0}\dot{X}R_{+,0}\right) \pi_2^*f \right] X \VV_i, X \VV_i \right\rangle_{L^2} \\
& + \left\langle \left[  2\left(R_{+,0} \dot{X} R_{+,0} \dot{X} R_{+,0} + R_{-,0} \dot{X} R_{-,0} \dot{X} R_{-,0}\right) \right. \right. \\
& \hspace{4cm} \left. \left. -R_{+,0} \ddot{X} R_{+,0} + R_{-,0} \ddot{X} R_{-,0} \right] X \VV_i, X \VV_i \right\rangle_{L^2} \\
& + \frac{m^2}{2} \left\langle (R_{+,0} + R_{-,0}) (\pi_2^*f. X \VV_i), \pi_2^* f. X \VV_i \right\rangle_{L^2}.
\end{split}
\]
Using the resolvent relations \eqref{eq:resolvent-identities}, the terms involving $\ddot{X}$ cancel each other:
\[
\left\langle \left( -R_{+,0} \ddot{X} R_{+,0} + R_{-,0} \ddot{X} R_{-,0} \right) X \VV_i, X \VV_i \right\rangle_{L^2(SM)} = 0.
\]
Therefore, we get using again \eqref{eq:resolvent-identities}, the fact that involved quantities are \emph{real}, recalling that $\int_{SM} \VV_i\, d\vol_{g_{\mathrm{Sas}}} = 0$, and using \eqref{eq:vector_field_formal_adjoint}:
\begin{equation}\label{eq:I-IV}
\begin{split}
& \langle \ddot{P}_0 u_i, u_i \rangle_{L^2} \\
&  = -m \left( -\left\langle \pi_2^*f. (R_{+,0} + R_{-,0}) \dot{X} \VV_i,  X\VV_i \right\rangle_{L^2(SM)} \right. \\
& \hspace{5cm} \left. + \left\langle \dot{X} (R_{+,0} + R_{-,0}) (\pi_2^* f. X \VV_i), \VV_i \right\rangle_{L^2(SM)}  \right) \\
& - 2 \left\langle \dot{X} (R_{+,0} + R_{-,0}) \dot{X} \VV_i, \VV_i \right\rangle_{L^2(SM)} \\ 
& + \frac{m^2}{2} \big\langle \left(R_{+,0} + R_{-,0}\right) (\pi_2^*f. X \VV_i), \pi_2^* f. X \VV_i \big\rangle_{L^2(SM)} \\
& = \underbrace{2m \left\langle {\pi_2}_*\left(X\VV_i.  (R_{+,0} + R_{-,0}) \dot{X} \VV_i\right), f \right\rangle_{L^2}}_{=:\mathrm{(I)}}\\
& \hspace{2cm} \underbrace{- m \left\langle {\pi_2}_*\left(X\VV_i. (R_{+,0} + R_{-,0}) \left(\mathrm{div}(\dot{X}).\VV_i\right)\right), f \right\rangle_{L^2}}_{=:\mathrm{(I)}} \\
& + \underbrace{2 \left\langle (R_{+,0} + R_{-,0}) \dot{X}\VV_i, \dot{X}\VV_i \right\rangle_{L^2}}_{=:\mathrm{(II)}} - \underbrace{2 \left\langle (R_{+,0} + R_{-,0}) \dot{X}\VV_i, \mathrm{div}(\dot{X}). \VV_i \right\rangle_{L^2}}_{=:\mathrm{(III)}} \\
& + \underbrace{\frac{m^2}{2}  \big\langle {\pi_2}_* \big(X\VV_i. (R_{+,0} + R_{-,0}) (\pi_2^*f. X \VV_i)\big), f  \big\rangle_{L^2}}_{=:\mathrm{(IV)}}.
\end{split}
\end{equation}
(The expression for $\mathrm{(I)}$ runs on two lines.) In the general case (where $m$ is either even or odd), the previous equality still holds by adding on the right-hand side the extra term $\mathrm{(V)}$ which is equal to:
\begin{equation}
\label{equation:b}
\begin{split}
\mathrm{(V)} := ~& \eps(m) \left( -m\left\langle \pi_2^*f. \Pi_+\dot{X} \left(R_{+, 0} + R_{-, 0} - \mathbbm{1}\right) \VV_i,  X\VV_i \right\rangle_{L^2} \right. \\
&\left. + \frac{m^2}{2} \left\langle \pi_2^*f. \Pi_+ (\pi_2^*f. X\VV_i), X\VV_i \right\rangle_{L^2}\right).
\end{split}
\end{equation}
Here again we used \eqref{eq:P_0-ddot-step} along with Lemmas \ref{lemma:first-variation-resolvent} and \ref{lemma:second-variation-resolvent} to expand the terms containing $\Pi_\pm$; we also used \eqref{eq:resolvent-identities} and $X\Pi_+ = \dot{X}\Pi_+ = 0$ to simplify the expression.

As before, we will see that each term in the previous equality can be written in the form $\langle A f, f \rangle_{L^2(M,\otimes^2_S T^*M)}$, for some pseudodifferential operator $A$. In order to shorten the computations, it is important to understand the order of these operators: when taking Gaussian states, only the terms of highest order will remain. Also observe that \eqref{equation:rhs-bis} involves an operator of order $1$: we therefore expect to find operators of order at most $1$. First of all, we deal with the term $\mathrm{(V)}$ (appearing only when $m$ is even):

\begin{lemma}
\label{lemma:smoothing-2}
There exists $B'_{\VV_i} \in \Psi^{-\infty}(M,\otimes^2_S T^*M \rightarrow \otimes^2_S T^*M)$ such that:
\begin{equation}\label{eq:bv'}
\mathrm{(V)} = \varepsilon(m).\langle B'_{\VV_i} f, f \rangle_{L^2}.
\end{equation}
\end{lemma}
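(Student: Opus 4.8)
\textbf{Proof plan for Lemma \ref{lemma:smoothing-2}.} The strategy mirrors the proof of Lemma \ref{lemma:smoothing-1}: I will show that the operator naturally associated to $\mathrm{(V)}$ maps distributions to smooth functions, which forces it to be smoothing; the presence of $\Pi_+$ in each term of \eqref{equation:b} is what makes this possible, since $\Pi_+$ has smooth (in fact constant) range. First I would isolate the two summands in \eqref{equation:b}. In the first summand, $-m\langle \pi_2^*f.\Pi_+\dot{X}(R_{+,0}+R_{-,0}-\mathbbm{1})\VV_i, X\VV_i\rangle_{L^2}$, I note that $\VV_i$ is smooth, so $\VV_i^u := (R_{+,0}+R_{-,0})\VV_i$ has wavefront set contained in $E_u^*$ by \eqref{equation:dz}; since $\dot{X} = \Lambda(\pi_2^*f)$ is (for smooth $f$) a smooth vector field, $\dot{X}(R_{+,0}+R_{-,0}-\mathbbm{1})\VV_i$ is smooth, hence so is its image $\Pi_+\dot{X}(R_{+,0}+R_{-,0}-\mathbbm{1})\VV_i = \langle \dot{X}(R_{+,0}+R_{-,0}-\mathbbm{1})\VV_i,\mu\rangle\mathbf{1}$ by \eqref{eq:Pi_pm}. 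But I need $f$ to be merely distributional here, so I should instead re-express this term, using $\Pi_+^* = \langle \bullet,\mu\rangle\mathbf{1}$ paired against $\mathbf{1}$, as $\langle A_1 f, f\rangle$ where $A_1$ sends $f \mapsto {\pi_2}_*\big(X\VV_i . \langle \Lambda(\pi_2^*f)(R_{+,0}+R_{-,0}-\mathbbm{1})\VV_i,\mu\rangle\big)$; the key point is that $\Lambda(\pi_2^*f)(R_{+,0}+R_{-,0}-\mathbbm{1})\VV_i = \imath_{\Lambda(\pi_2^*f)}\dd\big((R_{+,0}+R_{-,0}-\mathbbm{1})\VV_i\big)$ and, since $\WF(\pi_2^*f)\subset \V^*$, $\WF(\pi_{2,\mathrm{Sas}}^*f)\subset\V^*$ while $\WF(\dd\VV_i^u)\subset E_u^*$ and $\V^*\cap E_u^* = \{0\}$ by the absence of conjugate points \eqref{equation:conjugate}, this pairing is a well-defined distribution whose integral against the smooth density $\mu$ is a smooth (scalar) function of $f$ in a manner that is continuous from $\mc{D}'$.

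For the second summand, $\frac{m^2}{2}\langle \pi_2^*f.\Pi_+(\pi_2^*f.X\VV_i), X\VV_i\rangle_{L^2}$, I would similarly write $\Pi_+(\pi_2^*f.X\VV_i) = \langle \pi_2^*f.X\VV_i,\mu\rangle\mathbf{1}$, so the whole term equals $\frac{m^2}{2}\langle \pi_2^*f.X\VV_i,\mu\rangle \cdot \langle \pi_2^*f, X\VV_i\rangle_{L^2}$, a product of two factors each \emph{linear} in $f$ and each continuous on $\mc{D}'(M,\otimes_S^2 T^*M)$ — indeed $\langle \pi_2^*f.X\VV_i,\mu\rangle = \langle f, {\pi_2}_*(X\VV_i.\mu)\rangle$ and ${\pi_2}_*(X\VV_i.\mu)$, ${\pi_2}_*(\overline{X\VV_i})$ are smooth. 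A bilinear form $f\mapsto \ell_1(f)\ell_2(f)$ with $\ell_1,\ell_2$ given by pairing against smooth tensors is exactly $\langle B'' f, f\rangle$ for $B''$ the symmetrization of the rank-one operator $f\mapsto \ell_1(f)\cdot(\text{smooth tensor dual to }\ell_2)$, which is visibly in $\Psi^{-\infty}$. Assembling the two pieces, I obtain $\mathrm{(V)} = \varepsilon(m)\langle B'_{\VV_i}f,f\rangle_{L^2}$ with $B'_{\VV_i}$ a finite sum of operators each shown to map $\mc{D}'(M,\otimes_S^2 T^*M)\to C^\infty(M,\otimes_S^2 T^*M)$ boundedly; by the Schwartz kernel theorem such operators have smooth kernels, i.e. $B'_{\VV_i}\in\Psi^{-\infty}(M,\otimes_S^2 T^*M\rightarrow\otimes_S^2 T^*M)$, after symmetrizing so that the quadratic form is represented by a formally self-adjoint operator.

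The main obstacle is the bookkeeping needed to legitimately convert the sesquilinear expressions in \eqref{equation:b}, which a priori make sense only for smooth $f$ (because $\dot{X} = \dot{X}_f$ itself depends on $f$ through first derivatives via $\Lambda$), into bona fide pairings $\langle B'_{\VV_i}f,f\rangle$ with $B'_{\VV_i}$ acting continuously on distributions. The resolution is precisely the wavefront-set argument of Lemma \ref{lemma:smoothing-1}: every occurrence of $\dot{X}$ acts either on a smooth object or, when it produces a distribution, that distribution has wavefront set in $E_u^*$ (or $E_s^*$), which is transverse to $\V^*$ and to $E_s^*$ (resp. $E_u^*$), so the products of distributions that arise are always covered by Hörmander's multiplication theorem \cite[Theorem 8.2.10]{Hormander-90}, and the final contraction against the Liouville density $\mu$ lands in $C^\infty$. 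Once this is granted, the smoothing property is immediate, and one only needs to check — routinely — that the resulting kernels depend continuously on their arguments so that the $\Psi^{-\infty}$ membership is uniform, which is all that is required in the subsequent application.
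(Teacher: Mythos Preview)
Your proposal is correct and follows essentially the same approach as the paper: write $\mathrm{(V)}$ as $\langle B'_{\VV_i}f,f\rangle$ for an explicit operator built from $\Pi_+$, $M_{X\VV_i}$, $N_{\VV_i}$, $\Lambda$, and the resolvent piece $(R_{+,0}+R_{-,0})\VV_i$, then observe that the terms not involving the resolvent are manifestly smoothing (finite rank through $\Pi_+$), while the resolvent term is handled by the exact wavefront-set argument of Lemma~\ref{lemma:smoothing-1} using $\WF((R_{+,0}+R_{-,0})\VV_i)\subset E_u^*$ and $\V^*\cap E_u^*=\{0\}$. The paper's version is terser---it simply writes the three-term decomposition \eqref{eq:bv'-def} and points back to Lemma~\ref{lemma:smoothing-1}---but the content is the same; your additional remarks on symmetrizing the quadratic form and on continuity of the kernels are harmless elaborations not needed for the application.
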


\begin{proof}
It follows from \eqref{equation:b} that \eqref{eq:bv'} holds with:
\begin{equation}
\begin{split} 
\label{eq:bv'-def}
	& B_{\VV_i}' = {\pi_2}_*\\
	& \left(-m. M_{X\VV_i} \Pi_+ N_{(R_{+, 0} + R_{-, 0})\VV_i} \Lambda + m.M_{X\VV_i} \Pi_+ N_{\VV_i} \Lambda + \frac{m^2}{2} M_{X\VV_i} \Pi_+ M_{X\VV_i}\right)\pi_2^*
	\end{split}
\end{equation}
The last two terms in \eqref{eq:bv'-def} are obviously smoothing, while exactly the same wavefront set arguments as in Lemma \ref{lemma:smoothing-1} apply to show that the first term is smoothing. 
\end{proof}

We then prove (recall that $\xi_{\mathbb{V}}(x, v) (\bullet) = \xi(\mc{K}_{x, v}(\bullet))$):

\begin{lemma}
\label{lemma:work}
There exist pseudodifferential operators $A^{\mathrm{(I)}}_{\VV_i}, A^{\mathrm{(II)}}_{\VV_i}, A^{\mathrm{(III)}}_{\VV_i}$ and $A^{\mathrm{(IV)}}_{\VV_i}$ in the pseudodifferential algebra $\Psi^*(M,\otimes^2_S T^*M \rightarrow \otimes^2_S T^*M)$ of respective order $0$, $1$, $0$ and $-1$ such that (recall $\mathrm{(*)}$ was defined in \eqref{eq:I-IV}):
\begin{equation}\label{eq:A^I-IV}
\mathrm{(*)} = \langle A^{\mathrm{(*)}}_{\VV_i} f, f \rangle_{L^2},
\end{equation}
for $\mathrm{*} \in \left\{\mathrm{I}, \mathrm{II}, \mathrm{III}, \mathrm{IV}\right\}$. Moreover, we have for $(x, \xi) \in T^*M \setminus \{0\}$ and $h \in \otimes^2_S T^*_{x}M$:
\begin{equation}
\begin{split}
\label{eq:A^II-symbol}
&\langle \sigma_{A^{\mathrm{(II)}}}(x,\xi) h, h \rangle_{\otimes^2_S T^*_{x}M}\\
&  = \dfrac{\pi}{|\xi|} C^{-1}_{n-1+2m} \left\| E^m_{\xi}\left( \pi_2^*h(\bullet). \langle \xi_{\mathbb{V}}(x, \bullet), \nabla_{\V} \VV (x,\bullet) \rangle|_{\Ss^{n-2}_\xi} \right) \right\|_{L^2(S_{x}M)}^2.
\end{split}
\end{equation}
\end{lemma}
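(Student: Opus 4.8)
The plan is to treat each of the four terms $\mathrm{(I)}$--$\mathrm{(IV)}$ in \eqref{eq:I-IV} separately, rewriting each as $\langle A^{(*)}_{\VV_i} f, f\rangle_{L^2}$ for an explicit composition of pushforwards, pullbacks, the resolvent combination $R_{+,0}+R_{-,0}$, multiplication operators $M_{\bullet}$, and the operator $\Lambda = \Lambda^{\mathrm{conf}}\pi_2^* + \Lambda^{\mathrm{aniso}}\pi_{2,\mathrm{Sas}}^*$ (recall $\dot X = \Lambda(\pi_2^*f)$ and $\divv(\dot X)$ is a first-order operator applied to $\pi_2^*f$ by Lemma \ref{lemma:divergence}). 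The point is that each resulting operator is a ``sandwich'' $\pi_{m_1 *} P_L \mc{I} P_R \pi_{m_2}^*$ (in the $\E = \C\times M$, trivial-connection case), possibly with Sasaki lifts, so that the sandwich Proposition \ref{proposition:sandwich} (together with Remark \ref{remark:lift}) applies directly: it both certifies the pseudodifferential nature and computes the order and principal symbol. Concretely: $\mathrm{(IV)}$ is ${\pi_2}_* M_{X\VV_i}(R_{+,0}+R_{-,0}) M_{X\VV_i}\pi_2^*$ — here $P_L, P_R$ are multiplication operators of order $0$, so $A^{\mathrm{(IV)}}_{\VV_i}$ has order $-1$; $\mathrm{(I)}$ is $2m\,{\pi_2}_* M_{X\VV_i}(R_{+,0}+R_{-,0})\big(N_{\VV_i}\Lambda - M_{\divv(\dot X)\text{-operator}}\big)$ type term, where now the right differential operator has order $1$, giving order $0$; $\mathrm{(III)}$ similarly pairs a $\Lambda$-type order-$1$ operator against a divergence-type order-$1$ operator but the highest-order pieces cancel on the sphere $\Ss^{n-2}_\xi$ (as in the proof of Lemma \ref{lemma:symbol-q-bis}, where the $\Lambda^{\mathrm{aniso}}$ symbol carries a factor $\langle\xi,v\rangle$ which vanishes on $\Ss^{n-2}_\xi$), so it ends up of order $0$; and $\mathrm{(II)} = 2\,{\pi_2}_* N_{\VV_i}\Lambda (R_{+,0}+R_{-,0}) N_{\VV_i}\Lambda \,\pi_2^*$ — here both $P_L$ and $P_R$ are first-order operators, so $A^{\mathrm{(II)}}_{\VV_i}$ has order $0 + 1 + 1 - 1 - 1 = 1$. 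Wait: more carefully, $m_R = m_L = 1$ gives order $m_R + m_L - 1 = 1$, the highest order among all four terms, consistent with \eqref{equation:rhs-bis} involving an operator of order $1$.

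For the symbol formula \eqref{eq:A^II-symbol}, I would apply the sandwich formula \eqref{eq:symbol-1} of Remark \ref{remark:lift} to $A^{\mathrm{(II)}} = 2\,\pi_{2*}\, N_{\VV_i}\Lambda\, \mc{I}\, N_{\VV_i}\Lambda\, \pi_2^*$. By symmetry of the sandwich (the operator on the right is $P_R = N_{\VV_i}\Lambda\pi_{2,\mathrm{Sas}}^*$ built from the same data as $P_L^* $, up to adjoint, since $R_{+,0}^* = R_{-,0}$ and $\VV_i, f$ are real), the bilinear form $\langle\sigma_{A^{\mathrm{(II)}}}(x,\xi)h,h\rangle$ becomes $\frac{2\pi}{|\xi|}\int_{\Ss^{n-2}_\xi} |\sigma_{P_R}((x,u),\xi_{\HH}(x,u))\,\pi_{2,\mathrm{Sas}}^*h(u)|^2 \,\dd S_\xi(u)$, and the principal symbol of $P_R = N_{\VV_i}\Lambda\pi_{2,\mathrm{Sas}}^*$ is computed from Lemmas \ref{lemma:vector} and \ref{lemma:LambdaAnisoSymbol}: by \eqref{eq:N_vLambda-symbol} the $\Lambda^{\mathrm{conf}}$-contribution evaluated on $\xi_{\HH}(x,u)$ is $\tfrac{i}{2}\langle\xi_{\V}(x,u),\nabla_{\V}\VV(x,u)\rangle$, while by \eqref{equation:N_v-autre-lambda} the $\Lambda^{\mathrm{aniso}}$-contribution carries the factor $\langle\xi,u\rangle$ which vanishes on $\Ss^{n-2}_\xi$; hence only the conformal piece survives and $\sigma_{P_R}((x,u),\xi_{\HH}(x,u))\,\pi_{2,\mathrm{Sas}}^*h(u) = \tfrac{i}{2}\langle\xi_{\V}(x,u),\nabla_{\V}\VV(x,u)\rangle\,\pi_2^*h(u)$ (using $\pi_{2,\mathrm{Sas}}^*h = \pi_2^*h$ after applying $d\pi$, matching conventions). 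Substituting, the factor $|\tfrac{i}{2}|^2 = \tfrac14$ combines with the $2$ in front of $\mathrm{(II)}$ and the $\frac{2\pi}{|\xi|}$, and after recognizing $\int_{\Ss^{n-2}_\xi}|{\cdot}|^2 \dd S_\xi = C_{n-1+2m}^{-1}\|E^m_\xi({\cdot})\|^2_{L^2(S_xM)}$ via \eqref{eq:E^k-squared} (with $k = m$, so the normalization constant is $C_{2m+n-1}$), one obtains exactly \eqref{eq:A^II-symbol}. I should double check the arithmetic: $2 \times \frac{2\pi}{|\xi|} \times \frac14 = \frac{\pi}{|\xi|}$, and the $L^2(S_xM)$-norm-squared of $E^m_\xi$ of $\pi_2^*h(u)\langle\xi_\V(x,u),\nabla_\V\VV(x,u)\rangle$ absorbs the $C^{-1}_{n-1+2m}$ exactly as written.

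The routine parts are the bookkeeping of adjoints and the identification of each term with a sandwich of the correct type; these follow the template already set in the proof of Lemma \ref{lemma:symbol-q-bis} and in \S\ref{sssection:first-metric}. The one genuinely delicate point — and the main obstacle — is verifying that the apparently-highest-order pieces in $\mathrm{(I)}$, $\mathrm{(III)}$ and in the $\Lambda^{\mathrm{aniso}}$ contribution to $\mathrm{(II)}$ do \emph{not} contribute to the leading symbol, because $\Lambda^{\mathrm{aniso}}$'s symbol vanishes on $\Ss^{n-2}_\xi$ and because in $\mathrm{(III)}$ the operator $M_{\divv(\dot X)}$ involves $X$ applied to $\pi_0^*$ of a scalar, whose symbol also carries a factor killed by the integration over $\Ss^{n-2}_\xi$; keeping careful track of these cancellations (and of the fact that $\divv(\dot X)$ is genuinely first order but its symbol is $\propto\langle\xi,v\rangle$ by Lemma \ref{lemma:divergence} and \eqref{eq:alphaDef}) is what pins down the orders $0$, $1$, $0$, $-1$ claimed in the statement. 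Once the orders are correct, only $A^{\mathrm{(II)}}_{\VV_i}$ (and, from \S\ref{sssection:first-metric}, $Q^*_{\VV_i}\Pi_m^{-1}Q_{\VV_i}$, also of order $1$) matter for the subsequent Gaussian-state argument, which is why the symbol of $A^{\mathrm{(II)}}$ is singled out in \eqref{eq:A^II-symbol}.
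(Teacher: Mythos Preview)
Your proposal is correct and follows the same strategy as the paper: rewrite each of $\mathrm{(I)}$--$\mathrm{(IV)}$ as a sandwich and apply Proposition~\ref{proposition:sandwich} (with Remark~\ref{remark:lift}) to read off the orders and the principal symbol of $A^{\mathrm{(II)}}$; your arithmetic for \eqref{eq:A^II-symbol} (the factors $2\cdot\tfrac{2\pi}{|\xi|}\cdot\tfrac14=\tfrac{\pi}{|\xi|}$ and the use of \eqref{eq:E^k-squared}) matches the paper exactly. The one minor methodological difference is in term $\mathrm{(III)}$: the paper uses the identity $M_{\VV_i}Xu = X(M_{\VV_i}u) - M_{X\VV_i}u$ together with $X(R_{+,0}+R_{-,0})=0$ to rewrite $\mathrm{(III)}$ \emph{explicitly} as the order-$0$ sandwich $(2j_{g_0}{\pi_0}_* - n\,{\pi_2}_*)M_{X\VV_i}(R_{+,0}+R_{-,0})N_{\VV_i}\Lambda\pi_2^*$, whereas you argue that the a-priori order-$1$ sandwich has vanishing principal symbol (the divergence side carries a factor $\langle\xi,v\rangle$ which dies on $\Ss^{n-2}_\xi$) --- both are valid since Proposition~\ref{proposition:sandwich} produces classical $\Psi$DOs, but the paper's route yields a concrete operator rather than relying on a one-step drop in order.
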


\begin{proof}
\emph{Term {\rm (IV)}.} Using the notation of \eqref{eq:sandwich-auxiliary-operator}, we observe that \eqref{eq:A^I-IV} holds for $\mathrm{*} = \mathrm{IV}$ with:
\[
A^{\mathrm{(IV)}}_{\VV_i} := \frac{m^2}{2} {\pi_2}_* M_{X\VV_i} (R_{+,0} + R_{-,0})  M_{X\VV_i} \pi_2^* .
\]
By the sandwich Proposition \ref{proposition:sandwich}, this is a $\Psi$DO of order $-1$. \\

\emph{Term {\rm(I)}.} Next, using the formula for the divergence in Lemma \ref{lemma:divergence} together with $M_{\VV_i} X u = X(M_{\VV_i}u)- M_{X \VV_i} u$ and $(R_{+,0} + R_{-,0})X = 0 = X(R_{+,0} + R_{-,0})$, we see from \eqref{eq:I-IV} that \eqref{eq:A^I-IV} holds for $\mathrm{*} = \mathrm{I}$ with: 
\begin{equation}
\label{equation:a-1}
\begin{split}
A^{\mathrm{(I)}}_{\VV_i}  & := 2m. {\pi_2}_* M_{X \VV_i} (R_{+,0} + R_{-,0}) N_{\VV_i} \Lambda \pi_2^* \\
& \hspace{2cm} + m. {\pi_2}_* M_{X \VV_i} (R_{+,0} + R_{-,0}) M_{X \VV_i} \left(\pi_0^*\Tr_{g_0}(\bullet) - \frac{n}{2} \pi_2^*(\bullet)\right).
\end{split}
\end{equation}
By Proposition \ref{proposition:sandwich}, the first term in \eqref{equation:a-1} is of order $0$ as $\Lambda$ is of order $1$, and the second term is of order $-1$. \\

\emph{Term {\rm(III)}.} By Lemma \ref{lemma:divergence} and using $M_{\VV_i} X u = X(M_{\VV_i}u)- M_{X \VV_i} u$, we get from \eqref{eq:I-IV}:
\[
\begin{split}
A_{\VV_i}^{\mathrm{(III)}} =(2 j_{g_0} {\pi_0}_* - n. {\pi_2}_*) M_{X \VV_i}  (R_{+,0} + R_{-,0}) N_{\VV_i} \Lambda \pi_2^*,
\end{split}
\]
where $j_{g_0}$ denotes the multiplication by $g_0$, that is $j_{g_0} u := u.g_0$ for $u \in C^\infty(M)$. 
By Proposition \ref{proposition:sandwich}, this is a pseudodifferential operator of order $0$. \\

\emph{Term {\rm(II)}.} Finally, for the term $\mathrm{(II)}$, we write from \eqref{eq:I-IV}:
\[
\begin{split}
A^{\mathrm{(II)}}_{\VV_i} := & 2 \left({\pi_2}_* \left(\Lambda^{\mathrm{conf}}\right)^* + {\pi_{2,\mathrm{Sas}}}_* \left(\Lambda^{\mathrm{aniso}}\right)^*\right) \\
& \hspace{3cm} N_{\VV_i}^* (R_{+,0} + R_{-,0}) N_{\VV_i} (\Lambda^{\mathrm{conf}} \pi_2^* + \Lambda^{\mathrm{aniso}} \pi_{2,\mathrm{Sas}}^*),
\end{split}
\]
where $\left(\Lambda^{\mathrm{conf,aniso}}\right)^*$ denotes the formal adjoint of the first order differential operators $\Lambda^{\mathrm{conf,aniso}}$. Once more, applying Proposition \ref{proposition:sandwich} and Remark \ref{remark:lift} shows that $A^{\mathrm{(II)}}_{\VV_i}$ is a $\Psi$DO of order $1$ with the principal symbol given by \eqref{eq:A^II-symbol}, where we also use \eqref{eq:N_vLambda-symbol} and \eqref{equation:N_v-autre-lambda} to compute the principal symbols of $N_{\VV_i} \Lambda^{\mathrm{conf}}$ and $N_{\VV_i} \Lambda^{\mathrm{aniso}}$, and \eqref{eq:E^k-squared} to extend the formula to $S_xM$. Note that $N_{\VV_i} \Lambda^{\mathrm{aniso}}$ does not contribute to the principal symbol as its symbol (see \eqref{equation:N_v-autre-lambda}) is proportional to $\langle \xi, v \rangle$ and the formula of Remark \ref{remark:lift} involves integration over $\left\{\langle \xi,v \rangle = 0\right\} = \mathbb{S}^{n - 2}_{\xi}$.
\end{proof}
As a consequence, \eqref{eq:I-IV} and the previous two lemmas show that:
\begin{equation}
\label{equation:ri}
\sum_{i=1}^d \langle \ddot{P}_0 u_i, u_i \rangle_{L^2}  = \sum_{i=1}^d \langle R_{\VV_i} f, f \rangle_{L^2} + \langle \mc{O}_{\Psi^{-\infty}}(1)f,f\rangle_{L^2},
\end{equation}
where $R_{\VV_i} = A^{\mathrm{(I)}}_{\VV_i} + A^{\mathrm{(II)}}_{\VV_i} +A^{\mathrm{(III)}}_{\VV_i}+A^{\mathrm{(IV)}}_{\VV_i}$ is a pseudodifferential operator of order $1$, whose principal symbol is given by that of $A^{\mathrm{(II)}}_{\VV_i}$ and determined by \eqref{eq:A^II-symbol}.

\subsection{Assuming the second variation is zero}

We now assume that for any variation of the metric $g_\tau = g + \tau f$ with $f \in C^\infty(M,\otimes^2_ST^*M)$, the second derivative $\ddot{\lambda} = 0$ vanishes. Using Lemma \ref{lemma:abstract}, this implies that:
\begin{equation}
\label{equation:summary}
\sum_{i=1}^d \langle \ddot{P}_0 u_i, u_i \rangle_{L^2(M,\otimes^m_S T^*M)} = 2 \sum_{i=1}^d \langle P_0^{-1} \dot{P}_0 u_i, \dot{P}_0 u_i \rangle_{L^2(M,\otimes^m_S T^*M)}.
\end{equation}
In \S \ref{sssection:first-metric} and \S \ref{sssection:second-metric}, we saw that both the right-hand side and the left-hand side can be written as quadratic forms in $f$, with pseudodifferential operators of order $1$ acting on $f$. More precisely, combining \eqref{equation:rhs-bis} and \eqref{equation:ri}, we have that \eqref{equation:summary} implies:
\begin{equation}
\label{equation:summary-2}
\begin{split}
&\forall f \in C^\infty(M,\otimes^2_ST^*M), \\
&  \quad \sum_{i=1}^d \left\langle \left(R_{\VV_i} - 2Q_{\VV_i}^* \Pi_m^{-1} Q_{\VV_i}\right) f, f \right\rangle_{L^2(M,\otimes^2_S T^*M)} = \langle \mc{O}_{\Psi^{-\infty}}(1)f,f\rangle_{L^2},
\end{split}
\end{equation}
where the remainder has this form by Lemmas \ref{lemma:smoothing-1} and \ref{lemma:smoothing-2} and appears only when $m$ is even. 

We then consider a \emph{real} Gaussian state $f_h = \Re(\EE_h(x,\xi)) \widetilde{f}$  (see \eqref{eq:gaussian} for the notation), where $\widetilde{f} \in C^\infty(M,\otimes^2_S T^*M)$ is a smooth section such that $\widetilde{f}(x) =: f$ and $\xi \in T^*_xM \setminus \{0\}$. Note that similarly to \S\ref{section:genericity-connection} we can only allow \emph{real} perturbations of the metric, hence the need for the real part of the Gaussian state. Nevertheless, this will not be a problem insofar as the principal symbols of $R_{\VV_i}$ and $Q_{\VV_i}^* \Pi_m^{-1} Q_{\VV_i}$ are preserved by the antipodal map in the fibres. We thus obtain by applying \eqref{equation:summary-2} to the Gaussian state $h.f_h$ and taking the limit as $h \rightarrow 0$, using Lemma \ref{lemma:technical}: 
\[
\begin{split}
&C^{-1}_{n-1+2m} \dfrac{\pi}{|\xi|} \sum_{i=1}^d  \left\| E^m_{\xi}\left( \pi_2^*f(\bullet).  \left\langle\xi_{\mathbb{V}}(x, \bullet), \nabla_{\V} \VV_i (x,\bullet) \right\rangle|_{\Ss^{n-2}_{\xi}} \right) \right\|_{L^2(S_{x}M)}^2\\
&  =2 \sum_{i=1}^d \left\langle \sigma_{\Pi_m^{-1}\pi_{\ker D_0^*}}(x,\xi) \sigma_{Q_{\VV_i}}(x,\xi) f, \sigma_{Q_{\VV_i}}(x,\xi) f \right\rangle_{\otimes^m_S T^*_{x}M} \\
& = C^{-1}_{n-1+2m} \dfrac{\pi}{|\xi|} \\
 & \times \sum_{i=1}^d \left\langle \left(\pi_{\ker \imath_{\xi^\sharp}} {\pi_m}_* \pi_m^* \pi_{\ker \imath_{\xi^\sharp}}\right)^{-1}  \pi_{\ker \imath_{\xi^\sharp}} {\pi_m}_* E_\xi^m \left(\pi_2^* f. \langle \xi_{\mathbb{V}}, \nabla_{\V} \VV_i\rangle|_{\Ss^{n-2}_{\xi}}\right),\right.\\
&\hspace{5cm} \left. \pi_{\ker \imath_{\xi^\sharp}} {\pi_m}_* E_\xi^m \left(\pi_2^* f. \langle \xi_{\mathbb{V}}, \nabla_{\V} \VV_i\rangle|_{\Ss^{n-2}_{\xi}}\right) \right\rangle_{\otimes^m_S T^*_{x}M}\\
&= C^{-1}_{n-1+2m} \dfrac{\pi}{|\xi|} \\
& \times \sum_{i=1}^d \left\langle P_m  E_\xi^m \left(\pi_2^* f. \langle \xi_{\mathbb{V}}, \nabla_{\V} \VV_i\rangle|_{\Ss^{n-2}_{\xi}}\right),   E_\xi^m \left(\pi_2^* f. \langle \xi_{\mathbb{V}}, \nabla_{\V} \VV_i\rangle|_{\Ss^{n-2}_{\xi}}\right) \right\rangle_{\otimes^m_S T^*_{x}M},
\end{split}
\]
where we used Lemmas \ref{lemma:symbol-q-bis} and \ref{lemma:work} to compute the symbols of $R_{\VV_i}$ and $Q_{\VV_i}^* \Pi_m^{-1} Q_{\VV_i}$, and we introduced the following map:
\[
P_m :=  {\pi_m}^* \pi_{\ker \imath_{\xi^\sharp}}  \left(\pi_{\ker \imath_{\xi^\sharp}} {\pi_m}_* \pi_m^* \pi_{\ker \imath_{\xi^\sharp}}\right)^{-1}  \pi_{\ker \imath_{\xi^\sharp}} {\pi_m}_*.
\]
Note that $P_m$ is the $L^2$-orthogonal projection onto $\ran(\pi_m^*\pi_{\ker \imath_{\xi^\sharp}})$ in $L^2(S_{x}M)$ and so it holds that (cf. \eqref{equation:decomposition0}):
\[L^2(S_xM) = \ran(\pi_m^*\pi_{\ker \imath_{\xi^\sharp}}) \oplus^\perp \ker(\pi_{\ker \imath_{\xi^\sharp}} {\pi_m}_*).\]
Thus, we get cancelling the constant terms (similarly to \eqref{equation:partial}):
\begin{equation}
\begin{split}
\label{eq:symbolblabla}
&\sum_{i=1}^d \left\| E^m_{\xi}\left(\pi_2^* f. \langle \xi_{\mathbb{V}}, \nabla_{\V} \VV_i\rangle|_{\Ss^{n-2}_{\xi}}\right) \right\|_{L^2(S_{x}M)}^2 \\
& = \sum_{i=1}^d \left\| P_m E^m_{\xi}\left(\pi_2^* f. \langle \xi_{\mathbb{V}}, \nabla_{\V} \VV_i\rangle|_{\Ss^{n-2}_{\xi}}\right) \right\|_{L^2(S_{x}M)}^2.
\end{split}
\end{equation}
As $\ker ({\pi_m}_*) \subset \ker (\pi_{\ker \imath_{\xi^\sharp}}{\pi_m}_*)$, to obtain a contradiction it is sufficient to show that the $L^2$-orthogonal projection onto $\ker({\pi_m}_*)$ of the function $E^m_{\xi}\left( \pi_2^* f. \langle \xi_{\mathbb{V}}, \nabla_{\V} \VV_1\rangle|_{\Ss^{n-2}_{\xi}} \right)$ is non-zero, that is, it suffices to show the following:

\begin{lemma}
There exists $x \in M, \xi \in T^*_{x}M \setminus \left\{ 0 \right\}$ and $f \in \otimes^2_S T^*_{x}M$ such that:
\begin{equation}\label{eq:degree-final}
\mathrm{deg}\left(E^m_{\xi}\left(\pi_2^* f. \langle \xi_{\mathbb{V}}, \nabla_{\V} \VV_1\rangle|_{\Ss^{n-2}_{\xi}} \right)\right) \geq m+1.
\end{equation}
\end{lemma}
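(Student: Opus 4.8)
## Proof proposal for the final lemma

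The plan is to reduce \eqref{eq:degree-final} to the two spherical-harmonic lemmas already proved in \S\ref{section:spherical} — namely the differentiated restriction Lemma \ref{lemma:differentiated-restriction}, the multiplication Lemma \ref{lemma:multiplication-surjective}, and the extension Lemma \ref{lemma:extension} — exactly as the connection case was reduced via Lemma \ref{lemma:step2}. The first and crucial input is that $\VV_1$ must have high degree in some fibre. Arguing as in the proof of Lemma \ref{lemma:step2}: if $\mathrm{Deg}(\VV_1)\le m$ everywhere, then since $\VV_1$ has opposite parity to $m$ (by the mapping property \eqref{eq:mapping-property-X} and the discussion after \eqref{eq:Xv_i}), in fact $\mathrm{Deg}(\VV_1)\le m-1$, so $\VV_1=\pi_{m-1}^*\widetilde{\VV}_1$ for some $\widetilde{\VV}_1\in C^\infty(M,\otimes^{m-1}_S T^*M)$. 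Then \eqref{eq:Xv_i} and the intertwining relation \eqref{equation:x} give $\pi_m^*\Delta_0 u_1 = X\VV_1 = \pi_m^* D\widetilde{\VV}_1$, hence $\Delta_0 u_1 = D\widetilde{\VV}_1$, so $\pi_{\ker D_0^*}\Delta_0 u_1 = 0$; by Lemma \ref{lemma:isomorphisms} (or rather its metric analogue in \S\ref{sssection:metric-dependent-X-ray}) this forces $u_1=0$, a contradiction. Therefore there is a point $x\in M$ at which $\mathrm{deg}(\VV_1|_{S_xM})\ge m+1$.

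Next, with $x$ fixed as above, apply Lemma \ref{lemma:differentiated-restriction} in the Euclidean fibre $E=T_xM$ with $\nabla=\nabla_{\V}$ and $\WW = \VV_1(x,\bullet)$: since $\mathrm{deg}(\WW)\ge m+1$, there exists $\xi\in T_x^*M\setminus\{0\}$ such that the function $u\mapsto \langle\xi_{\mathbb{V}}(x,u),\nabla_{\V}\VV_1(x,u)\rangle$ restricted to $\Ss^{n-2}_\xi$ has degree $\ge m$ as a function on $\Ss^{n-2}_\xi$. (Here I am using that $\langle\xi_{\mathbb{V}}(x,u),\nabla_{\V}\WW(x,u)\rangle = \langle\xi,\nabla_{\V}\WW\rangle$ in the notation of that lemma, up to the identification $\xi_{\mathbb{V}}(x,u)(\bullet)=\xi(\mathcal{K}_{x,u}(\bullet))$, which is precisely $\langle\xi,\nabla\WW(\bullet)\rangle$ along the sphere.) Call $g_\xi := \langle\xi_{\mathbb{V}},\nabla_{\V}\VV_1\rangle|_{\Ss^{n-2}_\xi}\in C^\infty(\Ss^{n-2}_\xi)$, a function of degree $\ge m$. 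Now apply Lemma \ref{lemma:multiplication-surjective} on the sphere $\Ss^{n-2}_\xi\subset\ker\xi$ (which has dimension $n-2\ge 1$, so $n-2\ge 2$ when $n\ge 4$, and the $n=3$ case must be checked separately): there exists $f'\in\mathbf{S}_2(\ker\xi)$, i.e. an element realizable as $\pi_2^*f$ for a symmetric $2$-tensor $f\in\otimes^2_S T_x^*M$ after suitable extension, such that $\mathrm{deg}(\pi_2^*f\cdot g_\xi)\ge m+2\ge m+1$ on $\Ss^{n-2}_\xi$. Finally, Lemma \ref{lemma:extension} gives $\mathrm{deg}(E^m_\xi(\pi_2^*f\cdot g_\xi))\ge m+1$ on $\Ss^{n-1}=S_xM$, which is exactly \eqref{eq:degree-final}.

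I anticipate two points needing care. The first is the identification of $\langle\xi_{\mathbb{V}},\nabla_{\V}\VV_1\rangle|_{\Ss^{n-2}_\xi}$ with the object $\langle\xi,\nabla\WW(\bullet)\rangle|_{\Ss^{n-2}_\xi}$ appearing in Lemma \ref{lemma:differentiated-restriction}: one must verify that restricting the vertical gradient to the fibre sphere $S_xM$ and then pairing with $\xi_{\mathbb{V}}$ is the same operation as taking the spherical gradient on $\Ss^{n-1}\subset T_xM$ and pairing with $\xi$, which follows from the definition $\xi_{\mathbb{V}}(x,v)(\bullet)=\xi(\mathcal{K}_{x,v}(\bullet))$ together with the fact that $\mathcal{K}$ restricted to $\V$ is the canonical identification $\V(x,v)\cong v^\perp\subset T_xM$. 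The second, more genuine obstacle is matching $\pi_2^*f\cdot g_\xi$ with the precise quantity $\pi_2^*f(\bullet)\cdot\langle\xi_{\mathbb{V}},\nabla_{\V}\VV_1\rangle|_{\Ss^{n-2}_\xi}$ as it appears inside $E^m_\xi$ in \eqref{eq:degree-final}, including that the tensor $f$ supplied by Lemma \ref{lemma:multiplication-surjective} (which produces $f\in\mathbf{S}_2(\ker\xi)\cong\otimes^2_S(\ker\xi)^*$) can be genuinely extended to an element of $\otimes^2_S T_x^*M$ — this is harmless since $\pi_2^*$ of such an extension, restricted to $\Ss^{n-2}_\xi$, only sees the $\ker\xi$ part. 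The degree bound from Lemma \ref{lemma:multiplication-surjective} is stated for $\Ss^{n-1}$; here it is applied on $\Ss^{n-2}_\xi\subset\ker\xi$, so one needs $n-2\ge 2$, i.e. $n\ge 4$; for $n=3$ the hypersphere $\Ss^{n-2}_\xi=\Ss^1$ is one-dimensional, but Lemma \ref{lemma:multiplication-surjective} is stated for $n\ge 2$ (i.e. $\Ss^{n-1}$ with $n-1\ge 1$), so it applies to $\Ss^1$ as well and the argument goes through uniformly for all $n\ge 3$. This completes the proof of the lemma and hence, via \eqref{eq:symbolblabla}, of Theorem \ref{theorem:genericity-metric}.
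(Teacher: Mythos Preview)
Your proposal is correct and follows essentially the same route as the paper's proof: establish $\mathrm{deg}(\VV_1)\ge m+1$ at some fibre (you spell out the argument, the paper takes it for granted from the connection case), then apply Lemma~\ref{lemma:differentiated-restriction} to get $\xi$ with $\mathrm{deg}\big(\langle\xi_{\mathbb{V}},\nabla_{\V}\VV_1\rangle|_{\Ss^{n-2}_\xi}\big)\ge m$, then Lemma~\ref{lemma:multiplication-surjective} on $\ker\xi$ to raise the degree to $\ge m+2$ via a suitable $f\in\otimes^2_S(\ker\imath_{\xi^\sharp})$ extended by zero in the $\xi^\sharp$-direction, and finally Lemma~\ref{lemma:extension}. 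Your dimension bookkeeping for applying Lemma~\ref{lemma:multiplication-surjective} on the $(n-1)$-dimensional space $\ker\xi$ is also correct (it needs $n-1\ge 2$, i.e.\ $n\ge 3$).
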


\begin{proof}
Similarly as in the proof of Lemma \ref{lemma:step2}, we know that $\VV_1$ has degree $\geq m + 1$. Indeed, by the mapping properties of $X$ and using equation \eqref{eq:Xv_i}, parity of $\VV_1$ is opposite to $m$ and so if $\deg(\VV_1) \leq m - 1$ we could write $\VV_1 = \pi_{m - 1}^*\widetilde{\VV}_1$ for some smooth $\widetilde{\VV}_1$. This would imply (using \eqref{equation:x}) that $D\widetilde{\VV}_1 = \Delta_0 u_1$ and hence also $\pi_{\ker D^*} \Delta_0 u_1 = 0$, so $u_1 = 0$ which is a contradiction.

Observe that by Lemma \ref{lemma:differentiated-restriction}, since $\VV_1$ has degree $\geq m+1$ at some point $x \in M$, there exists $\xi \in T^*_{x}M$ such that $\langle \xi_{\V}, \nabla_{\V} \VV_1 \rangle|_{\Ss^{n-2}_{\xi}}$ has degree $\geq m$. Then, by Lemma \ref{lemma:multiplication-surjective} (observe here that the musical map is an isomorphism from $\ker \langle{\xi, \bullet}\rangle \subset T_xM$ to $\ker \imath_{\xi^\sharp} \subset T_x^*M$), there exists $f \in \otimes^2_S (\ker \imath_{\xi^\sharp})$ such that $\pi_2^*f. \langle \xi_{\V}, \nabla_{\V} \VV_1 \rangle|_{\Ss^{n-2}_{\xi}}$ has degree $\geq m+2$. Note that $f$ can be naturally extended as a symmetric tensor in $\otimes^2_S T^*_{x}M$ by setting $f(\xi^\sharp, \bullet) = f(\bullet,\xi^\sharp) = 0$. Thus we can apply Lemma \ref{lemma:extension} to obtain \eqref{eq:degree-final}.
\end{proof}

This allows to complete the proof of Theorem \ref{theorem:genericity-metric}.

\begin{proof}[Proof of Theorem \ref{theorem:genericity-metric}]
The same proof as for Theorem \ref{theorem:genericity-connection} applies \emph{verbatim}.
\end{proof}

\begin{remark}\rm
	When $n = 2$, similarly as in Remark \ref{remark:n=2-twisted} our proof does not work. More precisely, the equality \eqref{eq:symbolblabla} always holds, which shows that the symbol of $R_{\VV_i} - 2Q_{\VV_i}^*\Pi_m^{-1}Q_{\VV_i}$ appearing in \eqref{equation:summary-2} is zero, hence this operator is of order $0$, as opposed to the case $n \geq 3$ where we show it is strictly of order $1$.
\end{remark}

\section{Manifolds with boundary}\label{sec:boundary}

In this section we outline some applications of the previous results to the injectivity of $X$-ray transform on manifolds with boundary.

\subsection{Generic injectivity on manifolds with boundary} We now turn to the case of a smooth Riemannian manifold $(M,g)$ with boundary. We define the incoming (resp. outgoing) tail $\Gamma_-$ (resp. $\Gamma_+$) as the set of points $(x,v) \in SM$ such that $\varphi_t(x,v)$ is defined for all $t \geq 0$ (resp. $t \leq 0$); the trapped set is defined as $K = \Gamma_- \cap \Gamma_+$. We further assume that $(M,g)$ satisfies the following assumptions:

\begin{itemize}
\item \textbf{Strictly convex boundary:} the boundary is strictly convex in the sense that the second fundamental form is strictly positive;
\item \textbf{No conjugate points:} the metric has no conjugate points;
\item \textbf{Hyperbolic trapped set:} there exists a continuous flow-invariant Anosov decomposition as in \eqref{equation:anosov} on the trapped set $K$.
\end{itemize}
We will use the short notation \emph{SNH} for such manifolds. Typical and well-studied examples are provided by \emph{simple manifolds} which are diffeomorphic to balls, without conjugate points and no trapped set; SNH manifolds are a generalization of these, see \cite{Paternain-lecture-notes,Guillarmou-17-2} for further references.

Embedding of SNH manifolds into closed Anosov manifolds was recently established in \cite{Chen-Erchenko-Gogolev-20} under the extra assumption that the manifold has boundary components diffeomorphic to spheres $\mathbb{S}^{n-1}$, or $\mathbb{S}^1 \times \mathbb{S}^{n-2}$. As we will rely on \cite{Chen-Erchenko-Gogolev-20}, we therefore introduce the following terminology: we say that a smooth $n$-dimensional manifold $M$ with boundary is \textbf{admissible} if it has boundary components diffeomorphic to $\mathbb{S}^{n-1}$ or $\mathbb{S}^{1} \times \mathbb{S}^{n-2}$. As pointed out to us by on the referees, in dimension $n \geq 3$, SNH manifolds with spherical boundary components are presumably diffeomorphic to balls.

Given a manifold $M$, we let $\mc{M}_{\mathrm{SNH}}$ be the set of all smooth SNH metrics. As in the closed case, this set is invariant by the action of a gauge group $\mathrm{Diff}_0(M)$ which is the set of diffeomorphisms fixing the boundary $\partial M$.

Given $x \in \partial M$, we let $\nu(x)$ be the outward-pointing normal unit vector to the boundary and
\[
\partial_{\pm}SM := \left\{ (x,v) \in SM, ~ x \in \partial M, \pm g(v,\nu(x)) \geq 0 \right\} 
\]
be the incoming (-) and outgoing (+) boundary. The X-ray transform is defined as the operator
\[
I^g : C^\infty(SM) \rightarrow C^\infty(\partial_-SM \setminus \Gamma_-), ~~~ I^gf(x,v) := \int_0^{\ell_+(x,v)} f(\varphi_t(x,v)) \dd t,
\]
where $\ell_+(x,v)$ denotes the \emph{exit time} of $(x,v) \in SM$, namely the maximal positive time for which the geodesic flow is defined. As in the closed case, the X-ray transform of symmetric tensors is defined as $I^g_m = I^g \circ \pi_m^*$; a similar decomposition $f=Dp+h$ between potential and solenoidal parts holds by requiring the extra condition $p|_{\partial M} = 0$. It is then easy to check that such potential tensors are in the kernel of $I^g_m$ and it is conjectured that this should be the whole kernel of the X-ray transform. The s-injectivity is known to be true in a lot of cases but not in full generality:
\begin{itemize}
\item On simple manifolds: s-injectivity was proved for $m=0,1$ in any dimension \cite{Anikonov-Romanov-97}; further assuming non-positive sectional curvature, it was obtained for all $m \in \Z_{\geq 0}$ in \cite{Pestov-Sharafutdinov-87}; and for all $m \in \Z_{\geq 0}$ on surfaces, without any curvature assumption, it was obtained in \cite{Paternain-Salo-Uhlmann-13}; generic s-injectivity was obtained for $m=2$ in \cite{Stefanov-Uhlmann-05, Stefanov-Uhlmann-08} (by proving s-injectivity for real analytic metrics);
\item On SNH manifolds: in dimension $n \geq 3$, s-injectivity was proved on all SNH manifolds for $m=0,1$ and all SNH manifolds with non-positive sectional curvature for $m \geq 2$ in \cite{Guillarmou-17-2}; it was obtained on all SNH surfaces for all $m \in \Z_{\geq 0}$ in \cite{Lefeuvre-19-1},
\item On manifolds admitting a global foliation by convex hypersurfaces: s-injectivity was obtained in any dimension $\geq 3$ for all $m \in \Z_{\geq 0}$ in \cite{Uhlmann-Vasy-16, Stefanov-Uhlmann-Vasy-18, DeHoop-Uhlmann-Zhai-18}.
\end{itemize}
We will then derive the following:

\begin{corollary}[of Theorem \ref{theorem:genericity-metric} and \cite{Chen-Erchenko-Gogolev-20}]
\label{corollary:genericity-metric-boundary}
There exists an integer $k_0 \gg 1$ such that the following holds. Let $M$ be a smooth admissible manifold of dimension $\geq 3$ carrying SNH metrics. For all $m \in \Z_{\geq 0}$\footnote{For $m=0,1$, the s-injectivity is already established \cite{Guillarmou-17-2}.}, there exists an open dense set $\mc{R}'_m \subset \M_{\mathrm{SNH}}$ (for the $C^{k_0}$-topology) such that for all metrics $g \in \mc{R}'_m$, the X-ray transform $I^g_m$ is s-injective. In particular, the space of metrics $\mc{R}' := \cap_{m \geq 0} \mc{R}'_m$ whose X-ray transforms are all s-injective is \emph{residual} in $\M_{\mathrm{SNH}}$.

\end{corollary}

Once again, the sets $\mc{R}_m$ and $\mc{R}$ are invariant by the action of $\mathrm{Diff}_0(M)$. We believe that the assumption that the boundary components are diffeomorphic to $\mathbb{S}^{n-1}$ or $\mathbb{S}^1 \times \mathbb{S}^{n-2}$ could be removed (for that, one would need to avoid the use of \cite[Theorem 1]{Chen-Erchenko-Gogolev-20} and follow directly the proof of Theorem \ref{theorem:genericity-metric} in the case of a manifold with boundary).

\subsection{Marked boundary distance function}

Let $(M,g)$ be an SNH manifold with boundary. For each pair of points $x,y \in \partial M$ and each homotopy class $[\gamma]$ of curves joining $x$ to $y$, it is well-known that there exists a unique geodesic in that class. We let $d_g(x,y,[\gamma])$ be the length of that unique geodesic (it minimizes the length among all curves in the $[\gamma]$) and call the map $d_g$ the \emph{marked boundary distance function}. When $M$ is simple (it is diffeomorphic to a ball), there is only a single geodesic joining $x$ to $y$; we may then drop the $[\gamma]$ and we call $d_g$ the \emph{boundary distance function}. This function is invariant by the action of the gauge-group $\mathrm{Diff}_0(M)$ (it descends on the moduli space) and it is conjectured that this is the only obstruction to recovering the metric:

\begin{conj}
\label{conjecture:michel}
Simple manifolds are \emph{boundary distance rigid} and, more generally, SNH manifolds are \emph{marked boundary distance rigid} in the sense that the marked boundary distance function:
\begin{equation}
\label{equation:distance}
d:  \mathbb{M}_{\mathrm{SNH}}:= \mc{M}_{\mathrm{SNH}}/\mathrm{Diff}_0(M) \ni \mathfrak{g} \mapsto d_{\mathfrak{g}}
\end{equation}
is injective.
\end{conj}

This conjecture is known in a certain number of cases but it still open in full generality, and was originally stated for simple manifolds by Michel \cite{Michel-81}. We refer to \cite{Mukhometov-77, Mukhometov-81, Mukhometov-Romanov-78,Croke-91,Michel-81,Gromov-83,Otal-90-2,Pestov-Uhlmann-05,Burago-Ivanov-10,Stefanov-Uhlmann-Vasy-18} for further details. Similarly to the closed case, it was shown in \cite{Stefanov-Uhlmann-05} that the local boundary distance rigidity of a metric $g$ can be derived from the s-injectivity of its X-ray transform $I_2^g$. This was extended to SNH manifolds (not necessarily spherical) in \cite{Lefeuvre-19-2}. As a consequence, we obtain:

\begin{corollary}[of Corollary \ref{corollary:genericity-metric-boundary} and \cite{Lefeuvre-19-2}]
There exists $k_0 \gg 1$ such that the following holds. Let $M$ be a smooth admissible $n$-dimensional manifold carrying SNH metrics. There is an open dense set $\mathbbm{R}'_2 \subset \mathbb{M}_{\mathrm{SNH}}$ (with respect to the $C^{k_0}$-topology) such that: for all $\mathfrak{g}_0 \in \mathbbm{R}'_2$, the marked boundary distance function $d$ in \eqref{equation:distance} is locally injective near $\mathfrak{g}_0$.
\end{corollary}

Here $\mathbbm{R}'_2 = \mc{R}'_2/\mathrm{Diff}_0(M)$, where $\mc{R}'_2$ is given by Corollary \ref{corollary:genericity-metric-boundary}.

\subsection{Proofs} This section is devoted to the proof of Corollary \ref{corollary:genericity-metric-boundary}.

\begin{proof}[Proof of Corollary \ref{corollary:genericity-metric-boundary}] We fix an integer $m \in \Z_{\geq 0}$. As in the closed case, the s-injectivity of $I^g_m$ is equivalent to the s-injectivity of the \emph{normal operator} $\left(I^g_m\right)^*I^g_m$ which has the same microlocal properties as the generalized X-ray transform $\Pi^g_m$, see \cite{Guillarmou-17-2} for instance. Hence the fact that $\mc{R}'_m$ is open is an immediate consequence of elliptic theory so it suffices to show that $\mc{R}'_m$ is dense in $\M^{k_0}_{\mathrm{SNH}}$. We let $g \in \M^{k_0}_{\mathrm{SNH}}$. By \cite{Chen-Erchenko-Gogolev-20}, the manifold $(M,g)$ can be embedded into a closed Anosov manifold $(M^{\mathrm{ext}}, g_{\mathrm{ext}})$ such that $g_{\mathrm{ext}}|_{M} = g$ and by Theorem \ref{theorem:genericity-metric}, we can perturb the metric $g_{\mathrm{ext}}$ (in the $C^{k_0}$-topology) to a new metric $g'_{\mathrm{ext}}$ such that this metric has injective X-ray transform $I^{g'_{\mathrm{ext}}}_m$ on $M^{\mathrm{ext}}$. In order to prove Corollary \ref{corollary:genericity-metric-boundary}, it then suffices to show that $g'_{\mathrm{ext}}|_{M}$ restricted to the manifold with boundary $M$ has injective X-ray transform. In other words, it suffices to prove the following:

\begin{lemma}
Let $(M,g)$ be a closed Anosov manifold and let $(N,g|_{N}) \subset (M,g)$ be an SNH manifold with boundary. If the X-ray transform $I^{g}_m$ on the closed manifold is s-injective on $M$, then the X-ray transform $I^{g|_{N}}_m$ on the manifold with boundary is also s-injective on $N$.
\end{lemma}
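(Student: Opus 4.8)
The plan is to reduce the s-injectivity of the X-ray transform on the manifold with boundary $N$ to that on the ambient closed manifold $M$, by extending solenoidal tensors from $N$ to $M$ in a way that does not destroy the kernel information. Concretely, suppose $h \in C^\infty(N,\otimes^m_S T^*N)$ satisfies $D^*_{g|_N} h = 0$ (with the boundary condition built into the solenoidal decomposition on manifolds with boundary) and $I^{g|_N}_m h = 0$. The first step is to recall that $I^{g|_N}_m h = 0$ means $\pi_m^* h$ integrates to zero along every geodesic of $N$ with endpoints on $\partial N$; equivalently, by the (boundary) version of the Liv\v{s}ic-type argument of Dairbekov--Sharafutdinov and Guillarmou, there is a function $u \in C^\infty(SN)$ with $u|_{\partial_+ SN \setminus \Gamma_+} = 0$ such that $X u = \pi_m^* h$ on $SN$. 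The key observation is that because $N$ is strictly convex with no conjugate points and hyperbolic trapped set (spherical SNH), geodesics of $M$ that enter $N$ exit it, so the trajectory structure of $N$ sits inside that of $M$; and crucially, closed geodesics of $M$ that meet $N$ must also leave $N$ (by strict convexity a geodesic segment in $N$ cannot be closed), so they contribute nothing new to the marked length spectrum test on the extended tensor.

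The main step is the extension. I would extend $h$ to a symmetric $m$-tensor $\tilde h$ on all of $M$, supported in a slightly larger collar neighborhood of $N$, smoothly and with $\tilde h|_N = h$; then set $\hat h := \pi_{\ker D^*_g} \tilde h$ its solenoidal projection on $M$. By construction $D^*_g \hat h = 0$ on $M$. Now I must show $I^g_m \hat h = 0$: take any closed geodesic $\gamma$ of $M$. Decompose $\hat h = \tilde h - D_g p$ for some $p$ (with $p$ smooth on $M$ since $D_g$ is elliptic); the potential part $D_g p$ integrates to zero along any closed geodesic by $X\pi_m^* = \pi_{m+1}^* D$. For $\tilde h$: if $\gamma$ does not meet $\supp \tilde h$, the integral vanishes trivially; if it does meet $N$, then $\gamma \cap N$ is a union of open geodesic segments entering and leaving $N$ through $\partial N$ (never closed segments), and on each such segment $\int \pi_m^* \tilde h\, dt = \int \pi_m^* h\, dt = [u(\text{exit}) - u(\text{entry})]$; summing telescopes --- but the entry/exit points need not cancel pairwise a priori. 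The honest way around this is to replace $\tilde h$ on the collar by $\tilde h - D_g \chi p'$ adjustments, or more cleanly: extend not $h$ but the primitive. That is, extend $u$ (which vanishes near $\partial_+ SN$) by zero to a function $\hat u$ on $SM$ — it is continuous, and in fact as regular as $u$ near $\partial SN$ because of the boundary vanishing — and observe $X\hat u$ is a distribution equal to $\pi_m^* h$ on $SN$ and $0$ outside; the obstruction is that $X\hat u$ need not be of the form $\pi_m^*(\text{tensor})$ globally. So instead I would directly define $\hat h$ by the requirement $\pi_m^* \hat h = $ (the degree-$\le m$ part of a smooth extension of $X u$), ensuring $X$ applied to a global function equals $\pi_m^*\hat h$ up to a coboundary; then $I^g_m \hat h = 0$ follows from Lemma~\ref{lemma:relation} / the closed-manifold Liv\v{s}ic theorem since $\pi_m^*\hat h$ is globally a coboundary on $SM$.

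The cleanest formulation, which I would adopt: show that if $\pi_m^* h = Xu$ on $SN$ with $u$ vanishing on the outgoing boundary, then there exists $\hat u \in C^\infty(SM)$ and $\hat h \in C^\infty(M,\otimes^m_S T^*M)$ with $D^*_g \hat h = 0$, $\hat h|_N = h - D_{g|_N}(\text{something})$, and $X\hat u = \pi_m^*\hat h$. Then s-injectivity of $I^g_m$ on $M$ (equivalently, of $\Pi^g_m$ by Lemma~\ref{lemma:relation}) forces $\hat h = D_g q$ for some $q$, and restricting to $N$ and using the uniqueness of the solenoidal decomposition on $N$ (with the boundary condition $q|_{\partial N}=0$ arranged by subtracting a harmonic-type correction) yields $h = D_{g|_N} p$ on $N$, i.e.\ $h$ is potential, hence $h=0$ since it was assumed solenoidal. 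The main obstacle is precisely making the extension $\hat u$ smooth across $\partial SN$ and controlling that its radial Fourier content stays bounded so that $\hat h := $ the tensor with $\pi_m^*\hat h = X\hat u$ is well-defined and smooth: this uses that $u$ vanishes to infinite order in a suitable sense near $\partial_+ SN$ (which follows from strict convexity of the boundary and the smoothness of $u$ up to the boundary, as in \cite{Guillarmou-17-2,Lefeuvre-19-1}), together with elliptic regularity for $D_g$. Once the extension is in place the rest is a formal manipulation with the solenoidal projections and Lemma~\ref{lemma:relation}.
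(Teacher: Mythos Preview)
Your overall strategy (extend from $N$ to $M$, use s-injectivity on $M$, restrict back) is the right one, and coincides with the paper's. But the proposal has a genuine gap at the extension step, and the various workarounds you suggest do not close it. The paper's key observation, which you are missing, is that one can arrange for the tensor $h$ itself to vanish to infinite order on $\partial N$, so that its extension by zero to $M$ is automatically smooth. Concretely: starting from $f$ with $I^{g|_N}_m f=0$, the paper first invokes Sharafutdinov's boundary normal decomposition to write $f=Dp+h$ with $p|_{\partial N}=0$ and $\imath_\nu h=0$ near $\partial N$ (note: this is not the global solenoidal gauge you begin with). Then a direct positivity argument with short geodesics grazing the boundary shows $h|_{\partial N}=0$, and iterating (writing $h=r h'$ with $r$ the boundary distance) gives $h=\mc{O}(r^\infty)$. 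Now the extension by zero is smooth, and the computation of $I^g_m h$ along any closed geodesic $\gamma$ of $M$ is immediate: the portion of $\gamma$ outside $N$ contributes nothing since $h\equiv 0$ there, and each segment of $\gamma$ inside $N$ is a maximal $N$-geodesic, hence contributes $I^{g|_N}_m h=0$.

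By contrast, your proposed routes all stumble on smoothness or on losing control of $I^g_m$. Extending $h$ arbitrarily and then solenoidally projecting on $M$ destroys the support information, so you cannot compute $I^g_m\hat h$ (as you note). Extending the primitive $u$ by zero fails for two reasons: $\hat u$ need not be smooth across $\partial SN$ (you only know $u$ vanishes on $\partial_\pm SN$, not that its jets vanish), and $X\hat u$ is not of the form $\pi_m^*(\text{tensor})$ globally. Your ``cleanest formulation'' asserts the existence of compatible smooth $(\hat u,\hat h)$ without saying how to produce them; the infinite-order vanishing you invoke for $u$ at $\partial_+SN$ is not justified and in fact is not the right object to look at. Finally, the last step in the paper --- showing that the primitive $u$ obtained on $M$ satisfies $u|_{\partial N}=0$ --- is done by a density argument using a single dense geodesic orbit, not by elliptic or harmonic corrections.
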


\begin{proof}
We let $f \in C^\infty(N,\otimes^m_S T^*N)$ such that $I^{g|_{N}}_mf = 0$. First of all, by \cite[Lemma 2.2]{Sharafutdinov-02}, we can write $f=Dp+h$, where $p|_{\partial N}=0, \imath_\nu h = 0$ in a \emph{neighborhood of $\partial N$} (where $\nu$ is the outward-pointing unit vector and is extended in the inner neighborhood of $\partial N$ by flowing along the geodesics), $\imath_\nu$ is the contraction by the vector $\nu$ and both tensors $p,h$ are smooth. Observe that $I^{g|_{N}}f = 0 = I^{g|_{N}}h$. We claim that $\partial_\nu^k h = 0$ for all $k \geq 0$, that is $h$ vanishes to infinite order on the boundary. For $k=0$, this is contained in \cite[Lemma 2.3]{Sharafutdinov-02}. The proof is a simple observation: if $h_x(v,...,v)$ is non-zero for some $x \in \partial M$ and $v \in T_x\partial N$ then it is also true in a small neighboorhood of $(x,v)$ and $\pi_{m}^*h$ has constant sign there; without loss of generality we can take it to be positive. Using short geodesics in a neighborhood of the boundary (with unit speed vector almost equal to $v$) we then get that $I_mh(x,v) > 0$, which is a contradiction. Hence we can write $h = rh'$, where $r(x) := d(x,\partial N)$ is defined locally near the boundary and extended to an arbitrary positive function inside $N$. Then the same argument of positivity applies to $h'$ and by iteration, we get that $h = \mc{O}(r^\infty)$ at $\partial N$. Hence we can extend $h$ by $0$ outside $N$ to get a smooth tensor (still denoted by $h$) in $C^\infty(M,\otimes^{m}_S T^*M)$.

We now claim that $I_m^g h = 0$ on $M$, that is the integral of $\pi_m^*h$ along closed geodesics in $M$ is zero. Indeed, let $\gamma \subset SM$ be a closed orbit of the geodesic flow of length $\ell(\gamma)$, then:
\[
I_m^gh(\gamma) = \dfrac{1}{\ell(\gamma)} \int_0^{\ell(\gamma)} \pi_m^*h(\varphi_t(x,v)) \dd t =  \dfrac{1}{\ell(\gamma)} \left(\int_I \pi_m^*h + \int_J \pi_m^*h\right),
\]
where $I \subset [0,\ell(\gamma)]$ is the union of intervals of times $t$ such that $\pi(\varphi_t(x,v)) \notin N$, $\pi : SM \rightarrow M$ denotes the projection and $J$ is the complement of $I$. Observe that the integral over $I$ is zero since $h$ was extended by $0$ outside $N$. Now, $J$ splits as a union of subintervals, each of them corresponding to a segment of geodesic in $N$. By assumption, the integral of $\pi_m^*h$ over all these segments is $0$. Hence $I_m^g h = 0$.

Since $I_m^g$ is s-injective, we deduce that $h = Du$, for some tensor $u \in C^\infty(M,\otimes^{m-1} T^*M)$, that is $\pi_m^*h = X \pi_{m-1}^*u$ is a coboundary. We now want to show that $u|_{\partial M} = 0$. We let $(x_0,v_0) \in SM$ be a point on the boundary $\partial_+ SN$ such that both forward $(\varphi_t(x_0,v_0))_{t \in \R_{\geq 0}}$ and backward $(\varphi_t(x_0,v_0))_{t \in \R_{\leq 0}}$ orbits are dense in $SM$ and we let $c := \pi_{m-1}^*u(x_0,v_0)$. Observe that the following holds: if $t^-_1 \geq 0$ denotes the first positive time such that $\varphi_{t^-_1}(x_0,v_0)$ intersects $\partial_-SN^{\circ}$ (that is, it is an inward pointing vector that is not tangent to the boundary of $N$), then $ \pi_{m-1}^*u(\varphi_t(x_0,v_0))$ is constant equal to $c$ for all $t \in [0,t^-_1]$. There is then a time $t^+_1 > t^-_1$ such that $\varphi_{t_1^+}(x_0,v_0) \in \partial_+ SN$. For $t \in [t^-_1,t^+_1)$, the value of $ \pi_{m-1}^*u(\varphi_t(x_0,v_0))$ is unknown but one has $\pi_{m-1}^*u(\varphi_{t^+_1}(x_0,v_0)) = c$ since:
\[
\begin{split}
I^{g|_{N}}h(\varphi_{t^-_1}(x_0,v_0)) &= \int_{t^-_1}^{t^+_1} \pi_m^*h(\varphi_t(x_0,v_0)) \dd t \\
&= 0 =  \pi_{m-1}^*u(\varphi_{t^+_1}(x_0,v_0)) -  \pi_{m-1}^*u(\varphi_{t^-_1}(x_0,v_0)).
\end{split}
\]
Since the orbit of $\mc{O}(x_0,v_0)$ of $(x_0,v_0)$ is dense in $SM$, the set $\mc{A} := \mc{O}(x_0,v_0) \cap (\partial_- SN \cup \partial_+ SN)$ is also dense in $\partial_- SN \cup \partial_+ SN$. Moreover, iterating the previous argument shows that $\pi_{m-1}^*u|_{\mc{A}} = c$ and thus $\pi_{m-1}^*u|_{\partial_- SN \cup \partial_+ SN} = c$. If $m$ is even, $m-1$ is odd and this forces $c$ to be $0$ (just use the antipodal map $(x,v) \mapsto (x,-v)$); if $m$ is odd, then changing at the very beginning $u$ by $u+ \lambda g^{\otimes (m-1)/2}$ for some $\lambda \in \R$ allows to take $c=0$. Hence, we conclude that $u|_{\partial N} = 0$. This gives that $f = Dp + h = D(p+u)$, where $p+u$ vanishes on $\partial N$.
\end{proof}

This concludes the proof of Corollary \ref{corollary:genericity-metric-boundary}.
\end{proof}

\bibliographystyle{alpha}
%\nocite{*}
\bibliography{Biblio}

\end{document}